\DeclareSymbolFontAlphabet{\mathcalorig} {symbols}
\newenvironment{psmatrix}
{\left[\begin{smallmatrix}}
{\end{smallmatrix}\right]}
\newtheorem{theorem}{Theorem}[section]
\newtheorem*{theorem*}{Theorem}
\newtheorem{proposition}[theorem]{Proposition}
\newtheorem*{proposition*}{Proposition}
\newtheorem{lemma}[theorem]{Lemma}
\newtheorem*{lemma*}{Lemma}
\newtheorem{corollary}[theorem]{Corollary}
\newtheorem*{question*}{Question}
\theoremstyle{definition}
\newtheorem{definition}[theorem]{Definition}
\newtheorem{remark}[theorem]{Remark}
\newtheorem{example}[theorem]{Example}
\newtheorem{examples}[theorem]{Examples}
\newtheorem*{acknowledgements}{Acknowledgements}
\newtheorem{rem}[theorem]{Remark}
\newtheorem{notation}[theorem]{Notation}
\newtheorem{claim}[theorem]{Claim}
\newtheorem{alg}[theorem]{Algorithm}
 \DeclareMathOperator{\Tr}{Tr}
\newcommand{\0}{{(0)}}
\newcommand{\RR}{\mathbb{R}}
\newcommand{\PGL}{\operatorname{PGL}}
\newcommand{\PSL}{\operatorname{PSL}}
\newcommand{\SL}{\operatorname{SL}}
\newcommand{\meas}{\operatorname{\meas}}
\newcommand{\nc}{\newcommand}
\nc{\la}{\langle} \nc{\ra}{\rangle}
 \nc{\CA}{\cal A}
 \nc{\CBB}{\cal B}
\nc{\CDD}{\cal D}
\nc{\CE}{\cal E}
\nc{\CF}{\cal F} \nc{\CG}{\cal
G} \nc{\CH}{\cal H} \nc{\CI}{\cal I} \nc{\CJ}{\cal J}
\nc{\CK}{\cal K} \nc{\CL}{\cal L} \nc{\CM}{\cal M} \nc{\CN}{\cal
N} \nc{\CO}{\cal O} \nc{\CP}{\cal P} \nc{\CQ}{\cal Q}
\nc{\CR}{\cal R} \nc{\CS}{\cal S} \nc{\CT}{\cal T} \nc{\CU}{\cal
U} \nc{\CV}{\cal V} \nc{\CW}{\cal W} \nc{\CZ}{\cal Z}
\g@addto@macro\bfseries{\boldmath}
\nc{\fa}{  \mathfrak a} \nc{\fg}{  \mathfrak g} \nc{\fk}{  \mathfrak k}
\nc{\fh}{  \mathfrak h} \nc{\fm}{  \mathfrak m} \nc{\fn}{  \mathfrak n}
\nc{\fA}{  \mathfrak A} \nc{\fC}{  \mathfrak C} \nc{\fI}{  \mathfrak I}
\nc{\fL}{  \mathfrak L} \nc{\fS}{  \mathfrak S}
\nc{\fz}{  \mathfrak z}
\nc{\nen}{\newenvironment} \nc{\ol}{\overline}
\nc{\ul}{\underline} \nc{\lra}{\longrightarrow}
\nc{\lla}{\longleftarrow} \nc{\Lra}{\Longrightarrow}
\nc{\Lla}{\Longleftarrow} \nc{\Llra}{\Longleftrightarrow}
\nc{\hra}{\hookrightarrow} \nc{\iso}{\overset{\sim}{\lra}}
\def\N{\mathbb N}
\def\Z{\mathbb Z}
\newcommand{\Mod}{mod}
\newcommand{\trace}{\Tr}
\DeclareMathOperator{\cs}{css}
\definecolor{refkey}{rgb}{1,1,1}
\definecolor{labelkey}{rgb}{1,0,0}
\numberwithin{equation}{subsection}
 \nc{\bq}{\mathbb Q}
 \nc{\br}{\mathbb R}
 \nc{\bz}{\mathbb Z}
 \nc{\bc}{\mathbb C}
 \nc{\bn}{\mathbb N}
 \nc{\bX}{\mathbb{X}}
 \nc{\cL}{ \mathcal{L}}
 \nc{\G}{\Gamma}
 \nc{\sm}{\setminus}
 \nc{\sub}{\subset}
 \nc{\lm}{\lambda}
  \nc{\Lm}{\Lambda}
 \nc{\al}{\alpha}
 \nc{\bt}{\beta}
 \nc{\om}{\omega}
 \nc{\dl}{\delta}
 \nc{\g}{\gamma}
 \nc{\Dl}{\Delta}
 \nc{\Om}{\Omega}
 \nc{\s}{\sigma}
 \nc{\ro}{\rho}
 \nc{\te}{\theta}
 \nc{\SLZ}{\operatorname{SL}_2(\bz)}
 \nc{\SLZp}{\operatorname{SL}_2(\bz[\frac{1}{p}])}
 \nc{\SLZl}{\operatorname{SL}_2(\bz[\frac{1}{\ell}])}
 \nc{\SLR}{\operatorname{SL}_2(\br)}
 \nc{\GLR}{\operatorname{GL}_2(\br)}
 \nc{\PGLR}{\operatorname{PGL}_2(\br)}
 \nc{\PSLR}{\operatorname{PSL}_2(\br)}
 \nc{\PSLZ}{\operatorname{PSL}_2(\bz)}
 \nc{\SLC}{\operatorname{SL}_2(\bc)}
 \nc{\uH}{\mathbb H}
 \nc{\fD}{ \mathcal{D}}
 \nc{\fE}{ \mathcal{E}}
 \nc{\fO}{ \mathcal{O}}
 \nc{\haf}{\frac{1}{2}}
 \nc{\qtr}{\frac{1}{4}}
 \nc{\shaf}{{\scriptstyle\frac{1}{2}}}
 \nc{\hlm}{{\scriptstyle\frac{\lambda}{2}}}
 \nc{\inv}{^{-1}}
 \nc{\eps}{\varepsilon}
 \nc{\aG}{\mathbf{G}}
 \nc{\spn}{\operatorname{Span}}
 \nc{\Cm}{\operatorname{CM}}
\nc{\beq}{\begin{equation}}
\nc{\eeq}{\end{equation}}
\def\ve {\varepsilon}
\nc{\new}{{\color{red}{\label{ATTENTION ******}}}}
\let\LaTeXStandardTableOfContents\tableofcontents
\renewcommand{\tableofcontents}{
\begingroup
\renewcommand{\bfseries}{\relax}
\LaTeXStandardTableOfContents
\endgroup
}
\begin{document}

\renewcommand{\thefootnote}{ } %% no footnote number
\renewcommand{\footnoterule}{{\hrule}\vspace{3.5pt}} %% long footnote line

\title{Commutators in $SL_2$ and Markoff surfaces I}

\author{Amit Ghosh}
\address{Oklahoma State University\\ Stillwater, OK 74078, USA}
\curraddr{}
\email{ghosh@okstate.edu}
\thanks{}

\author{Chen Meiri}
\address{Technion - Israel Institute of Technology ( Haifa Israel)}
\curraddr{}
\email{chenm@tx.technon.ac.il}
\thanks{}

\author{Peter Sarnak}
\address{Institute for Advanced Study and  Princeton University \\Princeton, NJ 08540, USA}
\curraddr{}
\email{sarnak@math.ias.edu}

\dedicatory{To the memory of Vaughan Jones with admiration.}

\begin{abstract} 
We show that the commutator equation over $\SLZ$ satisfies
a profinite local to global principle, while it can fail with infinitely many 
exceptions for $\SL_2(\Z[\frac{1}{p}])$. The source of the failure is a reciprocity obstruction 
to the Hasse Principle for cubic Markoff surfaces.
\end{abstract}

\maketitle

\setcounter{secnumdepth}{5}
\setcounter{tocdepth}{3}
\setcounter{section}{0}
\setcounter{subsection}{4}
\setcounter{subsubsection}{5}

\tableofcontents

%\linenumbers

\setcounter{footnote}{0}
\setcounter{page}{1}
%%%%%%%%%%%%%%%%%%%%%%%%%% SECTION 1 %%%%%%%%%%%%%%%%%%%%%%%%%%%%%%% 

%\thispagestyle{empty}

%\footnote{{\bf Mathematics Subject Classification (2010).} Primary: 11F12, 11F30. Secondary: 34F05, 35P20, 81Q50.}

%%%%%%%%%%%%%%%%%%%%%%%%%%%%%%%%%%%%%%%%%%%%%%%%%%%%%%%%%%%%%%%%%%%%%%%%%%%
%%%%%%%%%%%%%%%%%%%%%%%%%%%%%%%%%%%%%%%%%%%%%%%%%%%%%%%%%%%%%%%%%%%%%%%%%%%
\renewcommand{\thefootnote}{\arabic{footnote}\quad } %% restore the footnote number
%\setcounter{footnote}{0} %% start of counting of footnotes

%\setcounter{section}{0}\setcounter{equation}{0}

%%%%%%%%%%%%%%%%%%%%%%%%%%%%%%%%%%%%%%%%%%%%%%%%%%%%%%%%%%%%%%%%%%%%%%%%%%%%%%%%%%%%%%%%%%%%%%%%%%%%%%%%%%%%%%%%%%%

\section{Introduction}\label{intro}

%%%%%%%%%%%%%%%%%%%%%%%%%%%%%%%%%%%%%%%%%%%%%%%%%%%%%%%%%%%%%%%%%%%%%%%%%%%%%%%%%%%%%%%%%%%%%%%%%%%%%%%%%%%%%%%%%%%

An element $Z$ in a group $\mathbb{G}$ is a commutator if the equation 
\begin{equation}\label{commutator}
\mathcal{C}_Z:\quad\quad XYX^{-1}Y^{-1}=Z ,
\end{equation}
has a solution with $X$, $Y\in \mathbb{G}$.

There is an extensive literature on understanding the set of commutators for various $\mathbb{G}$'s and the results are decisive for finite simple groups, and for semi-simple matrix groups over local fields and their rings of integers (see \cite{shalev2012}, Conjecture 1.3 and  \cite{agks} for a discussion). In these cases, most if not all elements are commutators. On the other hand if $\mathbb{G}$ is an $S$-arithmetic group, little is known and a description of its commutators appears to be a difficult problem, even for the simplest case that we investigate here, namely $\mathbb{G}=\Gamma_D=\SL_2(D)$ where $D$ is a ring of $S$-integers.

A necessary condition for $Z\in \Gamma_D$ to be a commutator is that it be so in every finite quotient of $\Gamma_D$. If this condition is also sufficient then we say that $\Gamma_D$ satisfies a profinite local to global principle, or a profinite Hasse principle (for commutators). These $\SL_2(D)$'s appear to be good candidates for such a Hasse principle, but one of our main results below shows that there may be subtle diophantine obstructions that intervene.

A fundamental difference between the case that $D$ contains infinitely many units and when it does not, is that in the former case essentially all the finite quotients of $\SL_2(D)$ are congruence quotients (\cite{menn}, \cite{serre70}) and correspondingly the local (finite) obstructions are all simply congruence obstructions. In particular this is true for the Ihara groups $\SL_2([\frac{1}{\ell}])$, with $\ell$  a prime, which will be our central focus. The main results of this paper are that

\ 
\begin{enumerate}[label=(\Roman*).]
\item $\SL_2(\mathbb{Z})$ satisfies the profinite Hasse principle for commutators (and it is crucial that one allows all finite quotients here, and not only congruence quotients).

\

\item For $\ell\equiv \pm 1$ (mod 5) a prime, the commutator Hasse principle fails for $\SL_2([\frac{1}{\ell}])$, and in each case there are infinitely many conjugacy classes which are commutators in every finite (= congruence) quotient, but are not commutators.
\end{enumerate}

\vspace{10pt}

(I) is proved using group theoretic tools. Results of this type are known for the equation $Z=X^m$ (that is $Z$ is a $m$-th power), by Lubotsky for the free group, and Thompson for $\SLZ$ (see \cite{Thomp}).
For a free group $F$, Khelif \cite{Khe} proved that if an element of  $F$ is a commutator in every finite quotient of $F$, then it is a commutator in $F$. Our proof of (I) (see Theorem \ref{thm:main}) follows Khelif's arguments, and we outline the main steps of this proof here. Since $-I_2$ does not belong to the commutator subgroup $\SL_2(\Z)'$ of $\SL_2(\Z)$ and $[\SL_2(\Z):\SL_2(\Z)']=12$, it is enough to prove an analogous theorem for $\PSL_2(\Z)$ instead of $\SL_2(\Z)$. The advantage in working with $\PSL_2(\Z)$ is that it is a free product of cyclic groups of orders 2 and 3. 

The first step of the proof implies that if $d \in \PSL_2(\Z)$ is a commutator in the profinite completion of $\PSL_2(\Z)$, then there exists a subgroup  $G \subseteq \PSL_2(\Z)$ which is generated by two elements and such that $d$ is the commutator of two topological generators of the profinite completion of $G$. In particular, $G$ is not abelian and, by the  Kurosh subgroup theorem, $G = \langle a\rangle * \langle b\rangle$, where  the orders of $a$ and $b$  are either 2 or 3 or $\infty$. The proof of this step is purely combinatorial, and a similar statement holds in an arbitrary free product of finitely many cyclic groups.

In the second step we assume that $d$ is a commutator of two topological generators of the profinite completion of a free product $G_{m,n}$ of two cyclic groups of orders $m,n \in \{2,3,\infty\}$. In each case, we choose a specific embedding $\rho$ of $G_{m,n}$ in $\PSL_2(\Z)$ and show that there are only finitely many possible values $t$ for $|\trace(\rho(d))|$. Every finitely generated subgroup of $\PSL_2(\Z)$ contains only finitely many conjugacy classes of elements of trace $t$. For each such conjugacy class we show  that it either  consists of commutators or that its elements are not commutators in some finite quotient of $G_{m,n}$. The  proof of the second step relies on the congruence structure of $\PSL_2(\Z)$ and the structure of the $\Z[G_{m,n}/G_{m,n}']$-module $G_{m,n}'/G_{m,n}''$ for $m,n \in \{2,3,\infty\}$. We do not know how to generalize it to arbitrary $m,n$. 

\ 

The proof of (II) is number theoretic. Equation \eqref{commutator} for $X$ and $Y$ in $\SL_2(D)$ can be viewed as seeking the $D$-points on the affine variety in $\mathbb{A}^{\!8}$ cut out by the four equations of degree four in the eight variables which are the entries in $X$ and $Y$, together with the two quadratic equations $\det X= \det Y=1$. In the case that $\SL_2(D)$ satisfies the congruence subgroup property, the profinite obstruction to solving \eqref{commutator} is the same as the familiar congruence obstruction to solving \eqref{commutator} over $D$. This puts us in the realm of the usual Hasse principle for integral points on an affine variety, albeit a complicated system when viewed directly.

To study \eqref{commutator} we introduce two intermediate varieties defined over $D$\,: for $t\in D$
\begin{equation}\label{tr-eq}
\mathcal{T}_t : \quad\quad \text{Tr}(XYX^{-1}Y^{-1})=t\, ,
\end{equation}
and the Markoff surfaces for $k\in D$
\begin{equation}\label{Mar-eq}
\mathcal{M}_k : \quad\quad x^2 +y^2 +z^2 -xyz=k\,.
\end{equation}

The relation between these three varieties (for a full discussion, see Section \ref{sec3}) is that a solution $X$, $Y$ to \eqref{commutator} yields a solution $X$, $Y$ to \eqref{tr-eq} with $t=\Tr(Z)$, while a solution to \eqref{tr-eq} yields one to \eqref{Mar-eq} with $x=\Tr(X)$, $y=\Tr(Y)$ and $z=\Tr(XY)$ with $k=t+2$ (which follows from Fricke's identities \cite{FR}). In general, an integral solution of \eqref{Mar-eq} does not {\em lift} to \eqref{tr-eq}, and similarly one of \eqref{tr-eq} need not lift to a solution of \eqref{commutator}. However, we show (see Section \ref{sec3}) that there is an effective procedure to decide if there are $D$-lifts in each of these two steps. This shows that the key diophantne equation that is at the heart of \eqref{commutator} is the cubic affine surface $\mathcal{M}_k$. For $k\neq 4$ these surfaces are nonsingular log K3's (see \cite{CT19}, \cite{whang}) and their diophantine analysis over $D$ is delicate. If $D=\Z$, one can exploit the ``Markoff'' non-linear group of polynomial morphisims of $\mathbb{A}^{\!3}$ which preserve the surfaces $\mathcal{M}_k$, to give a diophantine theory of these surfaces (see \cite{GhSa}). In particular, failures of the Hasse principle for $\mathcal{M}_k$ that have their roots in quadratic reciprocity were discovered in \cite{GhSa}, and have been interpreted as integral Brauer-Manin obstructions in \cite{CT19} and \cite{LM}. In Section \ref{sec5}, we give a streamlined treatment of these reciprocity obstructions to Hasse principles for $\mathcal{M}_k$ and show that they extend to some of the rings $D$ such as the ones occurring in (II) above.

On the other hand, we show that for certain $D$'s, $\mathcal{M}_k$ and $\mathcal{T}_t$ may have no Hasse failures. In fact they can have no local congruence obstructions and \eqref{tr-eq} and \eqref{Mar-eq} are universal for these $D$'s, in that they have solutions for every choice of $t$ and $k$ (in $D$). This demonstrates the delicate nature and complexity of the diophantine properties of $\mathcal{M}_k$ over these rings $D$.

In Section \ref{sec5}, the ``Brauer-Manin'' obstructions to the Hasse principle for the $\mathcal{M}_k$'s are promoted to the Hasse failures of \eqref{commutator} that were mentioned in (II). The precise conjugacy classes which are locally commutators but not globally so, are given in Theorem \ref{HFE1}. The $D$'s in Section \ref{sec4} for which $\mathcal{M}_k$ and $\mathcal{T}_t$ are universal are candidates for $\SL_2(D)$'s for which every $Z$ is a commutator, if indeed such exist.

In paper II of this series, we develop other aspects of the diophantine analysis of $\mathcal{M}_k$ over the base rings $D$. Specifically topological density and descent by the nonlinear morphism group, which when $D$ has infinitely many units acts with infinitely many orbits on $\mathcal{M}_k$ (\cite{Ba05}, \cite{Si90}), unlike the case $\Z$, where there are finitely many orbits. One of the goals is to find a procedure to decide whether a given $Z$ in $\SL_2(D)$ is a commutator. Apparently in going from $\Z$ to such $D$'s, the integer solutions become more random and plentiful even though \eqref{Mar-eq}, \eqref{tr-eq} and \eqref{commutator}  have only sparse solutions.

\begin{acknowledgements}\ 

 It is a pleasure to thank I. Agol, N. Avni, W. Duke, A. Lubotzky and D. Puder for their valuable comments and insights concerning this paper.

AG thanks the Institute for Advanced Study and Princeton University for making possible visits during part of the years 2018-21, including an extended visit to the IAS in Spring 2020, when much of this work took place. He also gratefully acknowledges support from the Mathematics Fund of the IAS, a Simons Foundation grant No. 634846,  and support from his home university for a sabbatical in 2020.

CM thanks the ISF for grant No. 1226/19. CM and PS  thank the BSF for the grant No. 2014099.

PS is also supported by NSF/DMS Grant No. 1302952.
\end{acknowledgements}

 \vspace{20pt}

%%%%%%%%%%%%%%%%%%%%%%%%%%%%%%%%%%%%%%%%%%%%%%%%%%%%%%%%%%%%%%%%%%%%%%%%%%%%%%%%%%%%%%%%%%%%%%%%%%%%%%%%%%%%%%%%%%%%%%%%%%%%

\section{Local to global principle for the commutator word in $\SL_2(\Z)$}\label{sec2}
%%%%%%%%%%%%%%%%%%%%%%%%%%%%%%%%%%%%%%%%%%%%%%%%%%%%%%%%%%%%%%%%%%%%%%%%%%%%%%%%%%%%%%%%%%%%%%%%%%%%%%%%%%%%%%%%%%%

\begin{notation}\label{nota:main}\ 
\begin{enumerate}\ 
	\item For every $n \in \N \cup \{\infty\}$, let $C_n$ be a cyclic group of order $n$.  Let
\begin{equation}
	U := \left[\begin{array}{cc}
		0 & -1 \\
		1 & \ 0
	\end{array}\right]
	\text{ and }\ 
	V := \left[\begin{array}{cc}
		0 & -1 \\
		1 & \ 1
	\end{array}\right]\ .
\end{equation}
Then $o(U)=4$, $o(V)=6$, $U^2=V^3$ and $$\SL_2(\Z)=\langle U\rangle*_{\langle U^2\rangle} \langle V\rangle \cong  C_4 *_{C_2} C_6.$$
\item We denote the image of $U$ and $V$ in $\PSL_2(\Z)=\SL_2(\Z)/\{\pm I_2\}$ by $u$ and $v$ respectively.  Then $\PSL_2(\Z)=\langle u \rangle*\langle v \rangle \cong  C_2 *C_3$. 
\item The derived subgroup of a group $G$ is denoted by $G'$ and $G''=(G')'$. 
\item The profinite completion of a group $G$ is denoted by $\widehat{G}$.  
\item An element $g$ in a group $G$ is called a commutator if there exists $x,y \in G$ such that $g=[x,y]:=xyx^{-1}y^{-1}$. If $g \in G$ is a commutator then $g \in G'$ but there might be elements of $G'$ which are not commutators.
\end{enumerate}
\end{notation}

\begin{remark}\ 
	We assume that the reader is familiar with the basic properties of profinite groups and profinite completions (see \cite{RZ} and \cite{Wil} for details). 
\end{remark}

Khelif \cite{Khe} proved the following theorem:

\begin{theorem}[Khelif]\label{thm:khelif}
Let $F$ be a  free group and let $g \in F$. If $g$ is a commutator in $\widehat{F}$ then $g$ is a commutator in $F$.	
\end{theorem}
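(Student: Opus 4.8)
The plan is to follow Khelif \cite{Khe}: first a combinatorial reduction to a two‑generator free subgroup, and then an analysis of that case running down the derived series. Throughout one uses freely that $F$ is residually finite, that finitely generated subgroups of free groups are closed in the profinite topology (so the profinite topology of $F$ induces the full profinite topology on each of them), and that a free factor decomposition $F=F_0*F_1$ yields a retraction $F\twoheadrightarrow F_0$ extending continuously to $\widehat F\to\widehat{F_0}$.

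\emph{Reduction to finite rank and to rank two.} If $F_0\le F$ is the free factor spanned by the finitely many basis letters occurring in the reduced word of $g$, applying the continuous retraction $\widehat F\to\widehat{F_0}$ to a relation $g=[x,y]$ in $\widehat F$ gives $g=[\widehat r(x),\widehat r(y)]$ in $\widehat{F_0}$; so we may assume $F$ is free of finite rank. Then a combinatorial argument --- the free‑group analogue of the first step carried out for $\PSLZ$ in this paper, and valid in any free product of finitely many cyclic groups --- shows that $g$ already comes from a subgroup of rank at most two: there is a finitely generated $G\le F$ with $g\in G$, of rank $\le2$, and a topological generating pair $\{x,y\}$ of $\widehat G$ with $g=[x,y]$. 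The idea is to approximate $x,y\in\widehat F$ by $x_n,y_n\in F$ (so $[x_n,y_n]\to g$), replace each pair $(x_n,y_n)$ by a Nielsen‑reduced one --- which does not change $\langle x_n,y_n\rangle$ --- and run a finiteness/compactness argument on the reduced data to stabilise $\langle x_n,y_n\rangle$ to a fixed $G$, using that $\overline{\langle x_n,y_n\rangle}=\widehat G$ and $g\in\widehat G\cap F=G$. By Nielsen--Schreier $G$ is free of rank $\le2$; if it is cyclic or trivial then $g=1$ and we are done, so assume $G\cong F_2=\langle a,b\rangle$.

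\emph{The rank‑two case.} It remains to prove: if $w\in F_2$ is a commutator of a topological generating pair of $\widehat{F_2}$, then $w$ is a commutator in $F_2$. I would analyse the equation $[x,y]=w$ layer by layer along the derived series. At the first layer, $M:=F_2/F_2''$ is finitely generated metabelian, hence residually finite; by the Magnus embedding $M'=F_2'/F_2''$ is an explicit finitely generated module over $\Lambda=\Z[M^{\mathrm{ab}}]=\Z[s^{\pm1},t^{\pm1}]$, and the commutator set of $M$ has a correspondingly explicit description as a union of cosets of $\Lambda$‑submodules determined by the ``abelianised'' conjugating pair. Using that finitely generated $\Lambda$‑modules are residually finite as abelian groups, one shows this set is closed in the profinite topology of $M$, so a commutator in $\widehat M$ is a commutator in $M$; say $w\equiv[x_0,y_0]\pmod{F_2''}$. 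One then corrects through the higher layers: modifying $(x_0,y_0)$ by elements of $F_2^{(k)}$ alters $[x_0,y_0]$ modulo $F_2^{(k+1)}$ by a value depending affinely on the modification, in the module $F_2^{(k)}/F_2^{(k+1)}$ over the residually finite ring $\Z[F_2/F_2^{(k)}]$, so the same descent applies at each layer. Since $\bigcap_k F_2^{(k)}=1$ and the nested nonempty solution sets of $[x,y]=w$ in $\widehat{F_2}\times\widehat{F_2}$ are compact and --- by the successive stages --- meet $F_2\times F_2$, their intersection contains a genuine pair $(x,y)$ with $[x,y]=w$. Then $g=[x,y]$ in $G\le F$, proving the theorem.

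\emph{The main obstacle.} The hard part is the rank‑two case: establishing that the commutator set of the finitely generated metabelian group $M$ (and the relevant correction sets at the higher layers) is closed in the profinite topology, so that profinite solvability genuinely descends, and organising the induction over the infinite derived series so that it terminates in a single pair rather than regressing forever. This is exactly where the special module structure of $F_2$ is used; in the analogous argument of this paper for $\PSLZ$, its roles are played by the congruence structure of $\PSLZ$ and by the $\Z[G_{m,n}/G_{m,n}']$‑module $G_{m,n}'/G_{m,n}''$. It is essential that \emph{all} finite quotients --- including solvable, non‑congruence ones --- are available: in the $S$‑arithmetic setting of part (II), where only congruence quotients occur, no such descent exists, which is precisely why the Hasse principle can fail there.
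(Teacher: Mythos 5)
The paper does not give a proof of this theorem; it is quoted directly from Khelif \cite{Khe}, and the paper's own work is the \emph{adaptation} of Khelif's strategy to $\PSL_2(\Z)$ (Propositions \ref{prop:reduction} and \ref{prop:commutator}), with the free case $(m,n)=(\infty,\infty)$ deliberately deferred to Khelif in Remark \ref{rem}(1). So your proposal is a reconstruction of Khelif's argument rather than something to match against a proof in the text, and I will assess it on its own terms.

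Your reduction step reaches the same endpoint as Proposition \ref{prop:reduction} but by a visibly different route: you propose Nielsen-reducing approximating pairs $(x_n,y_n)$ and invoking a stabilisation/compactness argument on the reduced data, whereas the paper instead constructs the nested finite-index subgroups $H_n=\overline{\langle x,y,M_n\rangle}\cap\PSL_2(\Z)$, applies Kurosh and Corollary \ref{cor:factor} to produce a finitely generated free factor $K$ of some $H_n$ containing $d$, and then uses a retraction $H_n\twoheadrightarrow K$ extended continuously to the profinite completions. The paper's version gives a concrete reason the process stops (a free factor of a fixed $H_n$), while your Nielsen-reduction sketch asserts stabilisation without supplying the mechanism; if filled in, it would likely need exactly the free-factor/retraction machinery anyway, since Nielsen reduction on approximants that do not even lie in a common finitely generated subgroup is not obviously controlled.

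There are genuine gaps in the rank-two step. First, you reduce to showing the commutator set of $M=F_2/F_2''$ is closed in the profinite topology, but you only exhibit it as a \emph{countable} union of cosets of $\Lambda$-submodules indexed by the abelianisations $(\bar x_0,\bar y_0)\in\Z^2\times\Z^2$; a countable union of closed subsets of $\widehat M$ intersected with $M$ need not be closed, and you give no argument to repair this (the relevant point is that the commutator set of $\widehat M$ is a compact set which must already be exhausted by the cosets coming from \emph{integral} abelianisations --- that is the nontrivial content). Second, the termination argument as written does not work: the sets of exact solutions in $\widehat{F_2}\times\widehat{F_2}$ are compact and their intersection is nonempty, but nothing in ``successive approximate solutions $(x_k,y_k)\in F_2\times F_2$ modulo $F_2^{(k+1)}$'' forces that intersection to meet the \emph{discrete} set $F_2\times F_2$. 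One needs the module structure to constrain the $(x_k,y_k)$ (e.g.\ via Nielsen equivalence or an explicit normal form) so that they genuinely stabilise, not merely converge in $\widehat{F_2}$. Third, your layer argument discards the ``topological generating pair'' hypothesis after the first layer, yet in the analogous argument the paper runs for $\PSL_2(\Z)$ this hypothesis is essential (Lemma \ref{lemma:inv}: the image of $d$ must be a \emph{unit} in $S_{m,n}$ precisely because $\widehat G'$ is topologically normally generated by $d$). Without it, the descent you propose at the metabelian and higher layers has no reason to single out genuine commutators. You acknowledge these as ``the hard part,'' but the proposal does not supply the ideas that resolve them.
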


 Here we prove a similar result, namely

\begin{theorem}\label{thm:main} Let $\Gamma$ be either $\SL_2(\Z)$ or $\PSL_2(\Z)$ and let  $d \in \Gamma$. If $d$ is a commutator in $\widehat{ \Gamma }$ then $d$ is a commutator in $\Gamma$.
\end{theorem}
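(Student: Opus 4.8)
The plan is to carry out the three-step argument sketched in the introduction, adapting Khelif's proof of Theorem~\ref{thm:khelif}.

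First I would reduce to $\Gamma=\PSL_2(\Z)$. The quotient map $\SL_2(\Z)\twoheadrightarrow\PSL_2(\Z)$ induces a continuous surjection $\widehat{\SL_2(\Z)}\twoheadrightarrow\widehat{\PSL_2(\Z)}$ sending commutators to commutators, so if $d\in\SL_2(\Z)$ is a commutator in $\widehat{\SL_2(\Z)}$ then its image $\bar d$ is a commutator in $\widehat{\PSL_2(\Z)}$; granting the theorem for $\PSL_2(\Z)$ we obtain $\bar d=[\bar x,\bar y]$ in $\PSL_2(\Z)$. Since $-I_2$ is central, the element $[x,y]\in\SL_2(\Z)$ formed from any lifts $x,y$ of $\bar x,\bar y$ is well defined and maps to $\bar d$, whence $d=\pm[x,y]$. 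But $d$, being a commutator in $\widehat{\SL_2(\Z)}$, maps to $0$ in $\SL_2(\Z)^{\mathrm{ab}}\cong\Z/12$, so $d\in\SL_2(\Z)'$; so does $[x,y]$; and as $-I_2\notin\SL_2(\Z)'$ the sign must be $+$. Hence it is enough to prove the theorem for $\Gamma=\PSL_2(\Z)=\langle u\rangle*\langle v\rangle\cong C_2*C_3$, and we may assume $d\neq 1$.

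Step 1 (combinatorial). Write $d=[\hat x,\hat y]$ with $\hat x,\hat y\in\widehat\Gamma$. The closed subgroup $\overline{\langle\hat x,\hat y\rangle}$ need not be the completion of a subgroup of $\Gamma$, and the point — a purely combinatorial argument with normal forms in a free product of finitely many cyclic groups, following Khelif — is to replace it by one that is: one produces a two-generated subgroup $G\le\Gamma$ and topological generators of $\widehat G$ with commutator $d$. In particular $G$ is non-abelian and $d\in G'$. As $C_2*C_3$ is virtually free, the Kurosh subgroup theorem writes every finitely generated subgroup of $\Gamma$ as a free product of copies of $\Z$, $C_2$ and $C_3$, and Grushko's theorem (rank is additive over free products) then forces a non-abelian two-generated one to be $G=\langle a\rangle*\langle b\rangle=:G_{m,n}$ with $m=o(a),\,n=o(b)\in\{2,3,\infty\}$. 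It therefore remains to show, for each of the six pairs $(m,n)$, that an element $d\in G_{m,n}$ which is the commutator of two topological generators of $\widehat{G_{m,n}}$ (and hence a commutator in every finite quotient of $G_{m,n}$) is a commutator in $G_{m,n}$.

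Step 2 (arithmetic / module-theoretic). For each pair $(m,n)$ I would fix a concrete faithful embedding $\rho\colon G_{m,n}\hookrightarrow\PSL_2(\Z)$. Using Fricke's identity $\tr[X,Y]=(\tr X)^2+(\tr Y)^2+(\tr XY)^2-(\tr X)(\tr Y)(\tr XY)-2$ together with the structure of topological generating pairs of $\widehat{G_{m,n}}$ (which, in these small free products, constrains the relevant traces up to the transformations preserving such pairs), one shows that $|\tr\rho(d)|$ takes only finitely many values $t$. Torsion elements never lie in $G_{m,n}'$ — each cyclic factor injects into the abelianization — so $d$ has infinite order, and one reduces to checking the finitely many conjugacy classes of $\rho(G_{m,n})\le\PSL_2(\Z)$ of each such trace $t$. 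For each such class one proves the dichotomy: either all of its elements are genuinely commutators in $G_{m,n}$ (exhibited by an explicit pair), or the class is not a commutator in some finite quotient of $G_{m,n}$, contradicting the hypothesis. Carrying out this dichotomy is where one invokes the structure of $G_{m,n}'/G_{m,n}''$ as a module over $\Z[G_{m,n}/G_{m,n}']$ together with the congruence description of the finite-index subgroups of $\PSL_2(\Z)$. Once all the obstructed classes are eliminated, $d$ is a commutator in $G_{m,n}$; unwinding Step 1 and the reduction then yields the theorem.

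The main obstacle is Step 2 — indeed the restriction $m,n\in\{2,3,\infty\}$ is used nowhere else. Both the bound on $|\tr\rho(d)|$ and, for each surviving conjugacy class, the commutator-versus-finite-obstruction alternative rest on explicit, case-by-case computations with the metabelian quotients $G_{m,n}'/G_{m,n}''$ over the group rings $\Z[G_{m,n}/G_{m,n}']$ (a finite ring when $m,n$ are both finite, a ring of Laurent polynomials over $\Z$, $\Z[C_2]$ or $\Z[C_3]$ otherwise) and on the precise congruence structure of $\PSL_2(\Z)$; I do not expect a uniform argument covering all six cases at once.
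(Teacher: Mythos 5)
Your plan reproduces the paper's proof: the same reduction from $\SL_2(\Z)$ to $\PSL_2(\Z)$ via the index-$12$ abelianization, the same Kurosh--Grushko reduction to a two-generated free product $G_{m,n}$ with a pair of topological generators of $\widehat{G_{m,n}}$ whose commutator is $d$, and the same case-by-case analysis of $(m,n)$ via the $\Z[G_{m,n}/G_{m,n}']$-module $G_{m,n}'/G_{m,n}''$ together with the congruence structure of $\PSL_2(\Z)$. One small misattribution: in the paper the bound on $|\tr\rho(d)|$ does not come from Fricke's identity (that tool belongs to Section~3) but from the observation that mod each odd prime $p$ a generating pair of $\SL_2(\Z/p\Z)$ cannot have commutator of trace $2$ (forcing $\tr \cs(d)=2\pm 2^k$), combined with the module computation pinning $d$ down to a conjugate of $[a,b]^{\pm1}$ modulo $G_{m,n}''$ and an explicit $2$-adic estimate on $G_{m,n}''$.
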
 

We recall that a subset $S$ of a topological group  $G$ is said to topologically 
generate $G$ if the discrete subgroup generated by $S$ is dense in $G$.  The proof of Theorem \ref{thm:main}  follows Khelif's argument closely and   is based on the following two propositions.

\begin{proposition}\label{prop:reduction}
	Let $d \in \PSL_2(\Z)$  be a commutator in $\widehat{ \PSL_2(\Z)}$. Then there exist $a,b \in \PSL_2(\Z)$ such that $d \in G:=\langle a,b\rangle$ and $d$ is the commutator of a pair of topological generators of $\widehat{G}$.	
\end{proposition}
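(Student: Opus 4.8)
The plan is to follow Khelif's argument for free groups \cite{Khe}, transposed to the action of $\PSL_2(\Z)=\langle u\rangle*\langle v\rangle\cong C_2*C_3$ on its Bass--Serre tree, using throughout that $\PSL_2(\Z)$ is virtually free and hence subgroup separable. First come the reductions. If $d=1$ then $d=[1,1]$ is the commutator of a pair of topological generators of $\widehat{\langle 1\rangle}$ with $G=\langle 1,1\rangle$, so assume $d\neq 1$. Since $\PSL_2(\Z)^{\mathrm{ab}}\cong C_2\times C_3$ is finite, the derived subgroup $\PSL_2(\Z)'$ has finite index and is therefore closed in the profinite topology; as every commutator of $\widehat{\PSL_2(\Z)}$ lies in $\overline{\PSL_2(\Z)'}$, we get $d\in\PSL_2(\Z)'$. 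Now $\PSL_2(\Z)'$ is torsion free (no conjugate of $u$, $v$ or $v^2$ maps to the identity of $\PSL_2(\Z)^{\mathrm{ab}}$) and of finite index in a virtually free group, hence free of rank $2$; in particular $d$ has infinite order, so it acts as a hyperbolic isometry on the Bass--Serre tree $T$ of $C_2*C_3$ (vertex groups $C_2,C_3$, trivial edge groups), with an axis $A_d\subseteq T$. Finally fix $\hat x,\hat y\in\widehat{\PSL_2(\Z)}$ with $[\hat x,\hat y]=d$; both the hypothesis and the desired conclusion are invariant under simultaneous $\widehat{\PSL_2(\Z)}$-conjugation of the pair $(\hat x,\hat y)$, so we may conjugate freely.

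Put $\overline H:=\overline{\langle\hat x,\hat y\rangle}\le\widehat{\PSL_2(\Z)}$, a topologically $2$-generated closed subgroup containing $d$. Since $\PSL_2(\Z)$ is virtually free, every finitely generated $G\le\PSL_2(\Z)$ is a virtual retract, so the inclusion induces an isomorphism $\widehat G\iso\overline G$ onto the closure of $G$ in $\widehat{\PSL_2(\Z)}$. It therefore suffices to produce a finitely generated $G\le\PSL_2(\Z)$ with $d\in G$ and $\overline G=\overline H$: then $\hat x,\hat y$ are topological generators of $\overline H=\overline G\cong\widehat G$ whose commutator is $d$, which is the conclusion. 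To find $G$ I would exploit the action of $\overline H$ on the profinite tree $\widehat T$ (the completion of $T$), on which $\widehat{\PSL_2(\Z)}$ acts with finite vertex stabilizers and trivial edge stabilizers. As $d\in\overline H$ has infinite order it fixes no vertex of $\widehat T$, so $\overline H$ fixes none and possesses a minimal invariant profinite subtree $\widehat D\supseteq A_d$. Profinite Bass--Serre theory applied to this action, combined with topological $2$-generation, forces $\overline H$ to be a profinite free product of at most two nontrivial procyclic factors, each of order $2$ or $3$ or isomorphic to $\widehat{\Z}$; thus $\overline H\cong\widehat{G_{m,n}}$ for some $m,n\in\{2,3,\infty\}$ (degenerate cases included). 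The remaining step is to realise this splitting \emph{discretely}: show that $\widehat D$ is the completion of the discrete subtree $D:=\widehat D\cap T$ of $T$, that the subgroup $G\le\PSL_2(\Z)$ generated by the stabilizers of the vertices of $D$ acts on $D$ with the same finite quotient graph of groups, and deduce that $G$ is finitely generated, contains $d$ (since $A_d\subseteq D$), is isomorphic to $G_{m,n}$, and satisfies $\overline G=\overline H$. By the reduction above this finishes the proof.

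The main obstacle is precisely this passage from the profinite tree action to a discrete one. A topologically $2$-generated closed subgroup of $\widehat{\PSL_2(\Z)}$ need not be the closure of any finitely generated discrete subgroup --- already $\widehat{\PSL_2(\Z)}$ has far more procyclic subgroups than $\PSL_2(\Z)$ has elements --- and $\overline H\cap\PSL_2(\Z)$ is in general neither finitely generated nor dense in $\overline H$. What rescues the argument is that $d=[\hat x,\hat y]$ is an honest hyperbolic element of $\PSL_2(\Z)$: its axis forces the minimal subtree $\widehat D$ to be the completion of the discrete subtree $D$, after which the $C_2*C_3$ structure and subgroup separability recover the discrete $G$. (When $\overline H$ is open in $\widehat{\PSL_2(\Z)}$ this is immediate: take $G:=\overline H\cap\PSL_2(\Z)$, a finite-index hence finitely generated subgroup containing $d$, dense in $\overline H$, with $\widehat G\cong\overline H$ by cofinality of finite-index subgroups.) An alternative that bypasses profinite trees is to work level by level: along a cofinal chain of finite quotients $\PSL_2(\Z)/N$, choose compatible $2$-generated subgroups $\langle x_N,y_N\rangle$ realising $dN$ as a commutator, encode each by its Stallings core graph for the splitting $C_2*C_3$, and show these core graphs stabilise --- they all carry the fixed finite combinatorial datum attached to $d$ while having first Betti number plus number of cyclic vertices at most $2$ --- so that the stable core graph is that of the required $G$. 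Either way, the engine is the interplay of the $C_2*C_3$ splitting, the honesty of $d$, and the subgroup separability of $\PSL_2(\Z)$.
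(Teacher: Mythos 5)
Your reduction asserts that it suffices to produce a finitely generated $G\le\PSL_2(\Z)$ with $d\in G$ and $\overline G=\overline H$ where $\overline H=\overline{\langle\hat x,\hat y\rangle}$. This is a strictly stronger demand than what the paper proves, and it is precisely the step you concede you cannot close. The paper's proof (following Khelif) never attempts to realize $\overline H$ itself as the closure of a finitely generated discrete subgroup. Instead, it sets $H_n:=\overline{\langle\hat x,\hat y,M_n\rangle}\cap\PSL_2(\Z)$ for a cofinal decreasing chain $M_n$ of finite-index normal subgroups, puts $H:=\bigcap_n H_n$ (which contains $d$ and satisfies $\bigcap_n\overline{H_n}=\overline H$), chooses a finitely generated Kurosh free factor $K$ of $H$ containing $d$, and then invokes a discrete Bass--Serre stability lemma (Corollary \ref{cor:factor}) to get $K$ as a free factor of some $H_n$ with a retraction $\rho:H_n\to K$. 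The crucial move is then to apply the continuous extension $\bar\rho:\overline{H_n}\to\overline K$ to the generators: since $\overline K\subseteq\overline H\subseteq\overline{H_n}$ and $\bar\rho$ restricts to the identity on $\overline K$, the chain $\overline K=\bar\rho(\overline K)\subseteq\overline{\langle\bar\rho(\hat x),\bar\rho(\hat y)\rangle}\subseteq\overline K$ forces $\bar\rho(\hat x),\bar\rho(\hat y)$ to be topological generators of $\overline K\cong\widehat K$ with $[\bar\rho(\hat x),\bar\rho(\hat y)]=\bar\rho(d)=d$. Thus one replaces $\hat x,\hat y$ by their images under a retraction rather than trying to trap them in the closure of a discrete subgroup. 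The Abert--Hegedus bound then caps the number of Kurosh factors of $K$ at two. Your profinite-tree route founders exactly where you flag it: the minimal $\overline H$-invariant profinite subtree $\widehat D$ need not be the pro-$\mathcal C$ completion of $\widehat D\cap T$, and the honesty of $d$ only controls the discrete axis $A_d$, not the branches of $\widehat D$ generated by the genuinely profinite elements $\hat x,\hat y$; nothing you say rules out $\overline H$ failing to be the closure of any finitely generated discrete subgroup, and indeed this can fail. The retraction device is what the paper's argument contributes, and your sketch would need a comparable replacement to go through.
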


\begin{proposition}\label{prop:commutator} Let $m,n \in \{2,3,\infty\}$ and let $G_{m,n}=\langle a \rangle * \langle b\rangle$ where $o(a)=m$ and $o(b)=n$. If $d \in G_{m,n}$ is a commutator of a pair of topological generators of $\widehat{G}_{m,n}$ then $d$ is a commutator in $G_{m,n}$.
\end{proposition}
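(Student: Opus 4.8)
The plan is to follow the two-step scheme indicated in the introduction: dispose of the degenerate cases directly, and for the remaining ones combine module theory over $\Z[G_{m,n}/G_{m,n}']$ with the arithmetic of traces in $\PSL_2(\Z)$.

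\emph{Reductions.} If $m=n=\infty$ then $G_{\infty,\infty}$ is free of rank $2$, and a commutator of a pair of topological generators of $\widehat{G}_{\infty,\infty}$ is in particular a commutator in $\widehat{G}_{\infty,\infty}$, so Khelif's Theorem~\ref{thm:khelif} gives the conclusion. If $\{m,n\}=\{2,2\}$ then $G_{2,2}=\langle a\rangle*\langle b\rangle$ is infinite dihedral: with $c=ab$ one has $ac^ka^{-1}=c^{-k}$, hence $[a,c^{-j}]=c^{2j}$, so every element of $G_{2,2}'=\langle c^2\rangle$ is already a commutator in $G_{2,2}$; since any $d$ as in the statement lies in $G_{2,2}'$, there is nothing more to prove. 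It remains to treat $\{m,n\}\in\{\{2,3\},\{3,3\},\{2,\infty\},\{3,\infty\}\}$, and for each of these I would fix, once and for all, an explicit embedding $\rho\colon G_{m,n}\hookrightarrow\PSL_2(\Z)$ with image of finite index: $\rho=\mathrm{id}$ when $G_{2,3}=\PSL_2(\Z)$, and otherwise a concrete finite-index subgroup of $\PSL_2(\Z)$ which, by the Kurosh subgroup theorem, is isomorphic to the required free product (of index $2,3,4$ for $\{3,3\},\{2,\infty\},\{3,\infty\}$ respectively).

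\emph{Trace rigidity (the crux).} The key claim is that if $d=[\hat x,\hat y]$ with $\hat x,\hat y$ topological generators of $\widehat{G}_{m,n}$, then $|\tr\rho(d)|$ belongs to an explicit finite set. I would prove this by passing to the nilpotent-by-abelian quotients. The images of $\hat x,\hat y$ in $\widehat{G}_{m,n}/\widehat{G}_{m,n}'$ topologically generate $\widehat{G_{m,n}^{ab}}$, and the commutator identity in the metabelian quotient $\widehat{G}_{m,n}/\widehat{G}_{m,n}''$ expresses the class of $d$ in $\widehat{G}_{m,n}'/\widehat{G}_{m,n}''$ in terms of those images and of the $\Z[G_{m,n}^{ab}]$-module structure of $G_{m,n}'/G_{m,n}''$; iterating this through the further finite nilpotent-by-abelian quotients of $G_{m,n}$, together with the congruence structure of $\PSL_2(\Z)$, confines the class of $\rho(d)$ to a finite set. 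Finally the Fricke identity $\tr[X,Y]=(\tr X)^2+(\tr Y)^2+(\tr XY)^2-(\tr X)(\tr Y)(\tr XY)-2$, applied to (lifts of) $X=\rho(\hat x)$ and $Y=\rho(\hat y)$, translates this into the desired finite list of values of $|\tr\rho(d)|$. In the torsion cases $\{2,3\}$ and $\{3,3\}$ the abelianization $G_{m,n}^{ab}$ is finite and the module constraint is transparent; in the cases $\{2,\infty\}$ and $\{3,\infty\}$ one must additionally control the free part of $G_{m,n}^{ab}$, which I would do inside the open subgroup $\overline{\rho(G_{m,n})}\leq\PSL_2(\widehat{\Z})$, using that there a pair of topological generators is rigid up to the Markoff/Vieta transformations, which preserve the conjugacy class — hence the trace — of the commutator. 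This rigidity step is where I expect the real work to lie.

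\emph{Finitely many candidates, and the case check.} For each of the finitely many admissible traces $t$, a finitely generated — hence geometrically finite Fuchsian — subgroup of $\PSL_2(\Z)$ contains only finitely many conjugacy classes of elements of trace $t$ when $|t|\neq 2$ (the elliptic ones because $\PSL_2(\Z)$ itself has only finitely many such classes; the hyperbolic ones because conjugacy in a subgroup forces conjugacy in $\PSL_2(\Z)$, where the classes of trace $t$ are counted by ideal classes of discriminant $t^2-4$ together with the finitely many non-primitive ones); the parabolic value $|t|=2$ either does not occur in the trace list or is handled separately. Thus there are only finitely many conjugacy classes in $\rho(G_{m,n})$ that $\rho(d)$ can meet. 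For each such class $[c]$ I would argue in one of two ways: either exhibit explicit $x,y\in G_{m,n}$ with $[x,y]=c$, so that $[c]$ consists of commutators and is consistent with the hypothesis; or produce a finite quotient $G_{m,n}\twoheadrightarrow Q$ in which the image of $c$ is not the commutator of any generating pair of $Q$ — since $c$ is conjugate to $d=[\hat x,\hat y]$ with $\hat x,\hat y$ topological generators, its image in $Q$ \emph{is} such a commutator, a contradiction, so this class cannot occur. Finding such a $Q$ is again a finite computation with $G_{m,n}'/G_{m,n}''$ and its finite nilpotent-by-abelian quotients. Once every candidate class is either ruled out or shown to consist of commutators, the hypothesis forces $d$ to lie in one of the latter, so $d$ is a commutator in $G_{m,n}$. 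The two points needing genuine care are the trace-rigidity step and the case-by-case verification that the short list of candidate classes is exhausted by these two alternatives; the remainder is bookkeeping with Reidemeister–Schreier presentations of the relevant subgroups of $\PSL_2(\Z)$.
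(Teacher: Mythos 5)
Your framework matches the paper's: dispose of $(\infty,\infty)$ via Khelif and $(2,2)$ directly, then for the four remaining pairs pin down $|\tr\rho(d)|$ to a finite list, enumerate the finitely many conjugacy classes with those traces, and for each class either exhibit it as a commutator or kill it in a finite quotient. The degenerate-case reductions and the final case check are essentially the paper's argument.

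However, the crux — the ``trace rigidity'' step, which you yourself flag as ``where the real work lies'' — is not actually supplied, and the mechanisms you gesture at would not deliver it. Three issues. First, the metabelian analysis alone (the $\Z[G^{\mathrm{ab}}]$-module $G'/G''$) does not bound the trace: the paper uses it (Proposition~\ref{lemma:conj}, via the unit group of $S_{m,n}$) only to show $d$ is conjugate to $[a,b]^{\pm 1}$ \emph{modulo} $G''$, and then separately needs Lemma~\ref{lemma:G''} to control $G''$ mod $2$, $8$, or $32$ to turn that into a congruence on $\tr\cs(d)$. You allude to ``iterating through finite nilpotent-by-abelian quotients'' but give no reason this pins the trace to a congruence class rather than just constraining the image in $G'/G''$. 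Second, and more serious, the paper's essential second constraint is Lemma~\ref{lemma:trace}: since $G_{m,n}$ surjects onto $\PSL_2(\Z/p\Z)$ for every odd $p$ (Lemma~\ref{lemma:good projection}), and the $\SL_2$-covers do not split, a commutator of a pair of topological generators projects for each odd $p$ to a commutator of generators of $\SL_2(\Z/p\Z)$, whose trace is never $2$; this forces $\tr\cs(d)=2\pm 2^k$. Your sketch contains nothing playing this role. The Fricke identity you invoke instead is an identity, not a constraint — without an independent bound on $(\tr X,\tr Y,\tr XY)$ it produces no finite list. Third, the appeal to Markoff/Vieta rigidity of generating pairs in $\PSL_2(\widehat\Z)$ is not used in the paper, and it is not clear it is true in any form strong enough to replace the two ingredients above; Nielsen moves do not exhaust automorphisms of $\widehat{G}_{m,n}$. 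Without the $\SL_2(\F_p)$ input and the explicit $G''$ congruences, the finite trace list — and hence everything downstream — is not established.
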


\begin{remark}\label{rem} \ 
\begin{enumerate}
	\item For $(m,n)=(\infty,\infty)$, Proposition \ref{prop:commutator} follows from Theorem \ref{thm:khelif}. 
	\item The proof of Proposition \ref{prop:commutator} is easy in the case $(m,n)=(2,2)$. Indeed,  $G_{2,2}$ is the infinite dihedral group $D_\infty = C_\infty \mathbb{o} C_2$ where the conjugation action of the generator $g$ of $C_2$  on the generator $h$ of $C_\infty$ is given by $ghg^{-1}=h^{-1}$. Thus, every element in $H:=\langle h^2 \rangle$ is a commutator in $D_\infty$. On the other hand, $H$ is a normal subgroup in $D_\infty$ and $D_\infty/H \simeq C_2 \times C_2$ is abelian so $H$ is the commutator subgroup  of $D_\infty$. It follows that  an element of $C_\infty \mathbb{o} C_2$ is a commutator if and only if it belongs to $\langle h^2\rangle$. In particular, an element of   $G_{2,2} \simeq  C_\infty \mathbb{o} C_2$ is a commutator if and only if its image in  $C_2 \mathbb{o} C_2 \simeq C_2 \times C_2$ is a commutator. 
	\item We do not know how to prove  Proposition \ref{prop:commutator} nor Theorem \ref{thm:main} for a general free product of two cyclic groups. 
\end{enumerate}
\end{remark}

\begin{proof}[Proof of Theorem \ref{thm:main}]\ 

  Assume first that $\Gamma=\PSL_2(\Z)$. Since $\PSL_2(\Z)$ is isomorphic to $C_2 * C_3$, it follows from the Kurosh subgroup theorem (see Chap.\,3.3 of \cite{LySh}) that every non-trivial subgroup of $\PSL_2(\Z)$ is a free product of cyclic groups and that every one of these cyclic groups is isomorphic to either $C_2$, $C_3$ or  $C_\infty$. It follows from the Grushko-Neumann theorem (see Chap.\,3.3 of \cite{LySh}) that the product of $k$ non-trivial cyclic groups cannot be generated by less than $k$ elements. Thus, every non-abelian subgroup of $\PSL_2(\Z)$ is of the form $\langle a \rangle * \langle b\rangle$ where $o(a)=m$, $o(b)=n$ and  $m,n \in \{2,3,\infty\}$. Theorem \ref{thm:main} then follows from Propositions \ref{prop:reduction} and \ref{prop:commutator}.

 Now assume that $\Gamma=\SL_2(\Z)$. Let $D\in \SL_2(\Z)$ be the commutator of two elements of $\widehat{\SL_2(\Z)}$. The first paragraph implies that there exist  $A,B \in \SL_2(\Z)$ such that  $[A,B]=\pm D$. Let $g$ be a generator of the cyclic group $C_{12}$. The map $U \mapsto g^3$ and $V \mapsto g^2$ can be uniquely extended to a homomorphism from    $\SL_2(\Z)=\langle U\rangle*_{\langle U^2\rangle} \langle V\rangle$ to $C_{12}$. The image of $U^2$ under this homomorphism is $g^6\ne 1$. Since $C_{12}$ is abelian, $U^2=-I_2$ is not contained in $\SL_2(\Z)'$. Thus, it is not possible that both $D$ and  $-D$ are  commutators in the finite quotient $\SL_2(\Z)/\SL_2(\Z)'$. Since $\PSL_2(\Z)/\PSL_2(\Z)'$ is finite, the assumption implies that $D$ is a commutator in $\SL_2(\Z)/\SL_2(\Z)'$ so $-D$ is not a commutator in $\SL_2(\Z)/\SL_2(\Z)'$. Thus it is not possible that $[A,B]= -D$.\end{proof}

%%%%%%%%%%%%%%%%%%%%%%%%%%%%%%%%%%%%%%%%%%%%%%%%%%%%%%%%%%%%%%%%%%%%%%%%%%%%%%%%%%%%%%%%%%%%%%%%%%%%%%%%%%%%%%%%%%%%%%%%%%%%
\subsection{\ Proof of Proposition \ref{prop:reduction}}\ 
%%%%%%%%%%%%%%%%%%%%%%%%%%%%%%%%%%%%%%%%%%%%%%%%%%%%%%%%%%%%%%%%%%%%%%%%%%%%%%%%%%%%%%%%%%%%%%%%%%%%%%%%%%%%%%%%%%%

We first summarize the definitions and results we need about groups acting on trees, see \cite{Ser} for  more details.

\begin{enumerate}	
\item The sets of vertices and edges of a graph $X$ are denoted by $V(X)$ and $E(X)$.
\item Graphs are assumed to be oriented. Thus:
	\begin{enumerate}
		\item Every edge $e$ has an origin  $o(e)$ and a terminus $t(e)$. 
		\item There exists an involution $\bar{}:E(X)\rightarrow E(X)$ such that for every $e \in E(X)$, $o(e)=t(\bar{e})$ and $\bar{e} \ne e$. 
		\item There is an orientation $E^+(X)\subseteq E(X)$, so for every $e \in E(X)$, $$|E^+(X)\cap \{e,\bar{e}\}|=1.$$ 
	\end{enumerate}
	\item A tree is a graph such that between any two vertices there exists a unique directed path without backtracking. 
	\item An action of a group $G$ on a tree $X$ is called a good action if for every non-identity $ g \in G$ and every $e \in E^+(X)$, $ge \in E^+(X)$ and  $ge\ne e$. 
	\item If a group $G$ has a good action on a tree $X$, then the quotient graph $G \backslash X$ is oriented. 
\item If $G$ acts on a tree $X$ and $v \in V(X)$ then the stabilizer of $v$ is defined to be $G_v:=\{g \in G \mid gv=v\}$.
\item If $G=H_1* \cdots *H_k$ then there exists a good action of $G$ on a tree $X$ with the following two properties: 
	\begin{enumerate}
	\item For every $1 \le i \le k$, there exists a vertex $v$ such that $H_i=G_v$.
	\item For every $v \in V(E)$, there exists $1 \le i\le k$ such that $G_v$ is conjugate to $H_i$.
	\end{enumerate}
\item  In the special case where $G=H_1* \cdots *H_k$ and each $H_i$ is cyclic, there exists a good action of $G$ on a tree $X$ with the following two properties: 
	\begin{enumerate}
	\item For every $1 \le i \le k$, if $H_i$ is finite then there exists a vertex $v$ such that $H_i=G_v$.
	\item For every $v \in V(E)$, $G_v$ is finite and  either $G_v$ is trivial or there exists $1 \le i\le k$ such that $G_v$ is conjugate to $H_i$.
	\end{enumerate}
\end{enumerate}

\vspace{10pt}
The following Lemma is a special case of the structure theorem of groups acting on trees (Chap.\,5 of \cite{Ser}).

\begin{lemma}\label{lemma:ser}
	 Let $G$ be a group which has a good action on  a tree $X$.
\begin{enumerate}[label=\upshape(\arabic*)]
\item Let $T_G$ be a maximal tree of $X_G:=G\backslash X$ and let $T$ be a lift of $T_G$ to $X$.
\item For every $e \in E^+(X_G)\setminus E(T_G)$, let $\tilde{e}\in E^+(X)$ be a lift of $e$  such that $o(\tilde{e})\in T$ and let $g_e \in G$ be such that $g_et(\tilde{e})\in T$.
\end{enumerate}
Then every $g_e$ is of infinite order and $G$ is the free product $$(*_{v \in V(T)}G_v) *(*_{e \in E^+(X_G)\setminus E(T_G) }\langle g_e\rangle).$$
\end{lemma}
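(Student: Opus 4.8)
The plan is to deduce this from the Bass--Serre structure theorem (Theorem~13 in Chapter~I.5 of \cite{Ser}) after the one essential observation that a good action has trivial edge stabilizers and no inversions. First I would record this: given $e\in E(X)$ and a non-identity $g\in G$, one of $e,\bar e$ lies in $E^+(X)$, say $e\in E^+(X)$; then $ge\in E^+(X)$, so $ge\ne\bar e$, and by definition of a good action $ge\ne e$. Passing $\bar{}$ through these relations handles the case $e\notin E^+(X)$ as well, so $G_e=\{1\}$ for every edge and $G$ acts without inversions. Consequently the quotient graph of groups $\mathcal{G}:=G\backslash\backslash X$ carried by $X_G=G\backslash X$ has all edge groups trivial, and Serre's theorem yields an isomorphism $G\cong\pi_1(\mathcal{G},T_G)$ relative to the maximal tree $T_G\subseteq X_G$.

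Next I would unwind the presentation of $\pi_1(\mathcal{G},T_G)$. In general it is generated by the vertex groups $\mathcal{G}_w$ ($w\in V(X_G)$) together with a stable letter $s_e$ for each oriented edge $e$, subject to the relations $s_es_{\bar e}=1$, $\,s_e=1$ for $e\in E(T_G)$, and $s_e\,\alpha_e(a)\,s_e^{-1}=\omega_e(a)$ for $a\in\mathcal{G}_e$. Since every $\mathcal{G}_e$ is trivial the last family of relations is vacuous, so what survives is precisely the free product of all the vertex groups with the free group on $\{s_e:e\in E^+(X_G)\setminus E(T_G)\}$; moreover the normal form theorem for fundamental groups of graphs of groups shows each such $s_e$ has infinite order. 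It then remains to match this abstract data with the concrete data in items (1)--(2). The lift $T$ of $T_G$ furnishes a system of representatives for the $G$-orbits of vertices, and under Serre's isomorphism $\mathcal{G}_w$ is carried to the stabilizer $G_{\tilde w}$ of the representative $\tilde w\in T$; this produces the factors $*_{v\in V(T)}G_v$. For a non-tree edge $e$, the isomorphism is constructed by choosing, exactly as in (2), a lift $\tilde e\in E^+(X)$ with $o(\tilde e)\in T$ together with an element $g_e\in G$ moving $t(\tilde e)$ into $T$; the stable letter $s_e$ then maps to $g_e$. Hence $g_e$ has infinite order and $G=(*_{v\in V(T)}G_v)*(*_{e\in E^+(X_G)\setminus E(T_G)}\langle g_e\rangle)$, as asserted.

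The one genuinely nontrivial point is the bookkeeping identifying the abstract presentation of $\pi_1(\mathcal{G},T_G)$ with the explicit free product built from the normalized choices in (1)--(2): one must check that picking a maximal tree $T_G$, a lift $T$, a lift $\tilde e$ with origin in $T$, and an element carrying $t(\tilde e)$ back to $T$ reproduces exactly the data used in Serre's construction of the isomorphism. If one prefers a self-contained argument avoiding that citation, the route is: define the obvious homomorphism $\Phi:=(*_{v\in V(T)}G_v)*(*_e\langle g_e\rangle)\to G$ via the inclusions $G_v\hookrightarrow G$ and $g_e\mapsto g_e$; prove surjectivity by using connectedness of $X$ to express any $g\in G$ as a product of vertex-stabilizer elements and $g_e$'s read off along a path joining $T$ to $gT$; and prove injectivity by a ping-pong/normal-form argument, showing that a nontrivial reduced word in $\Phi$ translates $T$ to a vertex outside $T$ (which along the way also forces $g_e\ne 1$ and then $g_e$ of infinite order, since a torsion element would fix a vertex of $X$ and hence lie in a conjugate of a vertex group, impossible for a free generator of the free factor). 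I expect this injectivity step --- the normal form for the free product acting on $X$ --- to be the main obstacle in a from-scratch proof, and it is precisely what Serre's structure theorem packages for us.
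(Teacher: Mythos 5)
Your proposal is correct, and it follows essentially the same route as the paper: the paper simply remarks that Lemma~\ref{lemma:ser} is a special case of the structure theorem for groups acting on trees in Chapter~5 of \cite{Ser} and gives no further argument. You have merely made explicit the two ingredients the paper leaves implicit — that a good action has trivial edge stabilizers and no inversions, so the graph-of-groups presentation collapses to the stated free product, and that the normalized choices of $T$, $\tilde e$, $g_e$ in items (1)--(2) match the data in Serre's construction of the isomorphism.
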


The following corollary is a well known consequence of the Kurosh subgroup theorem.

\begin{corollary}\label{cor:Kurosh}
	Let $K$ be free factor of a group $G$ and let $L$ be a subgroup of $G$. Then $K \cap L$ is a free factor of $L$. 
\end{corollary}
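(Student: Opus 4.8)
The plan is to derive this straight from the Kurosh subgroup theorem. Since $K$ is a free factor of $G$, fix a subgroup $M\le G$ with $G=K*M$, so that $G$ is literally the free product of the two subgroups $K$ and $M$. First I would apply the Kurosh subgroup theorem to the subgroup $L\le G=K*M$; it produces a free product decomposition
$$L \;=\; F \,*\, \big( *_{s\in S}\,(L\cap sKs^{-1}) \big) \,*\, \big( *_{t\in T}\,(L\cap tMt^{-1}) \big),$$
where $F$ is a free group and $S$ (respectively $T$) runs over a set of representatives for the double cosets $L\backslash G/K$ (respectively $L\backslash G/M$).

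The one point to be careful about is the choice of double coset representatives: the identity of $G$ represents the double coset $L\cdot 1\cdot K$, so I may take $1\in S$. With that normalisation the factor indexed by $s=1$ is exactly $L\cap 1\cdot K\cdot 1^{-1}=L\cap K$, and it therefore appears as one of the free factors in the displayed decomposition of $L$; hence $L\cap K$ is a free factor of $L$. The degenerate cases ($L\cap K$ trivial, or $K=G$) cause no trouble, since the trivial subgroup is a free factor of every group and $L$ is a free factor of itself.

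There is no genuine obstacle here --- the argument is essentially a bookkeeping remark about the Kurosh decomposition --- but if one prefers to avoid invoking Kurosh directly, the same conclusion can be reached with the Bass--Serre machinery already set up above. Realise $G=K*M$ by a good action of $G$ on a tree $X$ with a vertex $v$ such that $G_v=K$ (item (7) in the list preceding Lemma~\ref{lemma:ser}), observe that the restricted $L$-action is still good (goodness of an action is inherited by subgroups), and apply Lemma~\ref{lemma:ser} to it. The stabiliser of the lift of the $L$-orbit of $v$ into a chosen maximal subtree is an $L$-conjugate of $L\cap K$ and occurs as one of the vertex-stabiliser free factors produced by Lemma~\ref{lemma:ser}; since a conjugate of a free factor is again a free factor, this again gives that $L\cap K$ is a free factor of $L$. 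The only mildly delicate point in either approach is the normalisation ensuring that $K$ itself --- not merely a conjugate of it, in the second approach --- is the factor one reads off.
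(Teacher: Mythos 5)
Your proof is correct, and in fact you supply two valid arguments where the paper gives one. Your second (Bass--Serre) argument is essentially the paper's: realise $G=K*M$ by a good action on a tree $X$ with $G_v=K$ for some vertex $v$, restrict the action to $L$, and apply Lemma~\ref{lemma:ser}. The one refinement the paper makes, which dispenses with your worry about only obtaining an $L$-conjugate of $L\cap K$, is to \emph{choose} the lift $T$ of the maximal tree $T_L\subseteq L\backslash X$ so that $v\in V(T)$; then Lemma~\ref{lemma:ser} yields $L_v=L\cap G_v=L\cap K$ on the nose as one of the free factors. (This is the same freedom you use in the Kurosh-theorem version when you normalise the double coset representative of $LK$ to be the identity, so your instinct that the two arguments are parallel is exactly right.)

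Your first argument --- invoking the Kurosh subgroup theorem directly in the classical double-coset form and picking $1\in S$ --- is a genuinely different route from the paper's. It is shorter and does not require the Bass--Serre setup, but it relies on a black-boxed statement. The paper prefers the tree-action route because it has already built the relevant machinery (the list of facts about good actions and Lemma~\ref{lemma:ser}) and will reuse it for the more elaborate Corollaries~\ref{bbb} and~\ref{cor:factor}, where one must control \emph{which} vertices and edges land in the lifted maximal tree; phrasing this corollary in the same language keeps that bookkeeping uniform. Both arguments are sound.
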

\begin{proof}
	There exists a good action of $G$ on a  tree $X$ such that  $K$ is the stabilizer of some vertex $v$ of $X$.  Let $T_L$ be a maximal tree of $X_L:=L\backslash X$ and let $T$ be a lift of $T_L$ to $X$ such that $v$ is a vertex of $T$. Lemma \ref{lemma:ser} implies that $L\cap K$ is a free factor of $L$.  
\end{proof}

\begin{corollary}\label{bbb}
	Let $G$ be a group which has a good action on a tree $X$ and let $H$ be a subgroup of $G$. 
	\begin{enumerate}[label=\upshape(\arabic*)]
		\item  Let $S_H$ and  $T_G$ be  maximal trees of $X_H:=H\backslash X$  and $X_G:=G\backslash X$ and let $S$  and $T$ be  lifts of $S_H$ and $T_G$ to $X$. 
		\item	 For every $e \in E^+(X_H)\setminus E(S_H)$,   let $\tilde{e}$ be a lift of $e$  to $X$ such that $o(\tilde{e})\in V(S)$ and let $h_e \in H$ be such that $h_et( \tilde{e}) \in V(S)$. 
		\item Let $D \subseteq E^+(X_H) \setminus E(S_H)$ and $U \subseteq V(S)$.		
	\end{enumerate}
	Assume that
	\begin{enumerate}[label=\upshape(\alph*)]
		\item\label{assa} $V(S \cap T)$ contains $U \cup \{o(\tilde{e}),h_et(\tilde{e}) \mid e \in D\}$.
		\item\label{assb} For every $v \in U \cup \{o(\tilde{e}) \mid e \in D\}$, $G_v=H_v$.
	\end{enumerate}
Then $L:=(*_{v \in U} H_v)* (*_{e \in D} \langle h_e\rangle)$	is a free factor of $G$. 
\end{corollary}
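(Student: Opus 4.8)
The plan is to deduce this statement from Lemma \ref{lemma:ser} by building compatible combinatorial data (maximal trees, lifts, the connecting elements $g_e$) for $G$ acting on $X$ that visibly contain the data describing $L$. The point of hypotheses \ref{assa} and \ref{assb} is exactly that $U$, the relevant lifted edges, and the vertex stabilizers along them all live in a common piece $S\cap T$ where the $H$-structure and the $G$-structure agree, so that the free factorization of $L$ as $(*_{v\in U}H_v)*(*_{e\in D}\langle h_e\rangle)=(*_{v\in U}G_v)*(*_{e\in D}\langle g_e\rangle)$ can be read off inside the Bass--Serre decomposition of $G$.

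First I would apply Lemma \ref{lemma:ser} to the $H$-action on $X$, using the maximal tree $S_H$ of $X_H$, its lift $S$, and for each $e\in E^+(X_H)\setminus E(S_H)$ the chosen lift $\tilde e$ and element $h_e\in H$; this expresses $H$ (hence every subexpression of it) as $(*_{v\in V(S)}H_v)*(*_{e}\langle h_e\rangle)$, and in particular shows $L$ is a free factor of $H$ once one checks that $U\subseteq V(S)$ and $D\subseteq E^+(X_H)\setminus E(S_H)$ pick out a sub-free-product — this uses the standard fact that a subset of the factors in a free product decomposition is itself a free factor. Next I would apply Lemma \ref{lemma:ser} to the $G$-action, but I must choose the maximal tree and lifts for $G$ \emph{compatibly}: I would first enlarge the subtree $S\cap T$ (which by \ref{assa} contains $U\cup\{o(\tilde e),\,h_e t(\tilde e):e\in D\}$) to a maximal tree $T_G'$ of $X_G$ with lift $T'$ containing all of these vertices, and then, for each $e\in D$, note that the image of $\tilde e$ in $X_G$ lies in $E^+(X_G)\setminus E(T_G')$ (after possibly arranging this; edges of $X$ over $E(T_G')$ are accounted for inside $T'$), with $o(\tilde e)\in V(T')$ and $g_e:=h_e$ moving $t(\tilde e)$ into $T'$ since $h_e t(\tilde e)\in V(S\cap T)\subseteq V(T')$. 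Thus $h_e$ is a legitimate choice of connecting element $g_e$ in the $G$-decomposition, so $\langle h_e\rangle$ appears as one of the infinite-cyclic free factors; and by \ref{assb}, for $v\in U$ and for $v=o(\tilde e)$ we have $G_v=H_v$, so the vertex-stabilizer free factors $G_v$ for $v\in U$ literally coincide with the $H_v$'s. Finally, reading off the Lemma \ref{lemma:ser} decomposition of $G$ corresponding to $T_G'$, $T'$ and these $g_e$'s, the subproduct indexed by $U$ and $D$ is exactly $(*_{v\in U}G_v)*(*_{e\in D}\langle g_e\rangle)=L$, and being a sub-free-product of a free product decomposition, $L$ is a free factor of $G$.

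The main obstacle I anticipate is the bookkeeping needed to arrange the $G$-side data compatibly with the fixed $H$-side data: one has to be careful that extending $S\cap T$ to a maximal tree $T_G'$ of $X_G$ can be done keeping the listed vertices, that the edges $\tilde e$ ($e\in D$) project to edges of $X_G$ genuinely outside the chosen maximal tree (so they contribute cyclic factors rather than being absorbed), and that the resulting $g_e$ have infinite order — the last is automatic from Lemma \ref{lemma:ser}, and consistency with the given $h_e$ then forces $h_e$ itself to have infinite order, which is the only place a nontrivial constraint is silently used. Everything else is a direct transcription of the Bass--Serre structure theorem; no serious analytic or combinatorial difficulty arises beyond choosing the trees and lifts in the right order.
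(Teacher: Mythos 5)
Your overall strategy — apply Lemma \ref{lemma:ser} to the $G$-action with data compatible with the fixed $H$-side data — is the same one the paper uses, so the plan is sound in outline. (The paper works with the given $T$ and $T_G$ rather than extending $S\cap T$ to a fresh maximal tree $T'$; that's a cosmetic difference and either works.) However, you have misidentified where the real content lies, and this leaves a genuine gap.

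You write that the "only place a nontrivial constraint is silently used" is the infinite order of $h_e$, and you dismiss the rest as "bookkeeping." In fact the crux of the proof is precisely the step you wave at — showing that for $e\in D$, the edge $\pi_G(\tilde e)$ lies \emph{outside} the maximal tree of $X_G$ — and it is not automatic. The issue is that $\pi_G$ might identify some $\tilde e$ ($e\in D$) with an edge of $S\cap T$, or with another $\tilde e'$; if that happened, $\pi_G(\tilde e)$ could land inside $E(T_G)$ and the corresponding factor $\langle h_e\rangle$ would be absorbed rather than appear as a cyclic free factor. The paper rules this out by proving that $\pi_G$ is injective on $\{\tilde e\mid e\in D\}\cup E(S\cap T)$, and this is exactly where assumption \ref{assb} is used: if two such edges had the same $\pi_G$-image, their origins lie in $V(S\cap T)\subseteq V(T)$ where $\pi_G$ is injective, so the origins coincide, hence some $g\in G_{o(e_1)}$ carries one edge to the other; but $G_{o(e_1)}=H_{o(e_1)}$ by \ref{assb}, so $\pi_H$ would identify them too, contradicting the injectivity of $\pi_H$ on $E(S)\cup\{\tilde e\mid e\in E^+(X_H)\setminus E(S_H)\}$. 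Only then can one conclude (since $\pi_G(S\cap T)$ is a subtree of $T_G$ containing both endpoints of $\hat e=\pi_G(\tilde e)$ but not $\hat e$ itself) that $\hat e\notin E(T_G)$. Your plan treats \ref{assb} as needed only to identify $G_v$ with $H_v$ at the very end; in fact \ref{assb} is load-bearing already in showing that the lifted edges survive to become genuine Bass--Serre generators for $G$. Without the injectivity argument your proof is incomplete.
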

\begin{proof}
Let $\pi_H:X \rightarrow X_H$ and $\pi_G:X \rightarrow X_G$ be the quotient maps. 
We  claim that for every distinct $e_1,e_2 \in \{\tilde{e} \mid e \in D\} \cup E(S \cap T)$,   $\pi_G(e_1)\ne \pi_G(e_2)$. Assume otherwise. Assumption \ref{assa} implies that $o(e_1),o(e_2)\in V(S \cap T)$. Since the map $\pi_G$ is injective on $V(T)$, $o(e_1)=o(e_2)$. Thus, there exists $g \in G_{o(e_1)}$ such that $ge_1=e_2$. Assumption \ref{assb} implies that  $G_{o(e_1)}=H_{o(e_1)}$ so $\pi_H(e_1)=\pi_H(e_2)$. This is a contradiction since $\pi_H$ is injective on $E(S) \cup \{\tilde{e} \mid e \in E^+(X_H)\setminus E(S_H)\}$. 

Let $e \in D$. We want to show that $\hat{e}:=\pi_G(\tilde{e}) $ belongs to  $E^+(X_G) \setminus E(T_G)$. Since $\pi_G(t(\tilde{e}))=\pi_G(h_et(\tilde{e}))$, assumption \ref{assa} implies that the subtree $\pi_G(S \cap T)$ of $T_G$ contains a path from $o(\hat{e})=\pi_G(o(\tilde{e}))$ to $t(\hat{e})=\pi_G(t(\tilde{e}))$. Since $\{\tilde{e} \mid e \in D\}$ and $E(S \cap T)$ are disjoint,  the first paragraph implies that $\hat{e} \notin E(\pi_G(S \cap T))$. Since
	$\pi_G(S \cap T)$ is a subtree of $T_G$ and $\pi_G(S \cap T)$  contains the origin and terminus of $\hat{e}$ but does not contain $\hat{e}$, $\hat{e}\notin E(T_G)$ as desired. 

If $e \in D$ then:
	\begin{itemize}
		\item By the first two paragraphs, $\pi_G(\tilde{e}) \in E^+(X_G)\setminus E(T_G)$.
		\item Assumption \ref{assa} implies that  $o(\tilde{e}) \in T$. 
		\item $h_e \in G$ satisfies $h_et(e)\in T$.
	\end{itemize}
Since $U \subseteq V(S \cap T) \subseteq V(T)$, Lemma \ref{lemma:ser} implies that 
$$(*_{v \in U} G_v)* (*_{e \in D} \langle h_e\rangle)$$
	is a free factor of $G$. The result follows since for every $v \in U$, $G_v=H_v$.      
\end{proof}

\begin{corollary}\label{cor:factor}
		Let $G$ be a free product of cyclic groups and let $H_n$ be a decreasing sequence of finite index subgroups of $G$. If $K$ is a finitely generated free factor of $H:=\cap_{n \ge 1}H_n$ then for every large enough $n$, $K$ is a free factor of $H_n$. 
\end{corollary}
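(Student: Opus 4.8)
The plan is to deduce this from Corollary \ref{bbb} by setting up a common tree $X$ on which $G$ acts, and then choosing the data (maximal trees, lifts, the sets $U$ and $D$) so that a prescribed free-factor decomposition of $K$ inside $H$ is witnessed entirely by finitely many vertices and edges of $X$; once only finitely many cells are involved, they lie in all but finitely many $H_n$, and that is exactly what hypotheses \ref{assa}–\ref{assb} of Corollary \ref{bbb} require. First I would fix a good action of $G$ on a tree $X$ with the properties from item (8) in the list preceding Lemma \ref{lemma:ser} (possible since $G$ is a free product of cyclic groups). Applying Lemma \ref{lemma:ser} to the action of $H$ on $X$ gives a maximal tree $S_H$ of $X_H=H\backslash X$, a lift $S\subseteq X$, the elements $h_e$ for $e\in E^+(X_H)\setminus E(S_H)$, and a free-product decomposition $H=(*_{v\in V(S)}H_v)*(*_{e}\langle h_e\rangle)$. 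Since $K$ is a finitely generated free factor of $H$, after possibly modifying $S_H$ within its homotopy type I would arrange that $K$ is the sub-free-product on a \emph{finite} set $U\subseteq V(S)$ of vertices together with a \emph{finite} set $D\subseteq E^+(X_H)\setminus E(S_H)$ of edges — here I would use the Kurosh/Grushko bookkeeping (Corollary \ref{cor:Kurosh}) to see that a finitely generated free factor of a free product of cyclic groups is itself such a product on finitely many of the visible factors, and that one can realize it by finitely many cells of the Bass–Serre tree.

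Next I would pass to the subgroups $H_n$. For each $n$, the action of $H_n$ on $X$ again gives, via Lemma \ref{lemma:ser}, a maximal tree $S_{H_n}$ of $X_{H_n}=H_n\backslash X$ and a lift $S_n\subseteq X$; since $H_n\supseteq H$ we may choose $S_{H_n}$ and $S_n$ so that $S_n\supseteq S$, i.e. the lift for $H$ sits inside the lift for $H_n$ (the covering $X_H\to X_{H_n}$ lets us extend a maximal tree of the smaller quotient to one upstairs, equivalently extend the lift). Similarly choose a maximal tree $T_G$ of $X_G=G\backslash X$ and lift $T\subseteq X$. The point is now purely finitary: the finitely many vertices in $U\cup\{o(\tilde e):e\in D\}$ and the finitely many vertices $h_et(\tilde e)$ for $e\in D$ form a finite subtree of $X$; for $n$ large enough this finite configuration is contained in $V(S_n\cap T)$ (assumption \ref{assa}), because $\bigcap_n S_n\cap T$ exhausts $S\cap T$ as $n\to\infty$ — each of the finitely many required vertices eventually enters, and there are only finitely many of them. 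Likewise, for each of the finitely many vertices $v$ in question, the stabilizers satisfy $G_v\supseteq (H_n)_v\supseteq H_v$, and since $G_v$ is finite (property (8)) the descending chain $(H_n)_v$ stabilizes; choosing $n$ large enough forces $(H_n)_v=H_v$, and then $G_v=(H_n)_v$ is assumption \ref{assb} — note $K=(*_{v\in U}H_v)*(*_{e\in D}\langle h_e\rangle)$ already tells us $G_v$ and $H_v$ agree wherever it matters, but we additionally need the intermediate equality with $(H_n)_v$, which the finiteness of $G_v$ supplies.

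With \ref{assa} and \ref{assb} verified for all large $n$, Corollary \ref{bbb} (applied with $G$ replaced by $H_n$, $H$ by $H$, and the same data $U$, $D$, $h_e$) yields that $L:=(*_{v\in U}(H_n)_v)*(*_{e\in D}\langle h_e\rangle)$ is a free factor of $H_n$; and since $(H_n)_v=H_v$ on $U$, this $L$ is exactly $K$. Hence $K$ is a free factor of $H_n$ for all sufficiently large $n$, as claimed. The main obstacle I anticipate is the first step — pinning down precisely why a finitely generated free factor $K$ of $H$ can be realized by a \emph{finite} set of vertices and edges of the Bass–Serre tree and why one is free to choose the maximal tree $S_H$ compatibly; this is where one must be careful with the Kurosh decomposition, since a priori the free-product expression for $H$ coming from Lemma \ref{lemma:ser} is infinite, and one needs that $K$, being finitely generated, involves only finitely many of its factors up to the ambient conjugation — together with the compatibility of the tree lifts $S\subseteq S_n\subseteq X$ and $S\cap T$ exhausting as $n$ grows, which requires a small argument about how maximal trees of the intermediate covers can be nested.
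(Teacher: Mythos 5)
Your overall strategy — reduce to Corollary \ref{bbb} via the Bass--Serre tree for $G$, isolate a finite amount of combinatorial data, and then use finiteness of vertex stabilizers to push that data into $H_n$ for $n$ large — is the same as the paper's, and your use of the stabilization of the chains $(H_n)_v$ to get assumption \ref{assb} is exactly right. But there are two concrete problems with the execution.

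First, you try to arrange that $K$ \emph{itself} equals a sub-free-product $(*_{v\in U}H_v)*(*_{e\in D}\langle h_e\rangle)$ by ``modifying $S_H$ within its homotopy type,'' and you correctly flag this as the main obstacle. It is indeed a genuine gap: the Kurosh theorem gives $K$ only up to conjugacy of the factors, and realizing $K$ on the nose as a sub-free-product built from a single maximal tree requires real work. The paper sidesteps this entirely. Because $K$ is finitely generated, each of its generators is a finite word in the free-product decomposition of $H$ from Lemma \ref{lemma:ser}, so $K$ is \emph{contained in} the sub-free-product $L := (*_{u\in U}H_u)*(*_{e\in D}\langle h_e\rangle)$ for some finite $U$, $D$. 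By Corollary \ref{cor:Kurosh}, $K = K \cap L$ is automatically a free factor of $L$, and it then suffices to show $L$ (not $K$) is a free factor of $H_n$. This removes the obstacle you were worried about.

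Second, your handling of the tree data for assumption \ref{assa} is muddled. When Corollary \ref{bbb} is applied with $G$ replaced by $H_n$ and $H$ the original intersection, the tree $T_G$ in that corollary should be a maximal tree of $X_{H_n}$ with a lift $T_n$, and condition \ref{assa} asks that the finite subtree $R$ (the span of $U\cup\{o(\tilde e), h_e t(\tilde e)\}$) lie in $S\cap T_n$. You instead invoke a lift $T$ of a maximal tree of $X_G = G\backslash X$, which plays no role in this application, and you assert $S_n\supseteq S$ — this inclusion is backwards: since $H\subseteq H_n$, the quotient $X_H$ covers $X_{H_n}$, so the tree for $H$ is the bigger one, and a compatibly chosen lift satisfies $S_n\subseteq S$. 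The paper's argument is more robust and does not rely on nesting the lifts at all: since the stabilizers are finite, $\{g\in G \mid gv_1=v_2\}$ is finite for any $v_1,v_2$, so for $n$ large the quotient map $\pi_n$ is injective on the finite vertex set $V(R)$; one can then directly choose a maximal tree $T_{H_n}$ with a lift $T_n$ satisfying $R\subseteq T_n\cap S$. Your ``exhaustion'' heuristic is gesturing at the same conclusion but does not give a proof as stated, because of the inverted inclusion and the irrelevant $T$.
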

\begin{proof} 
Fix a good action of $G$ on a tree $X$ such that the stabilizer of every vertex of $X$ is a finite cyclic group. Let $S_H$ be a maximal tree of $X_H:=H\backslash X$. and let $S$ be a lift of $S_H$ to $X$. For every $e \in E^+(X_H)\setminus E(S_H)$, let $\tilde{e}$ be a lift of $e$ to $X$ such that $o(\tilde{e})\in V(S)$ and choose $h_e \in H$ be such that $h_et(\tilde{e})\in V(S)$. There exists a finite set of vertices $U \subseteq V(S)$ and a finite set of edges $D \subseteq E^+(X_H)\setminus E(S_H)$ such that $K$ is contained in $L:=*_{u \in U}H_u *_{e \in D}\langle h_e \rangle$. Corollary \ref{cor:Kurosh} implies that $K=K \cap L$ is a free factor of $L$ so it is enough to prove that there exists $n$ such that $L$ is a free factor of $H_n$. 

Let $R \subseteq S$ be the minimal subtree of $X$ which contains $U \cup \{o(\tilde{e}),h_et(\tilde{e}) \mid e \in D\}$ and note that $R$ is finite. 
For every $n$, let $\pi_n:X \rightarrow X_{H_n}:=H_n \backslash X$ be the quotient map. Since the stabilizer of every vertex of $X$ is finite, for every $v_1,v_2 \in V(X)$, $\{g \in G \mid gv_1=v_2\}$ is finite. 
Thus, since $\pi_H$ is injective on $V(S)$, for every large enough $n$:
\begin{enumerate}[label=(\alph*)]
	\item\label{item:tree1} $\pi_n$ is injective on $V(R)$ so there exists a maximal tree $T_{H_n}$ of $X_{H_n}$ and a lift $T_n$ of $T_{H_n}$ to $X$ such that $R \subseteq T_n \cap S$.
	\item\label{item:tree2}  For every $v \in V(R) $, $(H_n)_v=H_v$.
\end{enumerate}

The result follows from Corollary \ref{bbb} applied to $H_n$ instead of $G$ and $L$ instead of $H$.
\end{proof}
 
\begin{lemma}\label{lem:Ab}
Let $K$ be the free product of $m$ non-trivial cyclic groups. Then $\widehat{K}$ is not topologically generated by less than $m$ elements.
\end{lemma}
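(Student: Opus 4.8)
The plan is to produce, for the given $m$, a \emph{finite} quotient $Q$ of $K$ with $d(Q)\ge m$; this suffices, since $Q$ is then also a continuous quotient of $\widehat K$, so $d(\widehat K)\ge d(Q)\ge m$. Write $K=C_{n_1}*\cdots *C_{n_m}$ and, for each $i$, fix a prime $p_i\mid n_i$. There is an evident surjection $K\twoheadrightarrow K_0:=C_{p_1}*\cdots *C_{p_m}$, so it is enough to exhibit the desired $Q$ as a quotient of $K_0$. (The case $m=1$ is trivial: $\widehat K=K=C_{n_1}\ne 1$.)

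First I would build the candidate. Set $L:=\operatorname{lcm}(p_1,\dots ,p_m)$, choose a prime $\ell$ with $\ell\equiv 1\pmod L$ (Dirichlet), and fix a faithful character $\chi\colon C_L\hookrightarrow \FF_\ell^{\times}$. Let $C_L$ act on $V:=\FF_\ell$ through the scalar $\chi$, and put
\[
 Q\;:=\;V^{\,m-1}\rtimes C_L ,
\]
with $C_L$ acting diagonally on $V^{m-1}$. To see $Q$ is a quotient of $K_0$: pick $c_i\in C_L$ of order $p_i$, so that $\langle c_1,\dots ,c_m\rangle=C_L$ (every prime dividing $L$ is one of the $p_i$). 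Since $\chi(c_i)$ is a primitive $p_i$-th root of unity, $1+\chi(c_i)+\cdots+\chi(c_i)^{p_i-1}=0$, and hence for \emph{any} $v_i\in V^{m-1}$ the element $x_i:=(v_i,c_i)$ satisfies $x_i^{p_i}=1$; so (there being no further relations in a free product) $y_i\mapsto x_i$ defines a homomorphism $K_0\to Q$, $y_i$ denoting a generator of the $i$-th free factor. A direct computation gives $[x_1,x_i]=\bigl((\chi(c_1)-1)v_i-(\chi(c_i)-1)v_1,\;1\bigr)$. Taking $v_1=0$ and $v_2,\dots ,v_m$ a basis of $V^{m-1}$, the vectors $[x_1,x_i]$ ($2\le i\le m$) span $V^{m-1}$ (as $\chi(c_1)\ne1$), so the image of $K_0\to Q$ contains $V^{m-1}$ and surjects onto $C_L$, hence equals $Q$.

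It then remains to prove $d(Q)\ge m$. Suppose $Q=\langle g_1,\dots ,g_{m-1}\rangle$ with $g_j=(w_j,\overline g_j)$, and let $H=\langle g_1,\dots ,g_{m-1}\rangle=Q$. Projecting to $C_L$ gives $\langle \overline g_1,\dots ,\overline g_{m-1}\rangle=C_L$. By the standard relation-module description of a subgroup of a split extension that surjects onto the quotient (a $\Z[C_L]$-equivariant Fox-calculus/Gruenberg-sequence argument — the easy half of Gaschütz's generation theorem), $H\cap V^{m-1}$ is a $\Z[C_L]$-module quotient of the relation module $\mathcal R:=\ker\bigl(\Z[C_L]^{\,m-1}\to I_{C_L}\bigr)$ of the generating set $\overline g_1,\dots ,\overline g_{m-1}$ of $C_L$. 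Since $\ell\equiv 1\pmod L$, the algebra $\FF_\ell[C_L]$ is split semisimple and each of its $L$ simple modules is one-dimensional and occurs exactly once in $\FF_\ell[C_L]$. Reducing
\[
 0\to\mathcal R\to \Z[C_L]^{\,m-1}\to I_{C_L}\to 0,
 \qquad
 0\to I_{C_L}\to \Z[C_L]\to \Z\to 0
\]
modulo $\ell$ (all terms are $\Z$-free, so exactness persists) and comparing composition factors, the nontrivial simple module $V=\FF_\ell(\chi)$ (nontrivial as $\chi$ is faithful and $L\ge2$) appears in $\mathcal R\otimes\FF_\ell$ with multiplicity $(m-1)-1=m-2$. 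But $V^{m-1}$ has $V$-multiplicity $m-1>m-2$, so it is not a homomorphic image of $\mathcal R$; hence $H\cap V^{m-1}\subsetneq V^{m-1}$ and $H\ne Q$, a contradiction. Therefore $d(Q)\ge m$, and $d(\widehat K)\ge d(\widehat{K_0})\ge d(Q)\ge m$.

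I expect the main work to lie in the third paragraph: carefully verifying that $H\cap V^{m-1}$ really is a homomorphic image of $\mathcal R$ (this is the point where one invokes the relation-module/Fox-calculus identity), and then organising the multiplicity bookkeeping for $\FF_\ell[C_L]$. The other place needing care is the bookkeeping of the order conditions $x_i^{p_i}=1$ in the second paragraph, so that $Q$ is genuinely a quotient of $K_0$ rather than merely of a larger free product.
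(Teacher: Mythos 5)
Your argument is correct, and it is a genuinely different route from the paper's. The paper disposes of Lemma~\ref{lem:Ab} in two lines by quoting the theorem of Abert--Hegedus~\cite{AH} that the profinite completion of a free product of $m$ non-trivial \emph{finite} groups needs $m$ topological generators, and then noting that $C_{n_1}*\cdots*C_{n_m}$ projects onto a free product of $m$ non-trivial finite cyclic groups. You instead reduce to $K_0=C_{p_1}*\cdots*C_{p_m}$ in the same way, but then \emph{reprove} the needed special case of Abert--Hegedus from scratch by producing a concrete finite quotient $Q=V^{m-1}\rtimes C_L$ (with $C_L$ acting by a faithful character over a prime $\ell\equiv 1\pmod L$) and bounding $d(Q)$ from below via the Gruenberg relation sequence and multiplicity bookkeeping in the split semisimple algebra $\FF_\ell[C_L]$. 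The computations check out: $(v,c_i)^{p_i}=1$ because $1+\chi(c_i)+\cdots+\chi(c_i)^{p_i-1}=0$; the commutator formula $[x_1,x_i]=((\chi(c_1)-1)v_i-(\chi(c_i)-1)v_1,1)$ is right, so with $v_1=0$ the image contains $V^{m-1}$ and surjects onto $C_L$; and in the mod-$\ell$ reduction of $0\to\mathcal R\to\Z[C_L]^{m-1}\to I_{C_L}\to 0$ the nontrivial character $\chi$ occurs with multiplicity $m-2$, which is indeed beaten by the multiplicity $m-1$ in $V^{m-1}$. (Two cosmetic points: you should say explicitly that for an infinite cyclic factor one takes \emph{any} prime $p_i$ and projects $C_\infty\twoheadrightarrow C_{p_i}$; and for $m=1$ with $n_1=\infty$ one has $\widehat K=\widehat\Z$, not $K$, though the conclusion $\widehat K\ne 1$ is of course still immediate.) The trade-off is clear: the paper's route is a black-box citation that also covers arbitrary finite free factors, while yours is longer but entirely self-contained and elementary for the cyclic case actually needed here.
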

\begin{proof}
	 Abert and Hegedus \cite{AH} proved  that the profinite completion of a free product of $m$ non-trivial finite groups cannot be topologically   generated by less than $m$ elements. The result follows since a free product of $m$ non-trivial cyclic groups projects onto a free product of $m$ non-trivial finite cyclic groups.
\end{proof}

\ 

\begin{proof}[Proof of Proposition \ref{prop:reduction} ]\ 

We can assume that $d \ne 1$. Let $x,y \in \widehat{\PSL_2(\Z)}$ be such that $d=[x,y]$.  

For every $L \le \widehat{\PSL_2(\Z)}$, let $\overline{L}$ be the closure of $L$ in $\widehat{\PSL_2(\Z)}$. For every $n\ge 1$, let $M_n$ be the intersection of all subgroups of $\PSL_2(\Z)$ of index at most $n$. The following properties hold:
	\begin{enumerate}
		\item $(M_n)_{n\ge 1}$ is a decreasing sequence of finite index normal subgroups of $\PSL_2(\Z)$.
		\item For every $L \le \widehat{\PSL_2(\Z)}$, $\overline{L} = \cap_{n \ge 1} L\overline{M_n}$.  
	\end{enumerate}
	 
	For every $n$, denote $H_n:=\overline{ \langle x,y, M_n\rangle }\cap \PSL_2(\Z)$. Then:
	
	\begin{enumerate}[resume]
		\item For every $n \ge 1$, $\overline{H_n}=\overline{\langle x,y,M_n\rangle} $ and $\cap_{n \ge 1}\overline{H_n}=\overline{\langle x,y\rangle}$.
		\item  $H:=\cap_{n \ge 1}H_n$ contains $d$. 
	\end{enumerate} 
	
	Since $H$ is a subgroup of $\PSL_2(\Z)\simeq C_2 * C_3$, Kurosh's subgroup theorem implies that $H$ is a free product of (possibly infinitely many) cyclic groups.  Choose a finitely generated free factor $K$ of $H$ which contains $d$.  Corollary \ref{cor:factor} implies that there exists $n$ such that $K$ is a free factor of $H_n$ so there exists a retraction $\rho:H_n\rightarrow K$. We claim that $\rho$ can be extended to a a retraction $\bar{\rho}:\overline{H_n}\rightarrow \overline{K}$. By the universal property of profinite completions,   $\rho$ can be extended to a continuous homomorphism  $\hat{\rho}:\widehat{H_n}\rightarrow \widehat{K}$. Since $H_n$ has a finite index in $\PSL_2(\Z)$ and $K$ is a free factor of $H_n$, $\widehat{H_n}=\overline{H_n}$ and $\widehat{K}=\overline{K}$ as topological groups.  Thus, there exists an extension of $\rho$ to a continuous homomorphism  $\bar{\rho}:\overline{H_n}\rightarrow \overline{K}$. Since $\rho$ is continuous and $\rho\restriction_K$ is the identity map, $\bar{\rho}\restriction_{\overline{K}}$ is the identity map as desired. 
	
 Since $\overline{K}\subseteq \overline{\langle x,y\rangle}$,  $$\overline{K}=\bar{\rho}(\overline{K})\subseteq
	\bar{\rho}(\overline{\langle x,y\rangle})\subseteq \overline{\langle\bar{\rho}(x),\bar{\rho}(y)\rangle}\subseteq \overline{K}.$$ 

	Since  $\widehat{K}=\overline{K}$ as topological groups,  $\widehat{K}$ is topologically generated by $\bar{\rho}(x),\bar{\rho}(y)$ and $d=\rho(d)=\bar{\rho}(d)=\bar{\rho}([x,y])= [\bar{\rho}(x),\bar{\rho}(y)]$. Kurosh's subgroup theorem implies that $K$ is a free product of  cyclic groups.  Lemma \ref{lem:Ab} implies that  $K$ is  a free product of at most two cyclic groups. Since $d \ne 1$, $K$ is not abelian so $K$ is  the free product of two cyclic groups.	 \end{proof}

%%%%%%%%%%%%%%%%%%%%%%%%%%%%%%%%%%%%%%%%%%%%%%%%%%%%%%
\subsection{\ The ring $\Z[G_{m,n}/G_{m,n}']$ and the module $G_{m,n}'/G_{m,n}''$.}
%%%%%%%%%%%%%%%%%%%%%%%%%%%%%%%%%%%%%%%%%%%%%%%%%%%%%%%%%%%%%%%%%%%%%%%%%%%%%%%%%%%%%%%%%%%%%%%%%%%%%%%%%%%%%%%%%%%

\begin{notation}\label{nota:G_{m,n}} Let $2 \le m \lneq \infty$ and $2 \le n \le \infty$.  Recall that   $G_{m,n}=\langle a \rangle * \langle b \rangle$ where  $\langle a \rangle$ and $\langle b \rangle$ are cyclic groups of orders $m$ and $n$ respectively. For every $i,j$, let $c_{i,j}:=a^ib^j[a,b]b^{-j}a^{-i}$ and let $\bar{c}_{i,j}$ be the image of $c_{i,j}$ in  $G_{m,n}'/G_{m,n}''$. 	Note that for every $i,j$, the image of  $ac_{i,j}a^{-1}$ in $G_{m,n}'/G_{m,n}''$ is equal to $\bar{c}_{i+1,j}$ and the image of  $bc_{i,j}b^{-1}$ in $G_{m,n}'/G_{m,n}''$ is equal to $\bar{c}_{i,j+1}$
\end{notation}
\begin{remark}
	In all the cases that we are interested in, the order $m$ of $a$ is finite. However, in some definitions given below we allow $m$ to be infinite if there is no harm in doing so. 
\end{remark}

The goal of this section is to prove:

\begin{proposition}\label{lemma:conj} Under  Notation \ref{nota:G_{m,n}}, assume that  $$(m,n)\in \{(2,3),(2,\infty),(3,3),(3,\infty)\}.$$  Let $d \in G_{m,n}$ and assume that $d$ is the commutator of a pair of profinite generators of $\widehat{G}_{m,n}$. Then $d$ is conjugate to 
	$[a,b]^{\pm 1}$ modulo $G_{m,n}''$.
\end{proposition}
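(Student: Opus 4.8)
The plan is to work entirely inside the abelianization $A_{m,n} := G_{m,n}'/G_{m,n}''$, viewed as a module over the group ring $R_{m,n} := \Z[G_{m,n}/G_{m,n}']$, and to show that the hypothesis ``$d$ is a commutator of a topologically generating pair of $\widehat{G}_{m,n}$'' forces the image $\bar d$ of $d$ in $A_{m,n}$ to be $R_{m,n}$-equivalent to $\overline{[a,b]}^{\pm 1}$. First I would set up the explicit presentation: writing $\alpha,\beta$ for the images of $a,b$ in $Q_{m,n} := G_{m,n}/G_{m,n}' \cong C_m \times C_n$, the classes $\bar c_{i,j}$ are exactly the translates $\alpha^i\beta^j \cdot \overline{[a,b]}$, and (by a standard computation with free products of cyclic groups, e.g. via the Fox calculus / the exact sequence of the presentation) $A_{m,n}$ is generated as an $R_{m,n}$-module by the single element $e := \overline{[a,b]}$, with relations exactly those coming from the finite orders of $a$ and $b$: namely $(1+\alpha+\cdots+\alpha^{m-1})\cdot e = 0$ when $m<\infty$ and $(1+\beta+\cdots+\beta^{n-1})\cdot e = 0$ when $n<\infty$. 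Thus $A_{m,n} \cong R_{m,n}/J_{m,n}$ as $R_{m,n}$-modules, where $J_{m,n}$ is the ideal generated by those norm elements (one or both of them according to the four cases $(2,3),(2,\infty),(3,3),(3,\infty)$). Under this identification $\bar d$ corresponds to some ring element $f \in R_{m,n}/J_{m,n}$, and ``$d$ conjugate to $[a,b]^{\pm1}$ mod $G_{m,n}''$'' becomes ``$f \in \pm Q_{m,n}$ in $R_{m,n}/J_{m,n}$'', i.e. $f$ is $\pm$ a group element.

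Next I would extract the constraint imposed by the topological-generation hypothesis. If $x,y$ topologically generate $\widehat{G}_{m,n}$ and $d = [x,y]$, then applying the quotient map to $\widehat{Q}_{m,n} = Q_{m,n}$ shows that the images $\bar x,\bar y$ generate $Q_{m,n}$; and reducing the pair $(x,y)$ modulo $G_{m,n}''$ (more precisely into $\widehat{G}_{m,n}/\overline{G_{m,n}''}$, which is the profinite completion of the finitely generated virtually abelian group $G_{m,n}/G_{m,n}''$, so equals $G_{m,n}/G_{m,n}'' \otimes \hat\Z$-style completion) one obtains a pair generating that metabelian profinite group, whose commutator is the image of $d$. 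The key point is then a Nielsen-type / linear-algebra fact: in the metabelian group $E_{m,n} := G_{m,n}/G_{m,n}''$, which is a split extension $A_{m,n} \rtimes Q_{m,n}$, the commutator of any generating pair of its profinite completion lies in a single $R_{m,n}$-orbit, namely the orbit of $e$ under the automorphisms of $A_{m,n}$ as an $R_{m,n}$-module twisted by $\Aut(Q_{m,n})$ acting through generating pairs — and because $A_{m,n}$ is \emph{cyclic} as an $R_{m,n}$-module with the explicit presentation above, that orbit consists exactly of the elements $\pm \alpha^i\beta^j e$, i.e. the conjugates of $[a,b]^{\pm1}$. Concretely: if $\bar x \equiv \alpha^{r}\beta^{s}\cdot(\text{elt of }A),\ \bar y \equiv \alpha^{p}\beta^{q}\cdot(\text{elt of }A)$ generate $Q_{m,n}$, one computes $[\bar x,\bar y]$ in $E_{m,n}$ by the commutator formula in a semidirect product and finds its $A_{m,n}$-component equals $(1-\sigma)u - (1-\tau)w + \text{(principal part)}$, where $\sigma,\tau$ are the images of $\bar x,\bar y$ in $Q_{m,n}$ and the principal part is $\det\!\begin{psmatrix}\ast\end{psmatrix}$-like and reduces, after using that $\langle\sigma,\tau\rangle = Q_{m,n}$ and the norm relations, to a unit multiple of $e$ — which in $R_{m,n}/J_{m,n}$ means a $\pm$-group-element multiple of $e$.

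Finally I would run the four cases $(m,n) \in \{(2,3),(2,\infty),(3,3),(3,\infty)\}$ through this machine, since the ring $R_{m,n}$ and ideal $J_{m,n}$ differ: for $(2,3)$ one gets $R = \Z[C_2\times C_3]$ modulo two norm elements; for $(2,\infty)$, $R = \Z[C_2\times C_\infty] = \Z[C_2][\beta^{\pm1}]$ modulo one norm element; etc. In each case one checks (a) $A_{m,n}$ is indeed $R_{m,n}$-cyclic on $e$ with precisely the stated relations, and (b) the set of ``commutators of generating pairs'' computed as above is exactly $\{\pm\alpha^i\beta^j e\}$; both are finite explicit verifications in small group rings. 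I expect the main obstacle to be step (b) — proving that \emph{every} topologically generating pair of $\widehat{E}_{m,n}$ has commutator in this small orbit, rather than just that the orbit is nonempty. This requires controlling the action of $\Aut(\widehat{E}_{m,n})$ on the abelian part, i.e. showing that the relevant Nielsen transformations act on $A_{m,n}$ by $R_{m,n}$-module automorphisms composed with multiplication by units of $R_{m,n}/J_{m,n}$, and that the unit group of $R_{m,n}/J_{m,n}$ that can arise this way is just $\pm Q_{m,n}$; the finiteness of $Q_{m,n}$ and the smallness of $m,n$ are what make this tractable, and is presumably exactly why the authors remark they cannot handle general $m,n$.
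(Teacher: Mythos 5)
Your setup matches the paper's: you identify $A_{m,n}=G_{m,n}'/G_{m,n}''$ as a cyclic $R_{m,n}$-module, you recognize that $A_{m,n}\cong R_{m,n}/J_{m,n}$ (the paper's $S_{m,n}$), and you correctly reduce the target to showing that the class of $d$ in $S_{m,n}$ is $\pm$ a group element. But the key step — turning ``$d$ is the commutator of a topologically generating pair'' into an algebraic constraint on $\bar d\in S_{m,n}$ — is where you diverge from the paper and where your argument has a genuine gap.

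You try to classify commutators of generating pairs of $\widehat{E}_{m,n}$ directly by writing down a commutator formula in ``the split extension $A_{m,n}\rtimes Q_{m,n}$'' and then invoking Nielsen transformations to cover all generating pairs. Two problems. First, the extension $1\to A_{m,n}\to G_{m,n}/G_{m,n}''\to Q_{m,n}\to 1$ is \emph{not} split: already for $(m,n)=(2,2)$ one has $G/G''=D_\infty$, which has no subgroup $\cong C_2\times C_2$, and the same non-splitting is precisely why $\overline{[a,b]}$ carries nontrivial information; your ``principal part'' is exactly the 2-cocycle term you are trying to wave away. Second and more seriously, you yourself flag ``step (b)'' — showing that \emph{every} topologically generating pair has commutator in the small orbit — as the obstacle, and you do not close it. Controlling $\Aut(\widehat{E}_{m,n})$ and proving Nielsen-equivalence of all generating pairs is a hard problem and is not obviously tractable even for small $(m,n)$.

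The paper avoids all of this with a cleaner observation. If $d=[x,y]$ with $x,y$ topologically generating $\widehat{G}_{m,n}$, then the closed normal subgroup generated by $d$ contains the entire closure of $G_{m,n}'$ (because the quotient by that normal closure is generated by two commuting elements, hence abelian). A residual-finiteness argument (finitely generated rings are residually finite) then forces the normal closure of $d$ in $G_{m,n}$ modulo $G_{m,n}''$ to be all of $G_{m,n}'/G_{m,n}''$; equivalently, $\pi(d)$ generates $S_{m,n}$ as an ideal, i.e.\ $\pi(d)$ is a unit (Lemma \ref{lemma:inv}). One then computes, case by case, that $S_{m,n}^\times=\{\pm x^iy^j\}$ (Lemma \ref{lemma:S_{3,3}}). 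This ``normal closure $\Rightarrow$ unit'' step is the idea your proposal is missing: it replaces a classification of generating pairs with a one-line structural fact about commutators of generating pairs, and pushes all the work into an explicit unit-group computation in four small rings.
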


Note  that for every group $G$, $G'/G''$ is a $\Z[G/G']$-module under the conjugation action and that the $\Z[G/G']$-submodules of $G'/G''$ are the normal subgroup of $G/G''$ which are contained in $G'/G''$. In order to prove Lemma \ref{lemma:conj} we will study the $\Z[G_{m,n}/G_{m,n}']$-module $G_{m,n}'/G_{m,n}''$. 

The following lemma is well known, we only sketch a proof for the convenience of the reader.

\begin{lemma}\label{lemma:free_commutator}
Under  Notation \ref{nota:G_{m,n}}, $G'_{m,n}$ is a free group of rank $(m-1)(n-1)$.
\end{lemma}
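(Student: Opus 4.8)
The statement is that $G'_{m,n}$ is free of rank $(m-1)(n-1)$, where $G_{m,n}=\langle a\rangle*\langle b\rangle$ with $a$ of order $m<\infty$ and $b$ of order $n$ (finite or infinite). The most efficient route is via the action of $G_{m,n}$ on its Bass--Serre tree $X$, together with the general structure theorem recalled in Lemma \ref{lemma:ser}. First I would note that $G'_{m,n}$ is a free group at all: since $G_{m,n}$ acts on the tree $X$ with vertex stabilizers the conjugates of $\langle a\rangle$ and $\langle b\rangle$ and trivial edge stabilizers, and $G'_{m,n}$ is torsion-free (any torsion element of a free product is conjugate into a factor, hence has nontrivial image in the abelianization $G_{m,n}/G'_{m,n}\cong C_m\times C_n$), the restricted action of $G'_{m,n}$ on $X$ is free. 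By Lemma \ref{lemma:ser} (or directly, a group acting freely on a tree is free), $G'_{m,n}$ is free.

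**Computing the rank.** The rank equals $1-\chi(G'_{m,n}\backslash X)$, the first Betti number of the quotient graph. Equivalently one can use the multiplicativity of Euler characteristic for the finite-index subgroup $G'_{m,n}\le G_{m,n}$: writing $N=[G_{m,n}:G'_{m,n}]=mn$, one has (in the sense of Euler characteristics of groups / equivariant Euler characteristic of the tree)
\[
\chi(G'_{m,n}) \;=\; N\cdot\chi(G_{m,n}).
\]
Here $\chi(G'_{m,n})=1-\operatorname{rank}(G'_{m,n})$ since $G'_{m,n}$ is free, while $\chi(G_{m,n})=\chi(C_m)+\chi(C_n)-\chi(1)=\tfrac1m+\tfrac1n-1=-\tfrac{(m-1)(n-1)-1}{mn}$. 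Hence
\[
1-\operatorname{rank}(G'_{m,n}) \;=\; mn\left(\tfrac1m+\tfrac1n-1\right) \;=\; n+m-mn \;=\; 1-(m-1)(n-1),
\]
giving $\operatorname{rank}(G'_{m,n})=(m-1)(n-1)$ exactly as claimed. When $n=\infty$ the same bookkeeping works with $\chi(C_\infty)=0$: then $G_{m,n}=C_m*C_\infty$, $G'_{m,n}$ has index $m$ (since $C_\infty$ contributes nothing to the torsion part of the abelianization... actually $G_{m,n}/G'_{m,n}\cong C_m\times C_\infty$ is infinite), so one instead argues directly with the quotient graph, or restricts first to the index-$m$ subgroup $\langle a\rangle$-cosets; in any case $\chi(G'_{m,n})=m\cdot\chi(C_m*C_\infty)=m(\tfrac1m-1)=1-(m-1)$, so the rank is $(m-1)=(m-1)(n-1)$ with the convention $(\infty-1)=1$ in the sense that $b$ and its $G'$-translates contribute. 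I would present the $n=\infty$ case separately and carefully, or better, give an explicit Reidemeister--Schreier/Bass--Serre computation uniformly.

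**Alternative, more hands-on route, and the main obstacle.** Rather than invoking Euler characteristics, I would actually prefer to carry out Lemma \ref{lemma:ser} explicitly for $H=G'_{m,n}$: choose the standard tree, compute a fundamental domain for $G'_{m,n}\backslash X$, identify a Schreier transversal for $G'_{m,n}$ in $G_{m,n}$ (the elements $a^ib^j$, $0\le i<m$, $0\le j<n$), and read off generators $c_{i,j}=a^ib^j[a,b]b^{-j}a^{-i}$ of the type already introduced in Notation \ref{nota:G_{m,n}}; one then checks these are free generators and counts them, subtracting the redundancies coming from the relations $a^m=1$, $b^n=1$. This has the advantage of meshing directly with the generators $\bar c_{i,j}$ used later for the module $G'_{m,n}/G''_{m,n}$, so the rank count and the module description come out of the same computation. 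The main obstacle is purely bookkeeping: correctly tracking which Schreier generators are trivial or coincide because of the torsion relations in the two factors (this is exactly what produces the $(m-1)(n-1)$ rather than $mn$ or $mn+1$), and handling the $n=\infty$ case with the right conventions. Since the lemma asserts this is "well known" and only a sketch is expected, I would state the Bass--Serre/Reidemeister--Schreier setup, exhibit the $(m-1)(n-1)$ free generators among the $c_{i,j}$, and point to the Euler characteristic identity above as the clean confirmation of the count.
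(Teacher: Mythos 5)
Your proof is correct and matches the paper's argument in both steps: freeness via the Kurosh subgroup theorem (equivalently, a free action on the Bass--Serre tree, since $G'_{m,n}$ is normal and meets the free factors trivially), and rank via multiplicativity of the Euler--Poincar\'e characteristic, with $\chi(G_{m,n})=\tfrac1m+\tfrac1n-1$ and $[G_{m,n}:G'_{m,n}]=mn$. Your caveat about the $n=\infty$ case (where the index is infinite and the multiplicativity must be reinterpreted, e.g.\ via the quotient graph or a Reidemeister--Schreier count) is well taken, and is in fact glossed over in the paper's own sketch, which simply asserts ``free subgroup of index $mn$.''
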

\begin{proof}
	The Kurosh subgroup theorem implies that every subgroup of $G_{m,n}$ is a free product of cyclic groups and that the only finite cyclic groups which can appear as factors in this free product are conjugates of subgroups of $\langle a \rangle$ or of $\langle b \rangle$. Since $G'_{m,n}$ is normal and $G'_{m,n} \cap \langle a \rangle=G'_{m,n} \cap \langle b \rangle=1$, $G'_{m,n}$ is a free product of infinite cyclic groups, i.e. $G'_{m,n}$ is a free group.
	
	 The Euler-Poincar\'{e} characteristic of $G_{m,n}$ is $\frac{1}{m}+\frac{1}{n}-1$ (see \cite{Ser} for the definition and properties of the Euler-Poincar\'{e} characteristic). Since  $G_{m,n}'$ is a free subgroup of index $mn$, the Euler-Poincar\'{e} characteristic of $G_{m,n}'$ is $mn(\frac{1}{m}+\frac{1}{n}-1)=m+n-mn$. Thus, the rank of $G_{m,n}'$ is  $1-(m+n-mn)=(m-1)(n-1)$. 
	 \end{proof}

\begin{lemma}\label{lemma:basis}
Under  Notation \ref{nota:G_{m,n}}, assume that $n < \infty$. 
Then $G_{m,n}'/G_{m,n}''$ is a free abelian group with basis $C:=\{\bar{c}_{i,j} \mid 0 \le i \le m-2,0 \le j \le n-2\}$ and for every $i,j$, 
\begin{equation}\label{eq:1}
	\bar{c}_{i,n-1}=(\bar{c}_{i,0})^{-1}\cdots  (\bar{c}_{i,n-2})^{-1} \text{ and }\bar{c}_{m-1,j}=(\bar{c}_{0,j})^{-1}\cdots  (\bar{c}_{m-2,j})^{-1}.
\end{equation}
\end{lemma}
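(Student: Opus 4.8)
The plan is to realize $G_{m,n}'$ explicitly as a free group on the elements $c_{i,j}$ and then abelianize. First I would recall the standard generating set for the kernel of the projection $G_{m,n} \to G_{m,n}/G_{m,n}' \cong C_m \times C_n$ obtained from the Reidemeister--Schreier procedure: with Schreier transversal $\{a^i b^j \mid 0 \le i \le m-1,\ 0 \le j \le n-1\}$, the subgroup $G_{m,n}'$ is generated by the elements $a^i b^j \, a \, (\text{rep})^{-1}$ and $a^i b^j \, b \, (\text{rep})^{-1}$, most of which collapse to the identity. A short bookkeeping argument shows that the nontrivial Schreier generators can all be rewritten in terms of the $c_{i,j} = a^i b^j [a,b] b^{-j} a^{-i}$ for $0 \le i \le m-1$, $0 \le j \le n-1$, together with the two boundary relations that express $c_{i,n-1}$ and $c_{m-1,j}$ in terms of the others coming from $a^m = 1$ and $b^n = 1$. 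By Lemma \ref{lemma:free_commutator} we already know $G_{m,n}'$ is free of rank $(m-1)(n-1)$; since the $(m-1)(n-1)$ elements $\{c_{i,j} \mid 0 \le i \le m-2,\ 0 \le j \le n-2\}$ generate and their number equals the rank, they form a free basis.

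Next I would pass to the abelianization $G_{m,n}'/G_{m,n}''$. A free group of rank $r$ abelianizes to $\Z^r$ with basis the images of any free basis, so $C = \{\bar c_{i,j} \mid 0 \le i \le m-2,\ 0 \le j \le n-2\}$ is a free abelian basis, giving the first assertion. For the relations \eqref{eq:1}: in $G_{m,n}/G_{m,n}''$ the conjugation action of $a$ sends $\bar c_{i,j} \mapsto \bar c_{i+1,j}$ and of $b$ sends $\bar c_{i,j} \mapsto \bar c_{i,j+1}$ (this is recorded in Notation \ref{nota:G_{m,n}}). Since $b^n = 1$, applying $b$-conjugation $n$ times to $[a,b]$ must return $[a,b]$ modulo $G''$; writing out the telescoping product $\prod_{j=0}^{n-1} (b^j [a,b] b^{-j})(b^{j+1}[a,b]b^{-(j+1)})^{-1}$ — or more directly expanding the identity $b^n[a,b]b^{-n}=[a,b]$ using that the $\bar c_{i,j}$ commute — yields $\bar c_{i,0}\,\bar c_{i,1}\cdots \bar c_{i,n-1} = 1$ for each $i$, which is the first relation; the second follows symmetrically from $a^m = 1$ after conjugating by $b^j$.

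The main obstacle, and the only place requiring genuine care, is the Reidemeister--Schreier computation identifying the nontrivial Schreier generators with the $c_{i,j}$ and verifying that exactly the two families of boundary relations \eqref{eq:1} arise (no others). This is the step where one must be careful with the indexing conventions $a^m=1$, $b^n=1$ and with which coset representative is subtracted; but since the rank is already pinned down by the Euler characteristic argument of Lemma \ref{lemma:free_commutator}, one only needs a spanning statement at the group level, not a full presentation, which keeps the bookkeeping manageable. An alternative that sidesteps Reidemeister--Schreier entirely: work directly in $G_{m,n}/G_{m,n}''$, note it is a central extension of $C_m \times C_n$ by $G_{m,n}'/G_{m,n}''$, use that this quotient is generated by the (commuting) images of the $a^ib^j[a,b]b^{-j}a^{-i}$, and impose the consequences of $a^m=b^n=1$; counting against the known rank $(m-1)(n-1)$ then forces that the displayed relations are the only ones. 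I would present whichever of these is shorter, likely the second.
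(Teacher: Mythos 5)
Your first route (Reidemeister--Schreier, then abelianize) is a reasonable plan and differs from the paper, which instead observes directly that $[a,b]$ normally generates $G'_{m,n}$ and that conjugation by $g \in a^i b^j G'_{m,n}$ sends $\bar c_{0,0}$ to $\bar c_{i,j}$ in $G'/G''$, so that $\{\bar c_{i,j} : 0\le i\le m-1,\ 0\le j\le n-1\}$ spans with no Schreier bookkeeping at all; the counting argument against Lemma~\ref{lemma:free_commutator} then does the rest in both approaches. The paper's route is shorter, but yours would also close if the details were supplied.

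However, there is a genuine gap in your derivation of the relations \eqref{eq:1}. The identity $b^n[a,b]b^{-n}=[a,b]$ says only $c_{0,n}=c_{0,0}$, and your telescoping product
$\prod_{j=0}^{n-1}(b^j[a,b]b^{-j})(b^{j+1}[a,b]b^{-(j+1)})^{-1}$
collapses in $G'_{m,n}$ to $c_{0,0}\,c_{0,n}^{-1}=1$; abelianizing it gives $\bar c_{0,0}\bar c_{0,n}^{-1}=1$, again just $\bar c_{0,n}=\bar c_{0,0}$. Neither produces the desired relation $\bar c_{0,0}\bar c_{0,1}\cdots\bar c_{0,n-1}=1$. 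What you actually need is to expand the \emph{different} identity $1=[a,b^n]$ using the commutator identity $[g,hk]=[g,h]\,h[g,k]h^{-1}$, which by induction gives $[g,h^l]=\prod_{j=0}^{l-1}h^j[g,h]h^{-j}$; taking $g=a$, $h=b$, $l=n$ and reducing modulo $G''_{m,n}$ yields $\bar c_{0,0}\cdots\bar c_{0,n-1}=1$, and conjugating by $a^i$ (resp.\ using $[x,y]=[y,x]^{-1}$) gives the rest of \eqref{eq:1}. This is precisely what the paper does. Separately, your second alternative describes $G_{m,n}/G''_{m,n}$ as a central extension of $C_m\times C_n$; it is not central --- the conjugation action of $G/G'$ on $G'/G''$ is the nontrivial module structure that the whole of Section~2.2 turns on.
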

\begin{proof}Since $G'_{m,n}$ is a free group of rank $(m-1)(n-1)$, $G'_{m,n}/G_{m,n}''$ is a free abelian group of rank $(m-1)(n-1)$. Since $C$ contains $(m-1)(n-1)$ elements, if $C$ generates $G_{m,n}'/G_{m,n}''$ then it is a free basis. 

Let $g,h,k$ be elements of a group $G$. The identity $[g,hk]=[g,h]h[g,k]h^{-1}$ and induction implies that  for every $l \ge 1$, 
$$[g,h^l]=[g,h](h[g,h]h^{-1})\cdots (h^{l-1}[g,h]h^{1-l}).$$
Since $b^n=1$, by taking $g=a$, $h=b$ and $l=n$ we the get the identity 
\begin{equation}\label{eq:relation}
	\bar{c}_{0,n-1}=(\bar{c}_{0,0})^{-1}\cdots  (\bar{c}_{0,n-2})^{-1}.
\end{equation}
For every $i,j$, $a^i\bar{c}_{0,j}a^{-i}=c_{i,j}$. It follows form Equation \eqref{eq:relation} that for every $i$,
$$
\bar{c}_{i,n-1}=(\bar{c}_{i,0})^{-1}\cdots  (\bar{c}_{i,n-2})^{-1}.
$$
Since $[x,y]=[y,x]^{-1}$, the proof that for every $j$, 
$$
\bar{c}_{m-1,j}=(\bar{c}_{0,j})^{-1}\cdots  (\bar{c}_{m-2,j})^{-1}.
$$
is similar. 

Denote $c:=c_{0,0}$. For every $g \in G_{m,n}$, there there are $ 0 \le i \le m-1$ and $0 \le j \le n-1$ such that $g\in a^ib^jG'_{m,n}$ so $gcg^{-1} \in c_{i,j}G''_{m,n}$. Since the conjugacy class of $c$ generates $G'_{m,n}$, the set
$$\{\bar{c}_{i,j} \mid 0 \le i \le m-1,0 \le j \le n-1\}$$ generates $G_{m,n}'/G_{m,n}''$. Equation  \eqref{eq:1} implies that $C$ generates $G_{m,n}'/G_{m,n}''$.	
\end{proof}

A similar argument shows:
\begin{lemma}\label{lemma:oof5}
Under  Notation \ref{nota:G_{m,n}}, assume that $n=\infty$. 
Then $G_{m,\infty}'/G_{m,\infty}''$ is a free abelian group with basis $C:=\{\bar{c}_{i,j} \mid 0 \le i \le m-2, j \in \Z\}$ and for every $j$, 
\begin{equation}\label{eq:2}
\bar{c}_{m,j}=(\bar{c}_{0,j})^{-1}\ldots  (\bar{c}_{m-1,j})^{-1}.
\end{equation}
\end{lemma}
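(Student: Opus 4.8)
The final statement is Lemma \ref{lemma:oof5}, which is the $n=\infty$ analogue of Lemma \ref{lemma:basis}, so the plan is simply to mimic the previous proof with the infinite cyclic group $\langle b\rangle$ in place of the finite one. First I would record the structural facts: by the Kurosh subgroup theorem $G'_{m,\infty}$ is free (same argument as in Lemma \ref{lemma:free_commutator}), and since $[G_{m,\infty}:G'_{m,\infty}]=m\cdot\infty$ one cannot read off a finite rank, so instead I would directly exhibit $C:=\{\bar c_{i,j}\mid 0\le i\le m-2,\ j\in\Z\}$ as a free abelian basis of $G'_{m,\infty}/G''_{m,\infty}$ by showing it is a generating set and is $\Z$-independent. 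Independence follows because $G'_{m,\infty}/G''_{m,\infty}$ is free abelian (being the abelianization of a free group) and $C$ is a "coordinate subset" of the natural generating set coming from the Reidemeister–Schreier / covering-space picture of $G_{m,\infty}$ relative to $G'_{m,\infty}$; alternatively one can argue that in the finite-$n$ quotients $G_{m,n}$ the images of $C$ restricted to $0\le j\le n-2$ are independent by Lemma \ref{lemma:basis}, and let $n\to\infty$.

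Second, I would establish the single relation \eqref{eq:2}. As in the proof of Lemma \ref{lemma:basis}, use the identity $[g,hk]=[g,h]\,h[g,k]h^{-1}$ and induction to get, for $g=a$, $h=b$ and any $l\ge 1$,
\[
[a,b^l]=[a,b]\,(b[a,b]b^{-1})\cdots(b^{l-1}[a,b]b^{1-l}),
\]
whose image in $G'_{m,\infty}/G''_{m,\infty}$ reads $\overline{[a,b^l]}=\bar c_{0,0}\,\bar c_{0,1}\cdots\bar c_{0,l-1}$. Since $b$ now has infinite order this gives no relation among the $\bar c_{0,j}$ directly; the relation instead comes from the $a$-side, using $[x,y]=[y,x]^{-1}$ and $a^m=1$. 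Concretely, conjugating by powers of $a$ sends $\bar c_{i,j}\mapsto\bar c_{i+1,j}$ (Notation \ref{nota:G_{m,n}}), and applying the analogous $l$-fold product identity with the roles of $a$ and $b$ swapped, together with $a^m=1$, yields $\bar c_{m,j}=(\bar c_{0,j})^{-1}\cdots(\bar c_{m-1,j})^{-1}$, which is \eqref{eq:2}. The generation claim then follows exactly as before: every $g\in G_{m,\infty}$ lies in $a^ib^jG'_{m,\infty}$ for some $0\le i\le m-1$, $j\in\Z$, so $gcg^{-1}\in c_{i,j}G''_{m,\infty}$ with $c:=c_{0,0}$; since the conjugacy class of $c$ generates $G'_{m,\infty}$, the set $\{\bar c_{i,j}\mid 0\le i\le m-1,\ j\in\Z\}$ generates $G'_{m,\infty}/G''_{m,\infty}$, and \eqref{eq:2} lets us eliminate the rows $i=m-1$ (equivalently $i=m$), leaving $C$ as a generating set. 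Combined with independence, $C$ is a free basis.

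**Main obstacle.** The only genuinely new point compared to Lemma \ref{lemma:basis} is that the rank-counting shortcut ("$C$ has the right number of elements, so a generating set is automatically a basis") is unavailable when $n=\infty$, since both sides are now infinite rank. So the independence of $C$ must be argued separately rather than deduced for free. The cleanest route is probably the limiting argument: for each $N$, the quotient map $G_{m,\infty}\twoheadrightarrow G_{m,N+1}$ (killing $b^{N+1}$) carries $\{\bar c_{i,j}\mid 0\le i\le m-2,\ 0\le j\le N-1\}$ onto part of the free basis of Lemma \ref{lemma:basis}, hence any finite $\Z$-linear dependence among elements of $C$ would survive into some $G_{m,N+1}$ and contradict Lemma \ref{lemma:basis}; thus $C$ is independent. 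I expect the write-up to be short, since the paper explicitly flags it as "a similar argument"; the burden is just to point out which step changes and how the relation \eqref{eq:2} now arises from $a^m=1$ on the $a$-side rather than from $b^n=1$ on the $b$-side.
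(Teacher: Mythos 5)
Your plan is correct, and you have correctly located the one spot where the paper's ``A similar argument shows'' cannot be taken at face value: in Lemma \ref{lemma:basis} the independence of $C$ comes for free from a rank count, but here $G'_{m,\infty}$ has infinite rank, so a separate argument is needed. Your limiting argument is a valid fix, with one small refinement worth making explicit: a putative finite dependence $\sum n_{i,j}\,\bar c_{i,j}=0$ may involve negative $j$'s or a wide spread of $j$'s, so one should first conjugate by a suitable power of $b$ (an automorphism of the $\Z$-module $G'_{m,\infty}/G''_{m,\infty}$, sending $\bar c_{i,j}\mapsto \bar c_{i,j+k}$) to bring all $j$'s into $\{0,\dots,N-1\}$ for some $N$, and only then project $G_{m,\infty}\twoheadrightarrow G_{m,N+1}$; otherwise distinct $\bar c_{i,j}$'s could collapse to the same image. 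With that adjustment, the images land in the free basis of Lemma \ref{lemma:basis}, and independence follows.

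One point you should not let slide: equation \eqref{eq:2} as printed is a typo, and your own derivation in fact produces something slightly different. Since $a^m=1$, one has $c_{m,j}=c_{0,j}$, so \eqref{eq:2} as written reads $\bar c_{0,j}=(\bar c_{0,j})^{-1}\cdots(\bar c_{m-1,j})^{-1}$, i.e.\ $\bar c_{0,j}\,\bar c_{0,j}\,\bar c_{1,j}\cdots\bar c_{m-1,j}=1$. Combined with the genuine relation $\bar c_{0,j}\,\bar c_{1,j}\cdots\bar c_{m-1,j}=1$ that the identity $[b,a^m]=1$ yields, this would force $\bar c_{0,j}=1$, contradicting that $\bar c_{0,j}$ is a free generator. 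The relation your commutator-identity computation actually gives is $\bar c_{m-1,j}=(\bar c_{0,j})^{-1}\cdots(\bar c_{m-2,j})^{-1}$, exactly mirroring the $a$-side relation of Lemma \ref{lemma:basis}, and this is also the form that is used in the proof of Lemma \ref{cor:isom}(4). You should state that corrected version rather than declare your derivation ``is \eqref{eq:2}.''
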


\begin{notation}\label{nota:ring} Let $2 \le m < \infty$ and $2 \le n \le \infty$.
\begin{enumerate}  
\item  $\phi_m(X):=X^m-1$.
\item $\psi_m(X):=\frac{\phi_m(X)}{X-1}=X^{m-1}+\ldots+X+1$.
\item  $\phi_\infty(X)=\psi_\infty(X)=0$ ,
\item  $R_{m,n}:=\Z[X,X^{-1},Y,Y^{-1}]/(\phi_m(X),\phi_n(Y))$. 
\item $S_{m,n}:=\Z[X,X^{-1},Y,Y,^{-1}]/(\psi_m(X),\psi_n(Y))$. 
\item $x^{\pm 1}$ and $y^{\pm 1}$ are the images of $X^{\pm 1}$ and $Y^{\pm 1}$  in $R_{m,n}$ and in $S_{m,n}$. 
\end{enumerate}
\end{notation}

\begin{remark}\label{remark:module}
$S_{m,n}$ is a quotient of $R_{m,n}$ so it is both a finitely generated ring and a cyclic  $R_{m,n}$-module.  The action of an element $r \in R_{m,n}$ on an element $s \in S_{m,n}$ is the multiplication of $s$ with the image of $r$ in $S$. In particular, a subset of $S_{m,n}$ is  an $R_{m,n}$-submodule if and only if it is an ideal in $S_{m,n}$. 
\end{remark}

\begin{lemma}\label{cor:isom} Under Notations \ref{nota:G_{m,n}} and \ref{nota:ring}:

\begin{enumerate}[label=\upshape(\arabic*)]
		\item 	There exists a unique ring isomorphism $\rho:\Z[G_{m,n}/G'_{m,n}] \rightarrow R_{m,n} $ such that $\rho(a)=x$ and $\rho(b)=y$.
		\item Assume that $n<\infty$. There exists a unique group isomorphism $\pi$ from $G_{m,n}'/G''_{m,n}$ to the additive group $ S_{m,n} $  such that for every $0 \le i \le m-2$ and every $0 \le j \le n-2$, $\pi(\bar{c}_{i,j})=x^iy^j$.
		\item Assume that $n=\infty$. There exists a unique group isomorphism $\pi$ from $G_{m,n}'/G''_{m,n}$ to the additive group $ S_{m,n} $ such that for every $0 \le i \le m-2$ and every $j \in \Z$, $\pi(\bar{c}_{i,j})=x^iy^j$.
		\item For every $g \in G_{m,n}/G'_{m,n} $ and $h \in G_{m,n}'/G''_{m,n}$, $$\pi(ghg^{-1})=\rho(g)\cdot \pi(h).$$ Thus, $\pi$ is a $\rho$-equivariant  isomorphism of the $\Z[G_{m,n}/G_{m,n}']$-module $G_{m,n}'/G_{m,n}''$  and the $R_{m,n}$-module $S_{m,n}$.
	\end{enumerate}
\end{lemma}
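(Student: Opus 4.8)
\textbf{Plan for the proof of Lemma \ref{cor:isom}.}

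The plan is to construct the four maps one at a time, each time invoking the universal property of the relevant free/presented object so that existence and uniqueness come for free, and then checking that the defining relations are respected so the map descends. First I would handle (1): the abelianization $G_{m,n}/G'_{m,n}$ is the direct product of the images of $\langle a\rangle$ and $\langle b\rangle$, hence isomorphic to $C_m\times C_n$ (or $C_m\times C_\infty$ if $n=\infty$), so its group ring is generated by the two commuting units $\bar a,\bar b$ subject only to $\bar a^m=\bar b^n=1$. That is exactly the presentation of $R_{m,n}=\Z[X^{\pm1},Y^{\pm1}]/(\phi_m(X),\phi_n(Y))$, so sending $\bar a\mapsto x$, $\bar b\mapsto y$ extends uniquely to a ring isomorphism $\rho$; in the case $n=\infty$ one uses $\phi_\infty=0$, i.e.\ no relation on $Y$, and the same argument applies.

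Next, for (2) (and (3)), I would use Lemma \ref{lemma:basis} (resp.\ Lemma \ref{lemma:oof5}): $G_{m,n}'/G_{m,n}''$ is free abelian with basis $C=\{\bar c_{i,j}: 0\le i\le m-2,\ 0\le j\le n-2\}$ (resp.\ $0\le i\le m-2$, $j\in\Z$). On the target side, $S_{m,n}=\Z[X^{\pm1},Y^{\pm1}]/(\psi_m(X),\psi_n(Y))$ is free abelian on the monomials $\{x^iy^j: 0\le i\le m-2,\ 0\le j\le n-2\}$ (resp.\ $0\le i\le m-2$, $j\in\Z$), because $\psi_m(x)=0$ lets one rewrite $x^{m-1}=-(1+x+\dots+x^{m-2})$ and similarly for $y$, and a dimension/rank count (or a direct basis argument over $\Z$) shows these monomials are a free $\Z$-basis. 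Since both sides are free abelian groups with bases of matching cardinality, the set bijection $\bar c_{i,j}\mapsto x^iy^j$ extends uniquely to a group isomorphism $\pi$.

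For (4), the point is that $\rho$-equivariance need only be checked on a generating set of $G_{m,n}/G'_{m,n}$ and on the $\Z$-basis $C$ of $G_{m,n}'/G_{m,n}''$, because conjugation is additive in the second variable and multiplicative in the first. By Notation \ref{nota:G_{m,n}}, conjugation by $a$ sends $\bar c_{i,j}\mapsto \bar c_{i+1,j}$ and conjugation by $b$ sends $\bar c_{i,j}\mapsto \bar c_{i,j+1}$; under $\pi$ these become $x^iy^j\mapsto x^{i+1}y^j$ and $x^iy^j\mapsto x^iy^{j+1}$, which is exactly multiplication by $x=\rho(a)$ and $y=\rho(b)$. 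One subtlety: when $i=m-2$ the index $i+1=m-1$ is outside the basis range, so one must check that the relation $\bar c_{m-1,j}=(\bar c_{0,j})^{-1}\cdots(\bar c_{m-2,j})^{-1}$ from \eqref{eq:1} (resp.\ \eqref{eq:2}) is carried by $\pi$ to the corresponding relation $x^{m-1}y^j=-(1+x+\dots+x^{m-2})y^j$ forced by $\psi_m(x)=0$ in $S_{m,n}$ — this is precisely the compatibility that was built in when we matched the two presentations, and the analogous check for $j=n-1$ uses $\psi_n(y)=0$. Once the action of the generators matches, it matches for all of $R_{m,n}$, giving the stated $\rho$-equivariance.

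The main obstacle is bookkeeping rather than conceptual: making sure the chosen $\Z$-bases on the two sides are genuinely in bijection (the rank count via the Euler characteristic already appears in Lemma \ref{lemma:free_commutator}, so this is not hard but must be stated cleanly), and handling the boundary relations \eqref{eq:1}/\eqref{eq:2} versus $\psi_m,\psi_n$ so that the equivariance check in (4) does not run off the edge of the basis. The $n=\infty$ case requires only cosmetic changes, replacing the finite range of $j$ by $\Z$ and dropping the relation coming from $\psi_\infty=0$.
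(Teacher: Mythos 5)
Your proposal follows the paper's proof essentially verbatim: part (1) is immediate from identifying the abelianization, parts (2)–(3) quote Lemmas \ref{lemma:basis} and \ref{lemma:oof5}, and part (4) is verified by checking the conjugation action of the generators $a,b$ on the $\Z$-basis $C$, including the same boundary cases $i=m-2$ and $j=n-2$ where the relations \eqref{eq:1}/\eqref{eq:2} match the relations $\psi_m(x)=0$, $\psi_n(y)=0$ in $S_{m,n}$. No gaps, and no genuinely different route.
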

\begin{proof}
	The first claim is clear. The second and third are immediate consequences  of Lemmas \ref{lemma:basis} and \ref{lemma:oof5}, respectively. We check the forth claim. Let $C$ be as in Lemma \ref{lemma:basis} if $n<\infty$ and as in Lemma  \ref{lemma:oof5} otherwise. 
	
	Since $\{a,a^{-1},b,b^{-1}\}$ generate $\Z[G_{m,n}/G'_{m,n}] $ as a ring and $C$ generates $G_{m,n}'/G''_{m,n}$ as a $\Z$-module, it is enough to check that for every $\bar{c}_{i,j}\in C$, $$\pi(a\bar{c}_{i,j}a^{-1})=x^{i+1}y^j \text{ and }  \pi(b\bar{c}_{i,j}b^{-1})=x^iy^{j+1}.$$
	Indeed, for every $\bar{c}_{i,j}\in C$ the following holds: 
	\begin{itemize}
	\item If $0 \le i \le m-3$ then $a\bar{c}_{i,j}a^{-1}=\bar{c}_{i+1,j}$ and $\pi(\bar{c}_{i+1,j})=x^{i+1}y^j$.
	\item If $i=m-2$ then $a\bar{c}_{m-2,j}a^{-1}=\bar{c}_{m-1,j}=(\bar{c}_{0,j})^{-1}\cdots  (\bar{c}_{m-2,j})^{-1}$ and $$\pi\left((\bar{c}_{0,j})^{-1}\ldots  (\bar{c}_{m-2,j})^{-1}\right)=-y^j-\cdots - x^{m-2}y^j=x^{m-1}y^j.$$
	\item If $0 \le j \le n-3<\infty$ then $b\bar{c}_{i,j}b^{-1}=\bar{c}_{i,j+1}$ and $\pi(\bar{c}_{i,j+1})=x^{i}y^{j+1}$.
	\item If $j = n-2<\infty$ then $b\bar{c}_{i,n-2}b^{-1}=\bar{c}_{i,n-1}=(\bar{c}_{i,0})^{-1}\cdots  (\bar{c}_{i,n-2})^{-1}$ and $$\pi\left((\bar{c}_{i,0})^{-1}\cdots  (\bar{c}_{i,n-2})^{-1}\right)=-x^i-\ldots - x^{i}y^{n-2}=x^iy^{n-1}.$$
	\item If $j \in \Z$ and $n=\infty$ then  $b\bar{c}_{i,j}b^{-1}=\bar{c}_{i,j+1}$ and $\pi(\bar{c}_{i,j+1})=x^{i}y^{j+1}$.
\end{itemize}

\end{proof}

\begin{lemma}\label{lemma:d_in_G'}
 	Let $\Gamma$ be a finitely generated group and let $x,y\in \widehat{\Gamma}$. If $d:=[x,y]\in \Gamma$ then $d \in \Gamma'$.
 \end{lemma}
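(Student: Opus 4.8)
The plan is to reduce the statement to a purely abelian computation about commutators in finite quotients of $\Gamma/\Gamma'$. Since $\Gamma$ is finitely generated, its abelianization $A := \Gamma/\Gamma'$ is a finitely generated abelian group, hence its profinite completion $\widehat A$ is the profinite completion $\widehat{\Gamma}/\overline{\Gamma'}$, where $\overline{\Gamma'}$ denotes the closure of the commutator subgroup. The key point is that the abelianization map $\Gamma \to A$ extends continuously to $\widehat{\Gamma} \to \widehat A$, and $\widehat A$ is abelian, so every commutator in $\widehat{\Gamma}$ maps to $0$ in $\widehat A$. Concretely, if $d = [x,y]$ with $x,y \in \widehat{\Gamma}$, then the image of $d$ in $\widehat A$ is trivial.

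First I would form the image $\bar d$ of $d$ under $\Gamma \hookrightarrow \widehat{\Gamma} \to \widehat A$. On one hand, since $d \in \Gamma$ by hypothesis, $\bar d$ is the image of $d$ under the composite $\Gamma \to A \to \widehat A$, and this composite is just the natural map $\Gamma \to A$ followed by the canonical inclusion $A \hookrightarrow \widehat A$, which is injective because a finitely generated abelian group is residually finite. On the other hand, $\bar d = [\bar x, \bar y]$ in the abelian group $\widehat A$, hence $\bar d = 0$. Combining, the image of $d$ in $A = \Gamma/\Gamma'$ is trivial, i.e. $d \in \Gamma'$.

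The only genuine ingredients are: (i) the universal property of profinite completion, which gives the continuous extension of $\Gamma \to A$ to $\widehat{\Gamma} \to \widehat A$ (using that $\widehat A$ is profinite); (ii) the identification, for finitely generated $\Gamma$, of $\widehat A$ with $\widehat{\Gamma}/\overline{\Gamma'}$ — or, more simply, the mere fact that the extension $\widehat{\Gamma} \to \widehat A$ is a homomorphism of groups, which already suffices; and (iii) residual finiteness of the finitely generated abelian group $A$, so that $A \hookrightarrow \widehat A$. None of these is hard; the statement is essentially a formality, and there is no real obstacle. I expect the proof to be two or three sentences: apply the abelianization functor on the profinite level, observe commutators die in the abelian quotient, and pull back along the injection $A \hookrightarrow \widehat A$.

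One small point to be careful about is that the hypothesis ``$d \in \Gamma$'' is what makes the conclusion meaningful: without it the claim ``$d \in \Gamma'$'' would not typecheck. So the logical skeleton is: $d$ lies in $\Gamma$ and maps to $0$ in $\widehat A$; since the map $\Gamma \to \widehat A$ factors through $A \hookrightarrow \widehat A$ with the second arrow injective, $d$ maps to $0$ already in $A$, which is precisely the assertion $d \in \Gamma'$.
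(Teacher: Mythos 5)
Your proof is correct and rests on the same two ingredients as the paper's: that finitely generated abelian groups are residually finite, and that commutators vanish in (pro)abelian quotients. The paper packages this as a contradiction argument, picking a single finite abelian quotient $\Delta = \Gamma/\Lambda$ in which $d$ survives and noting $\widehat{\Gamma}/\overline{\Lambda} \simeq \Gamma/\Lambda$ is abelian; you instead phrase it directly via the continuous extension $\widehat{\Gamma} \to \widehat{A}$ and the injectivity of $A \hookrightarrow \widehat{A}$, which is just the same residual-finiteness fact stated once and for all rather than quotient by quotient — a cleaner but not essentially different route.
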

\begin{proof}
	Assume otherwise. The structure theorem of finitely generated abelian groups implies that every such group is residually finite. Since $\Gamma/\Gamma'$ is a finitely generated abelian group and $d \not\in\Gamma'$, there exists a finite abelian  quotient $\Delta$ of $\Gamma$ such that the image of $d$ in $\Delta$ is not trivial. Let $\Lambda$ be  a normal finite index  subgroup of $\Gamma$ such that $\Delta=\Gamma/\Lambda$. Let $\overline{\Lambda}$ be the closure of $\Lambda$ in $\widehat{\Gamma}$. Then $\widehat{\Gamma}/\overline{\Lambda} \simeq \Gamma/\Lambda$  is an abelian group so $d=[x,y]\in\overline{\Lambda}$. On the other hand, since  $\overline{\Lambda} \cap \Gamma=\Lambda $, $d \notin \overline{\Lambda}$, a contradiction.  	
\end{proof}

\begin{lemma}\label{lemma:inv} Under Notations \ref{nota:G_{m,n}} and \ref{nota:ring},
	let $d \in G_{m,n}$ be a  commutator of a pair of profinite generators of $\widehat{G}_{m,n}$. Denote the normal subgroup  of $G_{m,n}$ generated by $d$ by $D$. Then  $DG''_{m,n}=G_{m,n}'$.
	
	In particular, $\pi(d)$ is an invertible element of the ring $S_{m,n}$ where $\pi$ is as in Corollary \ref{cor:isom}.
\end{lemma}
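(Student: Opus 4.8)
The plan is to show that the quotient $Q:=G_{m,n}/DG''_{m,n}$ is abelian --- which is equivalent to the first assertion $DG''_{m,n}=G'_{m,n}$ --- and then to reinterpret this equality through the isomorphism of Corollary \ref{cor:isom}. Write $d=[x,y]$ with $x,y\in\widehat G_{m,n}$ a pair of topological generators. Since $d\in G_{m,n}$, Lemma \ref{lemma:d_in_G'} gives $d\in G'_{m,n}$, so the normal closure $D$ of $d$ is contained in $G'_{m,n}$ and $G''_{m,n}\subseteq DG''_{m,n}\subseteq G'_{m,n}$. Hence $Q$ is finitely generated and $Q'=G'_{m,n}/DG''_{m,n}$ is a quotient of the abelian group $G'_{m,n}/G''_{m,n}$; thus $Q$ is a finitely generated metabelian group, and by P.\ Hall's theorem it is residually finite, so $Q$ embeds in its profinite completion $\widehat Q$.

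Next I would push the hypothesis into $\widehat Q$. The quotient map $G_{m,n}\twoheadrightarrow Q$ extends to a continuous surjection $\varphi\colon\widehat G_{m,n}\twoheadrightarrow\widehat Q$, and $\varphi(x),\varphi(y)$ topologically generate $\widehat Q$ while $[\varphi(x),\varphi(y)]=\varphi(d)=1$ because $d\in D\subseteq DG''_{m,n}$. So $\varphi(x)$ and $\varphi(y)$ commute; the abstract subgroup they generate is abelian, hence so is its closure, which is all of $\widehat Q$. Since $Q\hookrightarrow\widehat Q$, the group $Q$ is abelian, i.e.\ $Q'=1$, which says precisely that $DG''_{m,n}=G'_{m,n}$.

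For the final statement, transport this equality through Corollary \ref{cor:isom}. The image of $DG''_{m,n}$ in $G_{m,n}/G''_{m,n}$ is the normal closure of $\bar d=dG''_{m,n}$, and as a subgroup of the $\Z[G_{m,n}/G'_{m,n}]$-module $G'_{m,n}/G''_{m,n}$ it is the submodule generated by $\bar d$; under the $\rho$-equivariant isomorphism $\pi$ this corresponds to the $R_{m,n}$-submodule of $S_{m,n}$ generated by $\pi(d)$, which by Remark \ref{remark:module} is the ideal $(\pi(d))$ of $S_{m,n}$. Thus $DG''_{m,n}=G'_{m,n}$ forces $(\pi(d))=S_{m,n}$, so $1\in(\pi(d))$ and $\pi(d)$ is invertible in $S_{m,n}$. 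The argument is otherwise formal; the one external ingredient is P.\ Hall's residual finiteness theorem, so the point requiring care is the identification of $Q$ as a finitely generated metabelian group (in particular that $Q'$ is abelian), after which everything follows from the elementary fact that a profinite group topologically generated by a pair of commuting elements is abelian, together with the module dictionary of Corollary \ref{cor:isom}.
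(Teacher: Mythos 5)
Your proof is correct, and it follows a genuinely different route from the paper. The paper argues by contradiction: assuming $DG''_{m,n}\subsetneq G'_{m,n}$, it uses the module dictionary of Corollary \ref{cor:isom} together with the residual finiteness of finitely generated commutative rings to produce a finite-index ideal $J\supseteq\pi(D)$ of $S_{m,n}$ not containing $\pi(c_{0,0})$, then pulls this back to a normal subgroup $K\le G_{m,n}$ with $d\in K$ and $c_{0,0}\notin K$. It needs $K$ to have finite index in $G_{m,n}$, which is automatic when $n<\infty$ but requires an extra residual-finiteness step to enlarge $K$ to a suitable $L$ when $n=\infty$. Finally, $\widehat G_{m,n}/\overline L$ is non-abelian because $c_{0,0}\notin L$, yet must be abelian because $d$ normally generates $(\widehat G_{m,n})'$ and $d\in\overline L$. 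You instead pass to the quotient $Q=G_{m,n}/DG''_{m,n}$ directly, identify it as finitely generated metabelian, invoke P.\ Hall's theorem to conclude $Q\hookrightarrow\widehat Q$, and then note that $\widehat Q$ is topologically generated by two commuting elements (the images of $x,y$ under $\widehat G_{m,n}\twoheadrightarrow\widehat Q$) and is therefore abelian; hence so is $Q$. Your route avoids the case split on $n$ and is somewhat shorter; the price is that you import a different external fact (Hall's theorem on finitely generated metabelian groups rather than residual finiteness of finitely generated rings), and you sidestep the explicit ideal-theoretic construction, which the paper's proof naturally recycles from the module machinery it has just set up. Your handling of the ``in particular'' part --- transporting $DG''_{m,n}=G'_{m,n}$ through $\pi$ to get $(\pi(d))=S_{m,n}$ --- is the same as the paper's.
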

\begin{proof} Denote $c:=c_{0,0}$. Assume that $DG''_{m,n}$ is properly contained in $G_{m,n}'$. Since $c$ normally generates  $G_{m,n}'$, $c \notin DG''_{m,n}$. We will show that there exists a finite index normal subgroup $L$ of $G_{m,n}$ such that $d \in L$ but $c \notin L$.  Then $G_{m,n}/L $ is not abelian. Let $\overline{L}$ be the closure of $L$ in $\widehat{G}_{m,n}$, then $G_{m,n}/L \simeq \widehat{G}_{m,n}/\overline{L}$ so $\widehat{G}_{m,n}/\overline{L}$ is not abelian. On the other hand, $d \in \overline{L}$ and the conjugacy class of $d$ in  $\widehat{G}_{m,n}$ topologically generates $(\widehat{G}_{m,n})'$. Thus $ \widehat{G}_{m,n}/\overline{L}$ must be abelian, a contradiction.

Let $\pi$ be as in Corollary \ref{cor:isom}. Denote $I:=\pi(D)$. Then $I$ is a proper $R_{m,n}$-submodule of $S_{m,n}$ and thus a proper  ideal of the ring $S_{m,n}$ (see Remark \ref{remark:module}). Since every finitely generated ring is residually finite (cf. \cite{OR}) and $\pi(c)\notin I$, there exists an  ideal $J \supseteq I$  such that  $S_{m,n}/J$ is finite and $\pi(c)\notin J$. Remark \ref{remark:module} implies that $J$ is an $R_{m,n}$-submodule of $S_{m,n}$. Thus, there exists a normal subgroup $K$ of $G_{m,n}$ which contains $G_{m,n}''$ such that  $K/G''_{m,n}=\pi^{-1}(J)$. Then $d\in K$ and $c \notin K$.

 If $n<\infty$ then $$[G_{m,n}:K]=[G_{m,n}:G_{m,n}'][G_{m,n}':K]=mn[G_{m,n}':K]<\infty$$  so we can take $L:=K$.
 
Assume that $n=\infty$.  If $H \le G$ are groups and $M$ is a normal subgroup of $G$ then $[G:H] \le [G:MH]|M|$. Since $O(a)=m<\infty$, $$[G_{m,n}/K:\langle b\rangle K/K]\le [G_{m,n}/K,\langle b\rangle G'_{m,n}/K][G'_{m,n}:K]= m[G'_{m,n}:K]<\infty.$$ If a finitely generated group $\Gamma$ has a finite index residually finite subgrup, then $\Gamma$ itself is residually finite. Since $\langle b\rangle K/K$ is an infinite cyclic group, $G_{m,n}/K$ is residually finite. 
 Hence, there exists a normal subgroup $L $ of $G_{m,n}$  such that $L \supseteq K$, $G_{m,n}/L$ is finite and $c \notin L$. Clearly,  $d \in K \subseteq L$. 

The ``in particular'' part follows since $\pi(D)$ is the ideal of $S_{m,n}$ generated by $\pi(d)$. 

 \end{proof}

\begin{corollary}\label{cor:(ab)^3} Under Notations \ref{nota:G_{m,n}} and \ref{nota:ring}, assume that $(m,n)=(3,3)$. Let $r$ be a non-zero integer and denote $h_1:=(ab)^{3r}$, $h_2:=(ba)^{3r}$ and $h_3:=(b^2a^2)^{3r}$.
Then non of $h_1$, $h_2$ and $h_3$ is a commutator of profinite generators of 
$\widehat{G}_{3,3}$. 
\end{corollary}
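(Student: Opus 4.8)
\textit{Proof proposal.}

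The plan is to read everything off Lemma~\ref{lemma:inv}. If $d\in G_{3,3}$ is a commutator of a pair of profinite generators of $\widehat{G}_{3,3}$, then $d\in G'_{3,3}$ (Lemma~\ref{lemma:d_in_G'}) and $\pi(dG''_{3,3})$ is a \emph{unit} of $S_{3,3}$, where $\pi\colon G'_{3,3}/G''_{3,3}\xrightarrow{\ \sim\ }S_{3,3}$ is the isomorphism of Corollary~\ref{cor:isom}. So it suffices to show $h_1,h_2,h_3\in G'_{3,3}$ and that $\pi$ sends all three to non-units. First I would reduce to one element. Since $a^3=b^3=1$ in $G_{3,3}$, we have $b^2a^2=b^{-1}a^{-1}=(ab)^{-1}$ and $ba=a^{-1}(ab)a$, so $h_3=(ab)^{-3r}=h_1^{-1}$ and $h_2=a^{-1}h_1a$; the image of $ab$ in $G_{3,3}/G'_{3,3}\cong C_3\times C_3$ has order dividing $3$, so $h_1=(ab)^{3r}\in G'_{3,3}$, hence also $h_2,h_3\in G'_{3,3}$. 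Passing to $S_{3,3}$ these relations read $\pi(h_3)=-\pi(h_1)$ and $\pi(h_2)=x^{-1}\pi(h_1)$; since $-1$ and $x$ are units, the three images are units or non-units together. Finally $h_1=\bigl((ab)^3\bigr)^r$ with $(ab)^3\in G'_{3,3}$, so $\pi(h_1)=r\cdot\pi\bigl((ab)^3G''_{3,3}\bigr)$ in the additive group $S_{3,3}$, and a nonzero integer multiple of a non-unit of a commutative ring is again a non-unit (if $rst=1$ then $s(rt)=1$). Thus everything reduces to showing that $\pi\bigl((ab)^3G''_{3,3}\bigr)$ is not a unit of $S_{3,3}$.

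The only real computation is to evaluate this element; I would show it equals $1+y+xy$. The cleanest route is the metabelian (``Magnus'') representation $\Phi\colon G_{3,3}\to\mathrm{Aff}$, where $\mathrm{Aff}=\bigl\{\bigl(\begin{smallmatrix}u&v\\0&1\end{smallmatrix}\bigr):u=x^iy^j,\ v\in S_{3,3}\bigr\}$ with $u$ acting on $v$ by the module structure of Corollary~\ref{cor:isom}, defined by $\Phi(a)=\bigl(\begin{smallmatrix}x&0\\0&1\end{smallmatrix}\bigr)$ and $\Phi(b)=\bigl(\begin{smallmatrix}y&1\\0&1\end{smallmatrix}\bigr)$. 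It is well defined because $\psi_3(x)=\psi_3(y)=0$ in $S_{3,3}$ forces $\Phi(a)^3=\Phi(b)^3=I$; it kills $G''_{3,3}$ because $\mathrm{Aff}$ is metabelian; and it is injective on $G'_{3,3}/G''_{3,3}$ because $\Phi(c_{i,j})=\bigl(\begin{smallmatrix}1&(x-1)x^iy^j\\0&1\end{smallmatrix}\bigr)$ and $x-1$ is a non-zero-divisor of $S_{3,3}$ (multiplication by $x-1$ on the $\Z$-basis $\{1,x,y,xy\}$ has determinant $9$). A direct matrix computation gives $\Phi\bigl((ab)^3\bigr)=\bigl(\begin{smallmatrix}1&x+x^2y+y^2\\0&1\end{smallmatrix}\bigr)$, and since $x+x^2y+y^2=(x-1)(1+y+xy)$ in $S_{3,3}$ (using $y^2=-1-y$), comparison with $\Phi(c_{0,0}c_{0,1}c_{1,1})$ and injectivity give $(ab)^3\equiv c_{0,0}\,c_{0,1}\,c_{1,1}\pmod{G''_{3,3}}$, i.e. $\pi\bigl((ab)^3G''_{3,3}\bigr)=1+y+xy$. (Equivalently one can straighten $ababab$ directly in $G_{3,3}/G''_{3,3}$ using $ab=c_{0,0}\,ba$ and the conjugation rules of Notation~\ref{nota:G_{m,n}}.)

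It remains to see that $1+y+xy$ is not a unit of $S_{3,3}$, which I would detect by a ``diagonal'' specialization. With $\omega$ a primitive cube root of unity, the assignment $x\mapsto\omega$, $y\mapsto\omega$ defines a ring homomorphism $\varepsilon\colon S_{3,3}\to\Z[\omega]$ (legitimate since $\psi_3(\omega)=0$), and $\varepsilon(1+y+xy)=1+\omega+\omega^2=0$, which is not a unit of the nonzero ring $\Z[\omega]$. Since a ring homomorphism carries units to units, $1+y+xy$ is not a unit of $S_{3,3}$. By the first paragraph $\pi$ sends none of $h_1,h_2,h_3$ to a unit, so by Lemma~\ref{lemma:inv} none of $h_1,h_2,h_3$ is a commutator of a pair of profinite generators of $\widehat{G}_{3,3}$.

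The work is concentrated in the identity $\pi\bigl((ab)^3G''_{3,3}\bigr)=1+y+xy$; the rest is formal. The one idea that matters is the choice of the diagonal specialization $x,y\mapsto\omega$: it is precisely the quotient of $S_{3,3}$ that exposes the cyclotomic relation $1+\omega+\omega^2=0$ hidden inside $1+y+xy$, and this is what produces the obstruction.
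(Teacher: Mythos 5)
Your proposal is correct, and it follows the same logical architecture as the paper's proof: reduce to $h_1=(ab)^3$ using $b^2a^2=(ab)^{-1}$ and $ba=a^{-1}(ab)a$, compute $\pi\bigl((ab)^3 G_{3,3}''\bigr)$ in $S_{3,3}$, and then show that element is not a unit so that Lemma~\ref{lemma:inv} applies. The two places you diverge are computational, not structural. For the value of $\pi\bigl((ab)^3 G_{3,3}''\bigr)$, the paper reads it off directly from the single word identity $(ab)^3=[a,b]\cdot a^2[a,b]a^{-2}\cdot(a^2b^2)[a,b](a^2b^2)^{-1}$, giving $1+x^2+x^2y^2$; you set up the Magnus (metabelian) representation $\Phi$, compute a $2\times2$ matrix product, and invoke injectivity of $\Phi$ on $G'_{3,3}/G''_{3,3}$ (via $x-1$ being a nonzerodivisor), obtaining $1+y+xy$. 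These agree: $1+x^2+x^2y^2=1+y+xy$ in $S_{3,3}$ since $x^2=-1-x$ and $y^2=-1-y$. Your route is a bit heavier in setup but more systematic and less error-prone than straightening the word by hand; the paper's is leaner once you see the identity. For the non-unit check, the paper uses the homomorphism $\eta\colon S_{3,3}\to\Z/3\Z$ with $\eta(x)=\eta(y)=1$, under which $1+x^2+x^2y^2\mapsto 3\equiv 0$; you use $\varepsilon\colon S_{3,3}\to\Z[\omega]$, $x,y\mapsto\omega$, under which $1+y+xy\mapsto 1+\omega+\omega^2=0$. These are essentially the same obstruction (the paper's $\eta$ factors through your $\varepsilon$ followed by reduction modulo the prime $(1-\omega)$ of $\Z[\omega]$), though yours exhibits the sharper fact that the image vanishes identically rather than merely modulo $3$.
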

\begin{proof}
Since $ba=a^{-1}(ab)a$ and $b^2a^2=(ab)^{-1}$, it is enough to prove the claim with respect to $h_1=(ab)^3$. 
 Let $\pi$ be as in Lemma \ref{cor:isom}. It is enough to prove  that for every $1 \le i \le 3$, $\pi(h_1^{3r})$ is not invertible in $S_{3,3}$. Since
 
	$$(ab)^3=[a,b]a^2[a,b]a^{-2}(a^2b^2)[a,b](a^2b^2)^{-1},$$$\pi((ab)^{3r})=r(1+x^2+x^2y^2)$. The claim follows since  $1+x^2+x^2y^2$ is in the kernel of the homomorphism $\eta:S_{3,3}\rightarrow \Z/3\Z$ defined by $\eta(x)=\eta(y)=1$. 
\end{proof}

\begin{lemma}\label{lemma:S_{3,3}}
	Let $(m,n)\in \{(2,3),(2,\infty),(3,3),(3,\infty)\}$. The invertible  elements in $S_{m,n}$ are the elements of the form $\pm x^iy^j$. 
\end{lemma}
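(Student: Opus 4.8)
The plan is to analyze each of the four rings $S_{m,n}$ explicitly and show that every unit reduces to $\pm x^i y^j$. The common strategy is: first identify $S_{m,n}$ with a concrete (product of) cyclotomic-type ring(s), then use a norm or a collection of ring homomorphisms to a field to force the coordinates of a unit to be roots of unity, and finally check by hand which roots of unity actually occur as images of units of $S_{m,n}$.

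First I would record the structure of each ring. Since $\psi_2(X)=X+1$, in $S_{2,n}$ we have $x=-1$, so $S_{2,3}\cong \Z[Y]/(Y^2+Y+1)=\Z[\omega]$ with $\omega$ a primitive cube root of unity, and $S_{2,\infty}\cong \Z[Y,Y^{-1}]$, the Laurent polynomial ring in one variable. For $m=3$, $\psi_3(X)=X^2+X+1$, so $S_{3,3}\cong \Z[\omega_1,\omega_2]$ with $\omega_1,\omega_2$ independent primitive cube roots of unity, i.e. $S_{3,3}\cong \Z[\omega]\otimes_\Z \Z[\omega]$, and $S_{3,\infty}\cong \Z[\omega][Y,Y^{-1}]$. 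Thus in all four cases $S_{m,n}$ is a (possibly Laurent) polynomial ring over $\Z$ or $\Z[\omega]$, or a tensor square of $\Z[\omega]$.

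Next I would compute the units. For $\Z[\omega]$ (the case $(2,3)$) the unit group is exactly $\{\pm 1,\pm\omega,\pm\omega^2\}=\{\pm x^iy^j\}$, which is classical. For $\Z[Y,Y^{-1}]$ (the case $(2,\infty)$): a unit times its inverse has Laurent degrees summing to zero, and since $\Z[Y,Y^{-1}]$ is a UFD (localization of $\Z[Y]$) with $\Z^\times=\{\pm1\}$, the only units are $\pm Y^j$; equivalently one can specialize $Y\mapsto$ a transcendental and use that units of $\Z[Y]$ are $\pm1$. For $S_{3,\infty}\cong\Z[\omega][Y,Y^{-1}]$ the same UFD/localization argument gives units $\zeta Y^j$ with $\zeta\in\Z[\omega]^\times=\{\pm\omega^k\}$, which is the claimed form. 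The remaining and most delicate case is $(3,3)$, $S_{3,3}\cong\Z[\omega_1]\otimes_\Z\Z[\omega_2]$: this ring is not a domain (it has idempotents coming from $\omega_1\mapsto\omega_2^{\pm1}$), so I would use the two ring maps $S_{3,3}\to\Z[\omega]$ sending $(x,y)\mapsto(\omega,\omega)$ and $(x,y)\mapsto(\omega,\omega^{-1})$; a unit must map to a unit $\pm\omega^a$, $\pm\omega^b$ under these, and conversely the pair $(a,b)$ determines the element modulo the kernel of the product map, which is killed by checking on the $\Z$-basis $\{x^iy^j\}_{0\le i,j\le 1}$. Concretely, I would argue that $S_{3,3}\to\Z[\omega]\times\Z[\omega]$ is injective (both factors, intersected, recover the six-element group), or more simply observe that $S_{3,3}$ is free of rank $4$ over $\Z$ with $\Z$-basis $1,x,y,xy$ and that the $\Z[\omega]\times\Z[\omega]$ presentation is faithfully flat, so units are detected on the two factors, forcing a unit to be $\pm x^iy^j$ after matching both specializations.

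The main obstacle I expect is precisely the $(3,3)$ case, because $S_{3,3}$ has zero divisors and a naive "degree" or norm argument fails; the correct move is to split it via its two cube-root-of-unity specializations and then reassemble, checking that the only elements of $S_{3,3}$ mapping to a compatible pair of sixth-roots of unity in $\Z[\omega]\times\Z[\omega]$ are the twelve elements $\pm x^iy^j$ ($0\le i,j\le 1$, together with the relations $x^2=-1-x$, $y^2=-1-y$, which fold $\pm x^iy^j$ for all $i,j$ back into this list). The Laurent cases are routine once one invokes that localizations of UFDs are UFDs and that $\Z^\times=\{\pm1\}$, $\Z[\omega]^\times=\langle-\omega\rangle$.
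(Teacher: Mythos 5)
Your approach is essentially the one the paper takes: the cases $(2,3)$, $(2,\infty)$, $(3,\infty)$ are handled by identifying $S_{m,n}$ with $\Z[\xi]$, $\Z[Y,Y^{-1}]$, or $\Z[\xi][Y,Y^{-1}]$ and invoking the known unit groups, and the case $(3,3)$ is attacked via the injection $\phi\colon S_{3,3}\hookrightarrow \Z[\xi]\times\Z[\xi]$ sending $y\mapsto(\xi,\xi^{-1})$. So there is no divergence of method for the three easy cases.

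For $(3,3)$, however, your writeup has a real gap at the decisive step, and also a small miscount. You observe (correctly) that a unit of $S_{3,3}$ must map under $\phi$ to a unit of $\Z[\xi]\times\Z[\xi]$, which is a group of $36$ elements; but injectivity of $\phi$ by itself does not tell you that only the $\pm x^iy^j$ occur. The phrase \emph{``units are detected on the two factors, forcing a unit to be $\pm x^iy^j$ after matching both specializations''} is exactly the assertion that needs proof: a priori the image of $S_{3,3}^\times$ in $\Z[\xi]^\times\times\Z[\xi]^\times$ could be larger than the $18$-element subgroup $\{(\pm\xi^{i+j},\pm\xi^{i-j})\}$. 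The paper closes this by a clean index argument: the image of the unit group is a \emph{subgroup} of a group of order $36$, and one verifies it is not all of it (e.g.\ $(1,-\xi^{-1})$ is not in the image of $\phi$ at all, since it would force $a=\tfrac{1}{1-\xi}\notin\Z[\xi]$); hence the image has order dividing $36$ and strictly less than $36$, so at most $18$, which is attained by the $\pm x^iy^j$. Without some argument of this kind, your proposal does not actually bound the unit group. Separately, the count of $18 = 2\cdot 3\cdot 3$ monomials $\pm x^i y^j$ with $0\le i,j\le 2$ is what you want, not ``twelve'' with $0\le i,j\le 1$; the $\Z$-rank of $S_{3,3}$ is $4$, but that is unrelated to the number of distinct elements of the form $\pm x^iy^j$.
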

\begin{proof}
Denote $T=\Z[X,X^{-1}]/\psi_m(X)$. Then $S_{m,n}\simeq T[Y,Y^{-1}]/(\psi_n(Y))$. 
	If $m=2$ then $\psi_m(X)=X+1$ so $T \simeq \Z$ while if $m=3$ then $\psi_m(X)=X^2+X+1$ so $T$ is isomorphic to the ring of Eisenstein integers $\Z[\xi]$ where  $\xi=\frac{-1+\sqrt{-3}}{2}$ is a primitive third root of unity. 	
	
	If $(m,n)=(2,3)$ then $S_{m,n}\simeq \Z[Y,Y^{-1}]/(\psi_n(Y))\simeq \Z[\xi]$ where the isomorphism from $S_{m,n}$ to $Z[\xi]$ sends $x$ to $-1$ and $y$ to $\xi$. The claim follows  since the invertible elements in the Eisenstein integers are the elements of the from $\pm \xi^j$.
	
		If $(m,n)=(2,\infty)$ then $S_{m,n}\simeq \Z[Y,Y^{-1}]/(\psi_n(Y))\simeq \Z[Y,Y^{-1}]$ where the isomorphism from $S_{m,n}$ to $Z[Y,Y^{-1}]$ sends $x$ to $-1$ and $y$ to $Y$. The claim follows since the invertible elements $\Z[Y,Y^{-1}]$ are the elements of the from $\pm Y^j$.
		
		If $(m,n)=(3,\infty)$ then $S_{m,n}\simeq \Z[\xi][Y,Y^{-1}]/(\psi_n(Y))\simeq \Z[\xi][Y,Y^{-1}]$ where the isomorphism from $S_{m,n}$ to $\Z[\xi]$ sends $x$ to $\xi$ and $y$ to $Y$. The claim follows since the invertible elements $\Z[\xi][Y,Y^{-1}]$ are the elements of the from $\pm \xi^iY^j$.
		
		We are left with the case $(m,n)=(3,3)$. In this case $T \simeq \Z[\xi]$ and
	$$
	S_{3,3} =\Z[X,X^{-1},Y,Y^{-1}]/(\psi_3(X),\psi_3(Y)) \simeq \Z[\xi][Y,Y^{-1}]/(\psi_3(Y)).
	$$
	Since $x^3=1$ and $y^3=1$, every element of the from $\pm x^iy^j$ is invertible in $S_{3,3}$ and there are  18 such elements. Thus, it is enough to prove that there are at most 18 invertible elements in $Z[\xi][Y,Y^{-1}]/(\psi_3(Y))$. The ring $\Z[\xi]$ is a unique factorization domain, $ Y-\xi,Y-\xi^{-1}\in \Z[\xi][Y]$ are coprime and 
	$\psi_3(Y)=Y^2+Y+1=(Y-\xi)(Y-\xi^{-1})$, hence,
	 the map $Y \mapsto (\xi,\xi^{-1})$ induces an injective homomorphism of rings  
	 $$\phi:\Z[\xi][Y,Y^{-1}]/(\psi_3(Y))\rightarrow \Z[\xi] \times \Z[\xi].$$
	 The restriction of $\phi$ to the set of invertible elements  is an injective homomorphism of  the groups of invertible elements    
	 $$\phi^*:\left(\Z[\xi][Y,Y^{-1}]/(\psi_3(Y)\right)^*\rightarrow \Z[\xi]^*\times \Z[\xi]^*.$$
	  Since $\Z[\xi]^*\times \Z[\xi]^*$ contains 36 elements and the image of $\phi^*$ is a subgroup of $ \Z[\xi]^*\times \Z[\xi]^*$, it is enough to show that $\phi^*$ is not onto. We claim that $(1,-\xi^{-1})$ in not in the image of $\phi$ and thus not in the image of $\phi^*$. Every element of $\Z[\xi][Y,Y^{-1}]/(\psi_3(Y))$ is of the form $a+bY+(\psi_3(Y))$ where $a,b \in \Z[\xi]$ and $$\phi^*\left(a+bY+(\psi_3(Y))\right)=(a+b\xi,a+b\xi^{-1}).$$  
	  Thus, if $(1,-\xi^{-1})$ was in the image of $\phi$, then there would have been $a,b \in \Z[\xi]$ such that 
$$
a+b\xi=1\ ,\ a+b\xi^{-1}=-\xi^{-1}.
$$	
This is impossible since then 
$$a=\frac{1+\xi}{1-\xi^2}=\frac{1}{1-\xi}\notin \Z[\xi].$$
\end{proof}

\begin{proof}[Proof of Proposition \ref{lemma:conj}]\ 

Lemma $\ref{cor:isom}$ implies that there is an isomorphism  $\pi$ from  $G'_{m,n}/G''_{m,n}$ to the additive group of $ S_{m,n}$ and that the images under $\pi$ of the conjugates of $[a,b]^{\pm 1}$ are the elements of the from $\pm x^iy^j$. Lemma $\ref{lemma:S_{3,3}}$  implies that the invertible elements in the ring $S_{m,n}$ are elements of the from $\pm x^iy^j$. Finally, Lemmas \ref{lemma:d_in_G'} and  \ref{lemma:inv}  imply that if $d \in G_{m,n}$ is a commutator of topological generators of $\widehat{G}_{m,n}$ then $d \in G'_{m,n}$ and $\pi(d)$ in invertible in $S_{m,n}$. 
\end{proof}

%%%%%%%%%%%%%%%%%%%%%%%%%%%%%%%%%%%%%%%%%%%%%%%%%%
\subsection{\ Trace computations }\ 
%%%%%%%%%%%%%%%%%%%%%%%%%%%%%%%%%%%%%%%%%%%%%%%%%%%%%%%%%%%%%%%%%%%%%%%%%%%%%%%%%%%%%%%%%%%%%%%%%%%%%%%%%%%%%%%%%%%

In this section we define a specific embedding of $G_{m,n}$ in $\PSL_2(\Z)$ and show that under this embedding there are only finitely many possible values for the trace of an element $d \in G_{m,n}$ which is a commutator of profinite generators of $\widehat{G}_{m,n}$.

\begin{notation}\label{nota:generators} 
Recall that
$$
	U:=\left[\begin{array}{cc}
		0 & -1 \\
		1 & \ \ 0
	\end{array}\right]
	\text{ and }
	V:=\left[\begin{array}{cc}
		0 & -1 \\
		1 & \ \ 1
	\end{array}\right].
$$
For every $(m,n) \in \{(2,3),(2,\infty),(3,3),(3,\infty)\}$ denote:

\ 

\begin{center}
\begin{tabular}{ | c | c| c |c| } 
  \hline

  $(n,m)$ & $A$ & $B$ & $C:=[A,B]$\\ 

  \hline
  &&&\\
  $(2,3)$  & $U=\left[\begin{array}{cc}
		0 & -1 \\
		1 & \ \ 0
	\end{array}\right]$ & $V=\left[\begin{array}{cc}
		0 & -1 \\
		1 & \ \ 1
	\end{array}\right]$ &  $\left[\begin{array}{cc} 
		 2& 1 \\
		1 & 1
	\end{array}\right]$ \\ 
	&&&\\
  \hline
  &&&\\
  $(2,\infty)$ & $U=\left[\begin{array}{cc}
		0 & -1 \\
		1 & \ \ 0
	\end{array}\right]$ & $VUV=\left[\begin{array}{cc} 
		\ \ 0 & \ \ 1 \\
		-1 & -2
	\end{array}\right]$ & $\left[\begin{array}{cc} 
		 5& 2 \\
		2 & 1
	\end{array}\right]$ \\
	&&&\\
  \hline
  &&&\\
  $(3,3)$ & $V=\left[\begin{array}{cc}
		0 & -1 \\
		1 & \ \ 1
	\end{array}\right]$ & $UVU^{3}=\left[\begin{array}{cc} 
		 1& -1\\
		1 & \ \ 0
	\end{array}\right]$ & $\left[\begin{array}{cc} 
		\ \ 1 & -2\\
		-2 & \ \ 5
	\end{array}\right]$\\
	&&&\\
  \hline
  &&&\\
  $(3,\infty)$ & $V=\left[\begin{array}{cc}
		0 & -1 \\
		1 & \ \ 1
	\end{array}\right]$ & $(UV)^3U=\left[\begin{array}{cc} 
		 -3 & 1 \\
		-1 & 0
	\end{array}\right]$ & $\left[\begin{array}{cc} 
		 \ \ 1 & -4\\
		-4 & \ 17
	\end{array}\right]$\\
	&&&\\
  \hline
\end{tabular}
\end{center}
\vspace{10pt}

Finally, for every	$(m,n)\in \{(2,3),(2,\infty),(3,3),(3,\infty)\}$, let $a,b,c$ be the images of $A,B,C$ in $\PSL_2(\Z)$ and denote $G_{m,n}:=\langle a,b\rangle$. Then for every $(m,n) \in \{(2,3),(2,\infty),(3,3),(3,\infty)\}$, $o(a)=m$, $o(b)=n$ and Lemma \ref{lemma:is_free_product} below implies that $G_{m,n}$ is  the free product of $\langle a\rangle$ and $\langle b\rangle$ so the notation defined here is consistent with Notation \ref{nota:G_{m,n}}. 
\end{notation}

\begin{lemma}\label{lemma:good projection}
	 Under Notation \ref{nota:generators},  for every odd prime $p$, $G_{m,n}$ projects onto $\PSL_2(\Z/p\Z)$.
\end{lemma}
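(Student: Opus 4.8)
The plan is to verify, in each of the four cases $(m,n) \in \{(2,3),(2,\infty),(3,3),(3,\infty)\}$, that the reduction mod $p$ of the pair of matrices $A, B$ listed in Notation \ref{nota:generators} generates all of $\PSL_2(\Z/p\Z)$, for every odd prime $p$. The natural tool is Dickson's classification of subgroups of $\PSL_2(\FF_p)$ (equivalently, of $\SL_2(\FF_p)$): a subgroup is either contained in a Borel, dihedral, one of $A_4$, $S_4$, $A_5$, contained in a $\PSL_2$ of a proper subfield (vacuous here since $\FF_p$ is prime), or the whole group. So the strategy reduces to ruling out the proper cases, and the cleanest way to do that is via element orders and traces.

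First I would note that in each case one of the generators, call it $a$, already has order $m \in \{2,3\}$ exactly (as an element of $\PSL_2(\Z)$, hence of order dividing $m$ after reduction, and order exactly $m$ for all but finitely many $p$ — but in fact one checks directly that $\tr U = 0$ and $\tr V = 1$ force orders $2$ and $3$ respectively mod any odd $p$). The key point will be to compute the trace of $C = [A,B]$ mod $p$: from the table, $\tr C$ equals $3, 6, 6, 18$ in the four cases respectively. Since the image subgroup $H \le \PSL_2(\FF_p)$ contains an element $c$ with this trace, and contains an element of order $3$ (in all four cases either $a$ or $b$ has order $3$; when $(m,n)=(2,\infty)$ one uses $b = VUV$ which has infinite order in $\PSL_2(\Z)$ and trace $-2$, so $b$ is unipotent mod $p$, giving an element of order $p$). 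Having an element of order $p$ in $H$ immediately kills the cases $A_4, S_4, A_5$ (their orders $12, 24, 60$ are divisible by no prime $\ge 7$, and one handles $p=3,5$ separately by the trace of $C$), and kills the dihedral case since a dihedral group has no element of order $p$ acting without a common fixed line together with... — more precisely, a subgroup containing a nontrivial unipotent and not contained in a Borel must be all of $\PSL_2(\FF_p)$ once we also exclude the exceptional small groups. For the cases where $b$ also has finite order (namely $(2,3)$ and $(3,3)$) one instead argues: $H$ is generated by two elements of coprime-ish finite orders whose product-commutator has a trace that is generically not $\pm 2$ and not $0, \pm 1$, so $H$ is not cyclic, not dihedral (the orders don't fit a dihedral group generated this way for large $p$), and not $A_4/S_4/A_5$ because $\tr C$ grows — wait, $\tr C$ is a fixed integer, so one must check mod $p$: e.g. for $(3,3)$, $\tr C = 6$; if $H \cong A_5$ then every element has trace in $\{0, \pm 1, \pm 2, (\pm 1 \pm \sqrt 5)/2\}$ mod $p$, and $6 \equiv$ such a value only for finitely many $p$, which must be checked and excluded by hand, or circumvented.

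The main obstacle is precisely this finite-exceptional-prime bookkeeping: for small $p$ (say $p = 3, 5, 7$) the trace arguments degenerate — e.g. $\tr C = 3 \equiv 0 \pmod 3$, $\tr C = 6 \equiv 1 \pmod 5$, etc. — and one cannot conclude abstractly. I expect the actual proof to sidestep this by exhibiting a uniform obstruction: show that the image $H$ is not solvable (it contains $[A,B]$ of infinite order in a nonabelian free product, and its traces realize infinitely many values, so mod $p$ it is not contained in a Borel or dihedral group for $p$ large, and then a direct finite computation disposes of $p = 3, 5$), and is not one of $A_4, S_4, A_5$ because $G_{m,n}$ surjects onto $\PSL_2(\Z)$ hence onto $\PSL_2(\FF_q)$ for infinitely many $q$ — better, because $H$ contains an element of order $m \in \{2,3\}$ and one of either order $n$ or order $p$, together with a commutator of trace not matching the exceptional groups. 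In the writeup I would organize it as: (i) reduce to showing $H$ is nonsolvable and not $A_5$-type by Dickson; (ii) nonsolvability from the presence of an element of order $p$ (the unipotent cases) or from a trace computation; (iii) eliminate $A_4, S_4, A_5$ using that $\#\PSL_2(\FF_p)$ for $p \ge 7$ exceeds $60$ and $H$ contains a specific large-order element; (iv) check $p = 3, 5$ directly. The genuinely delicate point is making step (ii)–(iii) uniform across all four $(m,n)$ simultaneously, since the two cases with $n = \infty$ behave differently (unipotent generator available) from the two with $n$ finite (where one leans entirely on $\tr[A,B]$).
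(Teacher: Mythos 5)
Your strategy via Dickson's classification is workable in principle but, as you yourself flag, it leaves substantial unfinished bookkeeping (the small primes $p=3,5,7$, the $A_4/S_4/A_5$ trace comparisons in the finite-$n$ cases, and the uniform dihedral elimination), and as written it is a plan rather than a proof. The paper takes a far simpler and entirely computational route that avoids Dickson's theorem altogether. First, the case $(m,n)=(2,3)$ is trivial because $G_{2,3}$ is literally all of $\PSL_2(\Z)$, a point your sketch does not exploit. For the remaining three cases, the key observation is that $\SL_2(\Z/p\Z)$ is generated by the two elementary matrices $\bigl[\begin{smallmatrix}1&1\\0&1\end{smallmatrix}\bigr]$ and $\bigl[\begin{smallmatrix}1&0\\1&1\end{smallmatrix}\bigr]$ (Gauss elimination), and hence — since $2$ is a unit mod any odd $p$ — by $\bigl[\begin{smallmatrix}1&\pm 2^r\\0&1\end{smallmatrix}\bigr]$ and $\bigl[\begin{smallmatrix}1&0\\\pm 2^s&1\end{smallmatrix}\bigr]$ for any $r,s$. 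One then simply checks that $AB$ and a companion product ($BA$ or $A^2BA^{-1}$, depending on the case) are exactly such elementary matrices: for instance, for $(3,\infty)$ one finds $BA=\bigl[\begin{smallmatrix}1&4\\0&1\end{smallmatrix}\bigr]$ and $AB=\bigl[\begin{smallmatrix}1&0\\-4&1\end{smallmatrix}\bigr]$, and similarly for $(2,\infty)$ and $(3,3)$ with off-diagonal entries $\pm 2$. This yields surjectivity onto $\SL_2(\Z/p\Z)$, hence onto $\PSL_2(\Z/p\Z)$, for every odd $p$ at once, with no exceptional primes to handle separately.

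The contrast is instructive. Your approach is structural and would generalize if one did not have explicit matrices in hand, but it costs you Dickson's theorem plus a delicate case analysis that you would genuinely need to complete (in particular the statement that a subgroup containing a nontrivial unipotent and not inside a Borel must be all of $\PSL_2(\FF_p)$ is false without also ruling out $A_4$, $S_4$, $A_5$, and ruling those out by trace of $[A,B]$ requires checking a handful of small primes concretely — this is not a formality). The paper's approach buys uniformity in $p$ for free by choosing generators whose products are unipotents with power-of-two off-diagonal entries; the only arithmetic fact used is that $2$ is invertible mod odd $p$. If you want to salvage your route, the missing step is the explicit small-prime verification, but you would do better to just compute $AB$ and $BA$ as the paper does.
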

\begin{proof}
If $(m,n)=(2,3)$, then $G_{m,n}=\PSL_2(\Z)$ and the claim is clear. Assume $(m,n)\ne (2,3)$. Gauss elimination implies that for every prime $p$, $\SL_2(\Z/p\Z)$ is generated by 
$$
\left[\begin{array}{cc}  1& 1 \\	0 & 1	\end{array}\right]  \text{ and }
\left[\begin{array}{cc}  1& 0 \\	1 & 1	\end{array}\right].
$$
If follows that $p$ is odd, then for every $r,s \in \Z$, the group  $\SL_2(\Z/p\Z)$ is generated by 
$$
\left[\begin{array}{cc}  1& \pm 2^r \\	0 & 1	\end{array}\right]  \text{ and }
\left[\begin{array}{cc}  1& 0 \\	\pm 2^s & 1	\end{array}\right].
$$
The result follows  the following computations: 
\begin{enumerate}
	\item If $(m,n)=(2,\infty)$ then 
$$
AB=\left[\begin{array}{cc}  1& 2 \\	0 & 1	\end{array}\right] \text{ and }
A^2BA^{-1}= \left[\begin{array}{cc} \ \  1 & 0 \\ -2 & 1	\end{array}\right].
$$
	\item If $(m,n)=(3,3)$ then 
$$
AB=\left[\begin{array}{cc}  -1 &\ \  0 \\	\ \ 2 & -1	\end{array}\right] \text{ and }
BA= \left[\begin{array}{cc}  -1 & -2 \\ \ \  0 & -1	\end{array}\right].
$$
	\item If $(m,n)=(3,\infty)$ then 
$$
BA=\left[\begin{array}{cc}  1 & 4 \\	0 & 1	\end{array}\right] \text{ and }
AB= \left[\begin{array}{cc}  \ \ 1 & 0 \\ -4 & 1	\end{array}\right].
$$
\end{enumerate}	
\end{proof}

The following lemma is well known and is proved only for the convenience of the reader: 
\begin{lemma}
	Let $p$ be an odd prime. Then the exact sequence $$1 \rightarrow \{\pm 1\} \rightarrow \SL_2(\Z/p\Z)\rightarrow \PSL_2(\Z/p\Z)\rightarrow 1$$ does not split
\end{lemma}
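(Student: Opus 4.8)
The plan is to argue by contradiction, so suppose the sequence splits. A splitting is a section $s\colon \PSL_2(\Z/p\Z)\to\SL_2(\Z/p\Z)$ of the projection; setting $H:=s(\PSL_2(\Z/p\Z))$, this produces a subgroup $H\le\SL_2(\Z/p\Z)$ isomorphic to $\PSL_2(\Z/p\Z)$, hence of index $2$, and with $H\cap\{\pm I_2\}=\{I_2\}$, i.e.\ $-I_2\notin H$. I will reach a contradiction by exhibiting an element of order $2$ inside $H$ and showing it is forced to equal $-I_2$.

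The one nontrivial ingredient is that $-I_2$ is the \emph{only} element of order $2$ in $\SL_2(\Z/p\Z)$ when $p$ is odd. To check this, let $A\in\SL_2(\Z/p\Z)$ satisfy $A^2=I_2$ and $A\ne I_2$. The Cayley--Hamilton identity for $2\times2$ matrices gives $A^2=(\tr A)\,A-(\det A)\,I_2=(\tr A)\,A-I_2$, so $A^2=I_2$ forces $(\tr A)\,A=2I_2$. If $\tr A\ne 0$ (here $2$ is invertible because $p\ne 2$), then $A=(2/\tr A)\,I_2$ is a scalar matrix $\lambda I_2$ with $\lambda^2=1$, hence $\lambda=\pm 1$ and, since $A\ne I_2$, $A=-I_2$. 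If instead $\tr A=0$, then $A^2=-(\det A)\,I_2=-I_2$, and $-I_2\ne I_2$ again because $p\ne 2$, contradicting $A^2=I_2$. So $A=-I_2$ in all cases. This is exactly where oddness of $p$ enters; for $p=2$ one has $\SL_2=\PSL_2$ and the statement is vacuous.

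To finish, note $|H|=|\PSL_2(\Z/p\Z)|=\tfrac12\,p(p^2-1)$, and since $p$ is odd we have $8\mid p^2-1$, so $|H|$ is even. By Cauchy's theorem $H$ contains an element $h$ of order $2$; viewed inside $\SL_2(\Z/p\Z)$ it still has order $2$, so by the previous paragraph $h=-I_2$. But then $-I_2\in H$, contradicting $H\cap\{\pm I_2\}=\{I_2\}$. Hence the sequence does not split.

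I do not expect a real obstacle here: this is elementary counting. The only point needing a little care is the classification of involutions in $\SL_2(\Z/p\Z)$ and checking that the hypothesis $p$ odd is genuinely used (both for $2$ to be invertible and for $-I_2\ne I_2$). Alternatively one could cite that $\PSL_2(\Z/p\Z)$ is simple with Schur multiplier $\Z/2$ for $p\ge 5$, so $\SL_2(\Z/p\Z)$ is a nonsplit central extension; but the argument above handles all odd primes, including $p=3$, uniformly.
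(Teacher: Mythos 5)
Your proof is correct, but it takes a genuinely different route from the paper's. The paper argues by generation: it observes that any subgroup $\Gamma\le\SL_2(\Z/p\Z)$ projecting onto $\PSL_2(\Z/p\Z)$ must contain some lift $\left[\begin{smallmatrix}\epsilon&1\\0&\epsilon\end{smallmatrix}\right]$ ($\epsilon=\pm1$) of the upper unipotent, whence $\left[\begin{smallmatrix}1&2\\0&1\end{smallmatrix}\right]=\left[\begin{smallmatrix}\epsilon&1\\0&\epsilon\end{smallmatrix}\right]^2\in\Gamma$, and similarly $\left[\begin{smallmatrix}1&0\\2&1\end{smallmatrix}\right]\in\Gamma$; since $p$ is odd these two elementary matrices generate all of $\SL_2(\Z/p\Z)$, so $\Gamma$ cannot be a proper (index-$2$) complement. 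Your argument instead classifies the involutions of $\SL_2(\Z/p\Z)$ via Cayley--Hamilton (finding that $-I_2$ is the unique one), observes that a hypothetical complement $H\cong\PSL_2(\Z/p\Z)$ has even order because $8\mid p^2-1$, and invokes Cauchy's theorem to force $-I_2\in H$, a contradiction. The paper's proof is short and exploits the specific structure of $\SL_2$ over a field (bounded generation by elementary matrices); yours is a counting argument that is slightly more self-contained and makes the role of $p$ being odd visible in two independent places (invertibility of $2$ in the involution classification, and parity of $|\PSL_2|$). Either works for all odd $p$, including $p=3$.
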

\begin{proof}
Let $\Gamma$ be a subgroup of $\SL_2(\Z)$ which projects onto $\PSL_2(\Z)$. Then there exists  $\epsilon=\pm 1$ such that 
$$
\left[\begin{array}{cc}  \epsilon & 1 \\	0 & \epsilon	\end{array}\right] \in \Gamma \text{ so }\left[\begin{array}{cc}  1 & 2 \\	0 & 1	\end{array}\right] = \left[\begin{array}{cc}  \epsilon & 1 \\	0 & \epsilon	\end{array}\right]^2 \in \Gamma.
$$
Similarly, $
\left[\begin{array}{cc}  1 & 0 \\	2 & 1	\end{array}\right] \in \Gamma
$ so $\Gamma=\SL_2(\Z)$.
\end{proof}

\begin{definition}
The projection map $\rho:\SL_2(\Z)\rightarrow \PSL_2(\Z)$ induces an epimorphosm $\hat{\rho}:\widehat{\SL_2(\Z)}\rightarrow \widehat{\PSL_2(\Z)}$ and $\ker \rho=\ker \hat{\rho}=\{\pm I_2\}$. Since   $-I_2 \notin \widehat{\SL_2(\Z)}'$, the restriction of $\hat{\rho}$ to $\widehat{\SL_2(\Z)}'$ is an isomorphism from $\widehat{\SL_2(\Z)}'$ onto $\widehat{\PSL_2(\Z)}'$. We refer to the inverse of this isomorphism as the commutator subgroup section and  denote it by $\cs:\widehat{\PSL_2(\Z)}'\rightarrow \widehat{\SL_2(\Z)}'$. Note that  $\cs(\PSL_2(\Z)')=(\SL_2(\Z))'$. 

Since  $\ker \hat{\rho}$ is central in $\widehat{\SL_2(\Z)}$, for every $x,y \in \widehat{\PSL_2(\Z)}$ and every lifts $X,Y \in \SL_2(\Z)$  of $x$ and $y$ to $\widehat{\SL_2(\Z)}$, $\cs([x,y])=[X,Y]$. 
\end{definition}

\begin{lemma}\label{lemma:trace} Under Notation \ref{nota:generators}, if $d\in G_{m,n}$ is the commutator of a pair of profinite generators of $\widehat{G}_{m,n}$ then there exists  $k \ge 0$ such that  $\trace (\cs(d))= 2 \pm 2^k$.  
\end{lemma}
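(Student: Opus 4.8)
The statement to prove is Lemma~\ref{lemma:trace}: if $d \in G_{m,n}$ is a commutator of a pair of profinite generators of $\widehat{G}_{m,n}$, then $\trace(\cs(d)) = 2 \pm 2^k$ for some $k \ge 0$. The strategy is to combine the algebraic rigidity result of Proposition~\ref{lemma:conj} with the explicit matrices fixed in Notation~\ref{nota:generators}. By Proposition~\ref{lemma:conj}, such a $d$ is conjugate in $G_{m,n}$ to $C^{\pm 1} = [A,B]^{\pm 1}$ modulo $G_{m,n}''$. So the first step is to understand, for each of the four pairs $(m,n)$, what the elements conjugate to $C^{\pm 1}$ modulo $G_{m,n}''$ look like when mapped into $\SL_2(\Z)$ via $\cs$, and in particular what traces they can have.

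\textbf{Key steps.} First I would pass to the abelianization picture: conjugacy modulo $G_{m,n}''$ means that $d$ lies in a coset of $G_{m,n}''$ of the form $g C^{\pm 1} g^{-1} G_{m,n}''$ for some $g \in G_{m,n}$. The point is that $\cs$ is a homomorphism on the commutator subgroup, so $\cs(d)$ equals $\cs(g) \cs(C)^{\pm 1} \cs(g)^{-1}$ times an element of $\cs(G_{m,n}'')$. Now $G_{m,n}''$ is contained in the second derived subgroup, and under the embedding into $\SL_2(\Z)$ one wants to control how the trace changes when one multiplies by an element of $\cs(G_{m,n}'')$. The cleanest route is to reduce modulo a suitable integer: work in $\SL_2(\Z/N\Z)$ for an appropriate $N$ (a power of $2$), where $G_{m,n}''$ maps into a subgroup whose elements multiply $\cs(C)^{\pm 1}$ by things that shift the trace only by multiples of $2^k$. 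Concretely, I expect that modulo a fixed power of $2$ the matrix $\cs(d)$ is conjugate to $\cs(C)^{\pm 1}$, and one computes $\trace(\cs(C)) = 2 + (\text{something})$ directly from the table — e.g. in the $(2,3)$ case $C = \left[\begin{smallmatrix} 2 & 1 \\ 1 & 1 \end{smallmatrix}\right]$ has trace $3 = 2 + 1 = 2 + 2^0$; in the $(2,\infty)$ case $C = \left[\begin{smallmatrix} 5 & 2 \\ 2 & 1 \end{smallmatrix}\right]$ has trace $6 = 2 + 4 = 2 + 2^2$; in the $(3,3)$ case trace is $6 = 2 + 2^2$; in the $(3,\infty)$ case trace is $18 = 2 + 2^4$. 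So $\trace(\cs(C)) = 2 + 2^{k_0}$ for an explicit small $k_0$ in each case, and $\trace(\cs(C)^{-1}) = \trace(\cs(C))$ since these are $\SL_2$ matrices. The content is then to show every $d$ in the relevant coset has trace congruent to $\pm 2 \pmod{2^{k_0+1}}$ or, better, to pin the trace down exactly up to the desired form — here one should use that $\pi(d)$ being a unit of the form $\pm x^i y^j$ in $S_{m,n}$ (Lemma~\ref{lemma:S_{3,3}}, Lemma~\ref{lemma:inv}) determines $d$ modulo $G_{m,n}''$ up to conjugacy and a sign, and then track the trace of the corresponding explicit words $a^i b^j C^{\pm 1} b^{-j} a^{-i}$ in the $2$-adic matrix group.

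\textbf{Main obstacle.} The hard part will be controlling the trace contribution of the $G_{m,n}''$-ambiguity: conjugacy modulo $G_{m,n}''$ is weaker than conjugacy, so $\cs(d)$ differs from a conjugate of $\cs(C)^{\pm 1}$ by a potentially large element of $\cs(G_{m,n}'')$, and a priori multiplying an $\SL_2(\Z)$ matrix by such an element can change the trace arbitrarily. The resolution I anticipate is that one does not need the exact trace but only its $2$-adic shape, and $\cs(G_{m,n}'')$ — being the second derived group of a congruence-like subgroup — lands inside a principal congruence subgroup modulo a power of $2$ (using that $G_{m,n}'$ maps into the level-$2$ congruence subgroup once one sets things up with the matrices in Notation~\ref{nota:generators}, so $G_{m,n}''$ maps into level $4$ or higher); then for $M$ in a principal congruence subgroup $\Gamma(2^r)$ and $P$ a fixed matrix, $\trace(PM) \equiv \trace(P) \pmod{2^r}$, which forces $\trace(\cs(d)) \equiv \trace(\cs(C)^{\pm 1}) = 2 + 2^{k_0} \pmod{2^{r}}$ with $r > k_0$, and hence $\trace(\cs(d)) - 2$ is $2^{k_0}$ times a unit in $\Z/2^{r-k_0}\Z$; combined with the parity/sign constraints from the free product structure this should yield $\trace(\cs(d)) = 2 \pm 2^k$. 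I would carry this out case by case for the four pairs $(m,n)$, each being a short explicit computation with the given $2\times 2$ matrices.
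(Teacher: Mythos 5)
Your proposal takes a fundamentally different route from the paper, and unfortunately the route has a gap that cannot be repaired within that framework.

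Your plan is to invoke Proposition~\ref{lemma:conj} (that $d$ is conjugate to $[a,b]^{\pm1}$ modulo $G_{m,n}''$), compute $\trace(\cs(C))$ explicitly from the table, and then control the $G_{m,n}''$-ambiguity by showing that $\cs(G_{m,n}'')$ lands in a deep enough principal congruence subgroup $\G(2^r)$, so that $\trace(\cs(d))\equiv \trace(\cs(C)^{\pm1})\pmod{2^r}$. But this only yields a congruence modulo a fixed power of $2$, and no congruence modulo $2^r$ can force $\trace(\cs(d))-2$ to have no odd prime factors — which is exactly what ``$\trace(\cs(d))=2\pm2^k$'' asserts. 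Concretely, in the $(2,\infty)$ case you get $\trace(\cs(d))\equiv 6\pmod{2^r}$, but $6+9\cdot 2^r$ also satisfies this congruence while $6+9\cdot 2^r-2=4(1+9\cdot 2^{r-2})$ is not a power of $2$. No amount of ``sign/parity constraints from the free product structure'' can close this: you are trying to prove a statement about all odd primes using only $2$-adic information. (The $2$-adic congruence argument you sketch is in fact essentially what the paper uses in the \emph{subsequent} Lemma~\ref{lemma:few_traces}, which — taking the present lemma as given — further narrows $\trace(\cs(d))$ to a short explicit list. You have conflated the two lemmas.)

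The paper's actual proof of Lemma~\ref{lemma:trace} works directly with odd primes. Writing $\cs(d)=[X,Y]$ for lifts $X,Y\in\widehat{\SL_2(\Z)}$ of topological generators of $\widehat G_{m,n}$, it uses Lemma~\ref{lemma:good projection} (which shows $G_{m,n}$ surjects onto $\PSL_2(\Z/p\Z)$ for every odd prime $p$) plus the non-splitting of $1\to\{\pm1\}\to\SL_2(\Z/p\Z)\to\PSL_2(\Z/p\Z)\to1$ to conclude that the images of $X,Y$ generate $\SL_2(\Z/p\Z)$. The known fact (Lemma 3 of Khelif) that a commutator of two generators of $\SL_2(\Z/p\Z)$ never has trace $2$ then gives $p\nmid\bigl(\trace(\cs(d))-2\bigr)$ for every odd $p$, hence $\trace(\cs(d))-2=\pm 2^k$. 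This ``for all odd $p$'' step has no analogue in your proposal, and it is precisely what you would need to add.
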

\begin{proof}
 Since a homomorphism between groups induces a homomorphism between their profinite completions, we have the following commutative squares:
	
	\[
\begin{tikzcd}
\langle A,B \rangle \arrow{r}{\alpha}\arrow[two heads]{d}{\psi} & \arrow[two heads]{d}{\rho} \SL_2(\Z) \\
G_{m,n} \arrow{r}{\beta} & \PSL_2(\Z)
\end{tikzcd} 
\text{ and }
\begin{tikzcd}
\widehat{\langle A,B \rangle} \arrow{r}{\hat{\alpha}}\arrow[two heads]{d}{\hat{\psi}} & \arrow[two heads]{d}{\hat{\rho}} \widehat{\SL_2(\Z)} \\
\widehat{G_{m,n}} \arrow{r}{\hat{\beta}} & \widehat{\PSL_2(\Z)}
\end{tikzcd} 
\]
where $\rho$, $\psi$, $\hat{\rho}$ and $\hat{\psi}$ are epimorphisms, $\alpha$ and $\beta$ are inclusions and $\ker \rho =\ker\hat{\rho}=\{\pm 1\}$.

Assume that $x,y$ are topological  generators of $\widehat{G_{m,n}} $ such that $[x,y]=d$. Let $X,Y \in \widehat{\SL_2(\Z)}$ be lifts of $\hat{\beta}(x)$ and $\hat{\beta}(y)$ to $\widehat{\SL_2(\Z)}$.  Then $$[X,Y]=\cs([\hat{\beta}(y),\hat{\beta}(y)])=\cs(\hat{\beta}([x,y]))=\cs(\beta(d))=\cs(d)\in \SL_n(\Z)'.$$ Thus, we have to  show that $\trace[X,Y]= 2 \pm 2^k$. 

For every odd prime $p$, let $\pi_p:\widehat{\SL_2(\Z)}
\twoheadrightarrow \SL_2(\Z/p\Z)$ and $\bar{\pi}_p:\widehat{\PSL_2(\Z)}
\twoheadrightarrow \PSL_2(\Z/p\Z)$ be the modulo-$p$ homomorphisms. Lemma 
\ref{lemma:good projection} implies that 
$$\PSL_2(\Z/p\Z)=(\bar{\pi}_p\circ{\beta}) (G_{m,n})=(\bar{\pi}_p \circ \hat{\beta})(\langle x,y\rangle)=(\bar{\pi}_p\circ\hat{\rho})(\langle X,Y\rangle).$$
Since the sequence $$1 \rightarrow \{\pm 1\} \rightarrow \SL_2(\Z/p\Z)\rightarrow \PSL_2(\Z/p\Z)\rightarrow 1$$ does not split, $\pi_p(\langle X,Y\rangle)=\SL_2(\Z/p\Z)$. The result follows from the fact that for an odd prime $p$,  the trace of the  commutator of two generators of $\SL_2(\Z/p\Z)$ cannot be equal to 2 (see, for example, Lemma 3 of \cite{Khe}). 
	\end{proof}

\begin{claim}\label{claim1}
	Let $k \in \Z$, $X:=\left[\begin{array}{cc} 
		 1& k \\
		0 & 1
	\end{array}\right]$ and $Y:=\left[\begin{array}{cc} 
		 0& -1\\
		1 & \ \ 1
	\end{array}\right]$. Then $$
	[X,YXY^{-1}]=
	\left[\begin{array}{cc} 
		 1-k^2+k^4& k^3\\
		k^3 &  1+k^2
	\end{array}\right]\equiv
	\left[\begin{array}{cc} 
		 1-k^2& 0\\
		0 &  1+k^2
	\end{array}\right]  (mod\ k^3).$$
\end{claim}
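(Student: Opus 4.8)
The plan is to verify the identity by a direct $2\times 2$ matrix computation, after first simplifying the conjugate $YXY^{-1}$ so that the commutator becomes one of two elementary unitriangular matrices.

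First I would record $Y^{-1}=\left[\begin{array}{cc} 1 & 1 \\ -1 & 0\end{array}\right]$ (legitimate since $\det Y = 1$) and compute $YXY^{-1}$. A one-line calculation gives $YXY^{-1}=\left[\begin{array}{cc} 1 & 0 \\ -k & 1\end{array}\right]$, a lower unitriangular matrix. This is the key simplification: writing $X = I_2 + k E_{12}$ and $Z:=YXY^{-1} = I_2 - k E_{21}$, the element we must compute is just the commutator $[X,Z]$ of two elementary matrices.

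Next I would set up $Z = \left[\begin{array}{cc} 1 & 0 \\ -k & 1\end{array}\right]$, with $Z^{-1} = \left[\begin{array}{cc} 1 & 0 \\ k & 1\end{array}\right]$ and $X^{-1} = \left[\begin{array}{cc} 1 & -k \\ 0 & 1\end{array}\right]$, and evaluate $[X,Z] = XZX^{-1}Z^{-1}$ in two steps. First one gets $XZX^{-1} = \left[\begin{array}{cc} 1-k^2 & k^3 \\ -k & 1+k^2\end{array}\right]$, where the $(1,2)$ entry arises from the cancellation $(1-k^2)(-k) + k = k^3$. Then right-multiplying by $Z^{-1}$ yields $\left[\begin{array}{cc} 1-k^2+k^4 & k^3 \\ k^3 & 1+k^2\end{array}\right]$, with the new $(2,1)$ entry coming from $-k + k(1+k^2) = k^3$; this is exactly the asserted matrix.

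Finally, the congruence modulo $k^3$ is immediate: the two off-diagonal entries equal $k^3\equiv 0$, and the $(1,1)$ entry satisfies $1-k^2+k^4\equiv 1-k^2$, so the matrix reduces to $\diag(1-k^2,\,1+k^2)$ modulo $k^3$. There is no genuine obstacle in this claim — the only thing to watch is bookkeeping in the $2\times 2$ products, the two sign-sensitive cancellations noted above being the natural places an error would propagate.
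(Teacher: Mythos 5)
Your computation is correct, and since the paper states Claim 5.1 without proof (it is a routine verification), there is nothing to compare against. The simplification $YXY^{-1}=\left[\begin{smallmatrix}1&0\\-k&1\end{smallmatrix}\right]$ reduces the claim to the commutator of two elementary unitriangular matrices, and the two products you display, $XZX^{-1}=\left[\begin{smallmatrix}1-k^2&k^3\\-k&1+k^2\end{smallmatrix}\right]$ and then $XZX^{-1}Z^{-1}=\left[\begin{smallmatrix}1-k^2+k^4&k^3\\k^3&1+k^2\end{smallmatrix}\right]$, are both right; the congruence mod $k^3$ then follows by inspection.
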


\begin{lemma}\label{lemma:nc}
	Let $H$ be a group which is generated by two elements $h_1,h_2$ and $o(h_1)=m<\infty$. Then:
	\begin{enumerate}[label=\upshape(\arabic*)]
		\item\label{aaa} $H' \subseteq N:=\langle h_1^i(h_1h_2)h_1^{-i} \mid 0 \le i \le m-1 \rangle$.
		\item If $h_1^2$ is central in $H$, then $H''$ is contained in the subgroup generated by $H$-conjugates of $[h_1^2h_2h_1^{-1},h_1h_2]$.
		\item\label{bbb2} If $h_1^3$ is central in $H$, $H''$ is contained in the subgroup generated by $H$-conjugates of $[h_2h_1,h_1(h_2h_1)h_1^{-1}]$.
	\end{enumerate}
\end{lemma}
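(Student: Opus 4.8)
The plan is to verify each of the three parts by a direct computation inside the group, organized around the obvious generating sets and the hypothesized centrality relations; no tree theory or module structure is needed here. For part \ref{aaa}, I would observe that $N$ is exactly the subgroup I want to prove contains $H'$. First I would show $N$ is normal in $H$. It is clearly invariant under conjugation by $h_1$ (that just cyclically permutes the listed generators, using $h_1^m = 1$), so it suffices to check invariance under conjugation by $h_2$; writing $h_2 = (h_1 h_2) h_1^{-1}$ and using that $h_1^i (h_1 h_2) h_1^{-i}$ lies in $N$ for all $i$, one expresses $h_2 \bigl(h_1^i (h_1 h_2) h_1^{-i}\bigr) h_2^{-1}$ as a product of $N$-generators and their inverses (a short manipulation of the form $h_2 x h_2^{-1} = (h_2 h_1^{-1}) \cdot h_1 x h_1^{-1} \cdot (h_2 h_1^{-1})^{-1}$, iterated). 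Once $N \trianglelefteq H$, the quotient $H/N$ is generated by the images of $h_1$ and $h_2$; but modulo $N$ we have $h_1 h_2 \equiv 1$, i.e. $\bar h_2 = \bar h_1^{-1}$, so $H/N$ is cyclic, hence abelian, hence $H' \subseteq N$.

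For parts (2) and (3), the idea is the same: apply part \ref{aaa} twice. In part (2), $m=2$, so $H' \subseteq N = \langle h_1 h_2,\ h_1(h_1 h_2) h_1^{-1} \rangle = \langle h_1 h_2,\ h_1^2 h_2 h_1^{-1}\rangle$; since $h_1^2$ is central, conjugation by $h_1$ is an automorphism of $H$ of order dividing $2$ that swaps these two generators up to a central factor, and $N$ is a two-generator group on which $h_1$ acts. Apply part \ref{aaa} to $N$ itself — but $N$ need not have a torsion generator, so instead I would argue directly: $H''= (H')' \subseteq N'$, and $N'$ is normally generated in $H$ by the single commutator of the two generators of $N$, namely $[\,h_1^2 h_2 h_1^{-1},\ h_1 h_2\,]$ (any commutator subgroup of a two-generator group is normally generated, inside that group, by the commutator of the two generators; and $H$-conjugation preserves $N$ because $N \trianglelefteq H$). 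Part (3) is identical with $m=3$: now $N = \langle h_1 h_2,\ h_1(h_1h_2)h_1^{-1},\ h_1^2(h_1h_2)h_1^{-2}\rangle$, but after replacing the generating pair by $\{h_2 h_1,\ h_1(h_2 h_1)h_1^{-1}\}$ — which generate the same group $H$ since $h_1^3$ is central — part \ref{aaa} applied with $h_1$ and $h_2 h_1$ (noting $h_1\cdot(h_2 h_1) = h_1 h_2 h_1$, a conjugate of $h_1 h_2$) gives $H' \subseteq \langle h_1^i (h_1 h_2 h_1) h_1^{-i}\rangle$, and I would track which conjugates of $h_2 h_1$ and $h_1(h_2 h_1)h_1^{-1}$ appear, so that $H'' = N' $ is normally generated in $H$ by $[\,h_2 h_1,\ h_1(h_2 h_1)h_1^{-1}\,]$.

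The main obstacle I anticipate is bookkeeping rather than conceptual: in parts (2) and (3) one must be careful that the two-element generating set exhibited for $H'$ (equivalently for the relevant normal subgroup $N$) is genuinely a \emph{pair} of elements whose commutator normally generates $N'$ inside $H$, and that $H$-conjugacy (not merely $N$-conjugacy) suffices — this is where normality of $N$ in $H$, established via part \ref{aaa}, is used. A secondary point is verifying that replacing $\{h_1,h_2\}$ by $\{h_1,h_2h_1\}$ (resp. using $h_1 h_2$ vs.\ $h_2 h_1$) is legitimate under the centrality of $h_1^2$ or $h_1^3$, and correctly rewriting the resulting commutator into the exact form stated; this is a finite, mechanical check using $h_1 x h_1^{-1}$-type conjugations and the relation $h_1^m$ central. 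I do not expect either to be hard, only to require care with indices and with distinguishing an element from its central multiples.
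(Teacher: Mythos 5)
Parts (1) and (2) of your proposal are correct and match the paper's argument: in (1) you show $N$ is normalized by both $h_1$ and $h_2$ (the conjugator $h_2h_1^{-1}=h_2h_1^{m-1}$ does lie in $N$, being the image of the $i=0$ generator under conjugation by $h_1^{-1}$), and $H/N$ is cyclic since $\bar h_2=\bar h_1^{-1}$; in (2) you correctly observe $N=\langle h_1h_2,\ h_1^2h_2h_1^{-1}\rangle$ is two-generated, so $H''\subseteq N'$, and $N'$ is the $N$-normal closure of the commutator of the two generators, which — as $N\trianglelefteq H$ — lands inside the $H$-normal closure of that same commutator.

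Part (3) has a real gap, and the one concrete statement you make there to bridge it is false. You write that $\{h_2h_1,\ h_1(h_2h_1)h_1^{-1}\}$ ``generate the same group $H$.'' But $h_1(h_2h_1)h_1^{-1}=h_1h_2$, so this pair is $\{h_2h_1,\ h_1h_2\}$, and this does \emph{not} generate $H$ in general even when $h_1^3$ is central: take $H=C_3*C_\infty=\langle a\rangle*\langle b\rangle$ with $h_1=a$, $h_2=b$; in the abelianization $C_3\times\Z$ both $ab$ and $ba$ map to $(1,1)$, while $a\mapsto(1,0)\notin\langle(1,1)\rangle$, so $a\notin\langle ab,ba\rangle$. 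You probably meant that $\{h_1,\ h_2h_1\}$ generates $H$ (true), but applying part (1) to that pair yields $\langle h_1^i(h_1h_2h_1)h_1^{-i}\rangle$, not the subgroup $N$ from part (1) applied to $\{h_1,h_2\}$, and in any case it is again three-generated, so the ``commutator of a generating pair'' principle from part (2) does not apply. The issue you must address — and that ``I would track which conjugates appear'' leaves unaddressed — is precisely that $N$ is generated by \emph{three} elements, say $g_k:=h_1^k(h_2h_1)h_1^{-k}$ for $k=0,1,2$ (one checks $N=\langle g_0,g_1,g_2\rangle$ using $h_1^{-1}(h_1h_2)h_1=h_2h_1$), so $N'$ is normally generated in $N$ by the three pairwise commutators $[g_0,g_1]$, $[g_1,g_2]$, $[g_2,g_0]$, not just one. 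The step you are missing is that conjugation by $h_1$ sends $g_k\mapsto g_{k+1}$ and, since $h_1^3$ is central, $g_{k+3}=g_k$; hence $[g_1,g_2]=h_1[g_0,g_1]h_1^{-1}$ and $[g_2,g_0]=[g_2,g_3]=h_1^2[g_0,g_1]h_1^{-2}$, so all three pairwise commutators are $H$-conjugates of $[g_0,g_1]=[h_2h_1,\ h_1(h_2h_1)h_1^{-1}]$, and $H''\subseteq N'$ lies in the $H$-normal closure of that single element as required.
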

\begin{proof}
	We start by proving  the first claim.  Note that for every $k$, $h_1^kNh_1^{-k}=N$. 	 Therefore, in order to show that $N$ is normal in $H$, it is enough to show that $h_2Nh_2^{-1}=N$ and $h_2^{-1}Nh_2=N$. Since $h_1h_2$ and $h_1^{m}h_2h_1^{m-1}=h_2h_1^{m-1}$ belong to $N$,
	$$
	h_2Nh_2^{-1}=(h_2h_1^{m-1})N(h_2h_1^{m-1})^{-1}=N \text{ and }
	h_2^{-1}Nh_2=(h_1h_2)^{-1}N(h_1h_2)=N. 
	$$
	Since $h_1N=h_2^{-1}N$, the quotient $H/N$ is cyclic and in particular abelian so $H' \subseteq N$. 
	
	We now prove the second claim. Since $h_1^2$ is central in $H$, $N=\langle h_1h_2,h_1^2h_2h_1^{-1}\rangle $. Denote $g:=[h_1^2h_2h_1^{-1},h_1h_2]$. Then, 
	$$
	H'' \subseteq N' = \langle hgh^{-1} \mid h \in N\rangle \subseteq  \langle hgh^{-1} \mid h \in H\rangle.
	$$
	
	Finally, we prove the third claim. For every $k$, denote $g_k:=h^k_1(h_2h_1)h_1^{-k}$.
	Since $N$ contains $H'$, it is enough to prove that $N'$ is  
	contained in the subgroup generated by $H$-conjugates of $[g_0,g_1]$.
	Since $h_1^3$ is central in $H$, if $k=k'\ (\Mod\ 3)$ then $g_k=g_{k'}$.
	Since $h_1^{-1}(h_1h_2)h_1=h_2h_1$, $$N=\langle g_k \mid 0 \le i \le m-1 \rangle=\langle g_0,g_1,g_2 \rangle.$$
 Thus $N'$ is  the subgroup of $N$ generated by the $N$-conjugates of the elements $[g_0,g_1]$, $[g_1,g_2]$ and $[g_2,g_0]$. The result follows since $[g_1,g_2]=h_1[g_0,g_1]h_1^{-1}$ and  $[g_2,g_0]=[g_2,g_3]=h_1^2[g_0,g_1]h_1^{-2}$.
\end{proof}

\begin{lemma}\label{lemma:G''} Under Notation \ref{nota:generators}, 
\begin{enumerate}[label=\upshape(\arabic*)]
	\item If $(m,n)=(2,3)$ then the elements of $\langle A,B \rangle'' $ are equal to $I_2$ modulo 2. 
	\item If $(m,n)=(2,\infty)$ then the elements of $\langle A,B \rangle'' $ are equal to $I_2$ or to $5I_2$ modulo 8.  
	\item If $(m,n)=(3,3)$  then the elements of $\langle A,B \rangle'' $ are equal to $I_2$ or to $5I_2$ modulo 8.  
	\item  If $(m,n)=(2,\infty)$ then the elements of $\langle A,B \rangle'' $ are equal to $I_2$ or to $17I_2$ modulo 32.  
\end{enumerate}
\end{lemma}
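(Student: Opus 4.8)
The plan is to handle the case $(m,n)=(2,3)$ separately, since there $\langle A,B\rangle$ is all of $\SL_2(\Z)$, and to treat the other three cases uniformly. For $(2,3)$ we have $A=U$ and $B=V$, so by Notation \ref{nota:main} the group $\langle A,B\rangle$ equals $\SL_2(\Z)$; reduction modulo $2$ maps it onto $\SL_2(\Z/2\Z)\cong S_3$, whose commutator subgroup is cyclic of order $3$ and hence abelian. Thus $\SL_2(\Z/2\Z)$ is metabelian, $\SL_2(\Z)''$ lies in the kernel of reduction mod $2$, and every element of $\langle A,B\rangle''$ is $\equiv I_2\pmod 2$, which is assertion (1).

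For the remaining cases I would use that the centre of $\SL_2(\Z)$ is $\{\pm I_2\}$ and that $U^2=V^3=-I_2$. When $A=U$ (the case $(2,\infty)$) this makes $A^2$ central in $\langle A,B\rangle$, so Lemma \ref{lemma:nc}(2) applies with $h_1=A$, $h_2=B$; when $A=V$ (the cases $(3,3)$ and $(3,\infty)$) it makes $A^3$ central, so Lemma \ref{lemma:nc}(3) applies. In either case $\langle A,B\rangle''$ is contained in the subgroup generated by the $\langle A,B\rangle$-conjugates of a single commutator $g$, namely $g=[A^2BA^{-1},AB]$ in the first case and $g=[BA,AB]$ in the other two (the latter because $h_1(h_2h_1)h_1^{-1}=h_1h_2$). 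Multiplying out the explicit matrices of Notation \ref{nota:generators} then gives
\[
(2,\infty):\ g=\begin{pmatrix}5&-8\\-8&13\end{pmatrix},\qquad (3,3):\ g=\begin{pmatrix}13&8\\8&5\end{pmatrix},\qquad (3,\infty):\ g=\begin{pmatrix}241&64\\64&17\end{pmatrix}
\]
so that $g\equiv 5I_2\pmod 8$ in the first two cases and $g\equiv 17I_2\pmod{32}$ in the third.

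To conclude, observe that $5^2\equiv 1\pmod 8$ and $17^2\equiv 1\pmod{32}$, so in each case $g\equiv\lambda I_2\pmod N$ with $\lambda^2\equiv 1$, where $(\lambda,N)=(5,8),(5,8),(17,32)$ respectively. Since $\lambda I_2$ is central, $hg^{\pm1}h^{-1}\equiv\lambda I_2\pmod N$ for every $h\in\SL_2(\Z)$, hence any product of $\langle A,B\rangle$-conjugates of $g^{\pm1}$ — in particular any element of $\langle A,B\rangle''$ — is $\equiv\lambda^e I_2\pmod N$ for some $e\in\Z$, i.e.\ $\equiv I_2$ or $\lambda I_2\pmod N$; this is (2), (3) and (4), reading item (4) of the statement as the case $(m,n)=(3,\infty)$. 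The only laborious step is the explicit evaluation of the three matrices $g$; the one genuinely special point is that in the case $(2,3)$ the commutator furnished by Lemma \ref{lemma:nc} reduces modulo $2$ to $\left(\begin{smallmatrix}0&1\\1&1\end{smallmatrix}\right)$ rather than to a scalar, which is why there one must instead use the identification $\langle A,B\rangle=\SL_2(\Z)$ together with the fact that $\SL_2(\Z/2\Z)$ is metabelian.
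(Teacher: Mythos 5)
Your proof is correct and follows essentially the same route as the paper's: case $(2,3)$ via $\SL_2(\Z/2\Z)\cong S_3$ being metabelian, and the other three cases via Lemma \ref{lemma:nc} applied with $h_1=A$, $h_2=B$ to reduce to a single commutator $g$, which you then compute explicitly and observe is a scalar $\lambda I_2$ modulo $N$ with $\lambda^2\equiv 1$. The only (cosmetic) differences from the paper are that you simplify $[BA,A(BA)A^{-1}]$ directly to $[BA,AB]$ and multiply out the $(3,\infty)$ matrix by hand rather than invoking Claim \ref{claim1}, and you correctly read item (4) as the intended case $(m,n)=(3,\infty)$ despite the typo in the statement.
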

\begin{proof}\ 
	\begin{enumerate}[label=\upshape(\arabic*)]
		\item $\SL_2(\Z/2\Z)\simeq S_3$ and $S_3''=1$.
		\item  Since $o(A)=4$ and $A^2=-I_2$, Lemma \ref{lemma:nc} implies that 
		$ \langle A,B \rangle''$ is contained in the subgroup generated by $SL_2(\Z)$-conjugates of 
		  $$[A^2BA^{-1},AB]=\left[\begin{array}{cc} 
		\ \  5 & -8 \\
		-8 &\  13
	\end{array}\right].$$
	The result follows since $5^2=1\ (\Mod\ 8)$.
		\item Since $o(A)=6$ and $A^3=-I_2$, Lemma \ref{lemma:nc} implies that 
		$ \langle A,B \rangle''$ is contained in the subgroup generated by $SL_2(\Z)$-conjugates of $[BA, A(BA)A^{-1}]$. Since $5^2=1\ (\Mod\ 8)$, the result follows from Claim \ref{claim1} with respect to
 $$ 
 X=BA= \left[\begin{array}{cc}  -1 & -2 \\ \ \ 0 & -1	\end{array}\right]\text{ and }
Y=A=\left[\begin{array}{cc} 0& -1\\	1 & \ \ 1 \end{array}\right] .
		$$
 \item Since $o(A)=6$ and $A^3=-I_2$, Lemma \ref{lemma:nc} implies that 
		$ \langle A,B \rangle''$ is contained in the subgroup generated by $SL_2(\Z)$-conjugates of $[BA, A(BA)A^{-1}]$.
Since $(17)^2=1\ (\Mod\ 32)$, the result follows from Claim \ref{claim1} with respect to
 $$ 
 X=BA= \left[\begin{array}{cc}  1& 4 \\ 0 & 1	\end{array}\right]\text{ and }
Y=A=\left[\begin{array}{cc} 0 & -1\\	1 & \ \ 1 \end{array}\right] .
		$$
\end{enumerate}
\end{proof}

\begin{lemma}\label{lemma:few_traces}  Under Notation \ref{nota:generators}, assume that $d \in G_{m,n}$ is the commutator of profinite generators of $\widehat{G}_{m,n}$. Then the possible values for $\trace(\cs(d))$ are given in Table \ref{table:pos}.
\begin{table}[!ht]
\begin{center}
\begin{tabular}{ | c | c| c |c|c| }
  \hline
  $(n,m)$ & $C$ & $\lambda$ & $l$ & $\trace(\cs(d))$\\ 
  \hline
  &&&&\\
  $(2,3)$  &   $\left[\begin{array}{cc} 
		 2& 1 \\
		1 & 1
	\end{array}\right]$& $1$ & $2$ & $1$ or $3$\\ 
	&&&&\\
  \hline
  &&&&\\
  $(2,\infty)$  & $\left[\begin{array}{cc} 
		 5& 2 \\
		2 & 1
	\end{array}\right]$ & $1$ or $5$& $8$ & $-2$ or $6$ \\
	&&&&\\
  \hline
  &&&&\\
  $(3,3)$  & $\left[\begin{array}{cc} 
		 \ \ 1& -2\\
		-2 & \ \ 5
	\end{array}\right]$
	&$1$ or $5$& $8$ & $-2$ or $6$\\
	&&&&\\
  \hline
  &&&&\\
  $(3,\infty)$  & $\left[\begin{array}{cc} 
		 \ \ 1 & -4\\
		-4 & \ 17
	\end{array}\right]$ &$1$ or $17$ & $32$ & $-14$ or $18$\\
	&&&&\\
  \hline
\end{tabular}
\captionof{table}{\label{table:pos} Possible values of $\trace(\cs(d))$}
\end{center}
\end{table}
\end{lemma}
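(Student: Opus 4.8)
The plan is to combine three ingredients already established: first, Lemma \ref{lemma:trace}, which says that $\trace(\cs(d)) = 2 \pm 2^k$ for some $k \ge 0$; second, Proposition \ref{lemma:conj}, which says $d$ is conjugate to $[a,b]^{\pm 1}$ modulo $G_{m,n}''$, equivalently $\cs(d)$ is conjugate to $C^{\pm 1}$ modulo $\langle A,B\rangle''$; and third, Lemma \ref{lemma:G''}, which pins down the elements of $\langle A,B\rangle''$ modulo a small power $\ell$ of $2$ (namely $\ell = 2, 8, 8, 32$ in the four cases). Putting these together, $\cs(d) \equiv \lambda \cdot g C^{\pm 1} g^{-1} \pmod{\ell}$ for some $g \in \SL_2(\Z)$ and some scalar $\lambda$ running over the values allowed by Lemma \ref{lemma:G''}. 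Since trace is conjugation-invariant and $\det C = 1$, taking traces gives $\trace(\cs(d)) \equiv \lambda\, \trace(C) \pmod{\ell}$ (note $\trace(C^{-1}) = \trace(C)$ because $C \in \SL_2$). This is the content of the columns $C$, $\lambda$, $\ell$ in Table \ref{table:pos}.

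First I would record, for each of the four pairs $(m,n)$, the value $\trace(C)$ from Notation \ref{nota:generators}: these are $3, 6, 6, 18$ respectively. Next I would list the scalars $\lambda$ from Lemma \ref{lemma:G''}: $\{1\}$ for $(2,3)$; $\{1,5\}$ for $(2,\infty)$ and $(3,3)$; $\{1,17\}$ for $(3,\infty)$. This yields the congruence conditions
$$
\trace(\cs(d)) \equiv \lambda\,\trace(C) \pmod{\ell},
$$
e.g.\ $\trace(\cs(d)) \equiv 3 \pmod 2$, i.e.\ odd, in the first case; $\trace(\cs(d)) \equiv 6 \text{ or } 30 \equiv 6 \pmod 8$ (so $\equiv 6 \pmod 8$) in the second and third; and $\trace(\cs(d)) \equiv 18 \text{ or } 17\cdot 18 = 306 \equiv 18 \pmod{32}$ in the fourth. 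Then I would intersect each of these residue classes with the set $\{2 \pm 2^k : k \ge 0\} = \{3, 4, 6, 10, 18, \dots\} \cup \{1, 0, -2, -6, -14, \dots\}$ from Lemma \ref{lemma:trace}. In the $(2,3)$ case the odd members $2 \pm 2^k$ with $k$ giving an odd value are exactly $k=0$, yielding $1$ or $3$. In the $(2,\infty)$ and $(3,3)$ cases one needs $2 \pm 2^k \equiv 6 \pmod 8$, forcing $2^k \equiv 4$ or $\equiv -4 \pmod 8$, i.e.\ $k = 2$, giving $2 \pm 4 \in \{-2, 6\}$. In the $(3,\infty)$ case one needs $2 \pm 2^k \equiv 18 \pmod{32}$, i.e.\ $\pm 2^k \equiv 16 \pmod{32}$, forcing $k = 4$, giving $2 \pm 16 \in \{-14, 18\}$. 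These are precisely the entries in the last column of Table \ref{table:pos}.

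The only genuinely delicate point is making the reduction ``$\cs(d)$ is conjugate to $\lambda C^{\pm1}$ mod $\ell$'' precise: Proposition \ref{lemma:conj} gives a conjugacy modulo $G_{m,n}''$ in the abstract group, and one must transport this through the section $\cs$ and through Lemma \ref{lemma:G''}'s description of $\langle A,B\rangle''$ as matrices congruent to scalar matrices mod $\ell$ — here I would note that $\cs$ restricted to commutators is given by $\cs([x,y]) = [X,Y]$ for lifts, as recorded before Lemma \ref{lemma:trace}, and that $\langle A,B\rangle' \subseteq \SL_2(\Z)$ maps onto $G_{m,n}'$ compatibly, so a conjugacy $d = g [a,b]^{\pm1} g^{-1} w$ with $w \in G_{m,n}''$ lifts to $\cs(d) = G C^{\pm1} G^{-1} W$ with $W \in \langle A,B\rangle''$, and then Lemma \ref{lemma:G''} gives $W \equiv \lambda I_2 \pmod \ell$. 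After that, taking traces is a one-line computation. I expect the write-up to be short, with the main (minor) obstacle being bookkeeping the lift through $\cs$ correctly.
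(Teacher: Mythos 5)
Your proposal matches the paper's own proof essentially step for step: Proposition~\ref{lemma:conj} gives $d=rcr^{-1}s$ with $s\in G_{m,n}''$, lifting through $\cs$ gives $\cs(d)=RCR^{-1}\cs(s)$, Lemma~\ref{lemma:G''} yields the congruence $\trace(\cs(d))\equiv\lambda\,\trace(C)\pmod{\ell}$, and Lemma~\ref{lemma:trace} then isolates the listed values. You in fact supply two details the paper leaves implicit — the observation $\trace(C^{-1})=\trace(C)$ that disposes of the $[a,b]^{-1}$ case, and the explicit intersection of each residue class with $\{2\pm 2^k\}$ — so the write-up is, if anything, slightly more complete.
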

\begin{proof}

Lemma \ref{lemma:conj} implies that the images of $c=[a,b]$ and $d$ in $G_{m,n}/G_{m,n}''$ are conjugates so there exist $r \in G_{m,n}$ and $s \in G_{m,n}''$ such that $d=rcr^{-1}s$. Let $A$, $B$ and $C$ be as in Notation  \ref{nota:generators}. Let $R$ be a lift of   $r$ to $\SL_2(\Z)$ so $\cs(c)=[A,B]=C$ and 
$$\cs(d)= RCR^{-1}\cs(s)\in \SL_2(\Z)'.$$ 

 Lemma \ref{lemma:G''}  implies that $\trace (\cs(d))=\lambda\trace(C) \ (\Mod\ l)$ where  $C$, the value of $l$ and the possible values of $\lambda$ are given in Table \ref{table:pos}. 

Lemma \ref{lemma:trace} implies that there exists $k \in \N$ such that $\trace(\cs(d))=2\pm 2^k$. Hence, the only possible values for $\trace(\cs(d))$ are the values written in the right column of Table \ref{table:pos}.

\end{proof}

%%%%%%%%%%%%%%%%%%%%%%%%%%%%%%%%%%%%%%%%%%%%%%%%%%%%%%%%%%%%%%%%%%%%%%%%%%%%%%%%%%%%%%%%%%%%%%%%%%%%%%%%%%%%%%
\subsection{\ Proof of Proposition \ref{prop:commutator}}\ 
%%%%%%%%%%%%%%%%%%%%%%%%%%%%%%%%%%%%%%%%%%%%%%%%%%%%%%%%%%%%%%%%%%%%%%%%%%%%%%%%%%%%%%%%%%%%%%%%%%%%%%%%%%%%%%%%%%%

The goal of this section is to complete the proof of Proposition \ref{prop:commutator}. 

\begin{definition}
	The trace of an element $g \in \PSL_2(\Z)$, denoted by $|\trace g|$, is the absolute value of the trace of its lifts to $\SL_2(\Z)$. 
\end{definition}

Let $d \in G_{m,n}$ be  a commutator of a pair of topological generating set of $\widehat{G}_{m,n}$. Lemma  \ref{lemma:few_traces}  gives the possible values of the trace of $d$. We will describe an  algorithm such that for a given $t$, the algorithm computes a representatives set to the finitely many  $G_{m,n}$-conjugacy classes of trace $t$. By applying this algorithm to the finitely many possible values of the trace of $d$, we will get a finite list of elements such that $d$ is $G_{m,n}$-conjugate to one of them. Then we will show that for every element in this list which in not a commutator, there exists a finite quotient in which this element is not a commutator, 
 
\begin{definition}
	Let $G=\langle a \rangle * \langle b \rangle$ where $o(a)=m$ and $o(b)=n$. If $m<\infty$ define $L_a:=\{a\}$, otherwise, define $L_a:=\{a,a^{-1}\}$. Define $L_b$ similarly. An $L_{a,b}$-word  is a finite sequence of elements of $L_{a,b}=:L_a \cup L_b$ (including the empty sequence). The set of $L_{a,b}$-words is denote by $W_{a,b} $.  If $w$ is a
	 word then its length is denote by $l(w)$. We say that a  word $(x_1,\ldots,x_k)\in W_{a,b}$ represents the element $g \in  G_{m,n}$ if $g=x_1\cdots x_k$ (the empty word represents the identity element). If $w_1$ and $w_2$ are words, then $w_1w_2$ denotes their concatenation.

\end{definition}

We recall some standard definitions and facts about $L_{a,b}$-words.

\begin{enumerate}[label=\upshape(\arabic*)]
	\item An $L_{a,b}$-word  is called reduced if the following two conditions hold: 
		\begin{enumerate}[label=\upshape(\alph*)]
			\item If $m < \infty$ then there are no $m$ consecutive appearances of $a$ and if $n < \infty$ then there are no $n$ consecutive appearances of $b$.
			\item If $m=\infty$  then there are no  consecutive appearances  of the from $a,a^{-1}$ or $a^{-1},a$ and if $n=\infty$ then there are no  consecutive appearances of the from $b,b^{-1}$ or $b^{-1},b$.
		\end{enumerate}
	\item For every $g \in G$ there exists a unique reduced word  which represents $g$.  
	\item An $L_{a,b}$-word is called cyclically reduced if every cyclic permutation of it is reduced. 
	\item \begin{enumerate}[label=\upshape(\alph*)]
	\item  If $z$ is a reduced word which represents an element $g$ we denote by $z^{-1}$ the reduced word which represents $g^{-1}$.  
	\item  For every conjugacy class $C$ of $G$ there exists a cyclically reduced word  $w$, unique up to cyclic permutation, such that every element  of $C$ is represented  by a reduced word of the form $zw'z^{-1}$ where  $w'$ is a cyclic permutation of $w$ and $z$ is a reduced word. 
In particular, the only elements of $C$ which are represented by a cyclically reduced word are the elements which are represented by a cyclic permutation of $w$.
\end{enumerate}
\end{enumerate}

We recall the description of the conjugacy classes of $\PSL_2(\Z)$ with a given trace. The following two Lemmas are well known. We sketch the proofs only for the convenience of the reader.

\begin{lemma}\label{lemma:conjugacy_class_SL_12}
\begin{enumerate}[label=\upshape(\arabic*)]
	\item If $g \in \PSL_2(\Z)$ and $|\trace{g}|=0$ then $g$ is conjugate to $u$. 
	\item If $g \in \PSL_2(\Z)$ and $|\trace{g}|=1$ then $g$ is conjugate to $v$ or to $v^2$. 
	\item If $g \in \PSL_2(\Z)$ and $|\trace{g}|=2$ then $g$ is conjugate to $(uv)^r$ for some $r \in \Z$. 
\end{enumerate}	
\end{lemma}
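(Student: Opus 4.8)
The plan is to determine the type of $g$ from $|\trace g|$ by inspecting the eigenvalues of a lift $\tilde g \in \SL_2(\Z)$, and then to conclude using the free product decomposition $\PSL_2(\Z)=\langle u\rangle*\langle v\rangle$ together with the action of $\PSL_2(\Z)$ on the cusps $\mathbb{P}^1(\Q)$.

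For (1) and (2) (the torsion cases): if $|\trace\tilde g|=0$ the characteristic polynomial of $\tilde g$ is $\lambda^2+1$, so $\tilde g^2=-I_2$ and $g$ has order $2$ in $\PSL_2(\Z)$; if $|\trace\tilde g|=1$ it is $\lambda^2\mp\lambda+1$, so $\tilde g^3=\mp I_2$ and $g$ has order $3$. In either case $g$ is a nontrivial torsion element of the free product $\langle u\rangle*\langle v\rangle$, and a standard fact about free products — for which one can use the word machinery recalled above, since a cyclically reduced word of length $\ge 2$ represents an element of infinite order — is that every torsion element is conjugate into one of the free factors (and conjugation preserves order). An element of order $2$ must therefore be conjugate to the unique nontrivial element $u$ of $\langle u\rangle$, and an element of order $3$ must be conjugate to one of the two nontrivial elements $v,v^2$ of $\langle v\rangle$, which is exactly what (1) and (2) assert.

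For (3) (the parabolic case): if $|\trace\tilde g|=2$ the characteristic polynomial is $(\lambda\mp 1)^2$, so $\tilde g$ is $\pm$ a unipotent matrix; hence either $g=1=(uv)^0$, or $g\ne 1$ and $\tilde g$ has a unique fixed point on $\mathbb{P}^1(\RR)$. Writing $\tilde g=\begin{psmatrix}a&b\\c&d\end{psmatrix}$ with $a+d=\pm 2$ and $ad-bc=1$, the fixed-point equation $c\xi^2+(d-a)\xi-b=0$ has discriminant $(a-d)^2+4bc=(a+d)^2-4=0$, and one checks directly that the fixed point lies in $\Q\cup\{\infty\}=\mathbb{P}^1(\Q)$ (it equals $(a-d)/(2c)$ when $c\ne 0$, and is $\infty$ when $c=0$, $b\ne 0$). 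Since $\PSL_2(\Z)$ acts transitively on $\mathbb{P}^1(\Q)$ (by B\'ezout), after replacing $g$ by a conjugate we may assume this fixed point is $\infty$; then $g$ is represented by an upper-triangular integer matrix $\begin{psmatrix}1&r\\0&1\end{psmatrix}$ for some $r\in\Z$, and since $uv=\begin{psmatrix}1&1\\0&1\end{psmatrix}$ in $\PSL_2(\Z)$ this says precisely that $g$ is conjugate to $(uv)^r$.

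I do not expect a genuine obstacle here, the statement being classical. The points that need a little care are the bookkeeping of the sign ambiguity in passing between $\SL_2(\Z)$ and $\PSL_2(\Z)$, the transitivity of $\PSL_2(\Z)$ on $\mathbb{P}^1(\Q)$ (equivalently, that the stabilizer of the cusp $\infty$ is exactly $\langle uv\rangle\cong\Z$, which also yields (3) directly once one knows a nontrivial parabolic fixes a unique cusp), and the one genuinely combinatorial input — that cyclically reduced words of length $\ge 2$ in a free product of cyclic groups have infinite order — which is recorded in the standard references already cited.
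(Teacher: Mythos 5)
Your proof is correct and follows essentially the same route as the paper: lift to $\SL_2(\Z)$, read off the order (or unipotency) from the characteristic polynomial, use the free-product structure for the torsion cases, and reduce the trace-$2$ case to an upper-triangular form. The paper phrases the last step via a primitive eigenvector $z\in\Z^2$ completed to a matrix $h\in\SL_2(\Z)$ with first column $z$, which is the linear-algebraic twin of your "move the rational fixed point to $\infty$" argument.
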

\begin{proof}\ 

	\noindent1. Let  $g \in \PSL_2(\Z)$ with $|\trace{g}|=0$. Then $g$ can be lifted to an element  $\tilde{g} \in \SL_2(\Z)$. The characteristic polynomial of $\tilde{g}$ is $T^2+1$ so  $g^2=1$. Every element of order 2 in the free product  $\PSL_2(\Z)=\langle u \rangle*\langle v\rangle$ is conjugate  to $u$.
	\\\\
	2. Let  $g \in \PSL_2(\Z)$ with $|\trace{g}|=1$. Then $g$ can be lifted to an element  $\tilde{g} \in \SL_2(\Z)$  with $\trace{\tilde{g}}=-1$. The characteristic polynomial of $\tilde{g}$ is $T^2+T+1=\frac{T^3-1}{T-1}$ so $g^3=1$. Every element of order 3 in the free product  $\PSL_2(\Z)=\langle u \rangle*\langle v\rangle$ is conjugate either to $v$ or to $v^2$.	
	\\\\
	3. Let  $g \in \PSL_2(\Z)$ with $|\trace{g}|=2$. Then $g$ can be lifted to an element  $\tilde{g} \in \SL_2(\Z)$  with $\trace{\tilde{g}}=2$. The characteristic polynomial of $\tilde{g}$ is $(T-1)^2$ so  1 is the
		only eigenvalue of $\tilde{g}$. Let $ z \in \Z^2$ be a primitive eigenvector of $\tilde{g}$. Let $h \in \SL_2(\Z)$ be an element whose left column is $z$.  There exists $0 \ne r\in \Z$ such that  $h^{-1}\tilde{g}h=
		\begin{psmatrix}
		1 & r\\
		0 & 1		
		\end{psmatrix}
		=(-UV)^r$.   \end{proof}

\begin{lemma}  
\label{lemma:conjugacy_class_SL}
Let $C$ be a $\PSL_2(\Z)$-conjugacy class of trace $|t| \ge 3$. Then there exists 
$g \in C$ and $s_1,\ldots,s_k \in \{1,2\}$ such that $g=uv^{s_1}uv^{s_2}\ldots u v^{s_k}$.
\end{lemma}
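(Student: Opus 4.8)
The plan is to reduce the conjugacy class $C$ to an element represented by a \emph{cyclically reduced} $L_{u,v}$-word, using the general theory of conjugacy classes in free products recalled above, and then analyze the shape of such a word when the trace is at least $3$. First I would pick a cyclically reduced word $w$ representing some $g_0 \in C$ (this exists and is unique up to cyclic permutation by fact (4)(b) above). The letters of $w$ alternate between the sets $\{u\}$ and $\{v, v^2\}$, since there are no nontrivial further powers of $u$ and since $v^3 = 1$; and because $w$ is cyclically reduced, no $u\cdot u$ and no $v^{s}\cdot v^{s'}$ juxtapositions occur, so after cyclic rotation $w$ is genuinely of the alternating form $u v^{s_1} u v^{s_2} \cdots u v^{s_k}$ with each $s_i \in \{1,2\}$ — \emph{provided} $w$ actually contains at least one $u$ and at least one $v^{s}$. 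So the real content is to rule out the degenerate cyclically reduced words, namely the empty word, the single letters $u$, $v$, $v^2$, and (after the alternation analysis) words that collapse to a pure power of $u$ or a pure power of $v$; these are exactly the conjugacy classes of trace $0$, $1$, $2$.

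The way I would rule them out is by trace. An element conjugate to $1$, $u$, $v$ or $v^2$ has $|\trace| \in \{2, 0, 1, 1\}$ respectively, each $< 3$, contradicting $|t| \ge 3$. A cyclically reduced alternating word has even total length $2k$ with $k \ge 1$; if $k = 0$ we are in the degenerate case just excluded. So $k \ge 1$ and $w = u v^{s_1} \cdots u v^{s_k}$ as desired. One should also note that the hypothesis $|t| \ge 3$ already forces $g$ to be hyperbolic (its lift to $\SL_2(\Z)$ has $|\trace| \ge 3 > 2$, hence is of infinite order and not conjugate into $\langle u\rangle$ or $\langle v\rangle$), which is a clean a priori reason the cyclically reduced word cannot be a single letter or empty; I would use this as the main filter rather than re-deriving the trace values of the torsion classes, falling back on Lemma \ref{lemma:conjugacy_class_SL_12} only to handle $|t| \le 2$ if needed for exposition.

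Concretely, the steps in order: (i) invoke fact (4)(b) to get $g \in C$ represented by a cyclically reduced word $w$; (ii) observe the alphabet structure ($u$ has order $2$, $v$ order $3$) forces the reduced form to alternate between $\{u\}$-blocks and $\{v,v^2\}$-blocks, and cyclic reducedness forces each block to have length $1$, so $w$ is either a single letter or has the alternating shape $u v^{s_1} u \cdots u v^{s_k}$ with $s_i \in \{1,2\}$; (iii) use $|t|\ge 3$, i.e.\ that $g$ is not of finite order and not elliptic, to exclude $w$ being empty or a single letter; (iv) conclude $k \ge 1$ and $g = u v^{s_1} u v^{s_2} \cdots u v^{s_k}$, which is the claim. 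The main obstacle is the bookkeeping in step (ii): one must be careful that a cyclically reduced word in a free product of a $C_2$ and a $C_3$, once it has length $\ge 2$, can always be cyclically rotated so that it literally begins with $u$ and then strictly alternates — this uses that the two generators lie in distinct free factors so adjacent letters in a reduced word come from different factors, together with the order constraints collapsing each maximal same-factor block to a single symbol $u$ or $v^{\pm 1}$. Everything else is immediate from the quoted facts.
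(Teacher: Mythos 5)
Your proposal is correct and takes essentially the same route as the paper: the paper's proof is just the two-sentence version of your steps (i)--(iv), observing that $|t|\ge 3$ forces infinite order so the class avoids $u,v,v^2$ (and the identity), and then picking the cyclically reduced representative rotated to start with $u$. You simply spell out the alternation/block-collapsing bookkeeping that the paper leaves implicit.
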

\begin{proof}  The elements of $C$ have infinite order so $u,v,v^2 \notin C$. Take $g \in C$ to be any element such that $w_{u,v}(g)$ is cyclically reduced and starts with $u$. \end{proof}

\begin{lemma}\label{lemma:trace_bound} Under Notation \ref{nota:generators},
assume that $\{s_1,\ldots,s_k \}= \{1,2\}$. Then $|\trace(uv^{s_1}\ldots uv^{s_k})| \ge k$.	
\end{lemma}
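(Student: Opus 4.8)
The plan is to transfer the computation to $\SL_2(\Z)$, where $uv$ and $uv^2$ lift, up to sign, to the two elementary unipotents. A direct multiplication of the matrices in Notation \ref{nota:generators} gives $UV=-R$ and $UV^2=-L$, where
\[
R:=\begin{psmatrix} 1 & 1\\ 0 & 1\end{psmatrix}\qquad\text{and}\qquad L:=\begin{psmatrix} 1 & 0\\ 1 & 1\end{psmatrix}.
\]
Hence, for $s_1,\dots,s_k\in\{1,2\}$, the element $uv^{s_1}\cdots uv^{s_k}$ lifts to $\pm M$ with $M=M_1\cdots M_k$, where $M_i=R$ if $s_i=1$ and $M_i=L$ if $s_i=2$; and since $\{s_1,\dots,s_k\}=\{1,2\}$, both letters $R$ and $L$ occur among the $M_i$. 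A product of copies of $R$ and $L$ has nonnegative integer entries and hence nonnegative trace, so $|\trace(uv^{s_1}\cdots uv^{s_k})|=\trace(M)$, and it suffices to prove $\trace(M)\ge k$.

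Next I would normalise $M$ using the cyclic invariance of the trace. Breaking the cyclic word $M_1\cdots M_k$ into its maximal single-letter runs, the runs alternate between $R$-runs and $L$-runs, and since both letters appear there are $r\ge 1$ runs of each kind. Cyclically rotating to start at the beginning of an $R$-run, we obtain $M'=R^{a_1}L^{b_1}R^{a_2}L^{b_2}\cdots R^{a_r}L^{b_r}$ with all $a_i,b_i\ge 1$, with $\sum_{i}(a_i+b_i)=k$, and with $\trace(M')=\trace(M)$. Using $R^{a}=\begin{psmatrix}1&a\\0&1\end{psmatrix}$ and $L^{b}=\begin{psmatrix}1&0\\b&1\end{psmatrix}$, each block equals
\[
R^{a_i}L^{b_i}=\begin{psmatrix} 1+a_ib_i & a_i\\ b_i & 1\end{psmatrix},
\]
which has all entries $\ge 1$ and trace $2+a_ib_i\ge 1+a_i+b_i$, the last inequality being $(a_i-1)(b_i-1)\ge 0$.

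The single elementary ingredient I would isolate is: if $A,B$ are $2\times 2$ matrices all of whose entries are $\ge 1$, then $AB$ again has all entries $\ge 1$ and $\trace(AB)\ge\trace(A)+\trace(B)$. This is immediate from $\trace(AB)=a_{11}b_{11}+a_{22}b_{22}+a_{12}b_{21}+a_{21}b_{12}$ together with the bounds $a_{ii}b_{ii}\ge a_{ii}+b_{ii}-1$ and $a_{12}b_{21},a_{21}b_{12}\ge 1$. Applying this $r-1$ times to the blocks $R^{a_i}L^{b_i}$ yields
\[
\trace(M)=\trace(M')\ \ge\ \sum_{i=1}^{r}\trace\!\bigl(R^{a_i}L^{b_i}\bigr)\ \ge\ \sum_{i=1}^{r}(1+a_i+b_i)\ =\ r+k\ \ge\ k+1,
\]
which is more than enough.

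I do not expect a real obstacle. The two points requiring a little care are getting the signs right in $UV=-R$ and $UV^2=-L$ (so that the absolute value of the trace of the lift is exactly $\trace(M)$) and the routine observation that a cyclic word on $\{R,L\}$ containing both letters can be rotated into the block form used above. One could alternatively bypass the cyclic normalisation by a direct induction on $k$ tracking all four entries of the partial products, but the block decomposition makes the estimate cleanest.
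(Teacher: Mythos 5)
Your proposal is correct and follows essentially the same route as the paper: lift $uv$ and $uv^2$ to the unipotents $R=-UV$ and $S=-UV^2$, reduce to a cyclically normalized word by conjugation-invariance of the trace, and then estimate the trace of the resulting product of $R$- and $S$-blocks by a superadditivity argument for matrices with positive entries. The only difference is that you make explicit (as a standalone lemma on $\trace(AB)\ge\trace(A)+\trace(B)$ for matrices with all entries $\ge 1$) what the paper dismisses as ``a simple induction argument,'' incidentally obtaining the slightly sharper bound $\ge k+r$.
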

\begin{proof}
It is enough to prove the claim when $s_1=1$. 
 Let $U,V$ be as in Notation \ref{nota:generators}.
	Denote $$R:=-UV=\left[\begin{array}{cc}
	1 & 1\\
	0 & 1
\end{array}
\right] S:=-UV^2=\left[\begin{array}{cc}
	1 & 0\\
	1 & 1
\end{array}\right].$$
Note that $$
R^{m_1}S^{m_2}=\left[\begin{array}{cc}
	1+m_1m_2 & m_1\\
	m_2 & 1
\end{array}
\right]
$$
so $\trace (R^{m_1}S^{m_2})=2+m_1m_2 \ge m_1+m_2$ and  all the entries of $R^{m_1}S^{m_2}$ are positive. 
A simple induction argument shows  that for every $k \ge 2$ and  every sequence $m_1,\ldots,m_k$ of positive integers all the entries of the product  $R^{m_1}S^{m_2}R^{m_3}S^{m_4}\cdots$ are positive and
$$\trace(R^{m_1}S^{m_2}R^{m_3}S^{m_4}\cdots)\ge m_1+\cdots+m_k.
$$
\end{proof}

\begin{remark} Under the assumption of Lemma \ref{lemma:trace_bound}
	it is possible to give a better bound to $|\trace(uv^{s_1}\ldots uv^{s_k})|$. Since the exact bound is not important to us, no attempt was made to make it optimal. 
\end{remark}

%\begin{remark}
 %Lemma \ref{lemma:conjugacy_class_SL} implies that  for every $t\ge 3$,  there is an effective procedure to find a finite subset of $\PSL_2(\Z)$ which non-trivially intersects every conjugacy class of trace $|t|$.
%\end{remark}

 The next step is to find a description of the conjugacy classes in $G_{m,n}$ of a given trace $t \ge 3$. 
 
 \begin{notation}\label{nota:ddd}
 	Under Notation \ref{nota:generators}, let $(m,n)\in \{(2,\infty),(3,3),(3,\infty)\}$. Then $\langle a\rangle * \langle b\rangle =G_{m,n}\le \PSL_2(\Z)=\langle u\rangle * \langle v\rangle $.  We can regard an element $g\in G_{m,n}$ as an element in the free product $\langle a\rangle * \langle b\rangle$ and as an element in the free product $\langle u\rangle * \langle u\rangle$. Thus, $g$ can be represented by a reduced $L_{a,b}$-word $w_{a,b}(g) \in W_{a,b}$ and also by a  reduced $L_{u,v}$-word $w_{u,v}(g) \in W_{u,v}$. 
 \end{notation}

\begin{lemma}\label{lemma:sub}  Under Notations \ref{nota:generators} and \ref{nota:ddd}, let $(m,n)\in \{(2,\infty),(3,3),(3,\infty)\}$. For every $g \in G_{m,n}$:
\begin{enumerate}[label=\upshape(\arabic*)]
	\item\label{item:sub1} $l(w_{u,v}(g)) \ge l(w_{a,b}(g))$.
	\item\label{item:sub2} If  $w_{a,b}(g)$ starts with $a^{\pm 1}$ and ends with $b^{\pm 1}$ then $w_{u,v}(g)$ is cyclically reduced.
	\item\label{item:sub3}  Let $C$ be a conjugacy class of $G_{m,n}$ which does not contain a power of $a$ nor a power of $b$. Then there exists $g \in C$ such that $w_{u,v}(g)$ is cyclic reduced and starts with $u$. 
\end{enumerate} 
\end{lemma}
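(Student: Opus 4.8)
The plan is to exploit the explicit generators from Notation \ref{nota:generators}. In each of the three cases the generators $a,b$ of $G_{m,n}$ are given as specific reduced $L_{u,v}$-words in $\PSL_2(\Z)=\langle u\rangle*\langle v\rangle$: for $(m,n)=(2,\infty)$ we have $a=u$ and $b=vuv$; for $(m,n)=(3,3)$ we have $a=v$ and $b=uvu^{3}=uv$ (since $u^2=1$ in $\PSL_2(\Z)$, so the $\SL_2$-word $UVU^3$ descends to $uvu$, i.e. its $L_{u,v}$-reduced form); for $(m,n)=(3,\infty)$ we have $a=v$ and $b=(uv)^3u$. The key combinatorial observation is that for each generator $x\in\{a^{\pm1},b^{\pm1}\}$ its reduced $L_{u,v}$-word starts with a fixed letter and ends with a fixed letter, and these are arranged so that when one concatenates the $L_{u,v}$-words of the letters of a reduced $L_{a,b}$-word $w_{a,b}(g)=x_1\cdots x_k$, \emph{no cancellation} occurs at the junctions and at most a bounded amount of merging (combining consecutive $v$'s, or $v^{-1}$ with $v$) can happen; crucially each junction still leaves at least one surviving letter. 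This gives \ref{item:sub1}.

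Concretely, I would first record, for each of the three $(m,n)$, a small table of the reduced $L_{u,v}$-words of $a,a^{-1},b,b^{-1}$, noting their first and last letters (e.g.\ for $(2,\infty)$: $b=vuv$, $b^{-1}=v^{-1}uv^{-1}$, $a=a^{-1}=u$; for $(3,3)$: $a=v$, $a^{-1}=v^{2}$, $b=uv$, $b^{-1}=v^{2}u$ — here I must be careful since $n=3$ makes $b$ a genuine element of a $C_3$-factor, not free; for $(3,\infty)$: $a=v$, $a^{-1}=v^2$, $b=(uv)^3u=uvuvuvu$, $b^{-1}=uv^2uv^2uv^2u$). Then, given a reduced $L_{a,b}$-word $x_1\cdots x_k$ for $g$, I concatenate the corresponding $L_{u,v}$-words. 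Because in a reduced $L_{a,b}$-word consecutive letters come from different free factors (or are not inverse to each other when $m$ or $n$ is infinite), one checks case by case that at each junction $x_i\mid x_{i+1}$ the last letter of $w_{u,v}(x_i)$ and the first letter of $w_{u,v}(x_{i+1})$ either do not interact, or combine two $v$-powers whose exponents add to something nonzero mod $3$ — in no case do they annihilate. Hence the concatenated word, after reduction, still has length $\ge k$ (in fact one gets length $\ge$ (sum of the $L_{u,v}$-lengths of the letters) $-$ (bounded correction), but the crude bound $\ge k$ suffices), proving \ref{item:sub1}. For \ref{item:sub2}: if $w_{a,b}(g)$ starts with $a^{\pm1}$ and ends with $b^{\pm1}$ (or vice versa — one should state which combinations are meant, matching how the Lemma will be used), then the first and last letters of $w_{u,v}(g)$ are determined by the first/last $L_{u,v}$-letters of the outer generators, and one checks in the table that these are such that wrapping around — i.e.\ the junction between the end of $w_{u,v}(g)$ and the start of $w_{u,v}(g)$ — again involves no cancellation and no forbidden merge, so every cyclic permutation is reduced, i.e.\ $w_{u,v}(g)$ is cyclically reduced.

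For \ref{item:sub3}: let $C$ be a conjugacy class of $G_{m,n}$ containing no power of $a$ and no power of $b$. By fact (4)(b) recalled before the lemma there is a cyclically reduced $L_{a,b}$-word $w$ representing some $g\in C$, unique up to cyclic permutation; since $C$ contains no power of $a$ or $b$, $w$ involves both generators, so up to cyclic permutation we may arrange $w$ to start with $a^{\pm1}$ and end with $b^{\pm1}$. By \ref{item:sub2}, $w_{u,v}(g)$ is then cyclically reduced. Finally, to arrange that it starts with $u$: the first $L_{u,v}$-letter of $w_{u,v}(g)$ is the first letter of $w_{u,v}(a^{\pm1})$, which from the tables is $u$ in the cases $(3,3)$ and $(3,\infty)$ only if we begin with a $b$-block, and is $v^{\pm1}$ if we begin with an $a$-block; so instead choose the cyclic permutation of $w$ that begins with $b^{\pm1}$ and ends with $a^{\pm1}$ (when $(2,\infty)$, $a=u$ already starts with $u$). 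In every case one can, by choosing the starting point of the cyclic word $w$ appropriately, guarantee the resulting $w_{u,v}(g)$ starts with $u$; one small point to verify is that cyclic reducedness is preserved, which follows from \ref{item:sub2} applied to the chosen permutation, or directly since cyclic reducedness is a property of the cyclic word.

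\textbf{Main obstacle.} The routine part is building the three tables and checking the junctions; the real care is needed in the $(3,3)$ case, where $n=3$ is finite, so $b$ and $b^{-1}=b^2$ both lie in a $C_3$-factor and a "junction" between two $b$-blocks in an $L_{a,b}$-word cannot occur in a reduced word anyway — but one must make sure the $L_{u,v}$-reduction of products like $a\cdot b = v\cdot uv$ versus $a^{-1}\cdot b = v^2\cdot uv$ behaves uniformly, and that the merging of $v$-powers (allowed up to length $2$) never drops a whole letter. The other delicate point is the precise bookkeeping in \ref{item:sub3} to land on a word starting with $u$ while keeping it cyclically reduced; this is where one must be most careful to pick the correct cyclic rotation of $w$, and to confirm that the hypothesis "$C$ contains no power of $a$ nor of $b$" is exactly what rules out the degenerate rotations.
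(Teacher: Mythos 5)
Your overall strategy is the same as the paper's: the paper's proof simply records six combinatorial properties of the $L_{u,v}$-words of the powers $a^r$ and $b^s$ (length $\ge |r|$ resp.\ $\ge |s|$; one of $a,b$ starts with $u$ and the other with $v$; first/last letters of $a^r$ are independent of $r\ne 0$, likewise for $b^s$; the concatenations $w_{u,v}(a^r)w_{u,v}(b^s)$ and $w_{u,v}(b^s)w_{u,v}(a^r)$ are reduced) and then says the three items follow. Your tables are exactly the verification of those properties, so the approaches coincide.

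However, there is a concrete error in your $(3,3)$ table that would break the argument if left. Since $u^2=1$ in $\PSL_2(\Z)$ we have $u^3=u$, so $b=uvu^3=uvu$, \emph{not} $uv$; your own parenthetical says $uvu$ correctly but the table then uses $b=uv$ and $b^{-1}=v^2u$. The correct reduced $L_{u,v}$-words are $b=uvu$ (length $3$) and $b^{-1}=b^2=uv^2u$ (length $4$), both beginning and ending with $u$. With the wrong table you would get e.g.\ $w_{a,b}(g)=ab^{-1}\mapsto v\cdot v^2u=u$, of $L_{u,v}$-length $1<2$, contradicting item (1); worse, the junction $v\,|\,v^2$ annihilates entirely, so your claim of ``no cancellation, at most bounded merging'' fails. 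With the corrected table all junctions in all three cases are of type $u\,|\,v$ or $v\,|\,u$ and there is \emph{no} merging at all, so the ``bounded correction'' you budget for is not needed. A second, more cosmetic point: in $(2,\infty)$ you write $b^{-1}=v^{-1}uv^{-1}$, but since $o(v)=3$ the letter alphabet is $\{v\}$ and the reduced $L_{u,v}$-word is $v,v,u,v,v$ of length $5$; this does not affect the bounds here but you should be consistent, since the whole point of the lemma is careful length bookkeeping in this alphabet.

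Your treatment of item (3) — choosing the cyclic rotation of the cyclically reduced $L_{a,b}$-word to begin with whichever of $a,b$ has $L_{u,v}$-word starting with $u$ (that is $b$ for $(3,3),(3,\infty)$ and $a$ for $(2,\infty)$), and observing that cyclic reducedness is a property of the cyclic word so is preserved under rotation — is correct and matches what the paper has in mind.
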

\begin{proof}
	 If $o(a)<\infty$ denote $I_a:=\{0,\ldots,o(a)-1\}$, otherwise, denote $I_a:=\Z$. Define $I_b$ similarly.  The claims  easily follow from the following properties:
			\begin{enumerate}[label=\upshape(\alph*)]
				\item For every $r \in I_a$ and $s \in I_b$, $l(w_{u,v}(a^r))\ge |r|$ and $l(w_{u,v}(b^s))\ge |s|$.
				\item If $w_{v,u}(a)$ starts with $u$  then $w_{v,u}(b)$ starts with $v$.
				\item If $w_{v,u}(a)$ starts with $v$  then $w_{v,u}(b)$ starts with $u$.
				\item  For every $0 \ne r \in I_a$, the first and last letters of $w_{u,v}(a)$ and $w_{u,v}(a^r)$ are all equal. 
				\item  For every $0 \ne s \in I_b$, the first and last letters of $w_{u,v}(b)$ and $w_{u,v}(b^s)$ are all equal.				\item For every $r \in I_a$ and $s \in I_b$, the words $w_{u,v}(a^r)w_{u,v}(b^s)$ and $w_{u,v}(b^s)w_{u,v}(a^r)$ are reduced.
			\end{enumerate}
\end{proof}

The  proof of Lemma \ref{lemma:sub} also implies:
\begin{lemma}\label{lemma:is_free_product}
	Under Notation \ref{nota:generators}, $G_{m,n}$ is a free product of $\langle a\rangle $ and $\langle b\rangle $.
\end{lemma}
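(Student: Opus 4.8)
The statement to prove is Lemma \ref{lemma:is_free_product}: that under Notation \ref{nota:generators}, the subgroup $G_{m,n}=\langle a,b\rangle$ of $\PSL_2(\Z)$ is the (internal) free product of $\langle a\rangle$ and $\langle b\rangle$, i.e. the natural surjection $\langle a\rangle * \langle b\rangle \to G_{m,n}$ from the abstract free product is an isomorphism. The remark after Lemma \ref{lemma:sub} says its proof "also implies" this, so the plan should reuse exactly the ping-pong-type bookkeeping on reduced $L_{u,v}$-words established there.

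\begin{proof}[Proof of Lemma \ref{lemma:is_free_product}]\

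The case $(m,n)=(2,3)$ is trivial since there $G_{m,n}=\PSL_2(\Z)=\langle u\rangle*\langle v\rangle$ by definition. So assume $(m,n)\in\{(2,\infty),(3,3),(3,\infty)\}$ and recall that in each of these cases $a$ and $b$ are explicit elements of $\PSL_2(\Z)=\langle u\rangle*\langle v\rangle$ with $o(a)=m$ and $o(b)=n$, as recorded in Notation \ref{nota:generators} (and as one checks from the matrices). There is a canonical surjective homomorphism $\Phi\colon \langle \hat a\rangle * \langle \hat b\rangle \to G_{m,n}$ from the abstract free product of a cyclic group of order $m$ and a cyclic group of order $n$, sending $\hat a\mapsto a$, $\hat b\mapsto b$; the content of the lemma is that $\Phi$ is injective, equivalently that every nonempty reduced $L_{a,b}$-word represents a nontrivial element of $\PSL_2(\Z)$.

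The plan is to show that if $w=(x_1,\dots,x_k)$ is a nonempty reduced $L_{a,b}$-word then the $L_{u,v}$-word obtained by concatenating $w_{u,v}(x_1),\dots,w_{u,v}(x_k)$ is already a reduced $L_{u,v}$-word of positive length, hence represents a nontrivial element. Concretely: each $x_i\in L_{a,b}$ is a power $a^{r}$ or $b^{s}$ with $r\in I_a\setminus\{0\}$, $s\in I_b\setminus\{0\}$, so $w_{u,v}(x_i)$ has positive length by property (a) in the proof of Lemma \ref{lemma:sub}, and by properties (d),(e) its first and last letters are determined only by whether $x_i$ is an $a$-syllable or a $b$-syllable (and, when the order is infinite, by the sign of the exponent, matching $w_{u,v}(a)$ resp.\ $w_{u,v}(a^{-1})$). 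Reducedness of $w$ means consecutive syllables alternate between $a$-type and $b$-type (with the $m=\infty$/$n=\infty$ non-cancellation condition), and property (f) of that proof says precisely that $w_{u,v}(a^{r})w_{u,v}(b^{s})$ and $w_{u,v}(b^{s})w_{u,v}(a^{r})$ are reduced $L_{u,v}$-words; iterating the junction-by-junction check (exactly as in the induction in the proof of Lemma \ref{lemma:sub}\ref{item:sub1}) shows no cancellation occurs at any of the $k-1$ internal junctions. Hence the concatenation is a nonempty reduced $L_{u,v}$-word, so it does not represent the identity of $\PSL_2(\Z)=\langle u\rangle*\langle v\rangle$. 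Therefore $w$ represents a nontrivial element of $G_{m,n}$, $\Phi$ is injective, and $G_{m,n}=\langle a\rangle*\langle b\rangle$.

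The only point requiring care is the verification that the "boundary letters" of $w_{u,v}(a^r)$ do not depend on $r$ (for fixed sign of $r$) and that an $a$-syllable and a $b$-syllable never begin, or end, with the same $L_{u,v}$-letter in a way that would allow cancellation — but this is exactly what properties (b)–(f) in the proof of Lemma \ref{lemma:sub} record, using the explicit matrices for $a$ and $b$ in each of the three cases. So no new computation is needed beyond what was already done there, which is why the remark asserts that the proof of Lemma \ref{lemma:sub} already yields this lemma.
\end{proof}
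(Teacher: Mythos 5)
Your proof is correct and takes essentially the same approach the paper intends: the paper's own "proof" is just the sentence that the proof of Lemma \ref{lemma:sub} (specifically the verifications (a)--(f)) already yields the freeness claim, which is precisely the ping-pong/no-cancellation argument you spell out. The only minor quibble is the parenthetical about the sign of the exponent being relevant to the boundary letters: as property (e) is stated, the first and last letters of $w_{u,v}(b^s)$ agree with those of $w_{u,v}(b)$ for \emph{all} nonzero $s\in I_b$, including negative ones, so no sign distinction is actually needed (and $a$ always has finite order here anyway). This is a harmless extra caution rather than an error.
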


\begin{alg}\label{alg1} Fix $(m,n) \in \{(2,3), (2,\infty),(3,3),(3,\infty)\}$ and $3 \le t \in \N$. Use the following steps to find a finite subset of $G_{m,n}$ with a non-empty intersection with every $G_{m,n}$-conjugacy class of elements with trace $t$.
\begin{enumerate}
	\item Apply Lemmas \ref{lemma:conjugacy_class_SL_12}, \ref{lemma:conjugacy_class_SL} and \ref{lemma:trace_bound} to find a finite set $\{g_1,\ldots,g_k\} \subseteq \PSL_2(\Z)$ such that:
	\begin{enumerate}
	\item $\{g_1,\ldots,g_k\}$ contains a representative of every $\PSL_2(\Z)$-conjugacy class of trace $|t|$.
	\item $w_{u,v}(g_1),\ldots,w_{u,v}(g_k)$ are cyclically reduced.
	\end{enumerate}
	\item For every $1 \le i \le k$, let $r_i:=l(w_{u,v}(g_i))$ and let $g_{i,1},\ldots,g_{i,r_i}$ be such that  $w_{u,v}(g_{i,1}),\ldots,w_{u,v}(g_{i,r_i})$ are the cyclic permutations of $w_{u,v}(g_i)$. Item \ref{item:sub3} of Lemma \ref{lemma:sub} implies that every $G_{m,n}$-conjugacy class of elements of trace $t$ contains an element in $\{g_{i,j} \mid 1 \le i \le k, 1 \le j \le r_i\}$. 
	\item  Lemma \ref{lemma:sub} implies that for every $g \in G_{m,n}$, $l(w_{a,b}(g)) \le l(w_{u,v}(g))$. For every $1  \le i \le k$, compute $X_i:=\{g \in G_{m,n} \mid l(w_{a,b}(g))\le r_i\}$ and $Y_i:=\{g_{i,j} \mid 1 \le j \le r_i \text{ and } g_{i,j}\in X_i\}$.    Lemma \ref{lemma:sub} implies that $\cup_{1 \le i \le k}Y_i$ contains at least one element of every conjugacy class of $G_{m,n}$  of element of trace $t$.  \end{enumerate}
\end{alg}

\begin{lemma}\label{lemma:almost_done} Under Notation \ref{nota:generators}, in the following table, for the given values of   $(m,n)$ and  $t$, the set $R_t$  non-trivially intersects every $G_{m,n}$-conjugacy class consisting of elements of trace $t$:

\begin{center}
\begin{tabular}{ | c | c| c | c |} 
  \hline
  $(n,m)$ & $t$ & $R_t$& $R_t\cap G_{m,n}'$  \\ 
  \hline
  &&&\\
  $(2,3)$  & $3$ & $\big\{[a,b]\big\}$ & $\big\{[a,b]\big\}$ \\ 
	&&&\\
  \hline
  &&&\\
   $(2,\infty)$  & $6$ & $\big\{[a,b],ab^3,ab^{-3},abab^2,ab^{-1}ab^{-2}\big\}$ & $\big\{[a,b]\big\}$  \\ 
	&&&\\
  \hline
    &&&\\
   $(3,3)$  & $6$ & $\big\{[a,b],[a,b^2]\big\}$ & $\big\{[a,b],[a,b^2]\big\}$  \\ 
	&&&\\
  \hline
      &&&\\
   $(3,\infty)$  & $14$ & $\big\{b^2a^2,ab^{-2}\big\}$& $\emptyset$  \\ 
	&&&\\
  \hline
        &&&\\
   $(3,\infty)$  & $18$ & $\big\{[a,b],[a,b^{-1}]\big\}$& $\big\{[a,b],[a,b^{-1}]\big\}$  \\ 
	&&&\\
  \hline
\end{tabular}
\end{center}

\end{lemma}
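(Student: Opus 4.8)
The plan is to run Algorithm \ref{alg1} explicitly for each of the five pairs $((m,n),t)$ listed in the table and record the output. First I would carry out Step 1 of the algorithm: using Lemmas \ref{lemma:conjugacy_class_SL_12}, \ref{lemma:conjugacy_class_SL}, and especially the trace bound in Lemma \ref{lemma:trace_bound}, I enumerate the finitely many $\PSL_2(\Z)$-conjugacy classes of trace exactly $t$. Concretely, every infinite-order class of trace $|t|\ge 3$ has a cyclically reduced representative of the form $uv^{s_1}\cdots uv^{s_k}$ with $s_i\in\{1,2\}$; if all of $1,2$ actually occur then $|\trace|\ge k$, so only words of length $k\le t$ need be considered (the all-$1$ or all-$2$ words give parabolics of trace $2$, excluded), and I then pick out from that finite list precisely those with $|\trace|=t$. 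For $t=3,6$ this is a very short enumeration; for $t=14$ and $t=18$ it is longer but still finite and mechanical.

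Next I would perform Steps 2 and 3: form all cyclic permutations of each representative word $w_{u,v}(g_i)$, obtaining candidates $g_{i,j}$, and then discard those $g_{i,j}$ that do not lie in the subgroup $G_{m,n}=\langle a,b\rangle$. The membership test is effective because, by Lemma \ref{lemma:sub}\ref{item:sub1}, an element of $G_{m,n}$ has $l(w_{a,b}(g))\le l(w_{u,v}(g))\le r_i\le t$, so it suffices to write out the (finitely many) reduced $L_{a,b}$-words of length $\le t$ — using the explicit matrices for $a,b$ from Notation \ref{nota:generators} — compute their images in $\PSL_2(\Z)$ as $L_{u,v}$-words (or as matrices), and check which of the $g_{i,j}$ coincide with one of them. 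The surviving elements form $\bigcup_i Y_i$, and Lemma \ref{lemma:sub}\ref{item:sub3} guarantees this set meets every $G_{m,n}$-conjugacy class of trace $t$. I then simplify each surviving element modulo conjugacy and relabel using the group relations ($a^m=b^n=1$, and e.g. $b^2a^2=(ab)^{-1}$ in the $(3,3)$ case, $c=[a,b]$, etc.) to present the representative set in the clean form shown as $R_t$ in the table; the last column $R_t\cap G_{m,n}'$ is then read off by checking which representatives have trivial image in the abelianization $G_{m,n}/G_{m,n}'$, which for a word is just counting exponents of $a$ and of $b$ modulo $m$ and $n$.

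The main obstacle is bookkeeping rather than conceptual: for the $(3,\infty)$ rows with $t=14$ and $t=18$ the number of cyclically reduced $L_{u,v}$-words of trace $t$ — and the number of length-$\le t$ reduced $L_{a,b}$-words to test against them — is sizable, and one must be careful that (i) the trace bound from Lemma \ref{lemma:trace_bound} is applied with the correct normalization so no class of trace $t$ is missed, and (ii) cyclic-permutation equivalences and the relations in $G_{3,\infty}=\langle a\rangle * \langle b\rangle$ (here $o(a)=3$) are fully accounted for so that the listed $R_t$ is not merely an intersecting set but as small as claimed. I would organize the $t=14$ and $t=18$ computations as a short finite case check — grouping candidate words by the pattern of exponents $s_i$ and by which ones can possibly lie in $\langle a,b\rangle$ given that $a=v$ and $b$ is one of the specific long words in Notation \ref{nota:generators} — and verify at the end that the only survivors are those in the table. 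Everything else (the $(2,3)$, $(2,\infty)$, and $(3,3)$ rows) reduces to a handful of $2\times 2$ matrix multiplications.
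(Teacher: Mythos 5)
Your proposal is exactly the paper's approach: the paper's proof of this lemma consists of the single sentence that ``the set $R_t$ was found by using a computer program which implemented Algorithm~\ref{alg1},'' and you have spelled out precisely what that implementation does (enumerate cyclically reduced $L_{u,v}$-words of the given trace via Lemmas~\ref{lemma:conjugacy_class_SL_12}--\ref{lemma:trace_bound}, take cyclic permutations, filter by membership in $G_{m,n}$ using the length comparison from Lemma~\ref{lemma:sub}, and read off $R_t\cap G_{m,n}'$ from exponent sums). The only difference is that the paper delegates the bookkeeping to a program while you propose doing it by hand, which is fine but, as you note, laborious for the $(3,\infty)$, $t=14,18$ rows.
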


\begin{proof}
	The set $R_t$ was found by using a computer program which implemented algorithm \ref{alg1}. \end{proof}

\begin{lemma}\label{lemma:not3} Under Notations \ref{nota:generators}. Let $d \in G_{m,n}$ be the commutator of topological generators of $\widehat{G_{m,n}}$. If $(m,n)\in \{(2,\infty),(3,3)\}$ then $|\trace d| \ne 2$. 
\end{lemma}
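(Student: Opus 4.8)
The plan is to suppose $|\trace d|=2$ and derive a contradiction. Such a $d$ would be a nontrivial parabolic element of $G_{m,n}\subseteq\PSL_2(\Z)$, and the parabolic elements of $G_{m,n}$ lie in very few conjugacy classes, each of which can be excluded — either because it does not meet $G_{m,n}'$, or, for the classes that survive when $(m,n)=(3,3)$, by Corollary \ref{cor:(ab)^3}. First I would record two facts: since $\widehat{G}_{m,n}$ is non-abelian a pair of topological generators cannot commute, so $d\neq1$; and by Lemma \ref{lemma:d_in_G'} (with $\Gamma=G_{m,n}$), $d\in G_{m,n}'$. Now suppose $|\trace d|=2$. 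A lift of $d$ to $\SL_2(\Z)$ has characteristic polynomial $(\lambda\mp1)^2$, so, $d$ being nontrivial, $d$ is a nontrivial parabolic element of $\PSL_2(\Z)$, and Lemma \ref{lemma:conjugacy_class_SL_12}(3) gives $g\in\PSL_2(\Z)$ and $r\in\Z\setminus\{0\}$ with $d=g\,T^{r}g^{-1}$, where $T=uv$ is the image of $\left[\begin{smallmatrix}1&1\\0&1\end{smallmatrix}\right]$ in $\PSL_2(\Z)$. I will use throughout that conjugation by an element of $G_{m,n}\subseteq\widehat{G}_{m,n}$ is a topological automorphism of $\widehat{G}_{m,n}$, hence carries a topologically generating pair to a topologically generating pair; thus any element of $G_{m,n}$ that is $G_{m,n}$-conjugate to $d$ is again the commutator of a pair of topological generators of $\widehat{G}_{m,n}$, and in particular lies in $G_{m,n}'$.

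For $(m,n)=(3,3)$: here $a=v$ and $b=uvu$ in $\PSL_2(\Z)=\langle u\rangle*\langle v\rangle$, and a Reidemeister--Schreier computation identifies $G_{3,3}=\langle a\rangle*\langle b\rangle$ with the normal index-$2$ subgroup $\ker(\PSL_2(\Z)\to C_{2})$, $u\mapsto1$, $v\mapsto0$; this subgroup contains $T^{r}$ iff $r$ is even, and $ba=T^{2}$. Normality gives $T^{r}=g^{-1}dg\in G_{3,3}$, so $r=2s'$ with $s'\neq0$. Writing $g$ as $g_{0}$ or $g_{0}u$ with $g_{0}\in G_{3,3}$, and using the identity $u(ba)u=v(ba)v^{-1}=a(ba)a^{-1}$, one finds that $d$ is $G_{3,3}$-conjugate to $(ba)^{s'}$. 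Since $d\in G_{3,3}'$, the image $s'(\bar a+\bar b)$ of $(ba)^{s'}$ in $G_{3,3}/G_{3,3}'\cong C_{3}\times C_{3}$ vanishes, so $3\mid s'$; writing $s'=3s$ with $s\neq0$, the element $(ba)^{3s}$ is the commutator of a pair of topological generators of $\widehat{G}_{3,3}$, contradicting Corollary \ref{cor:(ab)^3}.

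For $(m,n)=(2,\infty)$: here $a=u$ and $b=vuv$, and $G_{2,\infty}=\langle a\rangle*\langle b\rangle$ has index $3$ in $\PSL_2(\Z)$, with $\{1,v,v^{2}\}$ a set of right coset representatives (one checks directly that the three cosets $G_{2,\infty}v^{j}$ are distinct and are permuted by right multiplication by $u$ and by $v$); in particular $uv,vu\notin G_{2,\infty}$. Writing $g=g_{0}v^{j}$ with $g_{0}\in G_{2,\infty}$ and $j\in\{0,1,2\}$, the element $h_{j}:=v^{j}T^{r}v^{-j}$ lies in $G_{2,\infty}$ and $d$ is $G_{2,\infty}$-conjugate to it. A short word computation gives $h_{0}=T^{r}$, $h_{1}=(vu)^{r}$ and $h_{2}=b^{-r}$; combined with $T^{2}=ab$, $(vu)^{2}=ba$ and $b^{\pm1}\in G_{2,\infty}$, this forces $h_{j}$ to be one of $(ab)^{r/2}$, $(ba)^{r/2}$, $b^{-r}$ (and, when $j\in\{0,1\}$, that $r$ is even). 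Since $r\neq0$, the image of $h_{j}$ in $G_{2,\infty}/G_{2,\infty}'\cong C_{2}\times\Z$ is nonzero, so $h_{j}\notin G_{2,\infty}'$; but $h_{j}=g_{0}^{-1}dg_{0}\in G_{2,\infty}'$, a contradiction.

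The step I expect to be the main obstacle is the combinatorial bookkeeping: identifying $G_{m,n}$ as a concrete finite-index subgroup of $\PSL_2(\Z)$ (normal of index $2$ when $(m,n)=(3,3)$; index $3$ with transversal $\{1,v,v^{2}\}$ when $(m,n)=(2,\infty)$) and carrying out without error the conjugation computations $v^{j}T^{r}v^{-j}$, re-expressing the outcomes as powers of $ab$, $ba$ or $b$ inside $G_{m,n}$, together with the identity $u(ba)u=a(ba)a^{-1}$ in $G_{3,3}$. Everything else is formal: the reduction to $d=g\,T^{r}g^{-1}$, the conjugation-invariance of ``being a commutator of a pair of topological generators'', and the abelianization computations that kill all the parabolic classes except the $(ba)^{3s}$ ones in the $(3,3)$ case, which are then handled by Corollary \ref{cor:(ab)^3}.
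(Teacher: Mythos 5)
Your proof is correct and follows the same overall strategy as the paper: reduce to $d$ being $\PSL_2(\Z)$-conjugate to a power of $uv$ via Lemma~\ref{lemma:conjugacy_class_SL_12}(3), classify the $G_{m,n}$-conjugacy classes of trace~$2$, then eliminate them — for $(2,\infty)$ by showing they avoid $G_{2,\infty}'$ (using Lemma~\ref{lemma:d_in_G'}), and for $(3,3)$ by reducing to powers $(ba)^{3s}$ and invoking Corollary~\ref{cor:(ab)^3}. The one genuine difference is the intermediate tool: the paper derives the classification of trace-$2$ conjugacy classes in $G_{m,n}$ from the reduced-word comparison in Lemma~\ref{lemma:sub} (relating $w_{u,v}$ and $w_{a,b}$ normal forms), whereas you obtain it by directly identifying $G_{3,3}$ as the index-$2$ normal subgroup $\ker(u\mapsto 1,\ v\mapsto 0)$ and $G_{2,\infty}$ as an index-$3$ subgroup with transversal $\{1,v,v^2\}$, and conjugating $(uv)^r$ by coset representatives. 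Your route is more computational but entirely elementary and makes the coset structure explicit; the paper's route is slicker once Lemma~\ref{lemma:sub} is in hand and also serves the other trace values in Lemma~\ref{lemma:almost_done}. Both yield the same list of classes (your $(ba)^{s}$ is $G$-conjugate to $(ab)^{s}$ via $a$, and your $(ab)^{-s}$ is $G$-conjugate to the paper's $(ab^{-1})^{s}$ via $u$), and the abelianization and Corollary~\ref{cor:(ab)^3} steps are identical. All the small verifications you flag as potential obstacles — the coset decompositions, $ba=T^2$, $u(ba)u=a(ba)a^{-1}$, $v^{-1}Tv=b^{-1}$, and $uv,vu\notin G_{2,\infty}$ — check out.
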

\begin{proof}
	Assume first that $(m,n)=(2,\infty)$. Lemma \ref{lemma:d_in_G'} implies that $d \in G_{2,\infty}'$.
	Lemmas \ref{lemma:conjugacy_class_SL_12} and \ref{lemma:sub} imply that every  $g\in G_{2,\infty}$ with trace $2$ is conjugate in $G_{2,\infty}$ to $b^r$ or $b^{-r}$ or $(ab)^r$ or $(ab^{-1})^r$ for some $r \ge 1$. In particular, such $g$ does not belong to $G_{2,\infty}'$. 
	
	Assume that  $(m,n)=(3,3)$.  Lemma \ref{lemma:d_in_G'} implies that $d \in G_{3,3}'$. Lemmas \ref{lemma:conjugacy_class_SL_12} and \ref{lemma:sub} imply  that every  $g\in G_{3,3}$ with trace $2$ is conjugate in $G_{3,3}$ to  $(ba)^r$ or $(b^2a^2)^r$ for some $r \ge 1$. If in addition, $g \in G_{3,3}'$ then $r$ is divisible by 3.   Corollary \ref{cor:(ab)^3} implies that such an element $g$ is not a commutator of topological generators of $\widehat{G}_{m,n}$.
 \end{proof}

\ 

\begin{proof}[Proof of Proposition \ref{prop:commutator} ] \ 

Lemma implies that $d \in G_{m,n}'$ and Lemma \ref{lemma:few_traces} gives the possible values for $|\trace d|$. Since $v,v^2 \not \in G_{2,3}'$, it follows from Lemma \ref{lemma:conjugacy_class_SL_12} that if  $(m,n)=(2,3)$ then $|\trace d| \ne 1$. It follows from Lemma \ref{lemma:not3} that if $(m,n)\in \{(2,\infty),(3,3)\}$ then $|\trace d| \ne 2$. In all the other cases, it follows from Lemma \ref{lemma:almost_done} that $d$ is a commutator. 
 
 \end{proof}

\vspace{30pt}

\newpage
%%%%%%%%%%%%%%%%%%%%%%%%%%%%%%%%%%%%%%%%%%%%%%%%%%%%%%%%%%%%%%%%%%%%%%%%%%%%%%%%%%%%%%%%%%%%%%%%%%%%%%%%%%%%%%%%%%%%
\section{Lifting solutions}\label{liftings}\label{sec3}
%%%%%%%%%%%%%%%%%%%%%%%%%%%%%%%%%%%%%%%%%%%%%%%%%%%%%%%%%%%%%%%%%%%%%%%%%%%%%%%%%%%%%%%%%%%%%%%%%%%%%%%%%%%%%%%%%%%

\subsection{\ General $D$\nopunct}\label{general}\ 
%%%%%%%%%%%%%%%%%%%%%%%%%%%%%%%%%%%%%%%%%%% %%%%%%%%%%%%%%%%%%%%%%%%%%%%%%%%%%%%%%%%%%%%%%%%%%%%%%%%%%%%%%%%%%%%%%%%%

The three equations that are central to our study are:

\
\begin{enumerate}[label=($\mathcal{E}$\textsubscript{\arabic*})\quad]\label{E1toE3}
\item $W(X,Y)=XYX^{-1}Y^{-1}=Z$, with $Z\in \Gamma_D=\SL_2(D)$ given and to be solved in $X$, $Y \in \Gamma_D$. Denote the set of solutions by $\mathcal{C}_Z(D)$.
\vspace{4pt}
\item $\Tr\left( W(X,Y)\right)=t$, where $t\in D$ is given, to be solved for $X,\ Y\in \Gamma_D$. Denote by $\mathcal{T}_t(D)$ the set of solutions $(X, Y)$.
\vspace{4pt}
\item $M(x_1,x_2,x_3) := x_1^2+x_2^2+x_3^2-x_1x_2x_3= t+2$ for $t\in D$ given, and to be solved for ${\bf x}=(x_1, x_2, x_3) \in D^3$. Denote the set of solutions by $\mathcal{M}_{t+2}(D)$.
\end{enumerate}

\

There are a number of infinite symmetries that the solutions to these equations satisfy and which facilitate their analysis. The first is the action by simultaneous conjugation by $\Gamma_D$ on the solutions $(X, Y)$ to $(\mathcal{E}_1)$ and $(\mathcal{E}_2)$.

If $Z' = gZg^{-1}$ for some $Z \in \G_D$, then
\begin{equation}\label{3.1}
\mathcal{C}_{Z'}(D) = g\mathcal{C}_Z(D)g^{-1},
\end{equation}
where
\begin{equation}\label{3.2}
g[(X,Y)]g^{-1} = (gXg^{-1}, gYg^{-1})\,.
\end{equation}

The relation between the solutions to $(\mathcal{E}_1)$ for conjugate $Z$'s suggests that we modify the solution set $\mathcal{C}_{Z}(D)$ of $(\mathcal{E}_1)$ to

\ 

\begin{enumerate}[label=($\hat{\mathcal{E}}$\textsubscript{\arabic*})\quad]
\item ${\rm COM}_{Z}(D) := \left\{(X,Y) \in \G_D\times\G_D \ : \ W(X,Y)\ \text{is\ a\ conjugate\ of} \ Z\right\}\ .$
\end{enumerate}

\ 

In this way $\text{COM}_Z(D)$ depends only on the conjugacy class of $Z$, $\G_D$ acts on $\text{COM}_Z(D)$, and we naturally divide by this action to get equivalence classes $\overline{(X,Y)}$ of solutions to $(\hat{\mathcal{E}}_1)$ and whose totality is denoted by
\begin{equation}\label{3.4}
\text{COM}_Z(D)/\G_D \ .
\end{equation}

If $\Tr(Z) = t$ then $\text{COM}_Z(D) \subset \mathcal{T}_t(D))$, and the action of $\G_D$ on pairs extends to $\mathcal{T}_t(D)$. Dividing the latter by $\G_D$ gives classes of solutions, the totality of which is denoted by
\begin{equation}\label{3.5}
\mathcal{T}_t(D)/\G_D \ .
\end{equation}
So for $\Tr(Z) = t$, we have $ \text{COM}_Z(D)/\G_D  \subset \mathcal{T}_t(D)/\G_D$. 

In what follows we make use of some well known properties of the $S$-arithmetic groups $\G_D$. These are the extensions of the reduction theory of Hermite and Minkowski as developed by Borel \cite{Borel} and algorithmically by Grunewald-Segal in \cite{GuSe1} and \cite{GuSe2}.

For a given $t \in D$ there are finitely many conjugacy classes in $\G_D$ represented by $Z_1, ..., Z_h$ say, of trace equal to $t$. It follows that we have the decomposition 
\begin{equation}\label{3.6}
\mathcal{T}_t(D) = \bigcup\limits_{j=1}^{h}\ \text{COM}_{Z_j}(D)\,.
\end{equation}
The decomposition respects the $\G_D$-action on these sets so that
\begin{equation}\label{3.7}
\mathcal{T}_t(D)/\G_D = \bigcup\limits_{j=1}^{h}\ \left(\text{COM}_{Z_j}(D)/\G_D \right).
\end{equation}

There are further symmetries that $\text{COM}_Z(D)$, $\mathcal{T}_t(D)$ and $\mathcal{M}_{t+2}(D)$ carry. These come from the action of outer automorphisms of the free group, that is the Nielsen automorphisms on pairs $(X, Y)$ and which descend to the Markoff group action on $\mathcal{M}_{t+2}(D)$. The Nielsen and Markoff groups are isomorphic (to $\PGL_2(\Z)$ in fact) and we denote them here by $G$ with the action on $(X, Y)$ or $(x_1, x_2, x_3)$ being implicit below. These are given explicitly in terms of generators of these groups in the tables in Lemma \ref{perms}.

Denote by $\pi$ the map from $\mathcal{T}_t(D)$  to $\mathcal{M}_{t+2}(D)$ given by
\begin{equation}\label{3.8}
\pi(\, (X,Y)\, ) = (\Tr(X), \Tr(Y), \Tr(XY))\,.
\end{equation}
$\pi$ preserves the $\G_D$-classes, and induces a map
\begin{equation}\label{3.9}
\pi\ :\ \mathcal{T}_t(D)/\G_D \rightarrow \mathcal{M}_{t+2}(D)\,.
\end{equation}
The $G$-actions on $\mathcal{T}_t(D)$  and $\mathcal{M}_{t+2}(D)$ are $\pi$ equivariant, that is for $g \in G$
\begin{equation}\label{3.10}
\pi(\, g(X,Y)\, ) = g(\pi(\, (X,Y)\, ))\,.
\end{equation}
Dividing $\mathcal{T}_t(D)/\G_D$ and $\mathcal{M}_{t+2}(D)$ by this $G$-action gives a further collapsing into $G$-equivalence classes $\mathcal{T}_t(D)/G$ and $\mathcal{M}_{t+2}(D)/G$. Note that $G$ preserves the decompositions \eqref{3.6} and \eqref{3.7} so that we retain the decomposition
\begin{equation}\label{3.11}
\mathcal{T}_t(D)/G = \bigcup\limits_{j=1}^{h}\ (\text{COM}_{Z_j}(D)/G) ,
\end{equation}
and
\begin{equation}\label{3.12}
\pi\ :\ \mathcal{T}_t(D)/G \rightarrow \mathcal{M}_{t+2}(D)/G.
\end{equation}

With the set up above we can analyse the relations between the solubility of $(\hat{\mathcal{E}}_1)$, $(\mathcal{E}_2)$  and $(\mathcal{E}_3)$ and decision procedures for them. From the decompositions \eqref{3.6}, \eqref{3.7} and \eqref{3.11}, and that for a given $t$, the $h$ classes $Z_1, ..., Z_h$ can be effectively computed (\cite{GuSe2}), it follows that a decision procedure for $(\mathcal{E}_1)$ gives one for $(\mathcal{E}_2)$. In the other direction, we can certainly decide if a given solution in $\mathcal{T}_t(D)$ lifts to a solution of $(\mathcal{E}_1)$ with a specific $Z$. This is asking whether in \eqref{3.6} the given solution $(X,Y)$ is in $\text{COM}_Z(D)$, which is the same as whether $W(X,Y)$ is a conjugate of $Z$. Again, by \cite{GuSe2} this can be determined effectively. This however does not allow us to decide $(\mathcal{E}_1)$ from simply having a solution to $(\mathcal{E}_2)$. If the solution at hand lifts to the desired $Z$ one succeeds, but in principle one has to examine infinitely many solutions in  $\mathcal{T}_t(D)/G$. If however the latter is finite as is the case for $D=\Z$ (see below), then one can go from a decision procedure for $\mathcal{T}_t(D)$ to $(\mathcal{E}_1)$. In any case, if $\mathcal{T}_t(D)$ is empty then so is $\text{COM}_Z(D)$ for any $Z$ with $\Tr(Z) = t$.

\ 

We turn to the question of lifting solutions from $\mathcal{M}_{t+2}(D)$ to $\mathcal{T}_t(D)$ in \eqref{3.9} and \eqref{3.12}.

In the forward direction, if $\mathcal{M}_{t+2}(D)$ is empty then so is $\mathcal{T}_t(D)$, which is one of the primary ways that we exploit equations $(\mathcal{E}_1)$, $(\mathcal{E}_2)$ and $(\mathcal{E}_3)$. In the other direction starting with $\mathcal{M}_{t+2}(D)$, the key feature is that the map $\pi$ in \eqref{3.9} is finite to one and effectively so. That is given ${\bf x}\in \mathcal{M}_{t+2}(D)$, $\pi^{-1}({\bf x}) \in \mathcal{T}_t(D)/\G_D$ is finite and there is an effective procedure to find each point in $\pi^{-1}({\bf x})$, including the case where this set is empty. To see this let $V$ be the 8-dimensional vector space over $F$ (the number field from which our $D$ is taken) of pairs $(A, B)$ of $2\times2$ matrices. Then $\SL_2(F)$ acts linearly via the diagonal conjugation representation $\rho(g):\ (A,B) \mapsto (gAg^{-1}, gBg^{-1})$. We are primarily interested in Zariski closed orbits of this action. $\rho$ preserves $\det(A)$, $\det(B)$, $\Tr(A)$, $\Tr(B)$ and $\Tr(AB)$, and these generate the ring of $\rho$-invariants.

Fixing the values $\det(A)=1$, $\det(B)=1$, $\Tr(A)=x_1$, $\Tr(B)=x_2$ and $\Tr(AB)=x_3$, with $x_1, x_2, x_3 \in D$, defines a homogeneous affine variety $X\subset V$ corresponding to an orbit of a $F$ point (see Prop. \ref{lift1}). $X$ is defined over $F$ and we seek the points in $X(D)$. By \cite{Borel}, $X(D)$ consists of finitely many $\rho(SL_2(D))$ orbits, and by \cite{GuSe2} these can be determined effectively. Thus, for any ${\bf x}\in \mathcal{M}_{t+2}(D)$ the lifts of ${\bf x}$ to  $\mathcal{T}_t(D)/\G_D$, that is $\pi^{-1}({\bf x})$ are finite in number and effectively so.

In the case that $\mathcal{M}_{t+2}(D)/G$ is finite (and effectively so), the above shows that $\mathcal{T}_t(D)/G$ is also finite and so is $\text{COM}_Z(D)/G$. So in this case $(\mathcal{E}_1)$, $(\mathcal{E}_2)$ and $(\mathcal{E}_3)$ are all effectively decidable. When $D$ has infinitely many units, $\mathcal{M}_{k}(D)/G$ is not necessarily finite and our analysis here falls short of giving a decision procedure for all three.

%%%%%%%%%%%%%%%%%%%%%%%%%%%%%%%%%%%%%%%%%%%%%%%%%%%%%%%%%%%%%%%%%%%%%%%%%%%%%%%%%%%%%%%%%%%%%%%%%%%%%%
\subsubsection{\ \  An explicit analysis \nopunct }\label{genlift}\
%%%%%%%%%%%%%%%%%%%%%%%%%%%%%%%%%%%%%%%%%%%%%%%%%%%%%%%%%

For matrices $X$ and $Y$ in $\G_D=\SL_2(D)$, for any ring  $D$ (or field $F$), we consider the tuple $(X,Y)$ with commutator $W(X,Y)=Z$. Putting  $t=\Tr Z$, $x_1=\Tr X$, $x_2=\Tr Y$ and $x_3= \Tr XY$, for ${\bf x} = (x_1, x_2, x_3)$ on the Markoff surface $\mathcal{M}_{t+2}(D)$ in $(\mathcal{E}_3)$, we have double sign-changes, permutations and the (non-linear) Vieta maps as invariant actions. The double sign-changes are easily lifted from $(\mathcal{E}_3)$ to $(\mathcal{E}_1)$, corresponding to the three maps $(X,Y)\mapsto (-X,Y)$, $(X,Y)\mapsto (X,-Y)$ and $(X,Y)\mapsto (-X,-Y)$, preserving $W(*,*)=Z$. The six permutations and three Vieta maps of the $x_i$ are less trivial, but lift if we identify commutators with their inverses, as follows:

\begin{lemma}\label{perms} The first table gives the permutations of the subscripts of $(x_1,x_2,x_3)$ in $(\mathcal{E}_3)$, the corresponding action on $(\mathcal{E}_1)$, and the commutator:

 \begin{table}[h!]
\centering
\aboverulesep=0ex % Solution part 1 of 3
   \belowrulesep=0ex % Solution part 1 of 3
\begin{tabular}{|c|c|c|}

\midrule
\rule{0pt}{1.1EM}%
$(1,2,3)$  & $(X,Y)$ &$Z$\\
\midrule
\rule{0pt}{1.1EM}%
$(1,3,2)$& $(YXY^{-1},X^{-1}Y^{-1})$&$Z^{-1}$\\
\midrule
\rule{0pt}{1.1EM}%
$(2,1,3)$&$(Y,X)$&$Z^{-1}$ \\
\midrule
\rule{0pt}{1.1EM}%
$(2,3,1)$&$(XYX^{-1},Y^{-1}X^{-1})$&$Z$ \\
\midrule
\rule{0pt}{1.1EM}%
$(3,1,2)$&$(XY,X^{-1})$&$Z$ \\
\midrule
\rule{0pt}{1.1EM}%
$(3,2,1)$&$(YX,Y^{-1})$&$Z^{-1}$ \\
\midrule

\end{tabular}
\label{alpha}
\end{table}

This second table gives the lifting action of the three Vieta moves from $(\mathcal{E}_3)$ to $(\mathcal{E}_1)$ giving the Nielsen moves, with the resulting commutator indicated:

     \begin{table}[h!]
\centering
\aboverulesep=0ex % Solution part 1 of 3
   \belowrulesep=0ex % Solution part 1 of 3
\begin{tabular}{|c|c|c|}

\midrule
\rule{0pt}{1.1EM}%
$(x_1,x_2,x_3)\mapsto (x_1,x_2,x_1x_2-x_3)$  & $(X,Y)\mapsto (X^{-1},\ XYX^{-1})$ & $Z^{-1}$\\

\midrule
\rule{0pt}{1.1EM}%
$(x_1,x_2,x_3)\mapsto (x_2x_3-x_1,x_2,x_3)$  & $(X,Y)\mapsto (YXY,\ Y^{-1})$ & $Z^{-1}$\\

\midrule
\rule{0pt}{1.1EM}%
$(x_1,x_2,x_3)\mapsto (x_1,x_1x_3-x_2,x_3)$  & $(X,Y)\mapsto (X^{-1},\ XYX)$ & $Z^{-1}$\\
\hline
\end{tabular}
\label{alpha}
\end{table}
\end{lemma}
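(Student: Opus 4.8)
The plan is to prove the lemma by a direct, row-by-row verification; there is no structural statement hiding behind the two tables beyond the elementary trace calculus in $\SL_2$. The only facts I would invoke are that for $A,B\in\SL_2(D)$ one has $\Tr(A)=\Tr(A^{-1})$, $\Tr(AB)=\Tr(BA)$, and, via Cayley--Hamilton ($A+A^{-1}=\Tr(A)\,I$), the identity $\Tr(AB)+\Tr(AB^{-1})=\Tr(A)\Tr(B)$. Throughout, $Z=W(X,Y)=XYX^{-1}Y^{-1}$ and $(x_1,x_2,x_3)=(\Tr X,\Tr Y,\Tr XY)$, and I recall that the double sign changes $(X,Y)\mapsto(\pm X,\pm Y)$ visibly preserve $W(X,Y)$, as already noted in the text preceding the lemma.

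For each of the nine rows I would check two things about the displayed pair $(X',Y')$. First, that $(\Tr X',\Tr Y',\Tr X'Y')$ is the permutation (resp. Vieta transform) of $(x_1,x_2,x_3)$ shown in the left column: this is one application of the identities above. For instance, in the row $(1,3,2)$ with $(X',Y')=(YXY^{-1},X^{-1}Y^{-1})$ one gets $\Tr X'=\Tr X=x_1$, $\Tr Y'=\Tr(YX)=x_3$, and $\Tr(X'Y')=\Tr(YXY^{-1}X^{-1}Y^{-1})=\Tr(XY^{-1}X^{-1})=\Tr Y=x_2$; and in the first Vieta row with $(X',Y')=(X^{-1},XYX^{-1})$ one gets $\Tr X'=x_1$, $\Tr Y'=x_2$, and $\Tr(X'Y')=\Tr(X^{-1}Y)=x_1x_2-x_3$, the last entry from $\Tr(X^{-1}Y)+\Tr(XY)=\Tr X\,\Tr Y$. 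Second, that $W(X',Y')=X'Y'X'^{-1}Y'^{-1}$ equals $Z^{\pm1}$ exactly, as recorded in the right column: this is a free reduction of a short word in $X^{\pm1},Y^{\pm1}$, e.g. $W(YXY^{-1},X^{-1}Y^{-1})=YXY^{-1}\cdot X^{-1}Y^{-1}\cdot YX^{-1}Y^{-1}\cdot YX=YXY^{-1}X^{-1}=Z^{-1}$ and $W(X^{-1},XYX^{-1})=X^{-1}\cdot XYX^{-1}\cdot X\cdot XY^{-1}X^{-1}=YXY^{-1}X^{-1}=Z^{-1}$. The remaining seven rows are identical in spirit; for the permutation table one could instead verify a generating transposition and a $3$-cycle of $S_3$ and then compose (tracking the effect on $W$ at each step), but since each individual check is a three-line computation this buys nothing.

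The single subtlety — and the only thing I would be careful about — is that the tables claim $W(X',Y')$ equals $Z$ or $Z^{-1}$ \emph{on the nose}, not merely up to conjugacy, so each row must use the specific representative chosen. For example the transposition $(1,3,2)$ is also realized by the more obvious pair $(X,Y^{-1}X^{-1})$, for which $W=XY^{-1}X^{-1}Y=Y^{-1}Z^{-1}Y$ is only a conjugate of $Z^{-1}$; the representative $(YXY^{-1},X^{-1}Y^{-1})$ recorded in the table is chosen precisely so that the conjugating factor is trivial, and similarly for the other permutation and Vieta rows. Thus there is no genuine obstacle: the content of the proof is exactly the bookkeeping needed to exhibit compatible representatives together with the word reductions certifying the last column, and the corresponding trace identities certifying the first.
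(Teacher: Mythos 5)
Your proposal is correct, and since the paper states the lemma with no explicit proof (treating the nine rows as routine trace computations and free reductions), your row-by-row verification is exactly the argument the authors intend. Your remark about exact equality on the nose versus mere conjugacy is the one genuinely relevant observation and correctly explains the choice of representatives.
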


\begin{rem}

One notes that for $D=F$ a field, if $Z\in \Gamma_F$ with $\Tr Z=t$, then $Z$ is not conjugate to $Z^{-1}$ in $\Gamma_F$ if and only if $Z$ is not a diagonal matrix and if the equation $x^2-(t^2-4)y^2=-1$ has no solutions $(x,y)\in F^2$.

For $F=\RR$, $Z \not\sim Z^{-1}$ if and only if $Z$ is elliptic.

For $F=\mathbb{F}_{\!p}$ a prime field with $p>2$, using Lemma \ref{lemB1}, we see that $Z$ is not conjugate to $Z^{-1}$ in $\Gamma_F$ if and only if $Z$ is not a diagonal matrix, $p\equiv 3 \ (\text{mod 4})$ and $t\equiv \pm 2 \pmod p)$.
\end{rem}

We now state a (weak) lifting result from $(\mathcal{E}_3)$ to $(\mathcal{E}_1)$ over fields:

\begin{proposition}\label{lift1}\

 Let $F$ be a field with $\text{char}(F)=0$ , $\Gamma_F=\SL_2(F)$, $Z\in \Gamma_F$ with $\Tr Z=t \neq \pm 2$, and suppose  $(x_1,x_2,x_3)\in \mathcal{M}_{t+2}(F)$ is given. Suppose there is a matrix $Y\in \Gamma_F$ such that $\Tr ZY=\Tr Y=x_j$ for some $1\leq j\leq 3$. Then, either $Z$ or $Z^{-1}$ is $\Gamma_F$-conjugate to $W(X,Y)$ for some $X\in \Gamma_F$.
\end{proposition}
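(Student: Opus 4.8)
The plan is to set the Markoff point aside and work only with the matrix $Y$ produced by the hypothesis: the sole inputs are $\Tr Y=\Tr(ZY)=x_j$ and $t\neq\pm2$, and the task is to exhibit $X\in\Gamma_F$ with $W(X,Y)$ $\Gamma_F$-conjugate to $Z$ or to $Z^{-1}$. The hypothesis $(x_1,x_2,x_3)\in\mathcal M_{t+2}(F)$ plays no further role (it only records that $x_j\in F$ is the relevant trace).

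The first step is to observe that $\det Y=\det(ZY)=1$ together with $\Tr Y=\Tr(ZY)=x_j$ means $Y$ and $ZY$ share the characteristic polynomial $\lambda^2-x_j\lambda+1$. I would dispose of $x_j=\pm2$ at once: if $Y=\pm I$ then $\Tr(ZY)=\pm t\neq\pm2$, a contradiction, so $Y$ (and $ZY$) is regular and in every case $Y$ and $ZY$ are conjugate in $\GL_2(F)$, say $ZY=gYg^{-1}$ with $g\in\GL_2(F)$. Then
\[
W(g,Y)=gYg^{-1}Y^{-1}=(ZY)\,Y^{-1}=Z ,
\]
so everything reduces to whether $g$ can be chosen inside $\Gamma_F=\SL_2(F)$. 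The admissible conjugators form a coset of $Z_{\GL_2(F)}(Y)$, and rescaling by $F^{\times}$ changes $\det$ by squares, so the obstruction lies in $F^{\times}/\big((F^{\times})^2\,\det Z_{\GL_2(F)}(Y)\big)$. If $Y$ is split over $F$ (equivalently $x_j^2-4\in(F^{\times})^2$) the centralizer is a split torus, $\det$ is onto, the obstruction is trivial, and $X=g\in\Gamma_F$ gives $W(X,Y)=Z$. If $Y$ is non-split with eigenvalue field $K=F(\sqrt{x_j^2-4})$, the centralizer is $K^{\times}$ with $\det=N_{K/F}$, and the obstruction is the class of $\det g$ in $F^{\times}/N_{K/F}(K^{\times})$; whenever this class is trivial we again obtain $W(X,Y)=Z$.

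The remaining case — $Y$ non-split and $\det g\notin N_{K/F}(K^{\times})$ — is the heart of the matter, and here I would reformulate the target. Unwinding $W(X,Y)=XYX^{-1}Y^{-1}$ and conjugating, one checks that for some $X\in\Gamma_F$ the element $W(X,Y)$ is $\Gamma_F$-conjugate to $Z$ precisely when $Z$ lies in the product set $\mathcal O\,\mathcal O^{-1}=\{\,Y_1Y_2^{-1}:Y_1,Y_2\ \text{are }\Gamma_F\text{-conjugate to }Y\,\}$, where $\mathcal O$ is the $\Gamma_F$-conjugacy class of $Y$. Since $\mathcal O\,\mathcal O^{-1}$ is stable under inversion, membership of $Z$ is equivalent to membership of $Z^{-1}$; this both explains the ``$Z$ or $Z^{-1}$'' in the statement and shows it suffices to prove the single assertion $Z\in\mathcal O\,\mathcal O^{-1}$. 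The hypothesis already supplies $Z=(ZY)\,Y^{-1}$ with $Y\in\mathcal O$ and $ZY$ on the trace-$x_j$ locus $T_{x_j}=\{M\in\SL_2(F):\Tr M=x_j\}$; as the $\Gamma_F$-orbits inside $T_{x_j}$ form a torsor under $F^{\times}/N_{K/F}(K^{\times})$, what is missing is to replace $Y$ by a suitable $\Gamma_F$-conjugate $Y'$ for which $ZY'$ lies not merely in $T_{x_j}$ but in $\mathcal O$ itself, i.e.\ $\Tr(ZY')=x_j$ and $ZY'\sim_{\Gamma_F}Y$.

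This promotion is the step I expect to be the main obstacle. The locus $\{\,Y'\in\mathcal O:\Tr(ZY')=x_j\,\}$ is a curve whose $F$-points may be distributed among several $\Gamma_F$-subclasses of $T_{x_j}$, and one must show that the norm-residue invariant $Y'\mapsto[\,ZY'\,]\in F^{\times}/N_{K/F}(K^{\times})$ attains the trivial value somewhere on it. I would parametrize $Y'=hYh^{-1}$ with $h\in\SL_2(F)$, express this invariant as an explicit Hilbert symbol in the entries of $h$, $Y$ and $Z$, and produce choices of $h$ realizing the trivial class; the computation is essentially local, eased by the fact that over every completion $F_v$ the relevant quotient has order at most $2$ and the invariant is a visibly non-constant function of $h$. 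The degenerate case $x_j=\pm2$ (where $Y$ is a nontrivial $\pm$-unipotent and the obstruction lives in $F^{\times}/(F^{\times})^2$) is handled by the identical scheme with $N_{K/F}(K^{\times})$ replaced by $(F^{\times})^2$.
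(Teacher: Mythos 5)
Your reduction to the question of whether $Z$ lies in $\mathcal{O}\,\mathcal{O}^{-1}$, where $\mathcal{O}$ is the $\Gamma_F$-conjugacy class of $Y$, is correct, and so is the identification of the obstruction (in the non-split regular semisimple case) as a class in $F^\times/N_{K/F}(K^\times)$. But the proposal stops exactly at the crux: you name the ``promotion'' step as the main obstacle and then offer only a sketch, and the sketch does not apply to the setting of the proposition. The field $F$ is an \emph{arbitrary} field of characteristic zero, not a global field, so there are no completions $F_v$ to appeal to; and even over a number field, a pointwise local analysis of a Hilbert-symbol-valued invariant on the curve $\{Y'\in\mathcal{O}:\Tr(ZY')=x_j\}$ would not by itself produce a global $F$-point where the invariant is trivial without a Hasse principle for that curve. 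No explicit choice of $h$ is produced, so this part of the argument is genuinely missing.

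The deeper problem is your opening decision to set the Markoff hypothesis aside as playing ``no further role.'' That discards precisely the data the paper uses to avoid the cohomological obstruction altogether. The paper's proof first uses the permutation and Vieta moves (Lemma \ref{perms}) to reduce to the situation where $Y$ sits in the middle slot and $\Delta:=t+2-x_2^2\neq 0$ (this reduction is also where the ``$Z$ or $Z^{-1}$'' ambiguity comes from), and then simply writes down the matrix
\[
\Delta X \;=\; -\,x_3\,ZY \;+\; x_1\,Z \;+\;(x_3-x_1x_2)\,Y^{-1}\;+\; x_1\,I
\]
and verifies directly that $\det X=1$, $\Tr X=x_1$, $\Tr XY=x_3$, and $W(X,Y)=Z$. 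The Markoff relation $x_1^2+x_2^2+x_3^2-x_1x_2x_3=t+2$ enters essentially in that verification (it is exactly what makes $\det X=1$). In other words, the coordinates $x_1,x_3$ you discarded furnish a concrete $X$, and there is nothing left to ``promote.'' As written, you have replaced an elementary constructive lemma by the substantially harder problem of deciding, over an arbitrary characteristic-zero field, whether a given element of $\Gamma_F$ lies in $\mathcal{O}\,\mathcal{O}^{-1}$ --- and that problem is left unsolved.
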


\begin{rem} 
The proposition also holds for fields with finite but sufficiently large characteristic.

The condition $Y\in \Gamma_F$ such that $\Tr ZY=\Tr Y=x_j$ for some $1\leq j\leq 3$ is obviously necessary for $(x_1,x_2,x_3)\in \mathcal{M}_{t+2}(F)$ to be a candidate to be lifted from $(\mathcal{E}_3)$ to $(\mathcal{E}_1)$.
\end{rem}
The proof will follow from the following

\begin{lemma}\label{lift2}With the notation as in the proposition above, suppose $\Delta=t+2 -x^2_2 \neq 0$. Then, given any $Y\in \Gamma_F$ with $\Tr ZY=\Tr Y=x_2$, there exists a unique $X\in \Gamma_F$  such that (i). $\Tr X=x_1$, (ii). $\Tr XY=x_3$, and (iii). $ W(X,Y)=Z$. 

The conclusion also holds if $F$ is replaced with  $D$  a ring but with the requirement $\Delta\in D^{\times}$.
\end{lemma}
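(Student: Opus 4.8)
\emph{Plan.} The idea is to solve the equivalent conjugacy equation $XYX^{-1}=ZY$ (note $W(X,Y)=Z\iff XYX^{-1}=ZY$) by writing down the conjugator explicitly. First observe $Y\neq\pm I$: otherwise $W(X,Y)=I$ for every $X$, forcing $Z=I$ and $t=2$, against hypothesis. Hence $Y$ is regular, its centraliser in $\GL_2(F)$ is $F[Y]^{\times}$ with $F[Y]=\{aI+bY:a,b\in F\}$ two-dimensional over $F$, and the set of $X\in\GL_2(F)$ with $XYX^{-1}=ZY$ (equivalently $W(X,Y)=Z$) is a single coset $g_0\,F[Y]^{\times}$ once one solution $g_0$ has been produced. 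Uniqueness of the desired $X$ will follow by cutting this coset down with the two trace conditions.

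The key point is that $g_0:=ZY-Y^{-1}$ works. Using $Y^2=x_2Y-I$ and $(ZY)^2=x_2(ZY)-I$ (Cayley--Hamilton, legitimate since $\Tr ZY=x_2$ and $\det ZY=1$) one checks
$g_0Y=ZY^2-I=x_2ZY-Z-I=(ZY)^2-Z=(ZY)g_0$, and, writing $g_0=(ZY^2-I)Y^{-1}$, $\det g_0=\det(ZY^2-I)=2-\Tr(ZY^2)=2-(x_2^2-t)=\Delta$. Since $\Delta\neq0$ this gives $g_0\in\GL_2(F)$ with $g_0Yg_0^{-1}=ZY$, so every $X$ with $W(X,Y)=Z$ has the form $X=g_0(aI+bY)$. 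One also records $\Tr g_0=\Tr ZY-\Tr Y^{-1}=0$, $\Tr(g_0Y)=\Tr(ZY^2)-2=-\Delta$, and $\Tr(g_0Y^2)=x_2\Tr(g_0Y)-\Tr g_0=-x_2\Delta$.

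Now impose the trace conditions: $\Tr X=a\Tr g_0+b\Tr(g_0Y)=-b\Delta=x_1$ and $\Tr XY=a\Tr(g_0Y)+b\Tr(g_0Y^2)=-\Delta(a+bx_2)=x_3$. As $\Delta\neq0$, this linear system has the unique solution $b=-x_1/\Delta$, $a=(x_1x_2-x_3)/\Delta$, producing a unique candidate $X=g_0(aI+bY)$ with the prescribed traces, so (i) and (ii) hold by construction. Finally $\det X=\det g_0\cdot\det(aI+bY)=\Delta(a^2+x_2ab+b^2)$, and substituting the values of $a,b$ collapses the bracket to $(x_1^2+x_3^2-x_1x_2x_3)/\Delta^2$; hence $\det X=(x_1^2+x_3^2-x_1x_2x_3)/\Delta$, which equals $1$ exactly because $(x_1,x_2,x_3)\in\mathcal{M}_{t+2}(F)$, i.e. $x_1^2+x_3^2-x_1x_2x_3=t+2-x_2^2=\Delta$. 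This also shows $aI+bY$ is invertible, so $XYX^{-1}=g_0Yg_0^{-1}=ZY$ and $W(X,Y)=Z$; thus $X\in\Gamma_F$ satisfies (i)--(iii), and it is the only such $X$. The ring statement is literally the same computation: if $\Delta\in D^{\times}$ then $a,b\in D$ and $g_0=ZY-Y^{-1}\in M_2(D)$, so $X\in\SL_2(D)$, with uniqueness inherited from the field case. The only genuinely nonobvious step is guessing the conjugator $g_0=ZY-Y^{-1}$ and noticing $\det g_0=\Delta$ (which is precisely why $\Delta\neq0$ is the right hypothesis); the rest is bookkeeping with Fricke-type trace identities, and the Markoff relation is used only in the last determinant check.
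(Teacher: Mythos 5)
Your proof is correct, and it takes a genuinely different route from the paper's. The paper starts from the Fricke--Vieta matrix identity $W(X,Y)=x_3XY+(x_1-x_2x_3)X+x_2Y^{-1}-I$ (Prop.~\ref{identity}), solves that linear relation for $X$ to get the closed form $\Delta X=(Z-Y^{-2})(x_1I-x_3Y)$, and verifies (i)--(iii) by direct trace and determinant computation; uniqueness comes from subtracting two instances of the identity and noting the coefficient matrix $x_3Y+(x_1-x_2x_3)I$ has determinant $\Delta$. You instead rephrase $W(X,Y)=Z$ as the conjugacy equation $XYX^{-1}=ZY$, observe that (because $Y\ne\pm I$, which follows from $t\ne\pm2$ via the trace condition) the solution set in $\GL_2(F)$ is the single coset $g_0\,F[Y]^\times$ of the centraliser, exhibit the canonical conjugator $g_0=ZY-Y^{-1}$ with $\det g_0=\Delta$, and then pin down $a,b$ in $X=g_0(aI+bY)$ from the two trace conditions, with the Markoff relation supplying $\det X=1$. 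This is the more conceptual version: uniqueness is a coset statement rather than a computation, and it explains where the hypothesis $\Delta\ne0$ comes from. It is worth noting that the two formulas agree: using $Y^{-1}=x_2I-Y$ one has $g_0\bigl((x_1x_2-x_3)I-x_1Y\bigr)=(Z-Y^{-2})(x_1I-x_3Y)$, so you have rederived the paper's $X$.

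One small caveat on the last sentence: your uniqueness for the ring case is ``inherited from the field case,'' which is fine when $D$ is an integral domain (as in the Appendix's standing convention) but does not literally cover non-domain rings such as $\mathbb{Z}/2^n\mathbb{Z}$, to which the paper applies this lemma in the proof of Theorem~\ref{HFE1}. The paper's uniqueness argument (subtract two copies of Prop.~\ref{identity} and invert a matrix with determinant $\Delta\in D^\times$) is ring-agnostic. This is minor --- in that application only existence is needed, and your existence argument does transfer verbatim --- but if you want the lemma exactly as stated over a general commutative ring, you should run the paper's subtraction argument for uniqueness rather than passing through a field.
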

\begin{proof} The proof is an application of Prop. \ref{identity}. 

Uniqueness follows readily from \eqref{Ap1.1}:  assume there are two matrices $X_1$ and $X_2$ satisfying all the conditions satisfied by $X$, so that on subtracting the two expressions obtained from \eqref{Ap1.1}, we have $(X_1-X_2)[x_3Y+(x_1-x_2x_3)I]=0$, from which we have $X_1=X_2$, since $\det[x_3Y+(x_1-x_2x_3)I]=\Delta$ is invertible.

For the existence, choose $X$ so that
\begin{equation}\label{matrix-X}
\begin{split}
\Delta X&=-x_3ZY +x_1Z +(x_3-x_1x_2)Y^{-1}+x_1I\ ,\\
&= \left(Z-Y^{-2}\right)\left(x_1I-x_3Y\right).
\end{split}
\end{equation}

To show $X\in \Gamma_F$, we note that $X(x_1Y-x_3I)= (ZY-Y^{-1})$. We have $\det(x_1X-x_3I)= x^2_1-x_1x_2x_3 + x_3^2=t+2-x_2^2=\Delta$. Similarly, $\det(ZY-Y^{-1})= 1-\Tr ZY^2 +1= 2-\Tr Z(x_2Y-I)=2 - x_2^2+t=\Delta$. Hence $\det X=1$.

For (i), since $\Tr ZY=x_2$, taking the trace in \eqref{matrix-X} gives 
\[\Delta \Tr X= \left[-x_3x_2 +x_1t+(x_3-x_1x_2)x_2 +2x_1\right]=\Delta x_1 .\]

For (ii), we have $\Delta XY=-x_3Z(x_2Y-I) +x_1ZY +(x_3-x_1x_2)I+x_1Y$, using $Y^2=x_2Y-I$. Expanding and taking the trace gives the result.

Finally, for (iii), we solve for $Z$ in $X(x_1Y-x_3I)= (ZY-Y^{-1})$, and using $Y^{-2}=x_2Y^{-1}-I$ and $Y^{-1}=-Y+x_2I$, together with Prop. \ref{identity} gives the result.

\end{proof}

\begin{proof}[Proof of Prop. \ref{lift1}]\

 Suppose $x_j^2 = t+2$ for at least two $j$'s. There are only a bounded number of such points on $\mathcal{M}_{t+2}(F)$  and by Zariski density we look at other points equivalent to the original under the Markoff group $\mathfrak{M}$ to get to one for which we may assume a coordinate does not satisfy $x_j^2 = t+2$. If the proposition is proved for this latter point, we will have a pair of matrices $X_0$ and $Y_0$ with $W(X_0,Y_0) = Z^{\pm 1}$.  Then, reversing the process and using Lemma \ref{perms}  gives a pair of matrices associated with the original point. Now assume $x_3^2 = t+2$, but not for $x_1$ and $x_2$. Then, the point obtained by a Vieta move $(x_1, x_2, x_1x_2 -x_3)$  is problematic only if $(x_1x_2 -x_3)^2 = t+2$, in which case we conclude that $x_1x_2 = 2x_3$ and also $(x_1^2-2)(x_2^2 -2)=4$, so that there are only a finite number of such exceptions. Then, we repeat the argument given above.

 So assume  $x_j^2 \neq t+2$ for all $j$. If $Y\in \Gamma_F$ can be found satisfying  $\Tr ZY=\Tr Y=x_j$ for some $j$ , we apply a permutation to move that $x_j$ to the middle coordinate, noting by Lemma \ref{perms} that the corresponding $Y$ can be found. Then Lemma \ref{lift2} above can be used to find $X$. The ambiguity of $Z$ or $Z^{-1}$ comes from the application of the permutation map.

\end{proof}

\begin{proposition} Suppose $F$ is a field with $\mathbf{card}(F)>5$. 
If $Z\in \Gamma_F$ with $\Tr Z\neq -2$, then $Z$ is a commutator.
\end{proposition}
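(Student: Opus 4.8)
The strategy is to reduce the claim to producing, for a given $Z \in \Gamma_F = \SL_2(F)$ with $\Tr Z = t \neq -2$, a point $(x_1,x_2,x_3) \in \mathcal{M}_{t+2}(F)$ that is \emph{liftable} in the sense of Proposition \ref{lift1}, i.e.\ for which there is a $Y \in \Gamma_F$ with $\Tr Y = \Tr ZY = x_j$ for some $j$. Once such a point is found, Proposition \ref{lift1} yields $X$ with $W(X,Y)$ conjugate to $Z$ or $Z^{-1}$; composing with the appropriate permutation from the first table in Lemma \ref{perms} (which realizes the passage $Z \rightsquigarrow Z^{-1}$ by an explicit change of pair $(X,Y)$) then gives $Z$ itself as a commutator. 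So the real content is an existence statement on $\mathcal{M}_{t+2}(F)$ together with a matching $Y$.

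**Key steps.** First I would dispose of the degenerate cases: if $Z$ is central (i.e.\ $Z = \pm I$, so $t = \pm 2$), then $t = 2$ (since $t \neq -2$) forces $Z = I$, which is trivially a commutator, so assume $Z$ is non-central. Next, observe that for $Z$ non-central one can choose $Y \in \Gamma_F$ with $\Tr ZY = \Tr Y$: writing $Y$ with indeterminate entries, the condition $\Tr(ZY) = \Tr(Y)$ is a single linear equation in the four entries of $Y$, and $\det Y = 1$ is one quadratic equation; since $Z$ is non-central the linear form $Y \mapsto \Tr((Z-I)Y)$ is not identically zero, so its kernel is a $3$-dimensional affine subspace, and on it the conic $\det Y = 1$ has an $F$-point provided $\mathbf{card}(F) > 5$ — this is where the cardinality hypothesis enters, guaranteeing that the relevant conic/quadric over $F$ is isotropic and has enough points, and that we can avoid the finitely many bad loci $x_j^2 = t+2$ appearing in Proposition \ref{lift1}. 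Having fixed such a $Y$, set $x_2 := \Tr Y = \Tr ZY$; then one needs $x_1, x_3 \in F$ with $(x_1,x_2,x_3) \in \mathcal{M}_{t+2}(F)$, i.e.\ $x_1^2 + x_3^2 - x_1 x_2 x_3 = t + 2 - x_2^2$. For fixed $x_2$ this is again a conic in $(x_1,x_3)$, and for $\mathbf{card}(F) > 5$ it has an $F$-point away from the excluded loci; alternatively, one can simply take $x_1 = \Tr X$, $x_3 = \Tr XY$ for the very $X$ that Lemma \ref{lift2} will construct, running the argument in the order: pick $Y$, then pick the target traces, then invoke Lemma \ref{lift2} (whose hypothesis $\Delta = t + 2 - x_2^2 \neq 0$ we arrange). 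Finally, apply Proposition \ref{lift1} and the permutation table to conclude.

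**Main obstacle.** The delicate point is the simultaneous satisfaction of all the constraints over a field that is only assumed to have more than $5$ elements: one must find $Y$ with $\Tr Y = \Tr ZY =: x_2$ such that $x_2^2 \neq t+2$ \emph{and} such that the resulting Markoff fiber conic is non-degenerate with an $F$-rational point avoiding the finitely many exceptional values flagged in the proof of Proposition \ref{lift1}. Each individual condition excludes a bounded number of values, so a counting argument over $F$ closes the gap once $\mathbf{card}(F)$ exceeds the total number of exclusions — and checking that bound is $\le 5$ (equivalently, that $\mathbf{card}(F) > 5$ suffices, with the small fields $\mathbb{F}_2,\mathbb{F}_3,\mathbb{F}_4,\mathbb{F}_5$ genuinely excluded) is the one place requiring care rather than routine manipulation. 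The hypothesis $\Tr Z \neq -2$ is used precisely to keep $Z$ out of the locus where $Z \sim Z^{-1}$ can fail in a way that would obstruct the permutation step, as flagged in the Remark following Lemma \ref{perms}.
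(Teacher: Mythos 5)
Your broad strategy for $\Tr Z = t \neq \pm 2$ — produce a point on $\mathcal{M}_{t+2}(F)$ together with a compatible $Y$, then invoke Lemma \ref{lift2}/Proposition \ref{lift1} — matches the paper's, though the paper constructs everything explicitly rather than by isotropy and counting: it takes $\zeta \neq 0,\pm 1$, sets $x_2 = \zeta + \zeta^{-1}$, $x_3 = (x_2^2-t-1)/(\zeta-\zeta^{-1})$, $x_1 = 1 + \zeta x_3$, puts $Y = \begin{psmatrix}\zeta&\eta\\0&\zeta^{-1}\end{psmatrix}$, and then arranges (conjugating $Z$ if necessary) that $\Tr ZY = \Tr Y$ holds by solving a single linear equation for $\eta$.

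There is, however, a genuine gap. After ruling out central $Z$ you assume $Z$ is non-central and proceed through Proposition \ref{lift1} — but a non-central $Z$ can still have $\Tr Z = 2$ (any nontrivial unipotent such as $\begin{psmatrix}1&1\\0&1\end{psmatrix}$), and Proposition \ref{lift1} is stated only for $t \neq \pm 2$. So the parabolic case $\Tr Z = 2$, $Z \neq I$ is simply unaddressed in your writeup. The paper handles it by a separate elementary argument: conjugate $Z$ to $\begin{psmatrix}1&b\\0&1\end{psmatrix}$ with $b \neq 0$, take $X,Y$ upper triangular, and observe that $W(X,Y)=Z$ reduces to one linear equation in the off-diagonal entries, solvable once $\mathbf{card}(F)>3$. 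Your plan could be repaired by going directly to Lemma \ref{lift2} (which needs only $\Delta = t+2-x_2^2 \neq 0$, not $t \neq 2$) with $Y$ upper triangular, $x_2 = \Tr Y$ satisfying $x_2^2 \neq 4$, and the Markoff point $(x_2,x_2,2) \in \mathcal{M}_4(F)$, but as written the case is missing.

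Two smaller points. The alternative "take $x_1 = \Tr X$, $x_3 = \Tr XY$ for the very $X$ that Lemma \ref{lift2} will construct" is circular: that lemma requires $(x_1,x_2,x_3) \in \mathcal{M}_{t+2}(F)$ as input \emph{before} it produces $X$, so the fiber-conic route is the one that actually works. And the hypothesis $\Tr Z \neq -2$ is not there to avoid the locus where $Z \not\sim Z^{-1}$: since $[X,Y]^{-1}=[Y,X]$, an element is a commutator iff its inverse is, conjugacy aside. Rather, $t = -2$ is excluded because trace $-2$ elements such as $-I$ genuinely need not be commutators over $F$ (e.g.\ $F=\mathbb{R}$ or $\mathbb{Q}$); see Remark \ref{tracezero} and Remark \ref{tracezero_1}.
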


\begin{proof} We consider two cases: $\Tr Z=2$ and $\Tr Z=t\neq \pm 2$.

When $\Tr Z=2$, we assume $Z\neq I$, in which case, $Z$ is $\Gamma_F$-conjugate to a matrix of the type $\left[\begin{smallmatrix} 1&b\\0&1 \end{smallmatrix}\right]$ for some $b\in F$; this is easily verified using matrices of the type $\left[\begin{smallmatrix} \ \ 0&1\\-1&x\end{smallmatrix}\right]$ and $\left[\begin{smallmatrix} 1&1\\0&1\end{smallmatrix}\right]$ to conjugate with. To verify that $Z$ is a commutator, it suffices to consider the case $Z=\left[\begin{smallmatrix}  1&b\\0&1\end{smallmatrix}\right]$ with $b\neq 0$. 

Put $X=\left[\begin{smallmatrix}  x_1&x_2\\0&x_1^{-1}\end{smallmatrix}\right]$ and $Y=\left[\begin{smallmatrix}  y_1&y_2\\0&y_1^{-1}\end{smallmatrix}\right]$. Substituting into $Z=W(X,Y)$ requires us to find $x_2$ and $y_2$ such that $(x_1-x_1^{-1})y_2-(y_1-y_1^{-1})x_2=bx_1^{-1}y_1^{-1}$. This is solvable if $x_1,\ y_1 \neq \pm1$. For this part of the proof, we need $\mathbf{card}(F)>3$.

For $\Tr Z=t\neq \pm 2$, we will use Lemma \ref{lift2}. To do so, we first create a suitable point $(x_1,x_2,x_3)\in \mathcal{M}_{t+2}(F)$; and then a $Y\in \Gamma_F$ satisfying all the conditions (these are easy to do on a field). The result then follows.

Let $\zeta \neq 0,\ \pm1$ and put $x_2 =\zeta+\zeta^{-1}$. Put $x_3= \frac{x_2^2 -t-1}{\zeta-\zeta^{-1}}$ and $x_1= 1+\zeta x_3$. Then, $(x_1,x_2,x_3)\in \mathcal{M}_{t+2}(F)$. Choose $\zeta$ such that $x_2^2\neq t+2$ (this requires $\mathbf{card}(F)>5$ if $t+2$ is a square). Now, put $Y=\left[\begin{smallmatrix} \zeta&\eta\\0&\zeta^{-1}\end{smallmatrix}\right]$, with $\eta\in F$ to be chosen. If $Z-I=\left[\begin{smallmatrix} a_1&a_2\\a_3&a_4\end{smallmatrix}\right]$, then $\Tr ZY=\Tr Y$ iff $a_1\zeta +a_3\eta+a_4\zeta^{-1}=0$. This is solvable in $F$ when $a_3\neq 0$. If $a_3=0$ but $a_2\neq0$, we conjugate $Z$ with $\left[\begin{smallmatrix} 0&1\\-1&0\end{smallmatrix}\right]$ to get a new matrix with the corresponding $a_3\neq 0$, giving a suitable $Y$. Finally, if $a_2=a_3=0$, we conjugate $Z$ with $\left[\begin{smallmatrix} 1&1\\0&1\end{smallmatrix}\right]$ to get a matrix with a non-zero off-diagonal entry $a_1-a_4\neq 0$ (since $Z\neq \pm I$), giving $Y$. Hence we conclude using Lemma \ref{lift2} that some conjugate of $Z$ or $Z^{-1}$ is a commutator, giving the result.
\end{proof}

%%%%%%%%%%%%%%%%%%%%%%%%%%%%%%%%%%%%%%%%%%%%%%%%%%%%%%%%%%%%%%%%%%%%%%%%%%%%%%%%%%%%%%%%%%%%%%%%%%%%%%%%%%%%%%%%%%%%%
\subsection{\ The case $D=\mathbb{Z}$\nopunct}\label{sec32}\
%%%%%%%%%%%%%%%%%%%%%%%%%%%%%%%%%%%%%%%%%%%%%%%%%%%%%%%%%%%%%%%%%%%%%%%%%%%%%%%%%%%%%%%%%%%%%%%%%%%%%%%%%%%%%%%%%%%

Since $\Gamma_D:=\G = \PSLZ$ is isomorphic to $\Z/2\Z \ast \Z/3\Z$, the purely group theoretic equation $(\mathcal{E}_1)$ can be solved explicitly by expressing $Z$ (or its conjugacy class in $\G$ denoted by $\{Z\}_\G$) in terms of its generators. Wicks' theorem \cite{wicks} describes explicitly which elements are commutators in such a free product, in terms of their spelling as words in the generators. As shown in \cite{park1}, if one orders the $\{Z\}_\G$'s by their minimal word length, then very few of these are commutators; roughly square-root of the total number. According to Theorem \ref{thm:main}, the failure of the typical $Z$ to be a commutator is witnessed in some finite quotient of $\Gamma$. One can ask about the local to global principle when restricting locally to congruence quotients of $\G$; which we call the ``congruence'' Hasse principle. 

For $D=\Z$ this congruence version for $(\mathcal{E}_1)$ has mostly failures. This version is also relevant for $(\mathcal{E}_2)$ and $(\mathcal{E}_3)$ which are no longer purely group theoretic equations. For each of $(\mathcal{E}_1')$, $(\mathcal{E}_2)$ and $(\mathcal{E}_3)$, the local congruence obstructions are passed for a positive proportion of the choices on the right hand sides, and we call such choices admissible. 

For $\mathcal{M}_k(\Z)$, $k=t+2$ is admissible if and only if
\begin{equation}\label{3.2.1}
t \not\equiv 1 \pmod 4 \quad \text{and} \quad t \not\equiv 1, 4 \pmod 9\ . 
\end{equation}
This and the corresponding Hasse principle for $(\mathcal{E}_3)$ were studied in depth in \cite{GhSa}.

The congruence obstructions for $\mathcal{M}_{t+2}(\Z)$ lead to ones for $\mathcal{T}_t(\Z)$ and there are further ones: $t$ is not admissible if 

\begin{equation*}
t \equiv 0, 1, 4, 5, 8, 9, 10, 12, 13  \pmod {16}
\end{equation*}
and
\begin{equation}\label{3.2.2}
 t \equiv 1, 4, 5, 8 \pmod 9\ . 
\end{equation}

We expect but have not verified that \eqref{3.2.2} gives a complete description of the admissible $t$'s, see Section \ref{HFC} (Prop. \ref{HFu2}).

For $(\mathcal{E}_1')$ an explicit description of the (congruence) admissible conjugacy classes $\{Z\}_\G$ is more complicated, but it consists of a positive proportion of the sets
\begin{equation}\label{3.2.3}
\left\{ \{Z\}_\G\ : |\Tr(Z) \leq T \right\}.
\end{equation}

Indeed if $A$ is the matrix in $\SL_2(\Z/q\Z)$, with $q=2^2.3^3.5$ in Theorem \ref{HFE1}, then according to that theorem if $Z\in \Gamma \pmod q$ is conjugate to $A \pmod q$, then $\{Z\}_\G$ is (congruence) admissible for $(\hat{\mathcal{E}}_1)$. By the Chebotarev Theorem for prime geodesics (\cite{sarnak_thesis}) it follows that

\[\frac{\displaystyle \#\left\{  \{Z\}_\G : |\Tr(Z)|\leq T,\ \{Z\}_{\SL_2(\Z/q\Z)}=\{A\}_{\SL_2(\Z/q\Z)}\right\}}{\displaystyle \#\left\{  \{Z\}_\G : |\Tr(Z)|\leq T\right\}} \rightarrow \frac{|\displaystyle \{A\}_{\SL_2(\Z/q\Z)}|}{|\displaystyle  {\SL_2(\Z/q\Z)}|} \ ,\]
as $T \to \infty$. This yields a positive proportion lower bound for the congruence admissible $Z$'s for $(\mathcal{E}_1')$.  Pushing this analysis a bit further, one can show that the proportion of such admissible conjugacy class tends to a limit.

\

The algorithm above to decide $(\mathcal{E}_1')$ can be used to decide $(\mathcal{E}_2)$. Given an admssible $t$ for $\mathcal{T}_t(\Z)$, one computes the $h(t)$ classes $\{Z\}_\G$ with trace equal to $t$. This is a classical calculation, in fact the number $h(t)$ is the Hurwitz class number of binary quadratic forms of discriminant $t^2-4$ (\cite{chowchow}) and there are approximately $t$ such classes for $t$ large. For each class one runs $(\mathcal{E}_1)$ and checks whether it is a commutator. This gives a list of which of these are commutators, and in particular whether $\mathcal{T}_t(\Z)$ has a solution or not. 

In \cite{park2} this procedure was implemented for $|t|<1000$. He finds that for most admissible $t$'s, $\mathcal{T}_t(\Z)$ is not empty,  and typically only a few of the $h(t)$ classes are commutators. He also found a number of Hasse failures, some examples being:

\ 
\begin{enumerate}[label=(\arabic*)]\label{ex}
\item $t=3$ : the one conjugacy class of trace 3 is not even in the commutator subgroup of $\G$.
\item $t=-21$ : there are two conjugacy classes of this trace which are in the commutator subgroup. However, neither is a commutator.
\item $t=15$ : this is not a Hasse failure but it is typical. There are four conjugacy classes which are in the commutator subgroup; two are not commutators and two of them $Z_1 = \begin{psmatrix} 2&5\\5&13\end{psmatrix}$ and $Z_2 = \begin{psmatrix} \ \ 2&-5\\-5&\ 13\end{psmatrix}$ are. Hence only $\{Z_1\}_\G$ and $\{Z_2\}_\G$ are lifts of solutions from $(\mathcal{E}_2)$ to $(\mathcal{E}_1')$ with $t=15$.
\end{enumerate}
\

The top down procedure from $(\mathcal{E}_1)$ to $(\mathcal{E}_2)$ gives an algorithm to decide both of these equations. However, it offers less in terms of proving theoretical results for $(\mathcal{E}_2)$. The bottom up approach, that starts with $(\mathcal{E}_3)$ and lifts solutions is useful in this regard, and since in this case $\mathcal{M}_{t+2}(\Z)/G$ is finite for generic points, it can also be used to give a decision procedure for all three of $(\mathcal{E}_1)$, $(\mathcal{E}_2)$ and $(\mathcal{E}_3)$.

If $\mathcal{M}_{t+2}(\Z)= \emptyset$ and $t$ is admissible (if it satisfies \eqref{3.2.2}), then $\mathcal{T}_t(\Z)=\emptyset$ and is a Hasse failure. An example of such is $t=-21$ ($\mathcal{M}_{-19}(\Z)$ is one of the Hasse failures from \cite{GhSa}). Moreover, infinitely many such Hasse failures for $\mathcal{T}_t(\Z)$ can be constructed using the theory in \cite{GhSa}, and we give some of these in Section 5 (see Prop. \ref{HFTrace0} and Prop. \ref{HFTrace}). In fact these are promoted to give infinitely many conjugacy classes $\{Z\}_\G$ for which  the congruence Hasse principle fails for $(\mathcal{E}_1')$. 

Our purpose in this section is the decision procedure for the $(\mathcal{E})$'s.  As detailed in Section \ref{general}, once $\mathcal{M}_{t+2}(D)/G$ is finite as it is here, we have only finitely many lifting problems to solving $\mathcal{T}_t(\Z)/G$, each of which is effective, and then a further finite number of effective liftings to $(\mathcal{E_1'})$, with $\Tr(Z)=t$. This gives a complete decision procedure for all the $(\mathcal{E})$'s over $\Z$. As the analysis from \cite{GhSa} yields an effective and feasible algorithm for $(\mathcal{E}_3)$, we apply it to do the same for $(\mathcal{E}_1)$ and $(\mathcal{E}_2)$ in the following subsection.

%%%%%%%%%%%%%%%%%%%%%%%%%%%%%%%%%%%%%%%%%%%%%%%%%%%%%%%%%%%%%%%%%%%%%%%%%%%%%%%%%%%%%%%%%%%%%%%%%%%%%%%%%%%%%%%%%%%
\subsubsection{\ \ An algorithmn for  $(\mathcal{E}_2(\Z))$  \nopunct}\
%%%%%%%%%%%%%%%%%%%%%%%%%%%%%%%%%%%%%%%%%%%%%%%%%%%%%%%%%%%%%%%%%%%%%%%%%%%%%%%%%%%%%%%%%%%%%%%%%%%%%%%%%%%%%%%%%%%

If $x_1=\Tr X$, $x_2=\Tr Y$ and $x_3=\Tr XY$, then $(\mathcal{E}_3)$ holds when $(\mathcal{E}_2)$ holds for a given $k$ with $t=k-2$. Given a vector ${\bf x}=(x_1,x_2,x_3)\in \mathcal{M}_k(\mathbb{Z})$, we say it is $(\mathcal{E}_2)$-good if we can find matrices $X$ and $Y$ such that $(\mathcal{E}_2)$ holds in this way,  and otherwise ${\bf x}$ is $(\mathcal{E}_2)$-bad.  We know (see  \cite{GhSa} for references), that when $D=\mathbb{Z}$, there is an algorithm to determine if $(\mathcal{E}_3)$ is solvable for generic $k>4$.
%\vspace{10pt}

\begin{proposition}\label{m0}
Let $k\in \mathbb{Z}$ be generic such that the odd part of $k-4$ is squarefree. Let ${\bf x}_j$ be fundamental solutions to $(\mathcal{E}_3)$ with $1\leq j\leq \mathfrak{h}(k)$, where $  \mathfrak{h}(k)$ is the ``class number'' of $(\mathcal{E}_3)$. Denote the coordinates of ${\bf x}_j$ by $x_{ji}$ with $1\leq i\leq 3$. Suppose for each $j$ there is an odd prime factor $p$ of $k-4$ and a coordinate $x_{ji}$, such that $x_{ji}^2 -4$ is a quadratic non-residue modulo $p$. Then no solution $\bf{x}$ of $(\mathcal{E}_3)$ is $(\mathcal{E}_2)$-good.
\end{proposition}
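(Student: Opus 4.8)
The plan is to exploit the lifting criterion of Lemma \ref{lift2} together with the descent structure of $\mathcal{M}_k(\Z)/G$ furnished by the Markoff group $\mathfrak{M}$. First I would recall that by Lemma \ref{lift2}, if $\mathbf{x}=(x_1,x_2,x_3)\in\mathcal{M}_k(\Z)$ is $(\mathcal{E}_2)$-good with the middle coordinate put in a suitable position, then there is a matrix $Y\in\Gamma_\Z$ with $\Tr Y=\Tr ZY=x_j$ for some $j$. The trace of $Y$ being $x_j$ forces $Y$ to have eigenvalues that are roots of $T^2-x_jT+1$; the key quantity is then $x_j^2-4$, the discriminant of the characteristic polynomial of $Y$, whose quadratic character mod a prime $p\mid k-4$ governs whether $Y$ (equivalently, whether such a lift) can exist over $\Z_p$. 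Concretely, the idea is: a $\Z$-point on $\mathcal{M}_k$ that is $(\mathcal{E}_2)$-good yields, upon reduction mod $p$, constraints saying that for the relevant coordinate $x_{ji}$, the value $x_{ji}^2-4$ must be a square mod $p$ (a necessary congruence condition for the local solvability of $(\mathcal{E}_1)$/$(\mathcal{E}_2)$ at $p$). So the hypothesis that some coordinate of each fundamental solution has $x_{ji}^2-4$ a non-residue mod $p$ is precisely a local obstruction at $p$ to $(\mathcal{E}_2)$-goodness of the fundamental solutions.

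The next step is to propagate this obstruction from the fundamental solutions to \emph{all} solutions in the same $\mathfrak{M}$-orbit. Here I would use the fact (from \cite{GhSa}, invoked in the statement via the notions "fundamental solution'' and "class number'' $\mathfrak{h}(k)$) that when the odd part of $k-4$ is squarefree and $k$ is generic, $\mathcal{M}_k(\Z)$ decomposes into finitely many $\mathfrak{M}$-orbits, each containing a fundamental solution, and moreover that the $\mathfrak{M}$-action mod $p$ (for $p\mid k-4$) preserves the relevant quadratic-residue invariant of the coordinates: the Vieta involution $(x_1,x_2,x_3)\mapsto(x_1,x_2,x_1x_2-x_3)$ together with the permutations acts on the reductions mod $p$ in a way that the "residuosity pattern'' $\big(\left(\tfrac{x_i^2-4}{p}\right)\big)_{i}$ is an invariant of the orbit mod $p$ (this uses that $k\equiv 4\pmod p$, so the surface degenerates mod $p$ to $x_1^2+x_2^2+x_3^2-x_1x_2x_3\equiv 4$, on which these identities are rigid). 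Thus if one coordinate of the fundamental solution ${\bf x}_j$ is a non-residue obstruction mod some $p\mid k-4$, then for every ${\bf x}$ in the orbit of ${\bf x}_j$ there is a coordinate $x_i$ with $x_i^2-4$ a non-residue mod $p$, and hence no $Y\in\Gamma_\Z$ can exist satisfying the Lemma \ref{lift2} hypothesis for that ${\bf x}$. Applying Lemma \ref{perms} to move between coordinates does not affect this (the permutations only permute the $x_i$ and replace $Z$ by $Z^{-1}$, leaving $\Tr Z$ and the residuosity pattern intact).

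Finally, I would assemble these pieces: every ${\bf x}\in\mathcal{M}_k(\Z)$ lies in the $\mathfrak{M}$-orbit of some fundamental solution ${\bf x}_j$, $1\le j\le\mathfrak{h}(k)$; by hypothesis each such ${\bf x}_j$ carries a local non-residue obstruction at some odd prime $p\mid k-4$; by orbit-invariance this obstruction is inherited by ${\bf x}$; and by Lemma \ref{lift2} (contrapositive) this obstruction rules out the existence of a matrix $Y$ with $\Tr Y=\Tr ZY=x_i$ for any coordinate $x_i$ of ${\bf x}$, hence rules out ${\bf x}$ being $(\mathcal{E}_2)$-good. Therefore no solution of $(\mathcal{E}_3)$ is $(\mathcal{E}_2)$-good, as claimed.

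The main obstacle I anticipate is the second step: verifying rigorously that the quadratic-residuosity pattern of the coordinates is genuinely an invariant of the $\mathfrak{M}$-orbit modulo $p$ for $p\mid k-4$. This requires a careful local analysis at $p$ — in particular checking that the Vieta move $x_3\mapsto x_1x_2-x_3$ sends a coordinate whose square minus $4$ is a non-residue to one with the same property, which should follow from the degenerate surface equation $x_1^2+x_2^2+x_3^2-x_1x_2x_3\equiv 4\pmod p$ (rewriting $(x_1x_2-x_3)^2-4$ in terms of $x_1^2-4$, $x_2^2-4$, $x_3^2-4$ and using the relation), but the bookkeeping of the various cases (when some $x_i\equiv\pm2$, i.e.\ when a coordinate is a "parabolic'' trace mod $p$) needs to be handled, and it is here that squarefreeness of the odd part of $k-4$ and genericity of $k$ are used to exclude degenerate configurations.
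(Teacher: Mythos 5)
Your overall strategy is the right one and matches the paper's: establish a local quadratic-residue obstruction at each odd prime $p \mid k-4$ that is a necessary condition for $(\mathcal{E}_2)$-goodness, then show the obstruction is invariant under the Markoff group action so that it propagates from the fundamental set to every solution. The orbit-invariance half, which you flag as the main obstacle, is in fact the easier part: the paper's Lemmas~\ref{m1} and~\ref{m2} handle it cleanly by completing the square in $(\mathcal{E}_3)$ to get $(2x_i - x_jx_l)^2 - (x_j^2-4)(x_l^2-4) = 4(k-4) \equiv 0 \pmod p$, which immediately shows the products $(x_j^2-4)(x_l^2-4)$ are squares mod $p$ and hence the residuosity pattern is constrained and orbit-invariant.

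The genuine gap is in the local obstruction step, which you attribute to Lemma~\ref{lift2} but which that lemma does not deliver. Lemma~\ref{lift2} is a sufficient-condition lifting statement from $(\mathcal{E}_3)$ to $(\mathcal{E}_1)$: given a fixed $Z$ and a $Y$ with $\Tr Y = \Tr ZY = x_2$ and $\Delta = t+2-x_2^2$ invertible, an $X$ exists with $W(X,Y) = Z$. Its contrapositive says nothing about quadratic residues, and $(\mathcal{E}_2)$-goodness is a different question: it asks for any pair $(X,Y) \in \SL_2(\Z)^2$ with the prescribed traces $(x_1,x_2,x_3)$, with no $Z$ fixed in advance. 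Your heuristic that "$Y$ must exist over $\Z_p$ and the discriminant $x_j^2-4$ governs this'' is also not the right constraint: a $\Z$-matrix with any prescribed trace always exists (e.g.\ the companion matrix), so nothing about a single trace is obstructed. The actual necessary condition ``$(\mathcal{E}_2)$-good implies each $x_i^2-4$ is zero or a QR mod $p$'' has to be derived from the joint constraints on $(X,Y)$. The paper does this by a direct computation: writing $X = \left[\begin{smallmatrix} a_1 & a_2 \\ a_3 & x_1-a_1 \end{smallmatrix}\right]$, $Y = \left[\begin{smallmatrix} b_1 & b_2 \\ b_3 & x_2-b_1 \end{smallmatrix}\right]$, setting $u = 2a_1-x_1$, $v = x_2-2b_1$, extracting the three equations $u^2 + 4a_2a_3 = x_1^2-4$, $v^2 + 4b_2b_3 = x_2^2-4$, $uv = (x_1x_2-2x_3) + 2(a_2b_3+a_3b_2)$, and combining them to obtain
\[
(x_2^2-4)(a_2v+b_2u)^2 = \bigl[a_2(x_2^2-4) + (x_1x_2-2x_3)b_2\bigr]^2 - 4b_2^2(k-4),
\]
from which the claim follows by a case analysis on whether $p \mid (a_2v+b_2u)$, using the squarefreeness of the odd part of $k-4$. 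Without something like this explicit derivation (or an equivalent appeal to the Hasse invariant of the ternary form $f_{\bf X}$ as in Section~5.1), the proposal only asserts the crucial step rather than proving it.
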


\begin{example}  $k=108$ is generic with $k-4= 8*13$. One computes  $  \mathfrak{h}(108)=1$ with fundamental solution $(-3,3,6)$, using Theorem 1.1 of \cite{GhSa}. Since $3^2-4=5$ is a quadratic nonresidue modulo 13, the proposition states that $(\mathcal{E}_3)$ is solvable with $k=108$ but $(\mathcal{E}_2)$ is not solvable with $t=106$. Note that $t=106$ has no congruence obstructions in $(\mathcal{E}_2)$ (see Section 5.1) and so is a Hasse failure for $(\mathcal{E}_2)$, but not for $(\mathcal{E}_3)$.
\end{example}

The proof follows from the following lemmas.

\begin{lemma}\label{m1}
Let $(x_1,x_2,x_3)\in \mathcal{M}_k(\mathbb{Z})$ and let $p|(k-4)$ be odd. Then
\begin{enumerate}[label=\upshape(\arabic*).]
\item If $p\nmid x_i^2 -4$ for all $i$ , then $x_i^2-4$ are either all quadratic residues (QR) or all non-residues (QNR) modulo $p$.
\item If $p| x_i^2 -4$ for some $i$, then it does not do so for the other two. Moreover $x_j^2-4$ are then either both QR or both QNR modulo $p$ for $j\neq i$.
\end{enumerate}
\end{lemma}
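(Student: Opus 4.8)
The plan is to work with the Markoff equation $x_1^2+x_2^2+x_3^2-x_1x_2x_3=k$ reduced modulo an odd prime $p$ dividing $k-4$, and to exploit the fact that $k\equiv 4\pmod p$. First I would rewrite the surface equation in a form that makes the discriminants $x_i^2-4$ appear. The key algebraic identity is that for a point on $\mathcal{M}_k$ one has a factorization linking the three coordinates: completing the square in $x_3$ gives $(2x_3-x_1x_2)^2=(x_1^2-4)(x_2^2-4)+4(k-4)$, and symmetrically for the other indices. Hence, modulo $p\mid(k-4)$,
\begin{equation}\label{eq:m1key}
(2x_3-x_1x_2)^2\equiv (x_1^2-4)(x_2^2-4)\pmod p,
\end{equation}
together with the two analogous congruences obtained by permuting the roles of $x_1,x_2,x_3$.

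From \eqref{eq:m1key} the lemma is essentially immediate. For part (1), if $p\nmid x_i^2-4$ for all $i$, then \eqref{eq:m1key} exhibits $(x_1^2-4)(x_2^2-4)$ as a nonzero square modulo $p$, so $x_1^2-4$ and $x_2^2-4$ lie in the same square class; running the same argument with the permuted identities shows all three of $x_1^2-4,x_2^2-4,x_3^2-4$ lie in one square class, i.e. they are all QR or all QNR mod $p$. For part (2), suppose $p\mid x_1^2-4$, say. Then the identity $(2x_3-x_1x_2)^2\equiv(x_1^2-4)(x_2^2-4)\equiv 0\pmod p$ forces $2x_3\equiv x_1x_2\pmod p$; but also $x_1\equiv\pm2\pmod p$, so $x_3\equiv\pm x_2\pmod p$ and hence $x_3^2-4\equiv x_2^2-4\pmod p$. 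If $p$ also divided $x_2^2-4$ (equivalently $x_3^2-4$), then $x_1\equiv x_2\equiv x_3\equiv\pm2$ and plugging into the surface equation gives $k\equiv 12-8\equiv 4\pmod{p}$ consistently but one checks mod $p^2$ or directly that this cannot happen for generic $k$ — more cleanly, I would simply use the permuted version of \eqref{eq:m1key} with index pair $(2,3)$: it reads $(2x_1-x_2x_3)^2\equiv(x_2^2-4)(x_3^2-4)\pmod p$, and if the right side is $\equiv0$ then combined with $x_1\equiv\pm2$ we would get a forced relation; the contradiction comes from the remaining identity $(2x_2-x_1x_3)^2\equiv(x_1^2-4)(x_3^2-4)\equiv0$, which with $x_1\equiv\pm2$, $x_3\equiv\pm2$ makes all coordinates $\equiv\pm2$, and then the original cubic gives $x_1^2+x_2^2+x_3^2-x_1x_2x_3\equiv 4+4+4\mp8$, which is either $4$ or $20\pmod p$; ruling out $p\mid 16$ (automatic since $p$ odd, but $20-4=16$) shows the two diagonal sign patterns are incompatible unless $p\mid 16$, contradiction. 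Hence at most one index can have $p\mid x_i^2-4$. Finally, the same congruence $x_3^2-4\equiv x_2^2-4\pmod p$ derived above shows the remaining two discriminants are equal mod $p$, hence in particular both QR or both QNR.

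The only real subtlety — the part I expect to require the most care — is the case analysis in part (2) ensuring that $p$ cannot divide two of the $x_i^2-4$ simultaneously; one must track the sign ambiguities in $x_i\equiv\pm2$ and verify the resulting diagonal point does not lie on $\mathcal{M}_k$ modulo $p$. This is where the hypothesis $p\mid(k-4)$ (as opposed to a higher power) and $p$ odd get used. Everything else is the single identity \eqref{eq:m1key} and its two permutations, which follow directly from completing the square in the defining equation of $\mathcal{M}_k$.
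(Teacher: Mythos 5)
Your proof uses exactly the same key identity as the paper, namely the completed square
\[
(2x_i-x_jx_l)^2-(x_j^2-4)(x_l^2-4)=4(k-4)\equiv 0\pmod p ,
\]
and your treatment of part (1) and of the ``both QR or both QNR'' clause in part (2) (via $x_3^2-4\equiv x_2^2-4\pmod p$) matches the paper's. The gap is in the step where you try to rule out $p$ dividing two of the $x_i^2-4$. Your ``more cleanly'' argument does not close: once all three coordinates are $\equiv\pm2\pmod p$, the cubic evaluates to $12\mp 8$, which is $4$ or $20$ modulo $p$; you correctly rule out the value $20$ (which would force $p\mid 16$), but the other sign pattern gives exactly $k\equiv 4\pmod p$ with no contradiction at all. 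Concretely, with $p=3$, $k=13$ and the Markoff point $(1,2,4)\in\mathcal M_{13}(\Z)$, one has $3\mid x_i^2-4$ for all three $i$, and the point is $\equiv(-2,2,-2)\pmod 3$, so the sign pattern you fail to exclude really does occur. This shows the claim is not provable modulo $p$ alone.

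The argument the paper actually uses (and which you briefly gesture at with ``one checks mod $p^2$'') is different and is where the genericity hypothesis enters: if $p\mid x_1^2-4$ and $p\mid x_2^2-4$, then in the identity above the product term is $\equiv 0\pmod{p^2}$ and $p\mid(2x_3-x_1x_2)$, so the square on the left is $\equiv 0\pmod{p^2}$; hence $p^2\mid 4(k-4)$, i.e. $p^2\mid k-4$ since $p$ is odd. This contradicts the hypothesis, in force throughout this section (Proposition~\ref{m0}), that the odd part of $k-4$ is squarefree. So the correct version of your ``subtlety'' is not ``the diagonal point does not lie on $\mathcal M_k$ modulo $p$'' (it may), but rather ``two divisibilities would force $p^2\mid k-4$.'' You should make the hypothesis $p\,\|\,(k-4)$ explicit and replace the sign-pattern discussion with the $p^2$ argument; the rest of your write-up is fine.
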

\begin{proof}
Let $i,j,l$ be any permutation of $1,2,3$. Completing the square in $(\mathcal{E}_3)$ gives 
\[ (2x_i -x_jx_l)^2 -(x_j^2 -4)(x_l^2-4)= 4(k-4) \equiv 0 \pmod p\ .
\]
Since the products $(x_j^2 -4)(x_l^2-4)$ are all squares mod $p$ for each pair of subscripts, the first case of the lemma follows.

Next, it also follows that $p$ cannot divide two of $x_i^2-4$, as then $p^2|k-4$. The second case now follows.
\end{proof}

\begin{definition}
We say ${\bf x}=(x_1,x_2,x_3)$ is a QR mod $p$ if at least two of $x_i^2 -4$ are QR mod $p$. Otherwise we say it is a QNR
\end{definition}

\begin{lemma}\label{m2} Let $  \mathfrak{M}$ be the Markoff group and $  \mathfrak{m}\in  \mathfrak{M}$. If $\bf{x}$ satisfies $(\mathcal{E}_3)$, then $\bf{x}$ is a QR mod $p$ if and only if $  \mathfrak{m}{\bf x}$ is a QR mod $p$.
\end{lemma}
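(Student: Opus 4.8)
The plan is to reduce the claim to checking it on a generating set of the Markoff group $\mathfrak{M}$. By Lemma~\ref{perms} and the discussion preceding it, $\mathfrak{M}$ is generated by the coordinate permutations of $(x_1,x_2,x_3)$, the double sign changes $x_i\mapsto\pm x_i$, and the three Vieta involutions, a typical one being $T:(x_1,x_2,x_3)\mapsto(x_1,x_2,x_1x_2-x_3)$. Since the class of maps preserving ``being a QR mod $p$'' is closed under composition, and each of these generators is an involution (or, for $S_3$, a product of transpositions), it suffices to prove: if $\mathbf{x}\in\mathcal{M}_k(\mathbb{Z})$ is a QR mod $p$ then so is $g\mathbf{x}$, for $g$ any such generator. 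Here one uses freely that $g\mathbf{x}$ again lies in $\mathcal{M}_k(\mathbb{Z})$, so that Lemma~\ref{m1} applies to it verbatim.

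The first step is to recast the definition in a convenient form. By Lemma~\ref{m1}, among the three integers $x_1^2-4,\ x_2^2-4,\ x_3^2-4$ at most one is divisible by $p$, and all of those \emph{not} divisible by $p$ share a common value of the Legendre symbol $\left(\tfrac{\,\cdot\,}{p}\right)$; call this common value $\chi_p(\mathbf{x})\in\{\pm1\}$, which is well defined since at least two of the three are units mod $p$. Then $\mathbf{x}$ is a QR mod $p$ if and only if $\chi_p(\mathbf{x})=1$: if $\chi_p(\mathbf{x})=1$ the two or three coordinates with $p\nmid x_i^2-4$ are all quadratic residues, so at least two of the $x_i^2-4$ are residues; and if $\chi_p(\mathbf{x})=-1$ then only the coordinate divisible by $p$ (if any) could be a residue, so at most one is — and this is true whether or not one counts $0$ as a residue. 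Granting this, coordinate permutations preserve $\chi_p$ because it is a symmetric function of the $x_i$, and sign changes preserve it because they leave each $x_i^2-4$ unchanged.

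It remains to treat a Vieta involution, and by symmetry (the other two are conjugates of $T$ by permutations, already handled) it is enough to handle $T:(x_1,x_2,x_3)\mapsto(x_1,x_2,x_1x_2-x_3)$, which fixes $x_1$ and $x_2$. By Lemma~\ref{m1}(2), $p$ cannot divide both $x_1^2-4$ and $x_2^2-4$, so at least one of them, say $x_1^2-4$, is a unit mod $p$; hence $\chi_p(\mathbf{x})=\left(\tfrac{x_1^2-4}{p}\right)$. Since $T\mathbf{x}$ has the same first coordinate and also lies in $\mathcal{M}_k(\mathbb{Z})$, the same computation gives $\chi_p(T\mathbf{x})=\left(\tfrac{x_1^2-4}{p}\right)=\chi_p(\mathbf{x})$, so $\mathbf{x}$ is a QR mod $p$ exactly when $T\mathbf{x}$ is. This completes the reduction, and hence the lemma. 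The argument is essentially routine; the one place that requires care is this last step, where one must invoke Lemma~\ref{m1}(2) to guarantee that at least one of the two coordinates fixed by the Vieta move is a unit modulo $p$ — without that anchor, the bookkeeping of the single coordinate that $p$ may divide does not go through.
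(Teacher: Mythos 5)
Your proof is correct and rests on the same key lemma as the paper's (Lemma~\ref{m1}) and the same reduction to a single Vieta generator, but you package the argument more cleanly by first isolating an invariant $\chi_p(\mathbf{x})\in\{\pm1\}$ — the common Legendre symbol of the coordinates $x_i^2-4$ not divisible by $p$, well defined precisely because of Lemma~\ref{m1} — and then checking that each generator of $\mathfrak{M}$ preserves $\chi_p$. The paper instead argues directly: for the Vieta move replacing $x_1$, it observes that at least one of $x_2^2-4,\ x_3^2-4$ must be a QR, splits on whether $p$ divides $x_2^2-4$, and propagates the QR status to $x_4^2-4$ using Lemma~\ref{m1} applied to the new triple. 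The two arguments are equivalent in substance; yours has the advantage that the invariance of $\chi_p$ under the fixed coordinates of a Vieta move is visible at a glance (anchor on whichever of the two fixed $x_i^2-4$ is a unit mod $p$, which exists by Lemma~\ref{m1}(2)), while the paper's case split achieves the same thing more explicitly. No gaps; the conventions around whether $0$ counts as a quadratic residue are handled correctly in your remark, and the reduction to generators is standard.
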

\begin{proof}
It suffices to verify this for a single Vieta move, say $(x_1,x_2,x_3)\rightarrow (x_4,x_2,x_3)$ with $x_4=x_2x_3-x_1$. From the assumption, it follows that either $x_2^2-4$ or $x_3^2-4$ is a QR. If $p$ does not divide either, it follows from Lemma \ref{m1} that both are QR and we are done. So assume $p|x_2^2-4$, in which case the assumption implies $x_3^2-4$ is a QR, so that again Lemma \ref{m1} implies $x_4^2-4$ is too.
\end{proof}

\begin{lemma}\label{m3}
Suppose ${\bf x}$ satisfying $(\mathcal{E}_3)$ is $(\mathcal{E}_2)$-good. Then so is $  \mathfrak{m}{\bf x}$ for all $  \mathfrak{m}\in   \mathfrak{M}$.
\end{lemma}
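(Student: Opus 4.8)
The plan is to reduce the claim to the generators of the Markoff group and then read off the required matrices directly from the lifting tables of Lemma~\ref{perms}. First I would unwind the definition: ${\bf x}=(x_1,x_2,x_3)$ being $(\mathcal{E}_2)$-good means exactly that ${\bf x}$ lies in the image of the trace map $\pi\colon \mathcal{T}_t(\mathbb{Z})\to\mathcal{M}_{t+2}(\mathbb{Z})$ of \eqref{3.8}; say ${\bf x}=\pi((X,Y))$ with $W(X,Y)=Z$ and $\Tr Z=t$. Since $\mathfrak{M}$ is generated by the coordinate permutations, the three Vieta involutions, and the double sign changes, it is enough to produce, for each such generator $\mathfrak{m}$, a pair $(X',Y')\in\SL_2(\mathbb{Z})^2$ with $\pi((X',Y'))=\mathfrak{m}{\bf x}$ and $\Tr W(X',Y')=t$; composing then handles an arbitrary $\mathfrak{m}\in\mathfrak{M}$.

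For the permutations and the Vieta moves this is precisely the content of the two tables in Lemma~\ref{perms}: each such $\mathfrak{m}$ is there lifted to an explicit substitution $(X,Y)\mapsto(X',Y')$, a word in $X^{\pm1},Y^{\pm1}$ and hence integral, whose commutator $W(X',Y')$ equals $Z$ or $Z^{-1}$. Because $\Tr Z=\Tr Z^{-1}$ for $Z\in\SL_2(\mathbb{Z})$, in every row one has $\Tr W(X',Y')=\Tr Z=t$, so $(X',Y')\in\mathcal{T}_t(\mathbb{Z})$, and by construction $\pi((X',Y'))$ is the corresponding permutation or Vieta transform of ${\bf x}$. For the three double sign changes $(X,Y)\mapsto(\pm X,\pm Y)$ the commutator is literally unchanged while $(\Tr X,\Tr Y,\Tr XY)$ undergoes the matching even sign change, so again the new pair witnesses $(\mathcal{E}_2)$-goodness of $\mathfrak{m}{\bf x}$. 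Thus each generator of $\mathfrak{M}$ carries an $(\mathcal{E}_2)$-good point to an $(\mathcal{E}_2)$-good point, and the result follows.

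Equivalently, and more conceptually, one may simply invoke the $\pi$-equivariance \eqref{3.10} of the $G$-action together with the fact that this action preserves $\mathcal{T}_t(\mathbb{Z})$: then $\mathfrak{m}{\bf x}=\mathfrak{m}\,\pi((X,Y))=\pi(\mathfrak{m}(X,Y))$ again lies in $\pi(\mathcal{T}_t(\mathbb{Z}))$. There is essentially no obstacle here beyond bookkeeping; the only points that deserve a word are that the lifts in Lemma~\ref{perms} preserve integrality — clear, since they are words in $X,Y$ and their inverses — and that these transformations, together with the double sign changes, generate all of $\mathfrak{M}$.
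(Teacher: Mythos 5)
Your proof is correct. It also accomplishes slightly more than the paper's argument, which is a single line: the paper notes that it suffices to verify the claim for a Vieta move, and does so by the trace identity $\Tr XY^{-1}=(\Tr X)(\Tr Y)-\Tr XY$ — in effect the lift $(X,Y)\mapsto(X,Y^{-1})$ of the Vieta move on the third coordinate, with the invariance of $\Tr W$ under this replacement implicit from Fricke. You instead reduce to the full generating set of $\mathfrak{M}$ (permutations, Vieta moves, double sign changes) and read off the Nielsen lifts from Lemma~\ref{perms}, or equivalently quote the $\pi$-equivariance~\eqref{3.10} together with preservation of $\mathcal{T}_t(\Z)$. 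Both arguments are sound; yours is more explicit and self-contained (and handles the permutations and sign changes, which the paper leaves tacit), while the paper's is shorter because it cites a single trace identity and suppresses the routine generators. One small point worth keeping in mind: the claim that the Nielsen lifts in Lemma~\ref{perms} send $W(X,Y)$ to a $\G$-conjugate of $Z^{\pm1}$ — and hence preserve $\Tr W$ — is the precise fact that makes the equivariance argument give membership in $\mathcal{T}_t(\Z)$, and you correctly flag it.
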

\begin{proof}
It suffices to check this for a Vieta move, but the trace identity $\Tr XY^{-1}=(\Tr X)(\Tr Y)-\Tr XY$ verifies this.
\end{proof}
Let $\mathcal{M}_k(\mathbb{Z})$ be decomposed into its $  \mathfrak{h}(k)$ connected components $V^{(j)}_k(\mathbb{Z})$. Lemma \ref{m3} implies that all elements of $V^{(j)}_k(\mathbb{Z})$ are either $(\mathcal{E}_2)$-good or all $(\mathcal{E}_2)$-bad. So to prove Prop. \ref{m0} it suffices to verify that all elements of a fundamental set are $(\mathcal{E}_2)$-bad.
%\vspace{6pt}
\

\begin{proof}[Proof of Prop.\,\ref{m0}]\ 

Suppose ${\bf x}=(x_1,x_2,x_3)\in \mathcal{M}_k(\mathbb{Z})$ is $(\mathcal{E}_2)$-good. Write $x_1=\Tr X$, $x_2=\Tr Y$, $x_3=\Tr XY$ with $X$, $Y$ in $\SLZ$.
Write 
\[
X=\left[\begin{matrix} a_1&a_2\\a_3&x_1-a_1\end{matrix}\right],\quad \quad Y=\left[\begin{matrix} b_1&b_2\\b_3&x_2-b_1\end{matrix}\right]
\]
so that $x_3= a_1b_1 + a_2b_3 + a_3b_2 + (x_1-a-1)(x_2-b_1)$. Put $u=2a_1 -x_1$ and $v= x_2-2b_1$. We get three equations
\begin{enumerate}[label=(\roman*).]
\item $u^2 +4a_2a_3=x_1^2-4$,
\item $v^2 +4b_2b_3= x_2^2-4$,
\item $uv= (x_1x_2-2x_3)+2(a_2b_3+a_3b_2)$.
\end{enumerate}
These imply
\[
(x_2^2-4)(a_2v+b_2u)^2= [a_2(x_2^2-4)+(x_1x_2-2x_3)b_2]^2- 4b_2^2(k-4).
\]
Let $p|k-4$ be an odd prime factor. If $p\nmid (a_2v+b_2u)$, it follows that $x_2^2-4$ is zero or a QR mod $p$. Next, if $p|(a_2v+b_2u)$, since $p^2\nmid(k-4)$, it follows that $p|b_2$ so that (ii) implies again that $x_2^2-4$ is zero or a QR mod $p$. By symmetry, it follows that the same possibilities are true for $x_1^2-4$ and $x_3^2-4$. This is true for all odd prime factors of $k-4$. Thus if there is such a prime factor with one of $x_i^2-4$ is a QNR, we get that ${\bf x}$ is not $(\mathcal{E}_2)$-good. Then, by Lemmas \ref{m2} and \ref{m3}, if this property holds for each point of a fundamental set, it holds for all points of $\mathcal{M}_k(\mathbb{Z})$, as required.
\end{proof}
\begin{remark}\ 

 The analysis in this section corresponds to the an extension of $(\mathcal{E}_1)$ to subgroups $\Omega$ of $\Gamma_D$. Here, we choose $\Omega$  to be a  subgroup of $\Gamma_D$ invariant under the map $X \to -X$, so that it is closed under the Nielsen automorphisms. We then ask for solutions $(X,Y)\in \Omega\times \Omega$ satisfying $Z=W(X,Y)$ for a given $Z\in\Omega$; the analogue of $(\mathcal{E}_2)$ is described similarly. The projection of $(\mathcal{E}_1)$ and $(\mathcal{E}_2)$ then gives a subset of solutions to $(\mathcal{E}_3)$ with the coordinates of ${\bf x}$ restricted in some way, as illustrated below.

\begin{example} \ Let $p$ be an odd prime, and $D=\Z$.

(i). Let $\Omega =\left\{ U\in \G\ : U \equiv \pm I \pmod p\right\}$. Then $\Z=W(X,Y)$ being solvable in $\Omega$ implies $Z\equiv I \pmod p$, so that $\Tr Z =t\equiv 2 \pmod p$. The projection from $(\mathcal{E}_1)$ to $(\mathcal{E}_3)$ gives points Markoff-equivalent to $(2,2,2)$ modulo $p$ . 

One can generalize this to subgroups $\Omega_a =\left\{ U\in \G\ : U \equiv \pm R_a^m \pmod p , m\in \Z\right\}$ where $R_a = \begin{psmatrix} a&1\\-1&0 \end{psmatrix}$, so that a commutator is congruent to $I$. By choosing $a$ suitably, the corresponding points in $(\mathcal{E}_3)$ are Markoff-equivalent to $(2,b,b)$ modulo $p$ for all $b$ (such points $(2,b,b)$ represent the Markoff-inequivalent orbits on the Cayley surface $\mathcal{M}_4$ (see \cite{GhSa})).

(ii). Let $\Omega = \G_0(p)$. Then $\Z=W(X,Y)$ implies $Z \equiv \begin{psmatrix} 1&*\\0&1 \end{psmatrix} \pmod p$, so that $t\equiv 2 \pmod p$. Then, $x_1= \Tr X$ implies $x_1^2 -4$ is a QR modulo $p$; similarly for $x_2 = \Tr Y$. Then, $x_3^2 -4$ is a QR since $x_3=\Tr XY$ (this is reflected in Lemma \ref{m1}).

\end{example}
\end{remark}

%%%%%%%%%%%%%%%%%%%%%%%%%%%%%%%%%%%%%%%%%%%%%%%%%%%%%%%%%%%%%%%%%%%%%%%%%%%%%%%%%%%%%%%%%%%%%%%%%%%%%%%%%%%%%%%%%%%%%
\subsection{\ $D=\Z[\sqrt{-d}]$, Bianchi groups\nopunct}\
%%%%%%%%%%%%%%%%%%%%%%%%%%%%%%%%%%%%%%%%%%%%%%%%%%%%%%%%%%%%%%%%%%%%%%%%%%%%%%%%%%%%%%%%%%%%%%%%%%%%%%%%%%%%%%%%%%%%%

For $d <0$ squarefree, the groups $\SL_2(D)$ are known as the Bianchi groups. The commutator story for these is similar to that of $D=\Z$. There are only finitely many units in $D$ and the congruence subgroup property is known to fail (\cite{serre70}). In principle one can still proceed to give an effective procedure for equations $(\mathcal{E}_1)$ and  $(\mathcal{E}_2)$, thanks to the type of decidability results of  \cite{Sela}. While Sela's general results apply only to torsion-free Gromov hyperbolic groups, we understand from Ian Agol that one can decide the commutator problem for hyperbolic 3-manifold groups. However, one does not have an explicit description as with Wick's theorem so that the feasibility of implementing these procedures is less clear. On the other hand, the bottom up approach works equally well here as it does over $\Z$. The key is that $\mathcal{M}_{t+2}(D)/G$ is finite (for generic points) here as well. This was proved for $t=-2$ in \cite{Si90}, and using Markoff descent this extends to all $t\neq -2$. The geometric descent in \cite{whang} is carried out over these rings and also yields this finiteness. As explained in Section \ref{general} this effective finiteness together with the general lifting procedures, yields an effective and feasible means to decide all of $(\mathcal{E}_1)$, $(\mathcal{E}_2)$ and $(\mathcal{E}_3)$. While we have not implemented this procedure, we expect that doing so will yield results similar to that of $D=\Z$ in Section \ref{sec32}. Whether these $\G$'s satisfy a profinite local to global principle (the analogue of Theorem \ref{thm:main}) is less clear.

%%%%%%%%%%%%%%%%%%%%%%%%%%%%%%%%%%%%%%%%%%%%%%%%%%%%%%%%%%%%%%%%%%%%%%%%%%%%%%%%%%%%%%%%%%%%%%%%%%%%%%%%%%%%%%%%%%%%%
\section{Universal domains}\label{sec4}
%%%%%%%%%%%%%%%%%%%%%%%%%%%%%%%%%%%%%%%%%%%%%%%%%%%%%%%%%%%%%%%%%%%%%%%%%%%%%%%%%%%%%%%%%%%%%%%%%%%%%%%%%%%%%%%%%%%%%

\begin{definition}
For a domain $D$, ff any of $(\mathcal{E}_1)$, $(\mathcal{E}_2)$, $(\mathcal{E}_3)$ has a solution for all choices of the right hand side in $D$, we say that $D$ is {\it universal} for that $(\mathcal{E}_j)$. By abuse of language, we also say the corresponding $(\mathcal{E}_j)$ is universal, if the domain $D$ is implicit.
\end{definition}

We do not know of a choice of any of our domains $D$ for which $(\mathcal{E}_1)$ is universal. However for $(\mathcal{E}_2)$ and $(\mathcal{E}_3)$ we give some examples below.

\begin{proposition}\label{a1} Let $D$ be any ring containing a unit $\ve$ such that $\ve - \ve^{-1}$ is also a unit. Then, $(\mathcal{E}_2)$ and $(\mathcal{E}_3)$ are universal.
\end{proposition}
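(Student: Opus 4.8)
The plan is to exhibit an explicit solution to $(\mathcal{E}_2)$ — and hence, by projection, to $(\mathcal{E}_3)$ — for every $t \in D$, working with diagonal and near-diagonal matrices built out of the unit $\ve$. The key arithmetic input is that $\ve - \ve^{-1} \in D^\times$: this is exactly the quantity $\Delta$ whose invertibility is required in Lemma \ref{lift2}, so the strategy is to produce a point on $\mathcal{M}_{t+2}(D)$ whose middle coordinate $x_2$ equals $\ve + \ve^{-1}$ and a matrix $Y \in \Gamma_D$ with $\Tr Y = \Tr ZY = x_2$, and then invoke Lemma \ref{lift2} verbatim. Note $\Delta = t+2 - x_2^2 = t + 2 - (\ve+\ve^{-1})^2 = t - (\ve - \ve^{-1})^2$ is not automatically a unit, so one must either first arrange $t$ to avoid this or, better, mimic the field argument in the proposition preceding Section \ref{sec4}: choose the other coordinates so that $\Delta$ comes out to be a unit regardless of $t$.

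Concretely I would proceed as follows. First, set $\zeta = \ve$ and $x_2 = \zeta + \zeta^{-1}$. Following the field proof, put
\[
x_3 = \frac{x_2^2 - t - 1}{\zeta - \zeta^{-1}}, \qquad x_1 = 1 + \zeta\, x_3 ;
\]
these lie in $D$ precisely because $\zeta - \zeta^{-1}$ is a unit, and a direct substitution shows $(x_1, x_2, x_3) \in \mathcal{M}_{t+2}(D)$. One checks that with these choices $\Delta = t + 2 - x_2^2$ need not be a unit, so instead I would exploit the Vieta/Markoff moves (Lemma \ref{perms}, or Lemma \ref{m2}-style descent) to replace $(x_1,x_2,x_3)$ by a Markoff-equivalent triple whose middle coordinate $x_2'$ satisfies $t + 2 - (x_2')^2 \in D^\times$; since the relevant obstruction is a single polynomial condition and $D$ contains the infinite cyclic group of units $\langle \ve \rangle$, such a move exists. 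Second, with $x_2$ now good, take
\[
Y = \begin{psmatrix} \zeta & \eta \\ 0 & \zeta^{-1} \end{psmatrix} \in \Gamma_D,
\]
so $\Tr Y = \zeta + \zeta^{-1} = x_2$ automatically, and solve the single linear equation $\Tr ZY = \Tr Y$ for $\eta \in D$: writing $Z - I = \begin{psmatrix} a_1 & a_2 \\ a_3 & a_4 \end{psmatrix}$ this reads $a_1 \zeta + a_3 \eta + a_4 \zeta^{-1} = 0$, solvable in $D$ when $a_3 \in D^\times$. When $a_3$ is not a unit (in particular when $a_3 = 0$) one conjugates $Z$ by $\begin{psmatrix} 0 & 1 \\ -1 & 0 \end{psmatrix}$ or $\begin{psmatrix} 1 & 1 \\ 0 & 1 \end{psmatrix}$ to move a unit (or at least a nonzero entry — and here is where one may need to iterate or be slightly more careful than over a field) into the lower-left position, using $Z \neq \pm I$; the trivial case $Z = \pm I$ is handled separately, e.g. $I = [X,Y]$ with $X = Y = I$ and $-I$ needs a direct ad hoc commutator if it is a commutator at all, or is excluded. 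Third, apply Lemma \ref{lift2} to $(x_1, x_2, x_3)$ and this $Y$ to obtain $X \in \Gamma_D$ with $W(X,Y)$ conjugate to $Z$, giving a solution of $(\mathcal{E}_2)$ with trace $t$; projecting via $\pi$ gives the solution of $(\mathcal{E}_3)$ for $k = t+2$.

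The main obstacle I anticipate is the descent step ensuring $\Delta \in D^\times$ and the matching step ensuring the lower-left entry of a conjugate of $Z - I$ is a unit: over a field every nonzero element is invertible, so the proposition before Section \ref{sec4} gets both for free, but over a general ring $D$ "nonzero" is not "unit," and one must argue that the freedom provided by the unit $\ve$ (hence infinitely many units $\ve^m$, and conjugations by the corresponding elementary-type matrices) suffices to reach a configuration where the needed quantities are units. I would handle this by noting that both conditions fail only on a proper Zariski-closed subset of the relevant affine space of choices, and that the $\langle \ve \rangle$-orbit of available parameters is not contained in any such subset — or, more hands-on, by exhibiting the required conjugating element explicitly in each case. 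Once these unit conditions are secured, the remainder is the routine computation already packaged in Lemma \ref{lift2}.
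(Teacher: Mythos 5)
Your approach diverges substantially from the paper's, and in fact has a genuine gap. The paper's proof is a two-line explicit construction: set $\eta=\ve-\ve^{-1}$, $\tau=(t-2)\eta^{-2}$, take $Y=\left[\begin{smallmatrix}\ve&0\\0&\ve^{-1}\end{smallmatrix}\right]$ and $X=\left[\begin{smallmatrix}1&\tau\\-1&1-\tau\end{smallmatrix}\right]$, and verify by Fricke's identity (the key cancellation being $(\ve+\ve^{-1})^2-4=\eta^2$) that $\Tr W(X,Y)=t$. No lifting machinery, no case analysis, no auxiliary $Z$.

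By contrast, you try to route the proof through Lemma~\ref{lift2}, which requires you first to \emph{choose} a $Z$ of trace $t$ and then hit it up to conjugacy --- that is, you are implicitly solving $(\mathcal{E}_1)$ when the statement only asks for $(\mathcal{E}_2)$, a strictly weaker demand. This over-targeting is where the real difficulties enter: Lemma~\ref{lift2} needs $\Delta=t+2-x_2^2\in D^\times$, and with $x_2=\ve+\ve^{-1}$ you get $\Delta=t-\eta^2$, which for arbitrary $t\in D$ is simply not a unit (take $t=\eta^2$, or $t=\eta^2+p$ for a non-unit $p$). You notice this and propose to repair it via Vieta moves plus a Zariski-density/$\langle\ve\rangle$-orbit argument, but over a general ring $D$ that argument does not go through: the unit group can be tiny (e.g.\ $D=\Z/q\Z$ with $\ve=2$, one of the paper's running examples), and there is no reason the relevant polynomial non-vanishing condition must be achievable on the orbit. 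The same problem recurs in the step requiring a conjugate of $Z$ to have a \emph{unit} lower-left entry --- over a field ``nonzero $\Rightarrow$ invertible,'' but over $D$ it does not, and conjugating by $\left[\begin{smallmatrix}0&1\\-1&0\end{smallmatrix}\right]$ or $\left[\begin{smallmatrix}1&1\\0&1\end{smallmatrix}\right]$ only permutes or combines entries, with no guarantee of producing a unit. So the ``unit condition'' gaps you flag are genuine and not fillable by the argument you sketch; the correct move is to abandon the lifting strategy entirely and just write down the commutator directly, as the paper does, exploiting that $(\mathcal{E}_2)$ only constrains the trace, not the full conjugacy class.
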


\begin{proof} For any $t\in D$, put $\eta =\ve -\ve^{-1} \in D^{\times}$ and  $\tau = (t-2)\eta^{-2} \in D$. 

For $(\mathcal{E}_2)$, we use  $X= \left[\begin{smallmatrix} \ 1&\tau\\-1\ &1-\tau \end{smallmatrix}\right]$ and $Y=\left[\begin{smallmatrix} \ve&0\\0& \ve^{-1}\end{smallmatrix}\right]$, giving $\Tr W(X,Y)=t$.

For $(\mathcal{E}_3)$, take $x_1=2-\tau$, $x_2= \ve + \ve^{-1}$, $x_3= \ve + (1-\tau)\ve^{-1}$ and $t=k-2$, for any $k\in D$.
\end{proof}

\begin{example}\label{examples_sec4}\
\begin{enumerate}
\item For any $q$ with $(6,q)=1$, the equations in $(\mathcal{E}_2)$ and $(\mathcal{E}_3)$ are solvable in $D=\Z/q\Z$. In particular, there are no congruence obstructions over $\Z$ in $(\mathcal{E}_2)$ and $(\mathcal{E}_3)$ modulo primes powers exceeding 3. Here, we take $\ve =2$.
\item For the $S$-integers, take $D=\mathbb{Z}[\frac{1}{6}]$  using the unit $\ve=2$. 
\item Let $F$ be the complex cubic field determined by the roots $\rho$  of the polynomial $x^3 - x^2+1$, and let $D=\mathcal{O}_K$, the ring of integers generated by $1,\rho$ and $\rho^2$.  It has discriminant -23 and has class number one. Since $\rho^2(1-\rho)=1$, $\rho$ is a unit, and so is $ \rho - \rho^{-1}=\rho^2 $. 
\end{enumerate}
Then Prop \ref{a1} holds in all cases, as does Prop. \ref{a2} below. 
\end{example}

Another version of universality for $(\mathcal{E}_3)$ is 

\begin{proposition}\label{a3}
If $\text{char}(D)\neq 2$ and $D$ contains elements $z$ and $w$ with $w$ a unit satisfying $z^2-4=w^2$, then $(\mathcal{E}_3)$ is universal for $D$.
\end{proposition}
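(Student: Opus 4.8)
The plan is to deduce this from Proposition~\ref{a1}, whose hypothesis I would verify using $z$ and $w$. Given $z,w\in D$ with $w\in D^{\times}$ and $z^{2}-4=w^{2}$, set $\varepsilon:=\tfrac12(z+w)$ and $\varepsilon':=\tfrac12(z-w)$, which lie in $D$ since $\operatorname{char}(D)\neq 2$. Then $\varepsilon\varepsilon'=\tfrac14(z^{2}-w^{2})=1$, so $\varepsilon\in D^{\times}$ with $\varepsilon^{-1}=\varepsilon'$, and $\varepsilon-\varepsilon^{-1}=\varepsilon-\varepsilon'=w\in D^{\times}$. Thus $D$ contains a unit $\varepsilon$ for which $\varepsilon-\varepsilon^{-1}$ is again a unit, and Proposition~\ref{a1} applies verbatim; in fact it then also gives that $(\mathcal{E}_{2})$ is universal for $D$.

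For a self-contained argument I would instead exhibit the Markoff triple directly, mirroring the construction in the proof of Proposition~\ref{a1}. Fix $k\in D$ and take $x_{2}=z$, so that $x_{2}^{2}-4=w^{2}$. Put $\tau:=(k-4)w^{-2}\in D$ and $x_{1}:=2-\tau$. Viewing $(\mathcal{E}_{3})$ as the monic quadratic $x_{3}^{2}-x_{1}x_{2}x_{3}+(x_{1}^{2}+x_{2}^{2}-k)=0$ in $x_{3}$, its discriminant is
\[
x_{1}^{2}(x_{2}^{2}-4)-4x_{2}^{2}+4k \;=\; x_{1}^{2}w^{2}-4z^{2}+4k,
\]
and substituting $x_{1}=2-\tau$ and using $\tau w^{2}=k-4$ together with $z^{2}-w^{2}=4$ this collapses to $(\tau w)^{2}$. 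Hence one may take $x_{3}=\tfrac12\big((2-\tau)z+\tau w\big)\in D$, and a direct substitution confirms that $(x_{1},x_{2},x_{3})$ satisfies $M(x_{1},x_{2},x_{3})=k$, i.e.\ it lies in $\mathcal{M}_{k}(D)$; this works for every $k$, which is exactly universality of $(\mathcal{E}_{3})$.

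The one delicate point — and the only place the characteristic hypothesis enters — is that the formulas for $\varepsilon,\varepsilon'$ and $x_{3}$ involve a division by $2$, so one must know these elements genuinely lie in $D$. This is automatic whenever $2\in D^{\times}$, which holds in all the domains of interest (the quotients $\mathbb{Z}/q\mathbb{Z}$ with $(6,q)=1$, $\mathbb{Z}[\tfrac16]$, odd-characteristic fields, and so on), and I would take that as the operative reading of ``$\operatorname{char}(D)\neq 2$'' here. I expect this bookkeeping about $2$ to be the only real obstacle: everything else reduces to the verification of an explicit polynomial identity in $z$, $w$, $\tau$, exactly as in Proposition~\ref{a1}.
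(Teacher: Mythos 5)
Your argument is correct, and the first route — setting $\varepsilon=\tfrac12(z+w)$, observing $\varepsilon\varepsilon^{-1}=1$ and $\varepsilon-\varepsilon^{-1}=w\in D^{\times}$, then invoking Proposition~\ref{a1} — is a genuinely cleaner reduction than the paper gives: the paper instead restricts $M$ to the hyperplane $x_3=z$, notes the resulting binary form $x_1^2+x_2^2-zx_1x_2$ has unit-square discriminant $w^2$, and argues it is $D$-equivalent to the universal split form $x_1x_2$, before writing down an explicit point. Your reduction makes transparent that the unit $\zeta=\tfrac12(z+w)$ appearing in the paper's formulas is exactly the $\varepsilon$ of Proposition~\ref{a1}, and as a bonus immediately yields universality of $(\mathcal{E}_2)$ as well; your self-contained second computation (discriminant of the quadratic in $x_3$ collapsing to $(\tau w)^2$) matches, up to relabelling coordinates, the explicit point the paper exhibits. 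You are also right to flag the division by $2$: both your $\varepsilon=\tfrac12(z+w)$ and the paper's $\zeta=\tfrac12(z+w)$ require that this element actually lie in $D$, and ``$\operatorname{char}(D)\neq 2$'' by itself does not guarantee $2\in D^{\times}$ (for instance $D=\Z[\sqrt{5}]$, $z=\sqrt{5}$, $w=1$ satisfies the stated hypotheses yet $\tfrac12(z+w)\notin D$). The paper silently assumes this half-integer lies in $D$; you make the needed assumption explicit, which is an improvement rather than a gap in your argument.
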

\begin{proof}
The split binary form $f(x_1,x_2)=x_1x_2$ is universal for any $D$. Given $z$ as above, we put $x_3=z$ in $(\mathcal{E}_3)$. The restriction of $M(x_1,x_2,x_3)$ to this hyperplane section yields a binary quadratic form whose homogenous part is $g=x_1^2 + x_2^2 -zx_1x_2$, having discriminant $w^2$. Since $w$ is a unit, $g$ is equivalent over $D$ to $f$, giving universality. This construction is similar to the construction of the universal form $U_2$ in Section 5.1 of \cite{GhSa}.

Since  $\zeta=\frac{z+w}{2}$ is a unit, we obtain a point  in $\mathcal{M}_k(D)$ with coordinates $x_1=w^{-1}(1-k+z^2)$, $x_2=w^{-1}\left(\zeta -(k-z^2)\zeta^{-1}\right)$ and $x_3=z$, for any  $k\in D$. 
\end{proof}

\ 

For $D$ a PID, we can say a bit more. 

\begin{definition}\label{traceset}
Given a matrix $A$ defined over $D$, we let $\mathfrak{S}(A)$ be the set over $D$ given by
\[
\mathfrak{S}(A)=\{X : \Tr AX=\Tr X \ , |X|=1\}.
\]
\end{definition}

\begin{proposition}\label{a2} Let $D$ be a  PID containing a unit $\ve$ such that $\eta=\ve - \ve^{-1}$ is also a unit. Let $Z\in \Gamma_D=\SL_2(D)$,  and suppose there exists a $U\in \mathfrak{S}(Z)$ with $\Tr U=\ve +\ve^{-1}$. Then $Z$ is a commutator.
\end{proposition}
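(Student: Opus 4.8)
The plan is to normalize $U$ by a conjugation, turn $W(X,U)=Z$ into an explicit system of scalar equations, and then recognize that system as a rank-one matrix factorization problem over the PID $D$.

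\textbf{Normalizing $U$.} Being a commutator in $\SL_2(D)$ is invariant under $\GL_2(D)$-conjugation, since $W(gXg^{-1},gYg^{-1})=gW(X,Y)g^{-1}$ and conjugation by $\GL_2(D)$ preserves $\SL_2(D)$ (here $g^{-1}\in M_2(D)$ because $\det g\in D^{\times}$); likewise $\mathfrak S$ transforms equivariantly, $\mathfrak S(gZg^{-1})=g\,\mathfrak S(Z)\,g^{-1}$. Now $U\in\SL_2(D)$ has characteristic polynomial $\lambda^2-(\ve+\ve^{-1})\lambda+1=(\lambda-\ve)(\lambda-\ve^{-1})$, whose two roots differ by the unit $\eta$. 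From Cayley--Hamilton $(U-\ve I)(U-\ve^{-1}I)=0$, and for $w\in D^2$ the identity $\eta w=(U-\ve^{-1}I)w-(U-\ve I)w$ writes $w$ as a sum of an element of $\ker(U-\ve I)$ and one of $\ker(U-\ve^{-1}I)$; these kernels meet only in $0$, so $D^2=\ker(U-\ve I)\oplus\ker(U-\ve^{-1}I)$. Over the PID $D$ each summand is free of rank one, so $U$ is $\GL_2(D)$-conjugate to $E:=\diag(\ve,\ve^{-1})$, and I may assume $U=E$. (One cannot simply quote Lemma~\ref{lift2} with $Y=U$, since it requires $\Delta=t+2-(\ve+\ve^{-1})^2\in D^{\times}$, which is not assumed here.)

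\textbf{The scalar system.} Write $Z=\left(\begin{smallmatrix}a&b\\c&d\end{smallmatrix}\right)$, so $a+d=t$, $ad-bc=1$, and $E\in\mathfrak S(Z)$ reads $a\ve+d\ve^{-1}=\ve+\ve^{-1}$; with $a+d=t$ this forces $a=(\ve^2+1-t)(\ve^2-1)^{-1}$, which is consistent and lies in $D$ because $\ve^2-1=\ve\eta\in D^{\times}$. For $X=\left(\begin{smallmatrix}p&q\\r&s\end{smallmatrix}\right)$ one expands
\[
W(X,E)=XEX^{-1}E^{-1}=\begin{pmatrix}ps-\ve^{-2}qr & (1-\ve^2)pq\\ (1-\ve^{-2})rs & sp-\ve^2 qr\end{pmatrix}.
\]
Putting $\gamma:=\ve(a-1)\eta^{-1}$, $\alpha:=1+\gamma$, $\beta:=-b\ve^{-1}\eta^{-1}$, $\delta:=c\ve\eta^{-1}$, all in $D$ since $\eta\in D^{\times}$, a direct check using $1-\ve^2=-\ve\eta$, $1-\ve^{-2}=\ve^{-1}\eta$, the formula for $a$, and $ad-bc=1$ shows that ``$W(X,E)=Z$ and $\det X=1$'' is equivalent to the single system $ps=\alpha$, $qr=\gamma$, $pq=\beta$, $rs=\delta$, and that moreover $\alpha\gamma=\beta\delta$ (this identity is precisely where $ad-bc=1$ enters).

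\textbf{Factorization.} The system says exactly that $M:=\left(\begin{smallmatrix}\alpha&\beta\\ \delta&\gamma\end{smallmatrix}\right)$ factors as $M=\mathbf v\,\mathbf w^{\top}$ with $\mathbf v=(p,r)^{\top}$, $\mathbf w=(s,q)^{\top}\in D^2$. Since $\det M=\alpha\gamma-\beta\delta=0$ and $M\neq 0$ (as $\alpha-\gamma=1$), the column module of $M$ is a rank-one finitely generated torsion-free module over the PID $D$, hence free; a generator $\mathbf v$ expresses both columns of $M$ as $D$-multiples of $\mathbf v$, producing $\mathbf w$. The matrix $X$ assembled from $\mathbf v,\mathbf w$ then has $\det X=ps-qr=\alpha-\gamma=1$, so $X\in\SL_2(D)$, and $W(X,E)=Z$; undoing the first conjugation exhibits the original $Z$ as a commutator. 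The only genuinely delicate point is integrality — a priori the formula for $X$ lives only over $\operatorname{Frac}(D)$, and it is the hypothesis $\eta\in D^{\times}$ that keeps the four products in $D$ — while the PID hypothesis is used solely at the last step to make the rank-one column module of $M$ free. I expect the bulk of the actual writeup to be the routine algebra of the second step.
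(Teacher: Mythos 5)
Your proof is correct, and both halves hold up under scrutiny. It follows the same two-phase skeleton as the paper's argument — first conjugate $U$ to $\diag(\ve,\ve^{-1})$ using the PID hypothesis, then solve $W(X,\diag(\ve,\ve^{-1}))=Z$ for $X$ — but the execution of the second phase is genuinely different, and in my view cleaner. The paper reduces the problem to showing $ZU_1\sim U_1$ in $\SL_2(D)$ (equivalently, diagonalizing $B=ZU_1$ over $D$), and it produces the conjugating matrix by an explicit gcd construction: with $\delta=\gcd(b_1-\ve,b_2)$ it exhibits $X=\left[\begin{smallmatrix} b_2/\delta & -\delta\eta^{-1}\\ (\ve-b_1)/\delta & -(b_4-\ve)\delta/(b_2\eta)\end{smallmatrix}\right]$, together with a somewhat awkward case split (the "if $\delta=0$'' branch as written in the paper is in fact inconsistent with the standing assumption $b_2\neq 0$, a minor slip your route avoids entirely). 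You instead expand $W(X,E)=Z$ into the four scalar equations $ps=\alpha$, $qr=\gamma$, $pq=\beta$, $rs=\delta$, observe that $\alpha-\gamma=1$ makes $\det X=1$ automatic and that $\alpha\gamma=\beta\delta$ is exactly where $\det Z=1$ (together with the $\mathfrak S(Z)$ constraint fixing $a$) is consumed, and then recognize the system as the rank-one factorization $M=\mathbf v\,\mathbf w^{\top}$ of the singular nonzero $2\times 2$ matrix $M=\left[\begin{smallmatrix}\alpha&\beta\\\delta&\gamma\end{smallmatrix}\right]$, which exists over a PID because the column module of $M$ is free of rank one. This handles all degeneracies (e.g. a zero column of $M$) uniformly and makes the role of the PID hypothesis transparent. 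Your normalization step also differs slightly: you use the full eigenspace decomposition $D^2=\ker(U-\ve I)\oplus\ker(U-\ve^{-1}I)$ via Cayley--Hamilton and the invertibility of $\eta$, obtaining a $\GL_2(D)$-conjugation, whereas the paper stays in $\SL_2(D)$ by taking a primitive $\ve$-eigenvector, extending to a unimodular matrix, and killing the off-diagonal entry with one further unipotent conjugation; both are valid, and you correctly justify that $\GL_2(D)$-conjugation invariance suffices. Your aside that Lemma~\ref{lift2} cannot simply be quoted (since $\Delta$ need not be a unit) is accurate and shows you understood why the paper carries a separate alternative proof in Remark~\ref{altproof} rather than citing that lemma directly.
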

\begin{proof}(See Remark \ref{altproof} for another proof).

We first note that $\mathfrak{S}(\sigma Z\sigma^{-1})=\sigma^{-1}\mathfrak{S}(Z)\sigma$ for any $\sigma \in \SL_2(D)$. Moreover, $Z$ is a  commutator if any conjugate of it is. Thus, it suffices to verify the proposition by using a suitable conjugate of the matrix $U$. Since $U$ has $\ve$ for an eigenvalue and $D$ is a PID, there is a matrix $M\in\SL_2(D)$ such that $M^{-1}UM = U_0 :=  \big( \begin{smallmatrix} \ve&\alpha\\0&\ve^{-1} \end{smallmatrix}\big)$ for some $\alpha\in D$. Conjugating $U_0$ with $\left[\begin{smallmatrix} 1&\alpha\eta^{-1}\\0&1 \end{smallmatrix}\right]$ shows that we may take $\alpha=0$. So we will prove the proposition with $U=U_1:=\left[\begin{smallmatrix} \ve&0\\0& \ve^{-1} \end{smallmatrix}\right]$.

It suffices to show that $ZU_1$ and $U_1$ are conjugates in $\Gamma$, since then $(\mathcal{E}_1)$ follows with $Y=U_1$. Putting $B=ZU_1=\left[\begin{smallmatrix} b_1&b_2\\b_3&b_4 \end{smallmatrix}\right]$, we seek $X=\left[\begin{smallmatrix} x_1&x_2\\x_3&x_4 \end{smallmatrix}\right]\in\Gamma$ such that $BX=XU_1$. Since $U_1 \in   \mathfrak{S}(Z)$, we have $b_1 +b_4 = \ve + \ve^{-1}$, so that $(b_1 -\ve)(b_4 -\ve) = b_2b_3$. Assume $b_2 \neq 0$ and put $\delta = \text{gcd}(b_1 - \ve,b_2)$. If $\delta =0$, then $b_1=\ve$, $b_4 =\ve^{-1}$ and $b_3=0$. Then put $X=\left[\begin{smallmatrix} -\eta^{-1}&b_2\\0&-\eta \end{smallmatrix}\right]$. If $\delta \neq 0$, then we have $\frac{b_2}{\delta}\mid (b_4 -\ve)$ and so we put $X=\left[\begin{smallmatrix} \frac{b_2}{\delta}&-\delta\eta^{-1}\\\frac{\ve -b_1}{\delta}&-\frac{(b_4 - \ve)\delta}{b_{2}\eta} \end{smallmatrix}\right]\in \Gamma$. The argument is similar if $b_3 \neq 0$. Finally, if $b_2$ and $b_3$ are both zero, then $B$ is necessarily $U_1$ or its inverse, for which $X$ is either the identity or $\left[\begin{smallmatrix} 0&-1\\1&\ \ 0 \end{smallmatrix}\right]$.

\end{proof}

\begin{remark}\label{tracezero_1} It is not true that if $D$ is a  PID containing a unit $\ve$ such that $\eta=\ve - \ve^{-1}$ is also a unit, that then $(\mathcal{E}_1)$ is universal. This can be seen with $D=\mathbb{Z}[\frac{1}{6}]$, for which $-I$ is not a commutator. 

One can show that if $D$ is a PID, then $-I$ is a commutator if and only if the equation $r_1^2+r_2^2+r_3^2=0$ has a non-trivial solution in $D$ (see Remark \ref{tracezero}).
\end{remark}

\begin{remark}\label{altproof}\

 We give here an alternative proof of Prop. \ref{a2} using lifting as in the proof of Lemma \ref{lift2}. All notation is that used in Prop. \ref{a2}, and Lemma \ref{lift2}. The idea is to first construct a suitable point ${\bf x}$, a matrix $Y$ and then verify that the matrix $X$ in \eqref{matrix-X} is in $\Gamma_D$. We put $Y=U_1=\left[\begin{smallmatrix} \ve&0\\0&\ve^{-1}\end{smallmatrix}\right]$ and assume $Y\in \mathfrak{S}(Z)$;  use $x_2=\Tr Y=\ve+\ve^{-1}$ and $\Delta=t+2-x_2^2$.

Assume first that $\Delta\neq 0$. Write $Z-Y^{-2}=\left[\begin{smallmatrix} a&b\\c&d\end{smallmatrix}\right]$, with trace and determinant equaling $\Delta$. Then, $Y\in \mathfrak{S}(Z)$ implies $\Tr (Z-Y^{-2})Y=0$, so that $a\ve +d\ve^{-1}=0$. If $c=0$, a small calculation shows that $Z=\left[\begin{smallmatrix} 1&*\\0&1\end{smallmatrix}\right]$, and this is a commutator (since $ZY$ and $Y$ are conjugates, using $\eta$ is a unit). So we assume $c\neq 0$, and put $\mu=\gcd(c,\Delta)$. We then choose
\[
x_1 = \eta^{-1}\left(\ve\mu -\frac{\Delta}{\ve\mu}\right)\quad \text{and} \quad x_3 = \eta^{-1}\left(\mu -\frac{\Delta}{\mu}\right).
\]
The point ${\bf x}=(x_1,x_2,x_3)\in\mathcal{M}_{t+2}(D)$. Then, $X$ in \eqref{matrix-X} satisfies $\det X=1$, with 
\[
X= \Delta^{-1}\left[\begin{matrix} \Delta a\mu^{-1}&b\mu\\ \Delta c \mu^{-1}&d\mu\end{matrix}\right].
\]
 We need to show that $\Delta$ divides all entries in the matrix. We have $0=a\ve +d\ve^{-1}=a\ve +(\Delta -a)\ve^{-1}\equiv a\eta $ (mod $\Delta$), so that  $\Delta\vert a$ and $\Delta\vert d$. Using the determinant $-a^2 -bc \equiv 0$ (mod $\Delta$), we have $b \equiv 0$ (mod $\frac{\Delta}{\mu}$) so that $X\in \Gamma_D$.

Next, if $\Delta=0$, one concludes without much difficulty that 
\[
Z= \left[\begin{matrix} \ve^{-2}&* \\ 0&\ve^{2}\end{matrix}\right]\quad \text{or}\quad \left[\begin{matrix} z_1&z_2\\ z_3&t-z_1\end{matrix}\right],
\]
with $z_3$ free, $z_1= -\alpha\eta^{-1}z_3+\mu^{-2}$, and $z_2\in D$ determined. In either case, one shows that $Z$ is a commutator (showing $ZY$ is conjugate to $Y$, using elementary matrices and $\left[\begin{smallmatrix} 0&1 \\ -1&0\end{smallmatrix}\right]$).
\end{remark}

%%%%%%%%%%%%%%%%%%%%%%%%%%%%%%%%%%%%%%%%%%%%%%%%%%%%%%%%%%%%%%%%%%%%%%%%%%%%%%%%%%%%%%%%%%%%%%%%%%%%%%%%%%%%%%%%%%%%%
\section{Hasse failures}\label{sec5}

%%%%%%%%%%%%%%%%%%%%%%%%%%%%%%%%%%%%%%%%%%%%%%%%%%%%%%%%%%%%%%%%%%%%%%%%%%%%%%%%%%%%%%%%%%%%%%%%%%%%%%%%%%%%%%%%%%%
\subsection{\ Hasse failures for Markoff surfaces\nopunct}\label{HFM}
%%%%%%%%%%%%%%%%%%%%%%%%%%%%%%%%%%%%%%%%%%%%%%%%%%%%%%%%%%%%%%%%%%%%%%%%%%%%%%%%%%%%%%%%%%%%%%%%%%%%%%%%%%%%%%%%%%%

\subsubsection{\ \ Markoff surfaces and ternary quadratic forms, $\SLZ$-revisited\nopunct}
%%%%%%%%%%%%%%%%%%%%%%%%%%%%%%%%%%%%%%%%%%%%%%%%%%%%%%%%%%%%%%%%%%%%%%%%%%%%%%%%%%%%%%%%%%%%%%%%%%%%%%%%%%%%%%%%%%%

\begin{remark} We write this section mostly over $\mathbb{Z}$ but much carries over more generally to rings over number fields.
\end{remark}

For $D$ a ring, and $U(D)$ a subset of $D^3$, we set \[ \widetilde{U}(D) =\left\{{\bf X}=X({\bf x}):=\left[\begin{matrix} 2&x_1&x_2\\ x_1& 2&x_3\\x_2&x_3&2 \end{matrix}\right]\quad  | \quad{\bf x}=(x_1,x_2,x_3)\in U(D) \right\},\] and call the $x_j$'s coordinates of ${\bf X}$.

We will be primarily concerned with $D=\mathbb{Z}$ and $U= \mathcal{M}_k(\mathbb{Z})$, the Markoff surface. Then, there is a one-one correspondence ${\bf x} \leftrightarrow {\bf X}$ between points in $\mathcal{M}_k(\mathbb{Z})$ and  matrices in $\widetilde{\mathcal{M}}_k(\mathbb{Z})$ of determinant $-2(k-4)$. To each such point ${\bf x}$ and so matrix ${\bf X}$, we can then associate the ternary quadratic form \[f_{\bf X}({\bf u}):= \frac{1}{2}{\bf u}^\top {\bf Xu}=u_1^2 + u_2^2+u_3^2 + x_1u_1u_2 + x_2u_1u_3 + x_3u_2u_3,\] where ${\bf u}=(u_1,u_2,u_3)$.

For $k>4$ generic (see \cite{GhSa}, pg.2) and ${\bf X}\in \widetilde{\mathcal{M}}_k(\mathbb{Z})$, all such  quadratic forms are indefinite, with signature (2,1). For each prime $p$, we consider the Hasse-invariant $c_p(f_{\bf X})$ (see below for details). The Hasse invariants satisfy the product formula $\prod_{p \leq \infty} c_p =1$. One way to construct Hasse-failures $k$ is to take any matrix  ${\bf X}\in \widetilde{\mathcal{M}}_k(\mathbb{Z})$, and show that the product formula is violated for the corresponding quadratic form. This can be done for all the examples  of Hasse failures (not using descent) we have constructed in \cite{GhSa}. Since the method is quite general and simple, we present it here, and also extend it to $S$-integers.

We will give some details below for completeness sake. Throughout, $k$ will be generic, $k>4$ and fixed. For any ${\bf X}\in \widetilde{\mathcal{M}}_k(\mathbb{Z})$ as above, and $\gamma\in GL(3,\mathbb{Z})$, we write $\gamma\cdot {\bf X} = \gamma^\top {\bf X}\gamma$, and  we say $\gamma$ is good if $\gamma\cdot {\bf X}\in \widetilde{\mathcal{M}}_k(\mathbb{Z})$.

Let
\[ D_1= \left[\begin{matrix} 1&0&0\\ 0& 1&0\\0&0&-1 \end{matrix}\right]\ ,\quad D_2=\left[\begin{matrix} 1&0&0\\ 0& -1&0\\0&0&1 \end{matrix}\right]\ ,\quad D_3 =\left[\begin{matrix} -1&0&0\\ 0& 1&0\\0&0&1 \end{matrix}\right]\ .\] 

These matrices are all good for any ${\bf X}\in \widetilde{\mathcal{M}}_k(\mathbb{Z})$, with $D_j\cdot {\bf X}$ corresponding to a double sign-change on ${\bf x}\in \mathcal{M}_k(\mathbb{Z})$. The standard permutation matrices $P_j$ are also all good, corresponding to permutations of the coordinates of ${\bf x}$. For the Vieta moves, being non-linear, the associated matrix action depends on the coordinates of  ${\bf X}$. We put
\[
V_1= \left[\begin{matrix} 1&0&0\\ 0& -1&0\\0&x_3&1 \end{matrix}\right]\ ,\quad V_2 = \left[\begin{matrix} -1&0&0\\ x_1& 1&0\\0&0&1 \end{matrix}\right]\ ,\quad V_3= \left[\begin{matrix} 1&0&x_2\\ 0& 1&0\\0&0&-1 \end{matrix}\right]\ ,\]
where $V_j\cdot {\bf X}$ is the matrix corresponding to the  Vieta move applied to the $j$-th coordinate of ${\bf x}$, keeping the other two fixed. Obviously, these matrices are not unique, since they can be multiplied on the left by any automorph of ${\bf X}$. Since the Markoff group is generated by the three types of involutions, the action of any member of the Markoff group on ${\bf x}$ gives rise to ${\bf y}$ whose corresponding matrix satisfies $X({\bf y})= \gamma\cdot X({\bf x})$, for some $\gamma\in GL(3,\mathbb{Z})$; the entries of $\gamma$ obviously depending on both the Markoff action and $X({\bf x})$. We will say a $\gamma\in GL(3,\mathbb{Z})$ of the type discussed here, coming from a Markoff group action on ${\bf x}$ is of $M$-type; thus, given ${\bf X}$, a $M$-type matrix is a word generated by the matrices $D_j$, $P_j$ and specific $V_j$ depending on the coordinates of ${\bf X}$. 

By descent (see \cite{GhSa} for example), since $\mathcal{M}_k(\mathbb{Z})$ is a finite union of Markoff-orbits, there exists a fundamental set of matrices $\{ {\bf X}_1, ..., {\bf X}_h\}$ such that  any ${\bf X}\in \widetilde{\mathcal{M}}_k(\mathbb{Z})$ is $M$-type equivalent to a unique ${\bf X}_j$ (here ${\bf X}_j = X({\bf z}_j)$ with $\{{\bf z}_1, ... , {\bf z}_h\}$ a fundamental set in $\mathcal{M}_k(\mathbb{Z})$ with $h=\mathfrak{h}(k)$ of \cite{GhSa}). Thus, the same statement holds for the corresponding ternary quadratic forms: there are ternary quadratic forms $f^{(1)}, ..., f^{(h)}$ that are inequivalent under $M$-type transformations, and any $f_{\bf X}$ is $M$-type equivalent to a unique $f^{(j)}$. Since $M$-type equivalence implies $GL(3,\mathbb{Z})$-equivalence, all matrices coming from ${\bf x}$ in the same Markoff-orbit give rise to ternary forms in the same $GL(3,\mathbb{Z})$-orbit, and as such, will all have the same invariants (this being important in our discussion of Hasse failures).

We first look at some examples regarding fundamental sets. One can ask various questions about the ternary quadratics we get via the Markoff fundamental sets: isotropic, anisotropic, genera, $GL(3,\mathbb{Z})$-equivalence etc.. Obviously, if $\mathfrak{h}(k)=1$, all the associated quadratic forms are $GL(3,\mathbb{Z})$-equivalent. We begin with  Legendre's theorem and  consequences, together with some other verified facts:
\begin{lemma}\ 
\begin{enumerate}[label=\upshape(\arabic*).]
\item Legendre's theorem: let $a>0$, $b, c<0$ be integers with $abc$ squarefree. Then, $ax^2  +by^2+cz^2$ is isotropic if and only if $-ab$ is a square mod $c$, $-ac$ is a square mod $b$ and $-bc$ is a square mod $a$.
\item Suppose  $m$ is the squarefree part of $x_j^2-4$ for some $j$, and $n$ is the squarefree part of $k-4$ with $(m,n)=1$. Then $f= u_1^2 + u_2^2+u_3^2 x_1u_1u_2 + x_2u_1u_3 + x_3u_2u_3$ is isotropic if and only if $m$ is a square mod $n$ and $n$ is a square mod $m$.
\item Suppose there exists an odd prime $p$ such that $p\parallel (k-4)$ and $p\nmid (x_j^2 -4)$ for some $j$. If $x_j^2 -4$ is a quadratic non-residue mod $p$, then the associated quadratic form is anisotropic.
\end{enumerate}
\end{lemma}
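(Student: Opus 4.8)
The three claims are about ternary quadratic forms attached to points on the Markoff surface, and the unifying tool is the local-global principle for isotropy of indefinite ternary forms (the Hasse--Minkowski theorem), together with the explicit local computation encoded in Legendre's three-square theorem. So the plan is to reduce parts (2) and (3) to part (1) by a diagonalization/completing-the-square argument, and then to read off the Legendre conditions.

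\emph{Part (1).} This is the classical Legendre theorem; I would simply cite it (e.g. from Serre's \emph{Cours d'arithm\'etique}, or Cassels). No work is needed beyond recalling the statement: for $a>0$, $b,c<0$ with $abc$ squarefree, the form $ax^2+by^2+cz^2$ represents zero nontrivially over $\Q$ iff the three quadratic residue conditions hold. I would note for the record that the hypothesis $abc$ squarefree is what makes the local conditions at the primes dividing $abc$ collapse to exactly these Legendre symbols, and that at $2$ and at $\infty$ the conditions are automatic under the sign hypotheses.

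\emph{Part (2).} Start from $f_{\mathbf X}(\mathbf u)=u_1^2+u_2^2+u_3^2+x_1u_1u_2+x_2u_1u_3+x_3u_2u_3$ with $\mathbf x\in\mathcal M_k(\Z)$. Fix the index $j$ for which $x_j^2-4$ has squarefree part $m$; by permuting coordinates (which is a $GL(3,\Z)$-equivalence and so preserves isotropy) assume $j=1$. Completing the square in $u_1$ — $u_1^2+x_1u_1(u_2+ \text{stuff})$ — and then completing the square among the remaining variables, one finds that over $\Q$ the form $f_{\mathbf X}$ is equivalent to a diagonal form $\langle 1,\ 4-x_1^2,\ D\rangle$ up to squares, where $D$ is (up to a square factor) the product of the determinant of $f_{\mathbf X}$ and the previous coefficients; since $\det X(\mathbf x)=-2(k-4)$ and the leading $2\times2$ minor relevant here has determinant $4-x_1^2$, one computes $D$ to be, up to rational squares, $-(k-4)$ (the factor $4-x_1^2<0$ flips the sign appropriately, and the $2$'s are squares times units that can be absorbed). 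Thus $f_{\mathbf X}\sim\langle 1,\,-m,\,-n\rangle$ over $\Q_v$ for all $v$ after clearing square factors, with $m$ the squarefree part of $x_1^2-4$ and $n$ the squarefree part of $k-4$; the coprimality $(m,n)=1$ guarantees $1\cdot m\cdot n = mn$ is squarefree, so part (1) applies and isotropy is equivalent to ``$m$ a square mod $n$ and $n$ a square mod $m$'' (the third condition, ``$mn$ a square mod $1$'', being vacuous). The one thing to check carefully is the sign bookkeeping in the diagonalization — that one really lands on $\langle +,-,-\rangle$ and not some other signature — but this is forced by the fact (stated in the excerpt) that $f_{\mathbf X}$ has signature $(2,1)$ and $4-x_1^2<0$ for $|x_1|\geq 3$ (and the small cases $|x_1|\le 2$ can be handled directly or excluded as $k$ is generic).

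\emph{Part (3).} This is the anisotropy criterion, and it follows from the local theory at the single prime $p$. With $p\parallel(k-4)$ and $p\nmid(x_j^2-4)$, by the same diagonalization (over $\Q_p$) $f_{\mathbf X}$ is $\Q_p$-equivalent to $\langle 1,\,x_j^2-4,\,u\cdot p\rangle$ up to $\Q_p^\times{}^2$, where $u$ is a $p$-adic unit and, crucially, $x_j^2-4$ is also a $p$-adic unit by hypothesis. A rank-three form over $\Q_p$ ($p$ odd) $\langle \alpha,\beta,\gamma p\rangle$ with $\alpha,\beta$ units is anisotropic precisely when $\langle\alpha,\beta\rangle$ does not represent $0$ over $\Q_p$, i.e. when $-\alpha\beta$ is a nonsquare unit mod $p$; here $-\alpha\beta \equiv -(x_j^2-4)$, so the relevant condition is that $-(x_j^2-4)$, or equivalently after tracking the exact sign conventions $x_j^2-4$ itself, is a quadratic nonresidue mod $p$. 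Under that hypothesis $f_{\mathbf X}$ is anisotropic over $\Q_p$, hence anisotropic over $\Q$ (and over $\Z$). The anticipated main obstacle, shared by (2) and (3), is the explicit diagonalization with correct unit and sign tracking — i.e. verifying that the determinant $-2(k-4)$ of $X(\mathbf x)$ together with the chosen $2\times2$ minor really produces the claimed discriminant $-(k-4)$ (resp. $\pm p$ times a unit) modulo squares — rather than anything conceptual; once that computation is pinned down, both criteria drop out of Legendre's theorem and Hasse--Minkowski. I would organize the writeup so that this diagonalization is done once, in a lemma, and then invoked for both parts.
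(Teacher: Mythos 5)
Your approach is essentially the paper's: diagonalize the form by completing the square, identify the result up to rational squares, and apply Legendre's theorem; part (3) then follows by examining the single local place $p$. The difference is one of execution. The paper writes down the explicit identity
\[
4(x_1^2-4)\,f \;=\; (x_1^2-4)A^2 \;-\; B^2 \;+\; 4(k-4)\,u_3^2, \qquad
A=2u_1+x_1u_2+x_2u_3,\quad B=(x_1^2-4)u_2-(2x_3-x_1x_2)u_3,
\]
from which $(4-x_1^2)f \sim \langle 1,-m,-n\rangle$ is immediate, and then reads off parts (2) and (3) directly from that one equation; no abstract discriminant bookkeeping is needed and the coprimality hypothesis on $m,n$ enters only in part (2). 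Your writeup instead infers the diagonal coefficients from leading minors. That route works, but be careful with the computation of the third coefficient $D$: what actually comes out is $D = \dfrac{\det(\tfrac12\mathbf X)}{\text{leading }2\times2\text{ minor}} = \dfrac{-(k-4)/4}{(4-x_1^2)/4}=\dfrac{k-4}{x_1^2-4}\sim mn$ modulo squares, not $-(k-4)\sim -n$ as you wrote — you cannot absorb the factor $x_1^2-4$ into a square. So the correct statement is $f\sim\langle 1,-m,mn\rangle$, and it is $(4-x_1^2)f$ (not $f$ itself) that is $\sim\langle 1,-m,-n\rangle$. Since isotropy is invariant under scaling by a nonzero rational, this slip does not affect your conclusions, but as written the claim ``$f_{\mathbf X}\sim\langle 1,-m,-n\rangle$'' is false and should be corrected. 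One further advantage of the paper's explicit identity is worth noting for part (3): the identity gives anisotropy over $\Q_p$ directly from $p\parallel(k-4)$, $p\nmid(x_j^2-4)$, and the nonresidue hypothesis, with no need for the coprimality $(m,n)=1$ (which part (3) does not assume); your reduction to part (2) implicitly leans on a coprimality you have not established, whereas your alternative local argument over $\Q_p$ avoids this and is the right way to carry out part (3) if you do not use the explicit identity.
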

\begin{proof}
 We may consider only the case $j=1$. If $f= u_1^2 + u_2^2+u_3^2 x_1u_1u_2 + x_2u_1u_3 + x_3u_2u_3$, then $4(x_1^2-4)f=(x_1^2 -4)A^2 - B^2 + 4(k-4)u_3^2$, where $A=2u_1+x_1u_2+x_2u_3$ and $B=(x_1^2-4)u_2 - (2x_3-x_1x_2)u_3$. Hence, $(4-x_1^2)f$ is $GL(3,\mathbb{Q})$ equivalent to $ax^2 +by^2+cz^2$ with $a=1$, $b=-m$  and $c=-n$.  If $m$ and $n$ are coprime,  the conditions of Legendre's theorem are satisfied. It is  isotropic if and only if $m$ is a square (mod $n$) and $n$ is a square (mod $m$). Part (3) is a consequence.
\end{proof}
\begin{examples}\
\begin{enumerate}[label=(\arabic*)]
\item $\mathfrak{h}(70)=1$ with $f^{(1)}= u_1^2 + u_2^2+u_3^2 -3u_1u_2 + 3u_1u_3 + 4u_2u_3$. By the lemma above, this is anisotropic with $p=3$ and $x_1=-3$..
\item $\mathfrak{h}(3780)=1$ with $f^{(1)}= u_1^2 + u_2^2+u_3^2 -3u_1u_2 + 3u_1u_3 + 57u_2u_3$. This is isotropic with  $f^{(1)}(409,251,50)=0$. Here, $m=5$ and $n=59$, satisfying the conditions of the lemma.
\item $\mathfrak{h}(460)=2$,  with anisotropic forms (the lemma holds with $p=3$ and $x_1=-3$)
\begin{align*} f^{(1)} &= u_1^2 + u_2^2+u_3^2 -3u_1u_2 + 3u_1u_3 + 17u_2u_3\, , \\
f^{(2)} &= u_1^2 + u_2^2+u_3^2 -3u_1u_2 + 9u_1u_3 + 10u_2u_3\, .
\end{align*}
However, $f^{(1)}(\gamma {\bf u})= f^{(2)}({\bf u})$ with $\gamma = \left[\begin{smallmatrix} 1&0&-18\\ 0& 1&-16\\0&0&-1 \end{smallmatrix}\right]$ an involution. The fact that $\gamma$ is not $M$-type is a non-trivial consequence of reduction theory and uses the fundamental sets on $\mathcal{M}_{460}(\mathbb{Z})$. Using $\gamma$ in combination with the generators of the Markoff group, one can send any matrix in $\widetilde{\mathcal{M}}_{460}(\mathbb{Z})$ to any other.
\vspace{4pt}
\item $\mathfrak{h}(329)=2$, with 
\begin{align*}
 f^{(1)} &= u_1^2 + u_2^2+u_3^2 -3u_1u_2 + 8u_1u_3 + 8u_2u_3\, , \\
 f^{(2)} &= u_1^2 + u_2^2+u_3^2 -4u_1u_2 + 4u_1u_3 + 11u_2u_3\, .
 \end{align*}
  $f^{(1)}$ is anisotropic with $p=13$ and $x_1=-3$, while $f^{(2})$ is isotropic with $m=3$ and $n=13$ with $f^{(2)}(9,1,-1)=0$. The two forms lie in different genera; not only are they $M$-type inequivalent, they are $GL(3,\mathbb{Q})$-inequivalent. The fact that they are $GL(3,\mathbb{Q})$-inequivalent gives a proof that $\mathfrak{h}(329)\geq2$ without reduction theory.
\vspace{4pt}
\end{enumerate}
\end{examples}

\subsubsection{\ \ Some Hasse failures revisited\nopunct}\ 

We use the notation in \cite{Pall1945}: for non-zero rational numbers $a$ and $b$, and any prime $p$, $(a,b)_p$ has values $\pm 1$ depending on the solvability over $\mathbb{Q}$ of the equation $ax^2 + by^2 \equiv 1$ (mod $p^r$) for all $r$. The case $p=\infty$ corresponds to solvability over the reals and in all cases consider hereon, we will have $(a,b)_{\infty}=1$. This is a reformulation of the Hilbert symbol over the $p$-adics. We state various properties that we will use repeatedly:

\begin{enumerate}[label=(\arabic*).]
\item $(a,b)_p = (a',b')_p$ if $a,\ a'$ and $b,\ b'$ differ multiplicatively by squares modulo prime powers of $p$;
\item $(a,b)_p=(b,a)_p$;
\item $(a,1)_p=(a,1-a)_p=(a,-a)_p=1$;
\item $(a,a)_p=(a,-1)_p$;
\item $(a,b_1b_2)_p=(a,b_1)_p(a,b_2)_p$;
\item for odd $p$, $(a,b)_p=1$ if $a$ and $b$ are units in $\mathbb{Q}_p$;
\item for odd $p$, $(p^\alpha m,p^\beta n)_p = (-1|p)_J^{\alpha\beta}(m|p)_J^\beta ((n|p)_J^\alpha$;
\item $(2^\alpha m,2^\beta n)_2 = (2|m)_J^{\beta}(2|n)_J^\alpha (-1)^{(m-1)(n -1)/4}$,
\end{enumerate}
with integers $m,\ n$ coprime to $p$, with $\alpha$ and $\beta$ being 0 or 1, and $(x|y)_J$ the Jacobi symbol. One also has the validity of the product formula $\prod_{p\leq \infty}(a,b)_p =1$.

The Hasse invariant $c_p(f)$ of the quadratic form $f= u_1^2 + u_2^2+u_3^2 + x_1u_1u_2 + x_2u_1u_3 + x_3u_2u_3$ is given by $c_p(f)=C_p(x_1)= (x_1^2 -4,k-4)_p$, for any $(x_1,x_2,x_3)\in \mathcal{M}_k(\mathbb{Z})$. Note that $C_p(x_1)=C_p(x_2)=C_p(x_3)$ since permutations correspond to $GL(3,\mathbb{Z})$-equivalence for the forms, and the same being true for the coordinates of any Markoff-equivalent points. These invariants are well defined since we are assuming $k>4$ is generic so that $x_j^2-4$ is never zero.  We now derive proofs of Propositions 8.1(i,ii), 8.2 and 8.3 of Hasse failures in \cite{GhSa}. 

\begin{proposition}\label{HFz} If $(x_1,x_2,x_3)\in \mathcal{M}_k(\mathbb{Z})$, for $k$ generic, the product formula fails for the following choices of $k$:
\begin{enumerate}[label=(\roman*).]
\item $k= 4 + 2\nu^2$ with $\nu$ having all of its prime factors in the congruence classes $\{\pm 1\}$ modulo 8;
\item $k= 4 + 12\nu^2$ with $\nu$ satisfying $\nu^2 \equiv 25$ (mod 32), and having all of its prime factors in the congruence classes $\{\pm 1\}$ modulo 12;
\item $k= 4 + 20\nu^2$ with $\nu$ having all of its prime factors in the congruence classes $\{\pm 1\}$ modulo 20.
\end{enumerate}
\end{proposition}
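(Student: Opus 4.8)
The strategy is to pick, for each of the three families, a single explicit point $(x_1,x_2,x_3)\in\mathcal{M}_k(\mathbb{Z})$ and to show that the associated Hasse invariant $C_p(x_1)=(x_1^2-4,\,k-4)_p$ violates the product formula $\prod_{p\le\infty}(x_1^2-4,k-4)_p=1$. By the discussion preceding the proposition, $C_p$ is the same for all coordinates of all Markoff-equivalent points, so it suffices to exhibit \emph{one} point in each case; the natural choice is the ``fundamental'' type point with a small first coordinate, e.g. $x_1=-3$ (so that $x_1^2-4=5$), which one checks lies on $\mathcal{M}_k(\mathbb{Z})$ for suitable $x_2,x_3$ when $k-4$ has the prescribed shape. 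Concretely, for (i) one writes $k-4=2\nu^2$ and computes $(5,2\nu^2)_p$ (or more precisely $(x_1^2-4,k-4)_p$ for whatever small value of $x_1^2-4$ the fundamental point provides) prime by prime; for (ii), $k-4=12\nu^2=2^2\cdot 3\cdot\nu^2$, and the congruence $\nu^2\equiv 25\pmod{32}$ is exactly what is needed to pin down the $2$-adic symbol; for (iii), $k-4=20\nu^2=2^2\cdot 5\cdot\nu^2$.

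First I would record the relevant point on $\mathcal{M}_k(\mathbb{Z})$ in each case and evaluate $d:=x_1^2-4$ there (it will be a small fixed integer, independent of $\nu$). Then I would go through the primes using properties (1)--(8) of the Hilbert symbol listed in the excerpt. The key simplification: by property (6), $(d,k-4)_p=1$ for every odd prime $p$ at which both $d$ and $k-4$ are units; since $k-4$ has only the prime factors $2$, $3$ or $5$ (as appropriate) together with primes dividing $\nu$, the only places where the symbol can be $-1$ are $p=\infty$, $p=2$, the small fixed prime ($3$ or $5$), and the primes $p\mid\nu$. At $\infty$ the form has signature $(2,1)$ so $(d,k-4)_\infty=1$ (all quantities work out to give the trivial contribution — this is the ``$c_\infty=1$'' remark). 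At each $p\mid\nu$, since $\nu^2$ is a square, $k-4$ is $d$ times\,/\,a unit times a square, and using properties (1) and (7) together with the hypothesis that every prime factor of $\nu$ lies in $\{\pm1\}$ modulo $8$ (resp. $12$, resp. $20$) — which forces the relevant Jacobi symbols $(d\mid p)$ to equal $+1$ — these contribute $+1$. That leaves $p=2$ and the one small odd prime; using property (8) for $p=2$ (where the congruence condition on $\nu^2$ mod $8$ or mod $32$ enters) and property (7) for the small odd prime, one computes that exactly one of these two symbols equals $-1$ and the other $+1$, so the total product is $-1\ne 1$.

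The main obstacle — really the only non-routine part — is the bookkeeping at $p=2$ and at the distinguished small prime, where one must be careful about the $2$-adic valuation of $k-4$ (it is $2\nu^2$ in case (i) so valuation $1$ plus twice that of $\nu$; in cases (ii), (iii) it is $2^2$ times an odd number times $\nu^2$) and about which residue of $d$ and of the odd part modulo $8$ (or mod $32$) one is in. The congruence conditions in the statement ($\nu$ odd with prime factors $\equiv\pm1$, and $\nu^2\equiv25\pmod{32}$ in case (ii)) are precisely tuned so that the odd-part factors and the $p\mid\nu$ factors all drop out as $+1$ and a single sign survives; verifying this is a finite check with the eight Hilbert-symbol rules. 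I would organize it as: (a) reduce to the three ``bad'' places via property (6); (b) dispose of $\infty$ and of $p\mid\nu$ using the prime-class hypothesis and property (7)/(1); (c) evaluate the $2$-adic and small-odd-prime symbols explicitly via (7) and (8), using the mod-$8$ (resp. mod-$32$) hypothesis, and conclude the product is $-1$. Since a failure of the product formula means no rational (hence no integral) point exists but the Markoff point we wrote down shows $\mathcal{M}_k(\mathbb{Z})\ne\emptyset$ locally everywhere — wait, more precisely the product-formula failure for the ternary form attached to a putative \emph{global} point is the contradiction: assuming a point exists gives a form whose Hasse invariants violate the product formula, so no such $\mathbb{Z}$-point exists, while \eqref{3.2.1} shows these $k$ are admissible, yielding the Hasse failure.
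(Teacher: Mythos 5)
The proposal has a genuine logical gap that undermines most of the plan. You propose to ``pick, for each of the three families, a single explicit point $(x_1,x_2,x_3)\in\mathcal{M}_k(\mathbb{Z})$'' with a fixed small coordinate (say $x_1=-3$, so $x_1^2-4=5$) and compute Hasse invariants for it. But the whole content of the proposition is that $\mathcal{M}_k(\mathbb{Z})$ is \emph{empty} for these $k$; there is no point to write down, and the claim that ``one checks [$(-3,x_2,x_3)$] lies on $\mathcal{M}_k(\mathbb{Z})$ for suitable $x_2,x_3$'' is false. The correct logic, which you do eventually state in your final sentence, is a proof by contradiction: take an arbitrary hypothetical $(x_1,x_2,x_3)\in\mathcal{M}_k(\mathbb{Z})$ and show the product formula for the associated form would fail. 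But then $d:=x_1^2-4$ is \emph{not} ``a small fixed integer, independent of $\nu$''; it is an unknown integer constrained only by the Markoff equation, and the argument must handle every possible residue of the $x_j$'s at the relevant primes.

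This is precisely where the paper's proof does real work and your outline does not. The essential tool is the completed-square identity $(2x_i-x_jx_l)^2-(x_j^2-4)(x_l^2-4)=4(k-4)$, which translates the Markoff equation into quadratic-residue constraints on $x_j^2-4$ at every prime dividing $k-4$. At an odd prime $p\mid\nu$, this identity forces $2$ (resp.\ $q$) to be a QR mod $p$ whenever $p\mid x_j^2-4$ but $p\nmid\nu$; combined with the hypothesis on the residue classes of the prime factors of $\nu$, one gets $C_p=1$. At the distinguished small prime $q\in\{3,5\}$ and at $p=2$, the paper performs a careful case analysis over the possible $2$-adic and $q$-adic valuations of $x_1^2-4$, and in case (iii) with $q=5$ there is a delicate subcase ($5\mid x_1$, giving $C_5=(-4\mid5)_J=1$) that would spoil the argument and must be shown \emph{not to occur} by a further descent on the parity of the $x_j$'s. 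Your ``finite check with the eight Hilbert-symbol rules'' glosses over all of this; the bookkeeping is not merely mechanical because you cannot choose the coordinates. So: right tool (Hasse invariants of the associated ternary forms and the product formula), right final framing (contradiction), but the plan as written conflates ``exhibit a point'' with ``assume a point and derive a contradiction,'' and entirely misses the case analysis over unknown coordinates that is the substance of the proof.
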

\begin{proof}\ \par
\noindent(i). We have $C_p(x_1)=(x_1^2-4,2)_p$, so that $C_p=1$ if $p>2$ and $p\nmid(x_1^2-4)$. We note that $C_2(x_1)=(2|x_1^2-4)_J=-1$ if $x_1$ is odd (since $x_1^2-4\equiv -3$ modulo 8). Moreover, since $4\nmid k$, at least one coordinate must be odd, so that $C_2=-1$ always holds. To complete our proof, we need to ensure that $C_p=1$ for all odd primes dividing $x_1^2 -4$.  If  $p^{\alpha}\parallel(x_1^2-4)$ then $C_p(x_1)=(x_1^2-4,2)_p= (2|p)_J^\alpha$. Using $A^2 - (x_1^2-4)(x_2^2-4)= 8\nu^2$, for some $A$, we see that if $p\nmid \nu$ and $p|(x_1^2-4)$, then 2 is a quadratic residue mod $p$, so that $C_p(x_1)=1$ as needed.  On the other hand if $p$ divides both $x_1^2-4$ and $\nu$, we cannot deduce that $(2|p)_J^\alpha=1$, unless we force $(2|p)_J$ to equal one, which can be done if all primes factors of $\nu$ are restricted to $\pm 1$ modulo 8. Since $C_p=-1$ for exactly one prime, the product formula is violated.\newline

\noindent(ii , iii). We consider $k=4+4q\nu^2$, where $q$ is an odd prime not dividing $\nu$, which is odd ($q$ will be chosen later). Then, $C_p(x_1)=(x_1^2-4,q)_p$. We consider various cases:
\begin{enumerate}[label=(\alph*).]
\item If $p\nmid 4q\nu^2$, suppose $p^\alpha\parallel(x_1^2-4)$ with $\alpha\geq 0$.then, $C_p(x_1)=(q|p)^\alpha_J=1$ since $A^2 - (x_1^2-4)(x_2^2-4)= 4(k-4)=16q\nu^2$ implies that $q$ is a quadratic residue modulo $p$ when $\alpha\geq 1$.
\item If $p|\nu$, then $p\nmid q$, so using the same notation as in (a), we have $C_p(x_1)=(q|p)^\alpha_J$. We force $(q|p)_J$ to equal one. So, if $q\equiv 1$ (mod 4), we insist all prime factors of $\nu$ are congruent to quadratic residues modulo $q$. For $q\equiv 3$ (mod 4), we insist all prime factors of $\nu$ that are congruent to 1 (mod 4) are also quadratic residues modulo $q$; while those congruent to 3 (mod 4) are  quadratic non-residues modulo $q$. This ensures that $C_p=1$. 

Example: when $q=3$, the conditions needed are all prime factors of $\nu$ are congruent to $\pm 1$ (mod 12), while the condition for $q=5$ are the classes $\pm1$ (mod 20).
\item The case $p=q$. If $q^\alpha\parallel(x_1^2-4)$, then from $A^2 - (x_1^2-4)(x_2^2-4)= 16q\nu^2$, we conclude that $\alpha=0$ or $1$. But if $\alpha=1$ then $q\nmid ((x_2^2-4)(x_3^2-4)$, so that using $x_2$ or $x_3$ instead of $x_1$ leads us to consider only the case $\alpha=0$. In that case we have $C_q(x_1)=(w^2-4,q)_q= (w^2-4|q)_J$ with $w=x_1$ or $x_2$ satisfying $q\nmid(w^2-4)$. we cannot say more without choosing special values of $q$, 

When $q=3$, $3\nmid(w^2-4)$ implies $3|w$, so that $C_3=(-1|3)_J=-1$.

When $q=5$, $5\nmid(w^2-4)$ if and only if $5|w$ or $w^2-4 \equiv 2$ (mod 5). In the latter case, $C_5=(2|5)_J=-1$ . In the former case, $C_5=(-4|5)_J=1$, which is problematic ((the same is true if we had used the third  coordinate and 5 divided it). We now show that this case cannot occur. Now if $w$ is odd, then $w^2-4$ must have an odd prime factor $p\equiv \pm2$ (mod 5), and $p\nmid \nu$ from (b). Then, $A^2 - (x_1^2-4)(x_2^2-4)= 80\nu^2$ implies $(5|p)_J=1$, a contradiction. Since $w$ is either of two coordinates, we may now assume they are both even, so that $x_1$ is too. Putting $x_j=2y_j$ gives us the equation $B^2 - (y_1^2-1)(y_2^2-1)= 5\nu^2$. If either $y_1$ or $y_2$ is odd, we have $5$ is a quadratic residue (mod 8) while if both $y_1$ and $y_2$ are even, we conclude $B^2 \equiv 2$ (mod 4). Hence these cases do not occur and $C_5=-1$.

\item When $p=2$, suppose $2^\alpha\parallel(x_1^2-4)$. We have three cases (a). $\alpha=0 \iff 2\nmid x_1$, (b). $\alpha =2 \iff 4\| x_1$ and (c). $\alpha\geq 5 \iff 2\parallel x_1$.

 For case (a), $C_2(x_1)=(-1)^{(x_1^2-5)(q-1)/4}=1$, since $x_1$ is odd as is $q$.

 For case (b), writing $x_1=4t$, we have \[C_2(x_1)=\left(\frac{x_1^2-4}{4},q\right)_2 = (-1)^{(q-1)(2t^2 -1)/2}.\] Then $C_2=1$ if $q\equiv 1$ (mod 4). For $q\equiv 3$ (mod 4), this gives $C_2=-1$, and we have to make sure this does not happen. This case happens when $4\| x_j$ for all $j$, and so writing $x_j=4y_j$, the Markoff equation gives $y_1^2 +y_2^2 +y_3^2 -4y_1y_2y_3 = \frac{1+q\nu^2}{4}$. If the right side is odd, then either all $y_j$'s are odd, or exactly one is. The left is then congruent to 7 (mod 8) or 1 (mod 4). So we restrict $\nu$ so that $\frac{1+q\nu^2}{4}\equiv 3$ (mod 8), so that these cases of $x_j$ do not occur. For $q=3$, this corresponds to restricting $\nu$ to satisfy $\nu^2 \equiv 25$ (mod 32). Hence $C_2=1$.

For case (c), put $x_1^2-4 = 2^\alpha\lambda$, with odd $\lambda$ and $\alpha\geq 5$. Then we have $C_2(x_1)= (2|q)_J^\alpha(-1)^{(\lambda -1)(q-1)/4}$. Now we have $B^2 - 2^{\alpha -4}\lambda(x_2^2-4)= q\nu^2$. If $q \equiv 3$ (mod 4), we must have $\alpha =5$ and $x_2$ odd, in which case we get $\lambda \equiv 3$ (mod 4). Then $C_2= (-1)^{(q^2-1)/8}(-1)^{(q-1)/2}= 1$ if $q\equiv 3$ (mod 8). Hence $C_2=1$ if $q\equiv 3$ (mod 8). 

Finally, if $q\equiv 1$ (mod 4), $C_2=(2|q)_J^\alpha$. Then $B^2 - 2^{\alpha -4}\lambda(x_2^2-4)= q\nu^2$ implies $q\equiv 1$ (mod 8) if $\alpha\geq 7$. we assume $q\equiv 5$ (mod 8) to avoid this case. the case $\alpha =6$ needs no work. Lastly if $\alpha =5$, we get $1-2\lambda(x_2^2-4) \equiv 5$ (mod 8), which is impossible.
\end{enumerate}
For $q=3$ and 5, we have shown $C_q=-1$ and $C_p=1$ for all $p\neq q$, violating the product formula.
\end{proof}

%%%%%%%%%%%%%%%%%%%%%%%%%%%%%%%%%%%%%%%%%%%%%%%%%%%%%%%%%%%%%%%%%%%%%%%%%%%%%%%%%%%%%%%%%%%%%%%%%%%%%%%%%%%%%%%%%%%%%%
\subsubsection{\ \ Hasse failures over $S$-integers\nopunct}\ 
%%%%%%%%%%%%%%%%%%%%%%%%%%%%%%%%%%%%%%%%%%%%%%%%%%%%%%%%%%%%%%%%%%%%%%%%%%%%%%%%%%%%%%%%%%%%%%%%%%%%%%%%%%%%%%%%%%%%%%

 We construct Hasse failures on the surface $\mathcal{M}_k(\Z[\frac{1}{\ell}])$ when $\ell$  is an odd prime satisfying $\ell \geq 5$ and some $k\in \Z$ . By Prop. 6.1 of \cite{GhSa}, the Markoff surface has solutions in $\Z/p^n\Z$ for all primes $p$ and integers $n\geq 1$ except for those $k$'s with congruence obstructions, coming from $p =2$ or 3. Hence the congruence  is solvable over $\mathcal{M}_k(\Z[\frac{1}{\ell}])$ when $p \neq 2, \ 3$. Since $\ell \neq 2,\ 3$, the congruence modulo $p^n$  over $\Z$ is equivalent to that over $\Z[\frac{1}{\ell}]$ when $p = 2$ or 3. Hence, there are no congruence obstructions if $k$ is not congruent to 3 (mod 4) and $\pm 3$ (mod 9).

\begin{lemma}\label{nonintegral} Suppose $\mathcal{M}_k(\Z[\frac{1}{\ell}])$ is non-empty. Then either there is an integer lattice point in $\mathcal{M}_k(\Z[\frac{1}{\ell}])$ or if not, there is a point of the form $(x_1, \frac{x_2}{\ell^a}, \frac{x_3}{\ell^a})$ with integers $x_j$ where $\ell\nmid x_2x_3$, and $a\geq 1$.
\begin{proof}
Consider the equation
\begin{equation}\label{surface}\left(\frac{x_1}{\ell^a}\right)^2 + \left(\frac{x_2}{\ell^b}\right)^2+\left(\frac{x_3}{\ell^c}\right)^2 - \left(\frac{x_1x_2x_3}{\ell^{a+b+c}}\right)=k,\end{equation}

with integers $a$, $b$, $c$  and with $\ell\nmid x_1x_2x_3$.

If $\max{(a,b,c)}\geq 1$, then by checking denominators, we may assume that either   $\min(a,b,c)\geq 1$ or  $a\leq 0$, with $b,\ c\geq 1$.

Suppose there is a solution with $\min(a,b,c)\geq 1$. Rearranging the coordinates, we assume $a\leq b\leq c$, and among such triples, we choose one with minimal $c$. Comparing denominators implies that $a+b=c$ so that \eqref{surface} becomes
\[
\left(\frac{x_1}{\ell^a}\right)^2 + \left(\frac{x_2}{\ell^b}\right)^2+ \frac{x_3(x_3-x_1x_2)}{\ell^{2(a+b)}}=k .\]

If $a\leq b$, we conclude that $\ell^{2a}\mid (x_3-x_1x_2)$. Then, the point $\left(\frac{x_1}{\ell^a} , \frac{x_2}{\ell^b},\frac{(x_1x_2 -x_3)/\ell^{2a}}{\ell^{b-a}}\right)$ has exponents in $\ell$ less than $c$. This contradicts the minimal choice of $c$ unless $a=b$, in which case we have a point with a coordinate over $\Z$. So we can assume that in \eqref{surface} we have a point with  $a\leq 0$, and $b,\ c\geq 1$, and checking denominators, conclude that $b=c$.  
\end{proof}

\end{lemma}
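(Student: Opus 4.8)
The plan is to set up the lemma as a descent argument on the $\ell$-adic valuations of the three coordinates of a point in $\mathcal{M}_k(\Z[\tfrac{1}{\ell}])$. Given a non-empty $\mathcal{M}_k(\Z[\tfrac{1}{\ell}])$, take any point and clear denominators so that it has the shape displayed in \eqref{surface}, with integers $a,b,c$ and $\ell\nmid x_1x_2x_3$; we are allowed to rearrange the coordinates since permutations preserve $\mathcal{M}_k$. If $\max(a,b,c)\le 0$ we already have an integral point and are done, so assume $\max(a,b,c)\ge 1$. First I would observe, by comparing $\ell$-adic denominators in \eqref{surface}, that the largest exponent cannot occur uniquely: if, say, $c>\max(a,b)$ and $c>0$, then the term $x_3^2/\ell^{2c}$ has strictly more negative valuation than every other term, which is impossible since the four terms sum to the integer $k$. (One must treat the cross term $x_1x_2x_3/\ell^{a+b+c}$ carefully, but its valuation is $-(a+b+c)$, which is $\ge -2c$ only when $c\le a+b$; combining these forces exactly the relations claimed.) This rules out the ``one large, two small'' pattern and leaves two cases: $\min(a,b,c)\ge 1$, or (after reordering to put the non-positive exponent first) $a\le 0$ and $b,c\ge 1$.

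Next I would handle the case $\min(a,b,c)\ge 1$ by a minimality/descent argument, exactly as set up in the statement. Order the exponents $a\le b\le c$ and, among all points of $\mathcal{M}_k(\Z[\tfrac1\ell])$ of this ``all positive exponents'' type (with coordinates reduced so that $\ell$ divides no numerator), choose one minimizing $c$. Comparing denominators again, the only way the cross term can cancel the pole of $x_3^2/\ell^{2c}$ is $a+b=c$, so \eqref{surface} becomes
\[
\left(\frac{x_1}{\ell^a}\right)^2+\left(\frac{x_2}{\ell^b}\right)^2+\frac{x_3(x_3-x_1x_2)}{\ell^{2(a+b)}}=k.
\]
Since $a\le b$, the first two terms have denominators $\ell^{2a},\ell^{2b}$ with $2a\le 2b\le 2(a+b)$, so $\ell^{2a}\mid x_3(x_3-x_1x_2)$; as $\ell\nmid x_3$ this gives $\ell^{2a}\mid(x_3-x_1x_2)$, equivalently $\ell^{2a}\mid (x_1x_2-x_3)$. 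Now apply the Vieta involution in the third coordinate: $(x_1',x_2',x_3')=\bigl(\tfrac{x_1}{\ell^a},\tfrac{x_2}{\ell^b},\tfrac{(x_1x_2-x_3)/\ell^{2a}}{\ell^{\,b-a}}\bigr)$ is again a point of $\mathcal{M}_k(\Z[\tfrac1\ell])$, and its third exponent is $b-a<a+b=c$ (using $a\ge1$), while the first two exponents are $a$ and $b$, both $\le c$. Unless $a=b$ this strictly lowers the maximal exponent, contradicting minimality of $c$; and if $a=b$ then $b-a=0$, so we have produced a point with an integral (third) coordinate. Either way the $\min(a,b,c)\ge1$ case is eliminated in favour of a point having at least one coordinate in $\Z$.

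Finally I would treat the remaining case $a\le 0$, $b,c\ge 1$: comparing denominators in \eqref{surface} one more time, the two genuine poles $x_2^2/\ell^{2b}$ and $x_3^2/\ell^{2c}$ must cancel each other (the cross term has denominator $\ell^{a+b+c}$ with $a\le 0$, so $a+b+c\le b+c<2\max(b,c)$ unless $b=c$), forcing $b=c$; and since $a\le 0$ the first coordinate $x_1/\ell^a$ is already an integer. Renaming, this is precisely a point of the form $(x_1,\tfrac{x_2}{\ell^a},\tfrac{x_3}{\ell^a})$ with $x_j\in\Z$, $\ell\nmid x_2x_3$, $a\ge1$, as claimed. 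The main obstacle is the bookkeeping of the cross-term valuation against the three square terms in each case split — one has to be disciplined about which valuations can tie and which cannot, since the conclusion of the lemma hinges entirely on those tie-breaking relations $a+b=c$ and $b=c$. I would consolidate this into a single short lemma on $\ell$-adic valuations of four numbers summing to something of valuation $0$, and invoke it uniformly.
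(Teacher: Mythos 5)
Your proposal is correct and follows essentially the same descent argument as the paper: compare $\ell$-adic valuations of the four terms in \eqref{surface} to show that a lone largest exponent is impossible, forcing either $\min(a,b,c)\ge 1$ with $a+b=c$ (handled by the Vieta move to lower the maximal exponent) or $a\le 0$ with $b=c$. The extra paragraph you spend on cross-term valuation bookkeeping is precisely what the paper compresses into ``checking denominators,'' so the two proofs differ only in how much detail is spelled out.
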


\begin{proposition}\label{HF1} 
Suppose $\ell$ is a prime satisfying $\ell \equiv \pm 1\  \text{(mod 8)}$.  Let $k= 4 + 2\nu^2$ with $\nu$ having all of its prime factors in the congruence classes $\{\pm 1\}$ modulo 8, and in addition with  $\nu \in \{0,\ \pm3,\ \pm4\}$ modulo 9. Then $k$ is a Hasse-failure in $\mathbb{Z}[\frac{1}{\ell}]$.
\end{proposition}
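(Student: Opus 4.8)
The plan is to combine the integral Hasse-failure result of Proposition \ref{HFz}(i) with the reduction statement of Lemma \ref{nonintegral}, so that any $\Z[\frac1\ell]$-point would force either an integral point (already ruled out) or a point with a genuinely $\ell$-power denominator, which we will also rule out by the same Hasse-invariant computation. First I would record that the congruence conditions imposed on $k=4+2\nu^2$ (namely $\nu\equiv 0,\pm3,\pm4\pmod 9$ together with $k=4+2\nu^2$, which automatically gives $k\not\equiv 3\pmod 4$) are exactly the conditions of \eqref{3.2.1} adapted to this family, so that by the discussion preceding Lemma \ref{nonintegral} the surface $\mathcal M_k(\Z[\frac1\ell])$ has no congruence obstruction at any prime; hence $k$ is admissible over $\Z[\frac1\ell]$ and the claim is genuinely a failure of the Hasse principle. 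Also, since $\ell\equiv\pm1\pmod 8$, $\ell$ is a quadratic residue base that does not interfere with the $C_2$-computation below.

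Next I would suppose for contradiction that $\mathcal M_k(\Z[\frac1\ell])\neq\emptyset$ and apply Lemma \ref{nonintegral}. Case one: there is an integral point $(x_1,x_2,x_3)\in\mathcal M_k(\Z)$. Then by Proposition \ref{HFz}(i) the Hasse invariants $C_p(x_1)=(x_1^2-4,\,k-4)_p=(x_1^2-4,2)_p$ satisfy $C_2=-1$, $C_p=1$ for all odd $p$ (here the hypothesis that every prime factor of $\nu$ is $\equiv\pm1\pmod 8$ is used to force $(2\mid p)_J=1$ for primes $p\mid\gcd(x_1^2-4,\nu)$), and $C_\infty=1$ since $k>4$ makes the form indefinite of signature $(2,1)$; this violates the product formula $\prod_{p\le\infty}C_p=1$, a contradiction. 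Case two: there is a point of the shape $(x_1,\frac{x_2}{\ell^a},\frac{x_3}{\ell^a})$ with integers $x_j$, $a\ge1$, $\ell\nmid x_2x_3$. Substituting into $(\mathcal E_3)$ and clearing denominators gives
\begin{equation*}
\ell^{2a}x_1^2 + x_2^2 + x_3^2 - \ell^{a}x_1x_2x_3 = \ell^{2a}k .
\end{equation*}
Reducing modulo $\ell$ yields $x_2^2+x_3^2\equiv 0\pmod\ell$; since $\ell\nmid x_2x_3$ this forces $-1$ to be a square mod $\ell$, i.e. $\ell\equiv1\pmod4$, which is consistent with $\ell\equiv\pm1\pmod8$ only when $\ell\equiv1\pmod8$, and in the case $\ell\equiv-1\pmod 8$ we already reach a contradiction here. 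For $\ell\equiv1\pmod8$ I would instead run the quadratic-form argument directly on the integral ternary form attached to this denominator-type point: the form $g(u)=\ell^{2a}u_1^2+u_2^2+u_3^2-\ell^{a}x_1u_2u_3$ (or the natural rescaling) has the same discriminant $-2(k-4)$ up to squares and $\ell$-units, hence the same Hasse invariants $C_p$ for $p\ne\ell$, and at $p=\ell$ one checks $C_\ell=1$ using that $\ell\equiv\pm1\pmod8$ (so $(2\mid\ell)_J=1$ and $(\ell,2)_\ell=1$); thus again $C_2=-1$ is the only nontrivial invariant and the product formula is violated.

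The main obstacle I anticipate is the bookkeeping in case two: making precise which ternary quadratic form to attach to a non-integral point and verifying that its local invariants at $p=2$ and at the odd primes dividing the ``new'' numerators are unchanged, while simultaneously confirming that the invariant at $\ell$ itself is trivial. The cleanest route is probably to observe that $\Z[\frac1\ell]$-equivalence of the associated matrices $X(\mathbf x)$ (allowing $\gamma\in\GL_3(\Z[\frac1\ell])$) preserves all $C_p$ with $p\ne\ell$ and that a denominator-type point can be moved by an $\ell$-power-scaling $\gamma$ to an integral \emph{matrix} whose determinant differs from $-2(k-4)$ only by an even power of $\ell$; then Proposition \ref{HFz}(i)'s computation applies verbatim to give $C_2=-1$, and a direct Hilbert-symbol evaluation at $\ell$ using $\ell\equiv\pm1\pmod8$ gives $C_\ell=1$, completing the contradiction. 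I would also double-check that the extra hypothesis $\nu\equiv0,\pm3,\pm4\pmod9$ is precisely what kills the mod-$9$ obstruction (so that the statement is about a true Hasse failure and not a vacuous one), citing \eqref{3.2.1}.
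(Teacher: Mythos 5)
The substitution in Case Two is incorrect, and this error is load-bearing for your argument. Writing the point as $(x_1,\frac{x_2}{\ell^a},\frac{x_3}{\ell^a})$ and multiplying the Markoff equation through by $\ell^{2a}$ gives
\begin{equation*}
\ell^{2a}x_1^2 + x_2^2 + x_3^2 - x_1x_2x_3 = \ell^{2a}k,
\end{equation*}
with no factor of $\ell^a$ on the cross term (the product $x_1\cdot\frac{x_2}{\ell^a}\cdot\frac{x_3}{\ell^a}$ already carries $\ell^{-2a}$). Reducing the correct equation modulo $\ell$ gives $x_2^2+x_3^2-x_1x_2x_3\equiv 0\pmod\ell$, not $x_2^2+x_3^2\equiv 0$, and this is readily solvable with $\ell\nmid x_2x_3$ for every odd $\ell$ (e.g.\ $x_2=x_3=1$, $x_1=2$). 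So your mod-$\ell$ contradiction for $\ell\equiv -1\pmod 8$ evaporates, and nothing is ruled out at this step. Your fallback for $\ell\equiv1\pmod8$ is then a sketch built on a form $g$ that is not the ternary form attached to the denominator-type point (the correct rescaled form is $u_1^2+u_2^2+\ell^{2a}u_3^2+x_1u_1u_2+x_2u_1u_3+x_3u_2u_3$ with all three cross terms present), and the claim that equality of discriminants up to squares and $\ell$-units forces equal Hasse invariants at all $p\ne\ell$ is not justified.

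The paper's route in Case Two is both different and more elementary. After clearing denominators it completes the square in $x_2$ to obtain
\begin{equation*}
(2x_2-x_1x_3)^2 - 8\nu^2\ell^{2a} = (x_1^2-4)(x_3^2-4\ell^{2a}).
\end{equation*}
A parity count using $k\equiv 6\pmod 8$ shows at least two of the $x_j$ are odd; permuting the two coordinates with denominator $\ell^a$, one may take $x_3$ odd, so $x_3^2-4\ell^{2a}\equiv -3\pmod 8$. Such a number has a prime factor $q\equiv\pm3\pmod 8$; since every prime factor of $\nu$ is $\equiv\pm1\pmod8$ and $\ell\equiv\pm1\pmod8$ (this is precisely where the hypothesis on $\ell$ enters), one has $q\nmid\nu\ell$. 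Reducing the displayed identity modulo $q$ gives $(2x_2-x_1x_3)^2\equiv 8\nu^2\ell^{2a}\pmod q$ with the right side a nonzero multiple of $2$ by a square, forcing $2$ to be a quadratic residue mod $q$ — impossible for $q\equiv\pm3\pmod8$. Your admissibility verification (mod $4$ and mod $9$) and Case One via Proposition \ref{HFz}(i) are fine; Case Two needs to be redone along these lines.
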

\begin{proof}
The conditions on $\nu$ ensures that there are no congruence obstructions in $\mathbb{Z}[\frac{1}{\ell}]$. By Prop. 8.1(b) of \cite{GhSa} (or Prop.\ref{HFz}\ (i)), there are no integral points. Applying Lemma \ref{nonintegral} and clearing denominators gives us 
\[
(2x_2 - x_1x_3)^2 - 8\nu^2\ell^{2a} = (x_1^2 -4)(x_3^2 - 4\ell^{2a}).\]
Since at least two of the $x_j$'s must be odd, we have chosen $x_3$ to be odd. Then, $x_3^2 - 4\ell^{2a} \equiv -3$ (mod 8), so that there must be a prime $q \equiv \pm 3$ (mod 8) dividing the right hand side. Since $q \nmid \nu\ell$, we get a contradiction as the equation implies that 2 is a quadratic residue modulo $q$.
\end{proof}

\begin{proposition}\label{HFTrace0}
Suppose $\ell$ is an odd prime satisfying $\ell \equiv \pm 1\  \text{(mod 5)}$.  Let $\nu \equiv \pm4\ \text{(mod 9)}$, with all prime factors congruent to $\pm 1\ \text{(mod 20)}$. Then, $k=4 + 20\nu^2 \in \mathbb{Z}$ is a Hasse-failure in $\mathbb{Z}[\frac{1}{\ell}]$.
\end{proposition}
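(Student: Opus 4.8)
The plan is to follow the template of the proof of Proposition~\ref{HF1}: establish the local solubility of $\mathcal{M}_k(\Z[\tfrac1\ell])$, rule out integral points by the product‑formula computation already recorded in Proposition~\ref{HFz}(iii), and then use Lemma~\ref{nonintegral} to exclude the genuinely $\ell$‑nonintegral points as well. The only structural change from the $\ell\equiv\pm1\pmod 8$ case is that the obstructing Hasse symbol is now $C_p=(x_1^2-4,\,k-4)_p=(x_1^2-4,\,5)_p$, so the obstructing primes are those $q$ with $5$ a nonresidue, i.e.\ $q\equiv\pm2\pmod5$; the hypothesis $\ell\equiv\pm1\pmod5$ will play the role that $\ell\equiv\pm1\pmod8$ played in Proposition~\ref{HF1}.

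First I would check local solubility. Since $\ell\ge5$, the ring $\Z[\tfrac1\ell]$ has the same reduction behaviour as $\Z$ at $2$ and $3$, so by Prop.~6.1 of \cite{GhSa} there is no congruence obstruction provided $k\not\equiv3\pmod4$ and $k\not\equiv\pm3\pmod9$, while for every $p\neq2,3$ (in particular $p=\ell$) the surface already has $\Q_p$‑points. For $k=4+20\nu^2$ we have $k\equiv0\pmod4$, and from $\nu\equiv\pm4\pmod9$ we get $\nu^2\equiv7$, hence $20\nu^2\equiv14\equiv5$ and $k\equiv0\pmod9$; thus $\mathcal{M}_k(\Z[\tfrac1\ell])$ is everywhere locally soluble. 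Next, the hypotheses on $\nu$ contain those of Proposition~\ref{HFz}(iii), so the ternary form attached to any point of $\mathcal{M}_k(\Z)$ would violate the product formula; hence $\mathcal{M}_k(\Z)=\emptyset$.

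Now suppose $\mathcal{M}_k(\Z[\tfrac1\ell])\neq\emptyset$. By Lemma~\ref{nonintegral} there is a point $(x_1,\tfrac{x_2}{\ell^{a}},\tfrac{x_3}{\ell^{a}})$ with $x_j\in\Z$, $a\ge1$ and $\ell\nmid x_2x_3$. Clearing the denominator and completing the square in $x_2$ (or in $x_3$) gives
\begin{equation*}
(2x_2-x_1x_3)^2-80\,\nu^2\ell^{2a}=(x_1^2-4)\bigl(x_3^2-4\ell^{2a}\bigr).
\end{equation*}
The crucial claim is that the right‑hand side has a prime divisor $q$ with $5$ a nonresidue modulo $q$. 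Granting this: such $q$ is odd, $q\neq5$, and $q\nmid\nu$ (every prime factor of $\nu$ is $\equiv\pm1\pmod{20}$, hence $\equiv\pm1\pmod5$, hence a residue), and $q\neq\ell$ (since $\ell\equiv\pm1\pmod5$ forces $5$ to be a residue modulo $\ell$). Reducing the displayed identity modulo $q$ yields $(2x_2-x_1x_3)^2\equiv80\,\nu^2\ell^{2a}\pmod q$ with $q\nmid80\,\nu^2\ell^{2a}$; thus $80=2^4\cdot5$ is a nonzero square modulo $q$, so $5$ is a residue modulo $q$ — a contradiction. Hence $\mathcal{M}_k(\Z[\tfrac1\ell])=\emptyset$ and $k$ is a Hasse failure.

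The hard part is the crucial claim: that $(x_1^2-4)(x_3^2-4\ell^{2a})$ cannot have all of its odd prime factors $\neq5$ congruent to $\pm1\pmod5$. This is precisely the case analysis at $p=2$ and $p=5$ in the proof of Proposition~\ref{HFz}(iii), carried out now with the single change that one of the two factors is $x_3^2-4\ell^{2a}$ instead of $x_3^2-4$. I expect it to go through with only cosmetic modifications: $\ell^{2a}$ is an $\ell$‑adic unit with $\ell^{2a}\equiv1\pmod8$ and, because $\ell\equiv\pm1\pmod5$, also $\ell^{2a}\equiv1\pmod5$, so $x_3^2-4\ell^{2a}$ has the same residue modulo $8$ and modulo $5$ as $x_3^2-4$; combined with $5\nmid\nu\ell$ and the displayed identity, the valuation bookkeeping at $2$ and at $5$ — splitting on $v_2(x_1^2-4)$, on $v_2(x_3^2-4\ell^{2a})$, and on $(x_j^2-4)\bmod 5\in\{0,1,2\}$ — forces one factor to have $5$‑free part $\equiv\pm2\pmod5$, exactly as over $\Z$. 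The main risk is some low‑level coincidence (both factors landing in the same awkward residue class) that the hypotheses on $\nu$ modulo $20$ do not immediately kill; if that occurs, one uses the freedom to complete the square in either $x_2$ or $x_3$ and to permute coordinates, just as in the integral argument of \cite{GhSa}.
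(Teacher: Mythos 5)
Your skeleton (local solubility as in the preamble, using Prop.~\ref{HFz}(iii) to eliminate integral points, invoking Lemma~\ref{nonintegral}, and then extracting a contradiction from the completed-square identity $(2x_2-x_1x_3)^2 - 80\nu^2\ell^{2a} = (x_1^2-4)(x_3^2-4\ell^{2a})$) is the same as the paper's, and the reduction mod a prime $q\equiv\pm 2\pmod 5$ is indeed the underlying engine. But you have replaced the paper's final step — computing the Hasse invariants $c_p(f) = (x_1^2-4\ell^{2a_1},5)_p$ at each $p$ and showing $\prod_p c_p = -1$, in violation of the product formula — with a Legendre-style ``find one bad prime'' argument modeled on Prop.~\ref{HF1}, and this is where the proof stalls.

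The gap is the ``crucial claim'' itself. In Prop.~\ref{HF1} the obstructing prime $q\equiv\pm 3\pmod 8$ exists trivially: $x_3$ odd gives $x_3^2-4\ell^{2a}\equiv 5\pmod 8$, and a positive odd number $\equiv\pm 3\pmod 8$ must have a prime factor in that coset. The analogous claim mod $5$ is \emph{not} automatic, because the residue of $x_j^2-4\pmod 5$ can be $0$ or $1$ (if $5\mid x_j^2-4$ or $5\mid x_j$), not only $2$, and in those cases no factor is forced into the coset $\{\pm 2\}$. The paper's proof handles precisely these sub-cases: at $p=5$ (case~(d), via HFz(iii,c)) it separately treats $w^2-4\equiv 2\pmod 5$ (where $C_5=-1$ directly), $5\mid w$ with $w$ odd (ruled out by a secondary bad-prime argument on $w\pm 2$), and the two relevant coordinates both even (ruled out by a $2$-adic computation $B^2\equiv 2\pmod 4$); and at $p=2$ (case~(e)) it splits on $v_2(x_1^2-4\ell^{2a_1})\in\{0,2,\ge 5\}$. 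Your sentence ``the valuation bookkeeping at $2$ and at $5$ $\ldots$ forces one factor to have $5$-free part $\equiv\pm 2\pmod 5$, exactly as over $\Z$'' is asking the reader to accept, without proof, the one part of the argument that actually requires work, and the fallback you sketch (``complete the square in either $x_2$ or $x_3$ and permute coordinates'') is not shown to exhaust the problematic sub-cases. The paper wrote out a fresh five-case computation for Prop.~\ref{HFTrace0} precisely because the $\ell^{2a}$ factor, the extra case $p=\ell$, and the $p=5$ sub-cases are not cosmetic; that computation (and the product-formula formalism that organizes it, rather than a single obstructing prime) is what your proposal is missing.
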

\begin{proof}

By the preamble above, these $k$'s have no congruence obstructions. So it suffices to show that there are no lattice points in $\mathcal{M}_k(\Z[\frac{1}{\ell}])$. By Prop. 8.3 of \cite{GhSa}  (or Prop.\ref{HFz}\ (iii)), there are no integeral lattice points, so that by Lemma \ref{nonintegral} we have  \eqref{surface} in the form 
\begin{equation}\label{nonintegral2}
x_1^2 + \left(\frac{x_2}{\ell^a}\right)^2 + \left(\frac{x_3}{\ell^a}\right)^2 - \frac{x_1x_2x_3}{\ell^{2a}}=k  = 4 + 20\nu^2 ,
\end{equation}
with $x_1, x_2, x_3 \in \mathbb{Z}$, $a\geq 1$, and $\ell\nmid x_2x_3$. We see that $x_1$, $\frac{x_2}{\ell^a}$ and $\frac{x_3}{\ell^a}$ are not equal to $\pm 2$, so that we can apply the method considered above involving Hasse invariants. Exactly as we considered before, the Hasse invariants $c_p(f)$ are of the form $C_p(u_i) = (u_i^2 -4,5)_p = (x_i^2 -4\ell^{2a_i},5)_p$, where we have written $u_i= \frac{x_i}{\ell^{a_i}}$ with $a_i =0$ or $a$ as given above in \eqref{nonintegral2} (but not necessarily in that order). Completing the square gives $A^2 - (x_1^2 -4\ell^{2a_1})(x_2^2 -4\ell^{2a_2}) = 5(4\ell^{a_1+a_2}\nu)^2$, all integers.  We now follow the same procedure as in Prop. \ref{HFz} (iii) above, where we give the details for completeness.

\begin{enumerate}[label=(\alph*).]
\item If $p\nmid 10\ell\nu$, then 5 is a quadratic residue modulo $p$ when  $p^\al\parallel(x_j^2 -4\ell^{2a_j})$ for any $j$ with $\al \geq 1$. For such a $j$ we have $C_p(u_j) = (5,p)^\al_J =1$. On the other hand, if $p\nmid(x_j^2 -4\ell^{2a_j})$, then $\al=0$ so that $C_p(u_j)=1$.
\item If $p|\nu$, using the same notation as in (a), we have $C_p(u_j)=(5|p)^\alpha_J=(p|5)^\alpha_J =1$. 
\item When $p=\ell$, there are two cases to consider namely when $a_j=0$ and when $a_j \geq 1$. For the latter $C_\ell(u_j)= (x_j^2 - 4\ell^{2a_j},5)_\ell = (x_j^2\ , 5)_\ell =1$. Since at least one of the $a_j$'s is non-zero, we may permute the coefficients and get this value for $C_\ell$.
\item If $p=5$, $(x_j^2 -4\ell^{2a_j}) \equiv x_j^2 -4$ (mod 5), so that on replacing $\nu$ with $\nu_1=\ell^{a_i+a_j}\nu$, noting that $\nu_1 \equiv \pm\nu \pmod 5$ and $\nu_1^2 \equiv \nu^2 \pmod 8$, we have exactly the same equations as in the case considered in Prop. \ref{HFz} (iii, c) with $q=5$, so that $C_5 = -1$.
\item When $p=2$, suppose $2^\alpha\parallel(x_1^2-4\ell^a_1)$. We have three cases: (a). $\alpha=0 \iff 2\nmid x_1$, (b). $\alpha =2 \iff 4\| x_1$ and (c). $\alpha\geq 5 \iff 2\parallel x_1$.

 For case (a), $C_2(u_1)=(-1)^{(x_1^2-4\ell^{2a_1}-1)(5-1)/4}=1$, since $x_1$ is odd.

 For case (b), writing $x_1=4t$, we have \[C_2(u_1)=\left(\frac{x_1^2-4\ell^{2a_1}}{4},5\right)_2 = (-1)^{(5-1)(4t^2 - \ell^{2a_1}-1)/4}=1.\] 

For case (c), put $x_1^2-4\ell^{2a_1} = 2^\alpha\lambda$, with odd $\lambda$ and $\alpha\geq 5$. Then we have $C_2(u_1)= (2|5)_J^\alpha(-1)^{(\lambda -1)(5-1)/4} = (-1)^\alpha$. 

Since $A^2 - (x_1^2 -4\ell^{2a_1})(x_2^2 -4\ell^{2a_2}) = 80\nu_1^2$, we have $A_1^2 - 2^{\al-4}\lambda(x_2^2 -4\ell^{2a_2})= 5\nu_1^2$, with $A_1$ odd. Then, if $\al \geq 7$, we have $A_1^2 \equiv 5\nu_1^2 \equiv 5$ (mod 8), a contradiction. If $\al =6$, clearly $C_2(u_1)=1$. Finally if $\al =5$, we get $1-2\lambda(x_2^2-4\ell^{2a_2}) \equiv 5$ (mod 8), which is impossible. Hence $C_2(u_1)=1$.
\end{enumerate}
We thus see that the product formula is violated, proving the proposition.

\end{proof}
 Matrices that are commutators give rise to points on the Markoff surface, and checking admissibility in \eqref{3.2.2} (see Prop. \ref{HFu2}), we have 
 
\begin{proposition}\label{HFTrace}
Let $\ell$ be an odd prime satisfying $\ell\equiv \pm 1\  \text{(mod 5)}$. Then, there are infinitely many Hasse failures for $(\mathcal{E}_2)$ over $D= \Z[\frac{1}{\ell}]$. In particular, if  $t=2 +20\nu^2$ with $\nu$ as above, then $t$ is admissible but is a Hasse failure.
\end{proposition}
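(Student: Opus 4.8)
The plan is to combine the Markoff-surface Hasse failures already established with the lifting theory from Section \ref{sec3}. Concretely, given $\ell \equiv \pm 1 \pmod 5$ and $t = 2 + 20\nu^2$ with $\nu \equiv \pm 4 \pmod 9$ and all prime factors of $\nu$ congruent to $\pm 1 \pmod{20}$, we set $k = t+2 = 4 + 20\nu^2$ and invoke Proposition \ref{HFTrace0}: the surface $\mathcal{M}_k(\Z[\tfrac{1}{\ell}])$ has no $\Z[\tfrac1\ell]$-points but has no congruence obstructions. The first step is to translate "no congruence obstruction for $\mathcal{M}_k$" into "no congruence obstruction for $(\mathcal{E}_2)$ with this $t$", i.e.\ to verify that $t$ is admissible for $\mathcal{T}_t$. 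For $D = \Z[\tfrac1\ell]$ with $\ell \ge 5$, the local conditions at $2$ and $3$ are the same as over $\Z$ (since $\ell$ is a unit at those places), so admissibility of $t$ for $(\mathcal{E}_2)$ over $\Z[\tfrac1\ell]$ is governed by the congruence conditions recorded in \eqref{3.2.2} together with Prop.\ \ref{HFu2}; the numerical constraints on $\nu$ modulo $9$ (and the fact that $t \equiv 2 \pmod{\text{anything relevant at }2}$, since $20\nu^2 \equiv 0 \pmod 4$) are exactly what is needed to pass those local checks. This is the routine bookkeeping step.

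The second and main step is the implication $\mathcal{M}_k(D) = \emptyset \Rightarrow \mathcal{T}_t(D) = \emptyset$. This is precisely the "forward direction" of the lifting discussion in Section \ref{general}: the map $\pi \colon \mathcal{T}_t(D) \to \mathcal{M}_{t+2}(D)$ of \eqref{3.8} sends $(X,Y) \mapsto (\Tr X, \Tr Y, \Tr XY)$, so any solution of $(\mathcal{E}_2)$ over $D$ produces a $D$-point on $\mathcal{M}_{t+2}$. Since $\mathcal{M}_k(\Z[\tfrac1\ell]) = \emptyset$, there can be no such $(X,Y)$, so $\mathcal{T}_t(\Z[\tfrac1\ell]) = \emptyset$. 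Combined with the first step (no congruence obstruction), this exhibits $t$ as a genuine Hasse failure for $(\mathcal{E}_2)$ over $\Z[\tfrac1\ell]$.

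The third step is to check that there are infinitely many such $t$, i.e.\ infinitely many admissible $\nu$. Here one fixes the residue of $\nu$ modulo $9$ to be $\pm 4$, and then one needs infinitely many integers, all of whose prime factors lie in the classes $\pm 1 \pmod{20}$, within a fixed residue class mod $9$. Since the classes $\{1, -1\} \pmod{20}$ form a subgroup of $(\Z/20\Z)^\times$ and $(20,9)=1$, by Dirichlet there are infinitely many primes $p \equiv 1 \pmod{20}$ lying in any prescribed residue class mod $9$; taking $\nu$ to be such a prime (or a product of a fixed such prime times further primes $\equiv 1 \pmod{180}$) gives infinitely many admissible $\nu$, hence infinitely many Hasse-failure values $t = 2 + 20\nu^2$. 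Strictly speaking one only needs $\nu$ itself in the right class mod $9$ with the right prime-factor condition mod $20$, which the Chinese Remainder Theorem plus Dirichlet supplies immediately.

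The place requiring genuine care — and the main obstacle — is the first step: making sure the congruence-obstruction analysis for $(\mathcal{E}_2)$ over $\Z[\tfrac1\ell]$ really does reduce to the finitely many conditions in \eqref{3.2.2}, and that the chosen $\nu$ satisfies all of them. One must confirm (i) that inverting $\ell$ changes nothing at the primes $2$ and $3$ where the obstructions live, (ii) that the obstructions at $\ell$ itself vanish because $\ell$ is a unit, and (iii) that $t = 2 + 20\nu^2$ with $\nu \equiv \pm 4 \pmod 9$ avoids $t \equiv 1,4,5,8 \pmod 9$ and the forbidden classes mod $16$. Point (iii) is a short direct computation: $20\nu^2 \equiv 2\nu^2 \pmod 9$, and $\nu^2 \equiv 16 \equiv 7 \pmod 9$ when $\nu \equiv \pm 4$, giving $t \equiv 2 + 14 \equiv 7 \pmod 9$, which is not in $\{1,4,5,8\}$; and $20\nu^2 \equiv 4\nu^2 \pmod{16}$ with $\nu$ odd forces $\nu^2 \equiv 1$ or $9 \pmod{16}$, so $t \equiv 2 + 4 = 6$ or $2 + 4\cdot 9 = 38 \equiv 6 \pmod{16}$ — so $t \equiv 6 \pmod{16}$, which (one checks against the list) is not among the excluded residues. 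Once this is in hand, the rest is an immediate application of Prop.\ \ref{HFTrace0} and the functoriality of $\pi$.
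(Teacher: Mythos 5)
Your proposal matches the paper's intended argument exactly: the paper proves Prop.\,\ref{HFTrace} only by a one-line preamble ("Matrices that are commutators give rise to points on the Markoff surface, and checking admissibility in \eqref{3.2.2} (see Prop.\,\ref{HFu2})\ldots"), and you have correctly unpacked that sentence into (a) the forward direction of $\pi$ in \eqref{3.8} giving $\mathcal{M}_k(D)=\emptyset\Rightarrow\mathcal{T}_t(D)=\emptyset$, (b) an explicit check that $t=2+20\nu^2$ with $\nu\equiv\pm4\pmod 9$ and odd gives $t\equiv 7\pmod 9$ and $t\equiv 6\pmod{16}$, both outside the excluded classes in \eqref{3.2.2}, and (c) a Dirichlet/CRT argument for infinitude. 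Your arithmetic is correct.

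One small remark. In your first step you phrase things as translating ``no obstruction for $\mathcal{M}_k$'' into ``no obstruction for $(\mathcal{E}_2)$''; that implication does not hold in that direction (there are potential lifting obstructions from $(\mathcal{E}_3)$ to $(\mathcal{E}_2)$, as Prop.\,\ref{m0} illustrates), but what you actually \emph{do} is check $t$ directly against the $(\mathcal{E}_2)$ conditions in \eqref{3.2.2}, which is correct. Also worth flagging: the list in \eqref{3.2.2} is stated in the paper as necessary conditions, with the remark after Prop.\,\ref{HFu2} noting that completeness is ``expected but not verified''; so on the strength of \eqref{3.2.2} alone one has only verified that $t$ passes the \emph{known} obstructions. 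The unconditional admissibility of these $t$'s is in fact established later by the local-solvability computation in Theorem \ref{HFE1} for a specific matrix of trace $t$. This is a soft spot you share with the paper's exposition at this point in the text, not a flaw peculiar to your argument.
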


%%%%%%%%%%%%%%%%%%%%%%%%%%%%%%%%%%%%%%%%%%%%%%%%%%%%%%%%%%%%%%%%%%%%%%%%%%%%%%%%%%%%%%%%%%%%%%%%%%%%%%%%%%%%%%%%%%%%%%
\subsection{\ Hasse failures for commutators\nopunct}\label{HFC}\ 
%%%%%%%%%%%%%%%%%%%%%%%%%%%%%%%%%%%%%%%%%%%%%%%%%%%%%%%%%%%%%%%%%%%%%%%%%%%%%%%%%%%%%%%%%%%%%%%%%%%%%%%%%%%%%%%%%%%%%%

Our purpose here is to construct failures to the Hasse principle for the commutator problem $(\mathcal{E}_1)$: construct matrices $Z$ in an arithmetic group $\G_D$ over a ring $D$ that are commutators in congruence subgroups, that lie in the commutator subgroup but that are not commutators (see the discussion in Section 3.2). Since we have constructed Hasse failures  $\mathcal{M}_k(D)$  for certain infinite family of  $k$'s,  for  $D = \mathbb{Z}$ and $\Z[\frac{1}{\ell}]$ for some primes $\ell$, it is natural to want to extend these to matrices with trace $t=k-2$. It turns out that this does not always produce Hasse failures for the matrix problem as sometimes a Hasse failure $\mathcal{M}_k(D)$ corresponds to a congruence obstruction in $(\mathcal{E}_1)$ and $(\mathcal{E}_2)$ for matrices with trace $t=k-2$. We first give some exampes of these in Prop. \ref{HFu}, and then construct some Hasse failures for $(\mathcal{E}_1)$.

\begin{lemma}\label{hfc1}\ 
\begin{enumerate}[label=\upshape(\alph*).]
\item Suppose $t\equiv 2\ve \pmod q$ with $\ve =\pm 1$ and $Z = \left[\begin{smallmatrix} \al& \bt\\\g&t-\al \end{smallmatrix}\right] \in \SL_2(\Z/q\Z)$. Then, if $(\bt, q)=1$, there is a $M\in \SL_2(\Z/q\Z)$ with
\[
Z \equiv M\left[\begin{matrix} \ve& \bt^{-1}\\0&\ve \end{matrix}\right]M^{-1} \ ({\rm mod}\ q).
\]
\item For $q =2,\ 3,\ 4$, if $Z\in \SL_2(\Z/q\Z)$ is congruent to $\left[\begin{smallmatrix} 1& s\\0&1 \end{smallmatrix}\right]$  (or its transpose) modulo $q$, for some $s$ coprime to $q$, then $Z$ cannot be a commutator.
\end{enumerate}
\end{lemma}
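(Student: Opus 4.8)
The plan is to prove the two parts in turn, with part (a) being an essentially formal normal-form statement and part (b) the substantive claim, reduced to a finite check via part (a).

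For part (a), I would argue as follows. Since $Z\in\SL_2(\Z/q\Z)$ with $\Tr Z=t\equiv 2\ve\pmod q$, the matrix $\ve Z$ has trace $\equiv 2\pmod q$, and reducing modulo each prime power dividing $q$ one sees $\ve Z$ is unipotent (its characteristic polynomial is $(T-1)^2$ modulo $q$ when $q$ is squarefree or a power of a small prime; since here $q\in\{2,3,4\}$ this is immediate). The off-diagonal entry $\bt$ being a unit modulo $q$ means the first column of $\ve Z - \ve I$, namely $\begin{psmatrix}\al-\ve\\\g\end{psmatrix}$ times $\ve$, together with $\begin{psmatrix}\bt\\ t-2\al\end{psmatrix}$ spans enough to produce an $M\in\SL_2(\Z/q\Z)$ conjugating $\ve Z$ to $\begin{psmatrix}1&\bt^{-1}\\0&1\end{psmatrix}$. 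Concretely I would exhibit $M$ with first column a suitable eigenvector of $\ve Z$ for the eigenvalue $1$ scaled to be primitive, complete it to a matrix in $\SL_2(\Z/q\Z)$, and compute $M^{-1}(\ve Z)M$ directly, checking the upper-right entry equals $\bt^{-1}$ (rescaling $M$'s second column by a unit if needed). Multiplying back by $\ve$ gives the stated conjugacy for $Z$.

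For part (b), the key reduction is that conjugate matrices are simultaneously commutators or not, so by part (a) it suffices to show that $\begin{psmatrix}1&s\\0&1\end{psmatrix}$ (and its transpose, handled symmetrically or by conjugating with $\begin{psmatrix}0&-1\\1&0\end{psmatrix}$) is not a commutator in $\SL_2(\Z/q\Z)$ for $q=2,3,4$ and $s$ a unit. This is now a finite computation: for each $q\in\{2,3,4\}$ one enumerates all commutators $[A,B]$ with $A,B\in\SL_2(\Z/q\Z)$ — the groups have orders $6$, $24$, $48$ respectively — and checks that no such commutator is conjugate to $\begin{psmatrix}1&s\\0&1\end{psmatrix}$ for any unit $s$. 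For $q=2$ this is transparent: $\SL_2(\Z/2\Z)\cong S_3$, its commutator subgroup is $A_3$ of order $3$, while $\begin{psmatrix}1&1\\0&1\end{psmatrix}$ has order $2$, hence is not even in the commutator subgroup. For $q=3$ and $q=4$ one can either run the enumeration or argue structurally: in $\SL_2(\Z/3\Z)$ the element $\begin{psmatrix}1&1\\0&1\end{psmatrix}$ has order $3$ and one checks against the (short) list of conjugacy classes of commutators; similarly for $q=4$, using that $\SL_2(\Z/4\Z)$ has order $48$ with a manageable commutator subgroup.

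The main obstacle I expect is organizing part (b) cleanly rather than by brute force — in particular making sure the transpose case and the dependence on the unit $s$ are both covered (for $q=3$ there are two units $\pm1$, and $\begin{psmatrix}1&-1\\0&1\end{psmatrix}$ is conjugate to $\begin{psmatrix}1&1\\0&1\end{psmatrix}^{-1}$ but one should check it is not a commutator either; the transpose is conjugate to the original by $\begin{psmatrix}0&-1\\1&0\end{psmatrix}$ up to sign of $s$, so these collapse). The cleanest route is probably: (i) reduce everything to $\begin{psmatrix}1&s\\0&1\end{psmatrix}$ via part (a) and the conjugation $\begin{psmatrix}0&-1\\1&0\end{psmatrix}$ for the transpose; (ii) for $q=2$ give the $S_3$ argument; (iii) for $q=3,4$ invoke the finite verification, citing that one computes the set of commutators directly (as is done elsewhere in the paper for such small quotients). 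Everything here is elementary and effective; there is no genuine difficulty beyond bookkeeping.
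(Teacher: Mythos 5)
Your plan for part (a) has the right germ but stays hand-wavy at the one place that actually needs care. Constructing $M$ with first column an eigenvector of $\ve Z$ for the eigenvalue $1$ will produce a conjugate of the form $\begin{psmatrix}\ve&\ast\\0&\ve\end{psmatrix}$, but getting the upper-right entry to be exactly $\bt^{-1}$ is not a matter of "rescaling the second column by a unit": rescaling a single column destroys the determinant condition, and conjugating instead by $\diag(u,u^{-1})\in\SL_2$ only changes the upper-right entry by a square $u^2$, which need not reach $\bt^{-1}$. The fix is to choose the second column so that the determinant is $1$ \emph{and} the target entry comes out right in one stroke; the paper simply exhibits $M=\begin{psmatrix}\bt&0\\\ve-\al&\bt^{-1}\end{psmatrix}$ (its first column is exactly the eigenvector you describe, and the $\bt^{-1}$ in the second column is what forces the conjugated entry to be $\bt^{-1}$ rather than some other unit; verifying the bottom-left entry uses $\det Z=1$ together with $t\equiv 2\ve$). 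So you should be more explicit here, but the idea is the same.

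For part (b) your route is genuinely different and substantially less informative than the paper's. You propose to reduce to $q\in\{2,3,4\}$ and then enumerate all commutators in groups of orders $6$, $24$, $48$ (with a structural $S_3$ shortcut only at $q=2$). That is a valid proof, but it is a black-box finite check. The paper instead argues uniformly: writing $Z=[X,Y]$, the trace identities $\Tr(ZY)=\Tr Y$ and $\Tr(ZX^{-1})=\Tr X$ (which hold because $ZY=XYX^{-1}$ and $Z^{-1}X=YXY^{-1}$) combined with $Z\equiv\begin{psmatrix}1&s\\0&1\end{psmatrix}$ with $s$ a unit force the lower-left entries of $X$ and $Y$ to vanish mod $q$, so $X,Y$ are upper-triangular; for $q\in\{2,3,4\}$ the unit group has order at most $2$, so the diagonal entries are $\pm1$, and any two upper-triangular matrices with $\pm1$ diagonals commute, giving $Z\equiv I$, a contradiction. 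This avoids any enumeration, makes transparent exactly where $q\le 4$ enters (smallness of $(\Z/q\Z)^\times$), and is the kind of argument that can be pushed to other moduli. You should also note that your worry about $\begin{psmatrix}1&-1\\0&1\end{psmatrix}$ versus $\begin{psmatrix}1&1\\0&1\end{psmatrix}$ is moot: inverses of commutators are commutators, so once one of them is excluded the other is automatically.
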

\begin{proof}
For (a), take $M= \left[\begin{smallmatrix} \bt& 0\\\ve -\al&\bt^{-1} \end{smallmatrix}\right]$.

For (b), it suffices to check one case; otherwise consider the transpose of $Z$. Write $Z=XYX^{-1}Y^{-1}$ with $X, Y \in \SL_2(\Z/q\Z)$.  Computing the traces $\Tr(ZY)\equiv \Tr(Y) \pmod q$ and $\Tr(ZX^{-1})\equiv \Tr(X) \pmod q$, we conclude that $X$ and $Y$ are upper-triangular (mod $q$). With our choices of $q$, the diagonal entries must be congruent to $\ve =\pm 1$ so that $X$ and $Y$ commute, giving $Z \equiv I_2 \pmod q$, a contradiction.
\end{proof}

\begin{remark}\
The congruence obstructions above over $\Z$ also lead to congruence obstructions over $\Z[\frac{1}{\ell}]$ for $\ell \equiv \pm 1 \pmod 8$ for the first case, and $\ell \equiv \pm 1 \pmod {12}$ for the second.
\end{remark}

\begin{lemma}\label{HFu} For $Z \in \SLZ$, let $t=\Tr(Z)$. Then $Z$ is not a commutator if \upshape(a\upshape). $4|t$ or \upshape(b\upshape). $t \equiv \pm 1,  \pm 4 \pmod{9}$.
\end{lemma}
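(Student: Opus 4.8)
The plan is to convert the hypothesis ``$Z$ is a commutator in $\SL_2(\Z)$'' into a congruence statement and then produce a local obstruction in each of the two cases. Suppose $Z=W(X,Y)$ with $X,Y\in\SL_2(\Z)$, and set $x_1=\Tr X$, $x_2=\Tr Y$, $x_3=\Tr XY$. By Fricke's identities $(x_1,x_2,x_3)\in\mathcal{M}_{t+2}(\Z)$ and $(X,Y)\in\mathcal{T}_t(\Z)$, so for every $q$ the reductions satisfy $(X,Y)\in\mathcal{T}_t(\Z/q\Z)$ and $(x_1,x_2,x_3)\in\mathcal{M}_{t+2}(\Z/q\Z)$. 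Thus it is enough, for each forbidden residue of $t$, to exhibit a $q$ with $\mathcal{T}_t(\Z/q\Z)=\emptyset$; having $\mathcal{M}_{t+2}(\Z/q\Z)=\emptyset$ already suffices, but, as it turns out, is not always available at the relevant $q$. Since $Z$ is a commutator if and only if any conjugate of $Z^{\pm1}$ is, we are free to normalise $Z$, $X$, $Y$ by simultaneous conjugation and by the permutations of Lemma \ref{perms}.

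For (b) the hypothesis reads $t\equiv 1,4,5,8\pmod 9$. If $t\equiv 1$ or $4\pmod 9$, then $t+2\equiv 3$ or $6\pmod 9$, and a direct check — the mod-$9$ obstruction contained in \eqref{3.2.1} — shows $a^2+b^2+c^2-abc$ never represents $3$ or $6$ modulo $9$; hence $\mathcal{M}_{t+2}(\Z/9\Z)=\emptyset$ and no such $Z$ exists. If $t\equiv 5$ or $8\pmod 9$, then $\mathcal{M}_{t+2}$ has $3$-adic points, so one must instead prove the stronger fact $\mathcal{T}_t(\Z/9\Z)=\emptyset$: equivalently, the triples $(a,b,c)\bmod 9$ with $a^2+b^2+c^2-abc\equiv t+2$, which do occur in $(\Z/9\Z)^3$, never occur as $(\Tr X,\Tr Y,\Tr XY)$ with $X,Y\in\SL_2(\Z/9\Z)$. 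This is the ``further'' obstruction recorded in \eqref{3.2.2}; I would verify it by reducing $(X,Y)$ to $\SL_2(\mathbb{F}_3)$, enumerating there the realizable triples and the value $\Tr[X,Y]$, and then tracking which triples modulo $9$ survive when one lifts along the principal congruence kernel of $\SL_2(\Z/9\Z)\to\SL_2(\mathbb{F}_3)$.

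For (a), $4\mid t$ gives $t+2\equiv 2\pmod 4$, a value that is not obstructed on the Markoff surface (it has $2$-adic points), so again a lifting obstruction is needed; the right modulus is $16$, and the claim is $\mathcal{T}_t(\Z/16\Z)=\emptyset$ whenever $4\mid t$, which is the $2$-adic part of the congruence analysis of Section \ref{sec32}. The main case is immediate: modulo $2$ the commutator relation gives $\Tr(ZY)=\Tr Y$, and since in $\SL_2(\mathbb{F}_2)\cong S_3$ the identity is the only commutator of even trace, $Z\equiv I\pmod 2$; hence $\langle\overline X,\overline Y\rangle\le S_3$ is abelian, so cyclic, and if it is trivial then $X\equiv Y\equiv I\pmod 2$, whence $XY\equiv YX\pmod 4$ and $Z=[X,Y]\equiv I\pmod 4$, contradicting $4\mid t$. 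When the common mod-$2$ image has order $2$ or $3$ one conjugates it to a standard form and repeats the same linearisation one step further along the $2$-adic congruence filtration; this is the finite verification modulo $16$.

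The main obstacle is exactly these two lifting statements, $\mathcal{T}_t(\Z/9\Z)=\emptyset$ for $t\equiv 5,8\pmod 9$ and $\mathcal{T}_t(\Z/16\Z)=\emptyset$ for $4\mid t$. They are not obstructions on $\mathcal{M}_k$ itself, but on the finite-to-one map $\mathcal{T}_t\to\mathcal{M}_{t+2}$ over $\Z/q\Z$ for composite $q$, where the passage from representations to characters is no longer surjective; consequently they cannot be read off from a single global identity and instead require the congruence structure of $\SL_2(\Z/q\Z)$.
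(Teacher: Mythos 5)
The overall strategy — show the relevant reduction $\mathcal{T}_t(\Z/q\Z)$ is empty for a small $q$ — is the right one, and you correctly identify that for $t\equiv 5,8\pmod 9$ and for $4\mid t$ the obstruction lives on $\mathcal{T}_t$, not on $\mathcal{M}_{t+2}$. You also correctly dispose of $t\equiv 1,4\pmod 9$ via emptiness of $\mathcal{M}_{t+2}(\Z/9\Z)$, and of the subcase of (a) where $X\equiv Y\equiv I\pmod 2$. But the remaining cases are only promised, not proved: the enumeration ``in $\SL_2(\mathbb{F}_3)$, ... then tracking which triples modulo $9$ survive'' and ``repeats the same linearisation one step further ... finite verification modulo $16$'' are placeholders, and these are precisely where the content of the lemma sits. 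You also overshoot the modulus in (a): the obstruction is already visible modulo $4$, and insisting on $16$ makes the finite check harder than necessary (and leaves untreated the cases where $\overline X,\overline Y$ generate a subgroup of order $2$ or $3$ in $\SL_2(\mathbb{F}_2)$).

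The paper's proof is a single uniform argument for both parts and for all four residues in (b), via Lemma~\ref{hfc1}. The key step is: if $Z=\left[\begin{smallmatrix}\alpha&\beta\\\gamma&t-\alpha\end{smallmatrix}\right]$ is a commutator and $t\equiv\pm2\pmod q$ with $q\in\{2,3\}$, then Lemma~\ref{hfc1}(a) conjugates $Z\bmod q$ to $\left[\begin{smallmatrix}\pm1&\beta^{-1}\\0&\pm1\end{smallmatrix}\right]$ whenever $q\nmid\beta$, and Lemma~\ref{hfc1}(b) says such a matrix is never a commutator mod $q$; hence $q\mid\beta$ and (by transposition) $q\mid\gamma$. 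Now the determinant $\alpha(t-\alpha)-\beta\gamma=1$ is read modulo $q^2$: since $q^2\mid\beta\gamma$, one needs $\alpha(t-\alpha)\equiv1\pmod{q^2}$, and with $4\mid t$ this forces $-\alpha^2\equiv1\pmod 4$ (impossible), while with $t=2\varepsilon+3s$, $3\nmid s$, one gets $(\alpha-\varepsilon-6s)^2\equiv3\varepsilon s\pmod 9$, again impossible since $3,6$ are not squares mod $9$. This handles $t\equiv1,4,5,8\pmod 9$ at once — there is no need to split between the Markoff-surface obstruction and a separate lifting obstruction on $\mathcal{T}_t$ — and needs only $q^2\in\{4,9\}$, not $16$. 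To finish along your own lines you would have to actually carry out the $\SL_2(\Z/9\Z)$ and $\SL_2(\Z/16\Z)$ enumerations, which is feasible but considerably heavier than the two-line determinant computation the paper uses.
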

\begin{proof}\ 
Suppose there exist matrices $X,\ Y \in \SLZ$ such that $Z=XYX^{-1}Y^{-1}$. We write $Z= \left[\begin{smallmatrix} \al& \bt\\\g&t-\al \end{smallmatrix}\right]$. 

(a). If $\bt$ is odd, then by  Lemma \ref{hfc1}(a), with $q=2$, we may replace $Z$ with $\left[\begin{smallmatrix} 1& \bt\\0&1 \end{smallmatrix}\right]$ (mod 2), and then with Lemma \ref{hfc1}(b) conclude that $Z$ is not a commutator. The same reasoning holds if $\g$ was odd by transposing. So we can assume that both $\bt$ and $\g$ are even, so that $4|t$ implies that  the determinant is $-\al^2 \equiv 1$ (mod 4), a contradiction.

(b). We use the same argument as above with $q=3$, $t = 2\ve +3s$, and $\ve = \pm 1$. Then, 3 dividing both $\bt$ and $\g$ implies that $\al(t-\al) \equiv 1 \pmod 9$. Then, $(\al -\ve - 6s)^2 \equiv 12\ve s \pmod 9$, giving a contradiction when $3\nmid s$.
\end{proof}

The admissible $t$'s for $\mathcal{T}_t(\Z)$ may be described as follows:

let $\mathcal{V}$ be the affine variety in $\mathbb{A}^{\!8}$ given by the equations
\[
X=\left[\begin{matrix} x_1&x_2\\x_3&x_4\end{matrix}\right]\ , \quad Y= \left[\begin{matrix} y_1&y_2\\y_3&y_4\end{matrix}\right]\ , \quad \det X=\det Y=1 \ ,
\]
and let $f({\bf x},{\bf y})=\Tr (XYX^{-1}Y^{-1})$, with ${\bf x}=(x_1,x_2,x_3,x_4)$ and ${\bf y}=(y_1,y_2,y_3,y_4)$. The obstructions $\pmod {p^l}$, with $p$ a prime and $l\geq 1$ an integer, to $t\equiv  f({\bf x},{\bf y}) \pmod {p^l}$ having a solution is equivalent to describing the image in the $p$-adic integers $\Z_p$ of $f$, that is $f(\mathcal{V}(\Z_p))$.

By general principles of quantifier elimination for the $p$-adics (\cite{Cohen}), $f(\mathcal{V}(\Z_p))$ has an effective and explicit description for each $p\geq 2$.

\vspace{20pt}
\begin{proposition}\label{HFu2} \ 
\begin{enumerate}[label=\upshape(\alph*).]
\item For $p>3$, $f(\mathcal{V}(\Z_p)) =\Z_p$, that is there are no local obstructions for $t$ in $\Z_p$. 
\item For $p=3$, there are obstructions for $t \equiv 1, 4, 5, 8 \pmod {3^2}$.
\item For $p=2$, there are obstructions for $t \equiv 0, 1, 4, 5, 8, 9, 10, 12, 13  \pmod {2^4}$.
\end{enumerate}  
\end{proposition}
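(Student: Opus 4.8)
The plan is to compute the image $f(\mathcal{V}(\Z_p))$ by reducing the commutator trace equation to the Markoff surface and using the lifting theory of Section~\ref{general} locally, rather than attacking $f$ directly. Recall from Fricke's identities that for a pair $(X,Y)$, setting $x_1=\Tr X$, $x_2=\Tr Y$, $x_3=\Tr XY$, one has $\Tr(XYX^{-1}Y^{-1})=x_1^2+x_2^2+x_3^2-x_1x_2x_3-2$. Hence $f({\bf x},{\bf y})=t$ has a $\Z_p$-point lying over $(x_1,x_2,x_3)$ only if $(x_1,x_2,x_3)\in\mathcal{M}_{t+2}(\Z_p)$, and conversely, by the local version of Lemma~\ref{lift2} (valid over any ring once the relevant $\Delta$ is a unit) together with Prop.~\ref{lift1}, every point of $\mathcal{M}_{t+2}(\Z_p)$ with $t\neq\pm2$ lifts to a $\Z_p$-point of $\mathcal{V}$ provided we can produce a $Y\in\SL_2(\Z_p)$ with $\Tr ZY=\Tr Y=x_j$ for some $j$; over a local ring this auxiliary condition is essentially always satisfiable by the argument in the proof of the universality proposition for fields (choosing $Y$ upper-triangular with a unit eigenvalue and solving a linear condition, which is solvable unless an obstruction forces the off-diagonal data to vanish). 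Thus for $p\geq5$ the problem collapses to: \emph{is $\mathcal{M}_{t+2}(\Z_p)$ nonempty for every $t\in\Z_p$?} This is answered affirmatively by Prop.~6.1 of \cite{GhSa} (the Markoff surface has $\Z/p^n\Z$-points for all $n$ when $p>3$), which gives part~(a). For $p\in\{2,3\}$ one still gets that nonemptiness of $\mathcal{M}_{t+2}(\Z_p)$ is \emph{necessary}; the extra obstructions in (b) and (c) are exactly the two phenomena already isolated in Lemma~\ref{hfc1} and Lemma~\ref{HFu}.

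Concretely, the steps are as follows. \textbf{Step 1.} Prove that $f(\mathcal{V}(\Z_p))\subseteq\{t: \mathcal{M}_{t+2}(\Z_p)\neq\emptyset\}$ via Fricke, and that for $p\geq5$, $t\not\equiv\pm2$, equality holds by the local lifting argument; handle $t\equiv\pm2\pmod p$ separately using the explicit commutator constructions (e.g. $Z=\left[\begin{smallmatrix}1&b\\0&1\end{smallmatrix}\right]$ is a commutator over $\Z_p$ for $p\geq5$ by the field-case computation, which only requires $\mathbf{card}>3$ and goes through over $\Z_p$ since the needed elements are units). This yields (a). \textbf{Step 2.} For $p=3$: Lemma~\ref{HFu}(b) already shows $t\equiv\pm1,\pm4\pmod9$ are obstructed, i.e. $t\equiv 1,4,5,8\pmod 9$; so it remains to show these are the \emph{only} obstructions mod powers of $3$, i.e. for $t\equiv 0,2,3,6,7\pmod 9$ (equivalently $t\equiv\pm2\pmod 9$ or $3\mid t$) one has $t\in f(\mathcal{V}(\Z_3))$. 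For $t$ with $t\not\equiv\pm2\pmod 3$ this follows from Step~1 once one checks $\mathcal{M}_{t+2}(\Z_3)\neq\emptyset$ (using \cite{GhSa} Prop.~6.1's description of the $3$-adic obstructions for Markoff, which are exactly $k=t+2\equiv\pm3\pmod9$, matching); for $t\equiv\pm2\pmod 3$ one produces an explicit lift by hand (Hensel's lemma starting from a smooth point of the commutator variety). \textbf{Step 3.} For $p=2$: the obstructions $4\mid t$ and $t\equiv\pm1\pmod 4$-type congruences are packaged in Lemma~\ref{HFu}(a) and Lemma~\ref{hfc1}(b); together with the $2$-adic Markoff obstruction ($k\equiv 0\pmod 4$, i.e. $t\equiv 2\pmod4$ is \emph{allowed}, $t\equiv 0\pmod 4$ and odd-$t$ cases obstructed) one assembles the list $t\equiv 0,1,4,5,8,9,10,12,13\pmod{16}$. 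One then verifies that the complementary residues $t\equiv 2,3,6,7,11,14,15\pmod{16}$ are each realized, by exhibiting for each such residue class a point of $\mathcal{M}_{t+2}(\Z_2)$ and invoking the local lifting of Step~1 (checking $\Delta\in\Z_2^\times$ can be arranged), or by a direct Hensel argument on $\mathcal{V}$.

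The main obstacle is \textbf{Step~3}, the $p=2$ case. The difficulty is twofold: first, the local lifting lemma (Lemma~\ref{lift2}) requires $\Delta=t+2-x_2^2\in\Z_2^\times$, and near the bad residues one must be careful to choose the Markoff point $(x_1,x_2,x_3)$ (using the Markoff group action to move around the $2$-adic orbit) so that some coordinate gives a unit $\Delta$ — this is where the signature-$(2,1)$ / Hasse-invariant bookkeeping of Section~\ref{HFM} implicitly enters, and one must rule out the pathological case where every coordinate of every point in $\mathcal{M}_{t+2}(\Z_2)$ is forced to be $\equiv\pm2$. Second, to pin down the obstruction set \emph{exactly} mod $16$ (and show nothing appears mod $32$ or higher) one genuinely needs the quantifier-elimination input of \cite{Cohen} — the stabilization of $f(\mathcal{V}(\Z_2))$ at level $2^4$ — or else a finite but somewhat tedious explicit computation enumerating $\SL_2(\Z/2^5\Z)$-pairs and their commutator traces, which is the fallback I would use if a clean conceptual argument for the cutoff at $2^4$ is not available. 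The $p=3$ case (Step~2) is milder but has the same flavor: matching the Markoff-surface $3$-adic obstruction with the group-theoretic one of Lemma~\ref{HFu}(b) and confirming there is no additional obstruction beyond modulus $9$.
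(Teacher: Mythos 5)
Your plan is substantially more elaborate than, and on two counts incorrect relative to, what Proposition~\ref{HFu2} actually asserts and how the paper proves it.

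First, parts~(b) and~(c) only assert that the listed residue classes \emph{are} obstructions; they make no completeness claim. The paper's Remark immediately following notes explicitly that showing these are the \emph{only} obstructions mod~$3^2$ and~$2^4$ is merely ``indicated by preliminary calculations'' and is not proved. Your Steps~2 and~3 spend most of their effort on exactly this unasserted completeness, and you yourself flag that the mod-$2^4$ stabilization ``genuinely needs'' quantifier elimination or brute enumeration. That effort is misdirected: it is not part of the statement, and the paper's proof does not attempt it. Similarly, for part~(a) the paper cites Example~\ref{examples_sec4}(1), which over $\Z/q\Z$ with $(q,6)=1$ gives universality of~$(\mathcal{E}_2)$ directly from the explicit pair in Proposition~\ref{a1} with $\ve=2$; your route through Markoff-surface solvability plus the local lifting of Lemma~\ref{lift2} is longer and carries the $\Delta\in\Z_p^\times$ bookkeeping you flag as an obstacle, bookkeeping that the paper simply sidesteps.

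Second, and more seriously, you do not establish the one genuinely new obstruction in part~(c). The Markoff congruence in~\eqref{3.2.1} forbids only $t\equiv 1\pmod 4$ (not ``all odd $t$'' as you write, since $t\equiv 3\pmod 4$ is admissible), and Lemma~\ref{HFu}(a) forbids $t\equiv 0\pmod 4$. Together these cover $t\equiv 0,1,4,5,8,9,12,13\pmod{16}$, which is only eight of the nine classes in the statement; the class $t\equiv 10\pmod{16}$ is not explained by anything you cite. This is precisely the content that the paper must argue separately: a commutator $Z$ over $\Z/16\Z$ with $\Tr Z\equiv 10\pmod{16}$ would have $\Tr Z\equiv 2\pmod 4$, and since neither $\left[\begin{smallmatrix}0&1\\1&0\end{smallmatrix}\right]\pmod 2$ nor $\left[\begin{smallmatrix}1&2\\0&1\end{smallmatrix}\right]\pmod 4$ is a commutator, one is forced into $Z\equiv I\pmod 4$; writing $Z=I+4C$ and using $\det Z=1$ then forces $\Tr C\equiv 0\pmod 4$ and hence $t=\Tr Z\equiv 2\pmod{16}$, contradicting $t\equiv 10\pmod{16}$. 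Without that argument your list is simply not derived, and the method you do propose (mimicking the Markoff congruences) cannot produce it, since $t\equiv 10\pmod{16}$ is perfectly admissible for the Markoff surface $\mathcal{M}_{t+2}$.
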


\begin{remark}\ 
Preliminary calculations indicate that the obstructions $\pmod {3^2}$ and $\pmod {2^4}$ are the only ones for $\Z_3$ and $\Z_2$ respectively, that is the admissible $t$'s for $\mathcal{T}_t(\Z)$ are as in \eqref{3.2.2}.
\end{remark}

\begin{proof}[Proof of Prop. \ref{HFu2}]
By Example \ref{examples_sec4}(1) in Section 4,  or by Lemma 6.2 of \cite{GhSa}, there are no congruence obstructions on $t$ modulo prime powers $p >3$, which proves the first statement. For $p=3$, the obstructions listed are those in Lemma \ref{HFu}. For $p=2$, we have the obstruction $t\equiv 1 \pmod 4$ coming from the Markoff equation with $t=k-2$, while the $t\equiv 0 \pmod 4$ obstruction follows from Lemma \ref{HFu}.  That leaves the case $t\equiv 10 \pmod {16}$. A matrix $Z \in \SL_2(\Z_2)$ (or its transpose) with trace $t$ is either equivalent to $A=\left[\begin{smallmatrix} 0&1\\1&0\end{smallmatrix}\right] \pmod 2$ or equivalent to  $B=\left[\begin{smallmatrix} 1&2\\0&1\end{smallmatrix}\right] \pmod 4$ or satisfies $Z \equiv I \pmod 4$. One checks directly that $A$ is not a commutator $\pmod 2$, and neither is $B \pmod 4$. Hence if $Z$ is a commutator $\pmod {16}$, then $Z \equiv I \pmod 4$, from which it follows that $t \equiv 2 \pmod{16}$.

\end{proof}

\begin{remark}
The Hasse failures for $\mathcal{M}_k(\Z)$ constructed in Prop. \ref{HFz} of the type $k=4+2\nu^2$ and $4+12\nu^2$ do not lift to Hasse failures for $(\mathcal{E}_1)$ and $(\mathcal{E}_2)$, since the corresponding $t$'s  are not admissible (with $4|t$ in the former, and $t \equiv 5 \pmod 9$ in the latter).
\end{remark}

We now extend Prop. \ref{HFTrace} and construct matrices that are Hasse failures to the commutator problem in the Ihara groups $\SLZl$ for primes $\ell \equiv \pm 1 \pmod 5$.

\begin{theorem}\label{HFE1}
Suppose $\ell$ is an odd prime satisfying $\ell\equiv \pm 1\  \text{(mod 5)}$ and put $\mathbb{D}=\Z[\frac{1}{\ell}]$. Put  $t=2 +20\nu^2$, where $\nu \equiv 4\ \text{(mod 27)}$  and with $\nu$ having  all prime factors congruent to $\pm 1 \pmod {20}$. Let
 \[ A=\left[\begin{matrix} t-5& \frac{2}{3}(5\nu  -2)\\6(5\nu  +2)&5 \end{matrix}\right]\in \SL_2(\mathbb{D}).\] Then, $A$  is a commutator in every finite (congruence) quotient of $\SL_2(\mathbb{D})$, but $A$ is not a commutator in $\SL_2(\mathbb{D})$.
 
  Consequently, there are infinitely many Ihara groups, each containing  infinitely many Hasse failures. 
\end{theorem}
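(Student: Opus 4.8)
The plan is to verify three independent assertions about the explicit matrix $A$: (i) $A$ lies in $\SL_2(\mathbb{D})$, (ii) $A$ is a commutator in every congruence quotient of $\SL_2(\mathbb{D})$, and (iii) $A$ is not a commutator in $\SL_2(\mathbb{D})$. For (i), I would compute $\det A = 5(t-5) - 4(5\nu-2)(5\nu+2) = 5t - 25 - 4(25\nu^2-4) = 5t - 9 - 100\nu^2$, and since $t = 2 + 20\nu^2$ one gets $5t - 100\nu^2 = 10$, so $\det A = 1$; the denominators $2,3$ are units in $\mathbb{D}$ as $\ell \neq 2,3$, so $A \in \SL_2(\mathbb{D})$ indeed, with $\Tr A = t$. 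I would also note $t = 2 + 20\nu^2$ is admissible: the conditions $\nu \equiv 4 \pmod{27}$ (hence $\nu^2 \equiv 16$, $20\nu^2 \equiv 320 \equiv 5 \pmod 9$, so $t \equiv 7 \pmod 9$, avoiding $1,4,5,8$) and the fact that $20\nu^2 \equiv 0$ or (since $\nu$ has all prime factors $\equiv \pm 1 \pmod{20}$, $\nu$ is odd) $20\nu^2 \equiv 20 \equiv 4 \pmod{16}$, giving $t \equiv 6 \pmod{16}$, which by Proposition \ref{HFu2}(c) is admissible for $\Z_2$; combined with admissibility over $\Z_p$, $p>3$ (Prop. \ref{HFu2}(a)) and over $\Z_3$ (Prop. \ref{HFu2}(b)), and the fact that $\ell$-adically $\SL_2(\Z_\ell)$ imposes no constraint since every trace is achieved there, $t$ is congruence-admissible for $(\mathcal{E}_1)$ over $\mathbb{D}$.

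For (ii), the point is that $\SL_2(\mathbb{D})$ has the congruence subgroup property (it is an Ihara group with infinitely many units), so every finite quotient is a congruence quotient, i.e. a quotient of some $\SL_2(\Z/m\Z)$ with $(m,\ell)=1$. I would show $A$ is a commutator modulo every prime power $p^n$. For $p > 3$ this follows from the Proposition of Section \ref{general} (every $Z \in \SL_2(\F_p)$ with $\Tr Z \neq -2$ is a commutator, and $t \equiv 2 \not\equiv -2 \pmod p$ for $p > 2$; one bootstraps to $p^n$ by a standard Hensel-type lifting argument within the commutator variety, or more directly by exhibiting the point $(x_1,x_2,x_3)$ on $\mathcal{M}_{t+2}$ produced by Proposition \ref{a1} with $\ve = 2$ — valid whenever $(6,p)=1$ — and lifting via Lemma \ref{lift2}, whose hypothesis $\Delta \in D^\times$ holds generically and after a Markoff/Vieta adjustment always). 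For $p = 2, 3$ I would argue that $A \pmod{p^n}$ reduces, after conjugation, to a matrix for which the admissibility computation in Proposition \ref{HFu2} and Lemma \ref{hfc1} shows no obstruction arises — precisely because $t$ was chosen to be admissible, these are exactly the residues not excluded there. So $A$ is a commutator in every congruence quotient.

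For (iii), the heart of the matter, I would use the bottom-up machinery of Section \ref{liftings}: if $A = W(X,Y)$ with $X, Y \in \SL_2(\mathbb{D})$, then $(\Tr X, \Tr Y, \Tr XY)$ is a point of $\mathcal{M}_{t+2}(\mathbb{D})$ with $t + 2 = 4 + 20\nu^2 = k$. But Proposition \ref{HFTrace0} asserts exactly that $\mathcal{M}_{4+20\nu^2}(\Z[\tfrac1\ell])$ is empty (a Hasse failure) under the stated congruence conditions on $\nu$ and $\ell \equiv \pm 1 \pmod 5$. Hence no such $X, Y$ exist and $A$ is not a commutator in $\SL_2(\mathbb{D})$. \emph{The main obstacle} is checking the congruence conditions on $\nu$ are mutually consistent: Proposition \ref{HFTrace0} requires $\nu \equiv \pm 4 \pmod 9$ with all prime factors $\equiv \pm 1 \pmod{20}$, while admissibility of $t$ at $p=3$ (and the need to kill the $\pmod 9$ obstruction of Lemma \ref{HFu}) forces a condition mod $9$, here strengthened to $\nu \equiv 4 \pmod{27}$ to also handle potential $\pmod{27}$ subtleties in Prop. \ref{HFu2}(b); one must confirm these overlap on a nonempty (indeed infinite, by Dirichlet) set of $\nu$, and that the $p=2$ admissibility ($t \equiv 6 \pmod{16}$) is automatic from $\nu$ odd. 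Finally, for the last sentence: as $\ell$ ranges over the infinitely many primes $\equiv \pm 1 \pmod 5$ one gets infinitely many Ihara groups $\SL_2(\Z[\tfrac1\ell])$, and for each fixed such $\ell$, letting $\nu$ range over the infinitely many integers meeting the congruence and prime-factorization constraints produces infinitely many distinct values of $t = 2 + 20\nu^2$, hence infinitely many non-conjugate matrices $A$ (distinguished by trace) that are Hasse failures for the commutator equation in that group.
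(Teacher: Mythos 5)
Your outline is correct and matches the paper's structure in two of its three parts: you verify $A\in\SL_2(\mathbb{D})$ with $\Tr A=t$, you obtain the non-commutator conclusion from the emptiness of $\mathcal{M}_{4+20\nu^2}(\mathbb{D})$ (the paper quotes Prop.~\ref{HFTrace}, i.e.\ the $\mathcal{T}_t$-version, but this is the same thing), and you observe that CSP reduces the local problem to all $\SL_2(\Z/p^n\Z)$. But the step you treat as a formality is exactly where the paper does the real work, and your version of it has a genuine gap.

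The gap is at $p=2$ and $p=3$. You argue that because $t$ is admissible (avoids the obstructed residues of Prop.~\ref{HFu2}), ``no obstruction arises,'' so $A$ is a commutator mod $2^n$ and mod $3^n$. This does not follow. Prop.~\ref{HFu2} gives only a list of residues for which an obstruction \emph{is known}; the Remark immediately after says explicitly that completeness of that list is only ``indicated by preliminary calculations,'' not proved. So $t\not\equiv$ (obstructed residue) does not yet imply that some $Z$ of trace $t$ is a commutator mod $p^n$ for $p\in\{2,3\}$. Worse, even if admissibility of $t$ \emph{were} known to be sufficient, that would only produce \emph{some} matrix of trace $t$ that is a commutator mod $p^n$, not the specific $A$: modulo small prime powers there can be several conjugacy classes of the given trace, and you never use anything about $A$ beyond its trace and determinant. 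The paper instead writes $A=I+2B$ and exploits the exact $2$- and $3$-divisibility of the entries of $B$ (e.g.\ $2\|b_4$, $b_2,b_3$ odd; $3\|b_2,b_3$, $3\nmid b_1b_4$) to construct $Y\in\mathfrak{S}(A)$ mod $2^n$ and $3^n$ by hand, reducing to a binary quadratic congruence that Hensel/Gauss-sum arguments solve, and then checks that the $X$ produced by Lemma~\ref{lift2} (or a variant when $3\mid\Delta$) lands in $\Gamma_{\Z/p^n\Z}$. None of that is replaced by quoting admissibility.

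A smaller issue: for $p\geq 5$ your route (field case plus a ``Hensel-type lifting within the commutator variety'') is heuristic. The paper simply checks $A\not\equiv\pm I\pmod p$ for $p\ge 5$ and invokes a known local theorem (Theorem~3.5 of \cite{agks}) that then settles all $\SL_2(\Z/p^n\Z)$ at once. If you want to avoid the citation you would still need to lift from $\F_p$ to $\Z/p^n\Z$ \emph{and} match the lifted commutator to the fixed conjugacy class of $A$; neither step is automatic, though both are doable for $p\ge 5$. Finally, your mod-$16$ computation $t\equiv 6$ is fine as a sanity check, but the paper never needs to verify admissibility of $t$ at $2$ and $3$ directly — it proves that $A$ itself is locally a commutator, which is both more and exactly what is required.
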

\begin{proof}
 By Prop. \ref{HFTrace}, we know that $A$ is not a commutator in $\SL_2(\mathbb{D})$. Since all the finite quotients of $\Gamma_{\mathbb{D}}$ are congruence quotients (\cite{menn},\cite{serre70}), to show that $A$ is a Hasse-failure it remains to verify that $A$ is a commutator locally for all primes $p\geq 2$ (note that since $A\in \SLZ$, we do not have to consider the case  $p=\ell$ separately). We have $A\equiv I \pmod 2$, $A\equiv -I \pmod 3$ and $A \not\equiv \pm I \pmod p$ for all primes $p\geq 5$. Thus, by Theorem 3.5 of \cite{agks}, our conclusion holds for $p\geq 5$, so that it remains to check the cases $p=2$ and 3.

We write $A=I+2B$, where $B=\left[\begin{smallmatrix} b_1& b_2\\b_3&b_4 \end{smallmatrix}\right]\in \mathbb{M}_2(\mathbb{Z})$ with $b_1=2(5\nu^2 -1)$, $b_2=\frac{1}{3}(5\nu -2)$, $b_3=3(5\nu +2)$ and $b_4=2$. We use the following properties of the coefficients extensively: modulo 2, we have $2|b_1$, $2\| b_4$, $2\| (b_1-b_4)$ with $b_2,\ b_3$ odd; and modulo 3 we have $3\nmid b_1b_4$, $3\|b_2$, $3\| b_3$ and $3\|(b_1-b_4)$.

For the powers $2^n$ with $n\geq 2$, we appeal to Lemma \ref{lift2} with $D=\mathbb{Z}/2^n\mathbb{Z}$. Since $t+2 \equiv 0 \pmod 4$, $(1,1,1)\in \mathcal{M}_{t+2}(\mathbb{Z})$ is a non-singular solution modulo 2 and Hensel's lemma lifts this solution modulo $2^n$ (see Sec. 6 of \cite{GhSa}). Thus, we may choose $x_2$ to be odd in Lemma \ref{lift2} so that $\Delta$ is invertible in $D$, giving $X\in \Gamma_D$. So we need to find $Y\in \Gamma_D$ satisfying $\Tr AY\equiv \Tr Y\equiv x_2 \pmod{2^n}$. Writing $Y=\left[\begin{smallmatrix} x_2-y_1& y_2\\y_3&y_1 \end{smallmatrix}\right]$, we seek $y_j$ satisfying the two equations $y_1(x_2-y_1)-y_2y_3\equiv 1$ and $b_1(x_2-y_1)+b_3y_2+b_2y_3+b_4y_1\equiv 0 \pmod{2^{n-1}}$. Since $x_2$ is odd, we have $y_2$ and $y_3$ odd, so that the two equations combine with a need to solve $b_3y_2^2+(b_4-b_1)y_1y_2-b_2y_1^2+b_1x_2y_2+b_2x_2y_1\equiv b_2 \pmod{2^{n-1}}$ with $y_1$ free and $y_2$ odd. Since $b_2$ and $b_3$ are odd, there are no solutions here for even $y_2$'s and so we may drop the restriction that $y_2$ should be odd if $n\geq 2$. Completing the square, and using the fact that $b_1$ and $b_4$ are even, the equation to solve is equivalent to the equation $u^2 -\delta v^2 +\sigma v\equiv b_2b_3 + \left(\frac{b_1x_2}{2}\right)^2 \pmod{2^{n-1}}$, where $\delta=5\nu^2(5\nu^2 +1)$ is divisible exactly by 2, and $\sigma$ is odd. Using Gauss sums, one shows that this equation has $2^{n-1}$ solutions, giving the existence of $Y$ (alternatively one can appeal to Hensel's lemma, since the equation has trivially a non-singular solution modulo 2 that lifts to modulo $2^{n-1}$). When $n=1$, $A\equiv I$ is a commutator. 

For $l=3$, we are unable to use Lemma \ref{lift2} directly since $\Delta$ is always divisible by 3, but we are able to use the proof of the lemma to deduce our result. We start with a non-singular solution $(x_1,x_2,x_3)\equiv (3,3,3) \pmod 9$ and extend this using Hensel's lemma to modulo $3^{n+2}$, with $n\geq 0$ (see \cite{GhSa} Sec.6). Then, $\Delta=t+2-x_2^2\equiv -9 \pmod{27}$, so that $9\| \Delta$. The construction of $X$ is then valid with $|X|\equiv 1\pmod{3^{n-2}}$, this being so since in \eqref{matrix-X} we have 9 dividing the right-hand-side using $A\equiv -I \pmod 3$ and $3|x_j$ for all $j$. Thus, if $n\geq 2$, $A$ will be a commutator modulo $3^{n-2}$ provided we can find a $Y$ with $\Tr AY\equiv \Tr Y \pmod{3^{n+2}}$ with $|Y|\equiv 1 \pmod{3^{n+2}}$. To find $Y$, we follow the same strategy as the case above: we seek $y_i$ such that $y_1(x_2-y_1)-y_2y_3\equiv 1$ and $b_1(x_2-y_1)+b_3y_2+b_2y_3+b_4y_1\equiv 0 \pmod{3^{n+2}}$. Since $3|x_2$, we must have $3\nmid y_2y_3$ so that we are reduced to finding $y_1$ and $y_2$ satisfying  $\frac{b_3}{3}y_2^2+\frac{b_4-b_1}{3}y_1y_2-\frac{b_2}{3}y_1^2+b_1\frac{x_2}{3}y_2+\frac{b_2}{3}x_2y_1\equiv \frac{b_2}{3} \pmod{3^{n+1}}$ with $3\nmid y_2$. We may drop this latter condition as there are no solutions with $3|y_2$. Since $3\|b_3$, we complete the square and obtain an equation of the type $u^2 -\delta v^2 +\sigma v\equiv \tau \pmod{3^{n+1}}$, where $3|\delta$ and $3\nmid \sigma$. This is exactly as in the case above, and Hensel's lemma gives a solution lifted from a non-singular one modulo 3. Thus, the required $Y$ exists modulo $3^{n+2}$, so that $A$ is a commutator modulo $3^{n-2}$ for all $n\geq 2$.

\end{proof}

%%%%%%%%%%%%%%%%%%%%%%%%%%%%%%%%%%%%%%%%%%%%%%%%%%%%%%%%%%%%%%%%%%%%%%%%%%%%%%%%%%%%%%%%%%%%%%%%%%%%%%%%%%%%%%%%%%%%%%
\begin{appendices}
\section*{Appendix}
%%%%%%%%%%%%%%%%%%%%%%%%%%%%%%%%%%%%%%%%%%%%%%%%%%%%%%%%%%%%%%%%%%%%%%%%%%%%%%%%%%%%%%%%%%%%%%%%%%%%%%%%%%%%%%%%%%%

\section{Preliminaries}\label{prelim}
%%%%%%%%%%%%%%%%%%%%%%%%%%%%%%%%%%%%%%%%%%%%%%%%%%%%%%%%%%%%%%%%%%%%%%%%%%%%%%%%%%%%%%%%%%%%%%%%%%%%%%%%%%%%%%%%%%%

We put here some general information that will be used repeatedly without comment.

\begin{notation} $D$ will be an integral domain and $F$ a field (finite or not). 

$\mathbb{M}_2(S)$ will denote $2\times2$ matrices with entries in the set $S$, and $\Gamma_S =\SL_2(S)$.

For $A= \left[\begin{smallmatrix} a&b\\c&d\end{smallmatrix}\right]\in \mathbb{M}_2(D)$, we put $A'= \left[\begin{smallmatrix} d&-b\\-c&a\end{smallmatrix}\right]$, and $I$ to denote the identity matrix.

$W(A,B)=ABA^{-1}B^{-1}$ for $A,\ B\ \in \Gamma_D$.\end{notation}

\begin{lemma}\label{ele1}
For $A,\ B \in \mathbb{M}_2(D)$, we have
\begin{enumerate}[label=\upshape(\roman*).]
\item $A+A'=(\Tr A) I$,
\item $AA'=|A|I$,
\item $A^2=(\Tr A)A-|A|I$\ \ and\ \  $A'^2=(\Tr A)A' -|A|I$,
\item $|A+B|= |A|+|B|+\Tr AB'$.
\end{enumerate}
\end{lemma}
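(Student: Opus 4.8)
\textbf{Proof plan for Lemma \ref{ele1}.}

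The four identities are all elementary consequences of the Cayley--Hamilton theorem in dimension two, so the plan is simply to record the one-line verifications in the right order, each using the previous ones where convenient. First I would prove (i): for $A=\left[\begin{smallmatrix} a&b\\c&d\end{smallmatrix}\right]$ one has $A'=\left[\begin{smallmatrix} d&-b\\-c&a\end{smallmatrix}\right]$ by definition, so $A+A'=\left[\begin{smallmatrix} a+d&0\\0&a+d\end{smallmatrix}\right]=(\Tr A)I$. Then (ii): the product $AA'=\left[\begin{smallmatrix} a&b\\c&d\end{smallmatrix}\right]\left[\begin{smallmatrix} d&-b\\-c&a\end{smallmatrix}\right]=\left[\begin{smallmatrix} ad-bc&0\\0&ad-bc\end{smallmatrix}\right]=|A|I$, which is just the cofactor expansion of the determinant.

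Next, (iii) follows by combining (i) and (ii): multiply the identity $A+A'=(\Tr A)I$ on the left by $A$ to get $A^2+AA'=(\Tr A)A$, and substitute $AA'=|A|I$ from (ii), yielding $A^2=(\Tr A)A-|A|I$. For the companion identity, multiply $A+A'=(\Tr A)I$ on the right by $A'$ instead, obtaining $AA'+A'^2=(\Tr A)A'$, and again use (ii) (noting $AA'=A'A=|A|I$, which one sees the same way as (ii)) to get $A'^2=(\Tr A)A'-|A|I$. Finally, for (iv) I would expand $|A+B|$ directly: writing $A+B$ entrywise and computing its determinant gives $|A+B|=|A|+|B|+(ad'+a'd-bc'-b'c)$ where primes now denote the entries of $B$; one then checks that the cross term equals $\Tr(AB')$ by writing out $AB'=\left[\begin{smallmatrix} a&b\\c&d\end{smallmatrix}\right]\left[\begin{smallmatrix} d'&-b'\\-c'&a'\end{smallmatrix}\right]$ and reading off its diagonal. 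Alternatively, and more slickly, one can derive (iv) from bilinearity of the map $(X,Y)\mapsto \tfrac12(|X+Y|-|X|-|Y|)$ together with $|A|=\tfrac12\Tr(AA')$ from (ii).

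None of these steps presents a genuine obstacle; the only thing to be careful about is the bookkeeping of signs in (iv) and the order of multiplication in (iii) (left versus right by $A$ or $A'$), so I would present (iii) with both multiplications spelled out. The whole proof is a few lines of $2\times 2$ matrix arithmetic.
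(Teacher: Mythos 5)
The paper states Lemma~\ref{ele1} without proof (it appears in the appendix under the heading ``Preliminaries,'' introduced with ``We put here some general information that will be used repeatedly without comment''), so there is no argument of the authors' to compare against. Your verification is correct and complete: (i) and (ii) are direct entrywise checks, (iii) is deduced cleanly from (i) and (ii) by multiplying the identity $A+A'=(\Tr A)I$ on the left by $A$ and on the right by $A'$, and (iv) is confirmed by expanding the determinant of $A+B$ and matching the cross term $ah-bg+de-cf$ with $\Tr(AB')$. Two small stylistic cautions, neither of which is an error: reusing primes for the entries of $B$ in (iv) clashes with the paper's convention that $B'$ denotes the adjugate (you do flag this, but in a final write-up it is cleaner to name $B$'s entries with fresh letters); and the ``slicker'' alternative you sketch for (iv) is somewhat circular, since bilinearity of $(X,Y)\mapsto\tfrac12(|X+Y|-|X|-|Y|)$ is essentially the content of (iv) itself — the direct expansion is the honest proof and should be the one kept.
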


\begin{proposition}\label{identity}
Let $X,\ Y\in\Gamma_D$, with $x_1=\Tr X$, $x_2=\Tr Y$ and $x_3=\Tr XY$. Then
\begin{equation}\label{Ap1.1}
W(X,Y) =x_3XY + (x_1 -x_2x_3)X + x_2Y^{-1} -I.
\end{equation}
\end{proposition}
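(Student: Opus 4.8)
The plan is to verify the identity \eqref{Ap1.1} by a direct computation, exploiting the Cayley--Hamilton-type relations collected in Lemma \ref{ele1}. The key observation is that for $Y \in \Gamma_D$ we have $Y^{-1} = Y' = (\Tr Y)I - Y = x_2 I - Y$, so the right-hand side of \eqref{Ap1.1} can be rewritten entirely in terms of $X$, $Y$ and $XY$ (with no inverses), namely as $x_3 XY + (x_1 - x_2 x_3)X - x_2 Y + (x_2^2 - 1)I$. Thus it suffices to prove the polynomial matrix identity
\begin{equation}\label{planeq}
XYX^{-1}Y^{-1} = x_3\, XY + (x_1 - x_2 x_3)X - x_2 Y + (x_2^2-1)I .
\end{equation}

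First I would clear the inverses on the left as well: multiplying \eqref{planeq} on the right by $YX$ and using $X^{-1}Y^{-1}\cdot YX = I$, the claim \eqref{planeq} becomes the purely polynomial identity
\begin{equation}\label{planeq2}
XY = \bigl[\, x_3 XY + (x_1 - x_2 x_3)X - x_2 Y + (x_2^2-1)I \,\bigr]\, YX .
\end{equation}
Now expand the right-hand side of \eqref{planeq2} term by term, repeatedly applying Lemma \ref{ele1}(iii) in the forms $Y^2 = x_2 Y - I$ and $X^2 = x_1 X - I$ to reduce every monomial to the spanning set $\{I, X, Y, XY, YX\}$ of the relevant module; note $XYYX = X(x_2 Y - I)X = x_2 XYX - X^2 = x_2 XYX - x_1 X + I$, and $XYX$ itself should be reduced using $\Tr(XYX) = \Tr(X^2 Y)$ together with $X^2 = x_1X - I$, giving the standard Fricke relation $XYX + YXY$ or equivalently an expression for $XYX$ in terms of $XY$, $YX$, $X$, $Y$, $I$. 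Carrying this out, every term on the right collapses and the coefficients of $X$, $Y$, $I$, $YX$ cancel while the coefficient of $XY$ comes out to $1$, matching the left-hand side. This is the routine core of the computation and I would not write out every line.

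The one genuine subtlety — the step I expect to be the main bookkeeping obstacle — is handling the mixed monomial $XYX$ (and $YXY$), since neither Cayley--Hamilton relation directly applies to it; one needs the trace identity $\Tr(XYX) = \Tr(YX^2) = x_1 \Tr(XY) - \Tr(Y) = x_1 x_3 - x_2$ combined with the fact that $XYX$, being a $2\times 2$ matrix, satisfies its own characteristic equation, to express $XYX = (x_1 x_3 - x_2) I - (XYX)^{-1} = (x_1 x_3 - x_2)I - YXY^{-1}X^{-1}\cdot(\det)$; a cleaner route is to stay at the level of \eqref{planeq} and verify it by checking it against a faithful $2$-dimensional representation, i.e. since both sides are polynomials in the entries of $X$ and $Y$ with coefficients in $\Z$, it is enough to prove the identity for the generic matrices $X, Y$ over the polynomial ring $\Z[x_{ij}, y_{ij}, (\det X)^{\pm 1}, (\det Y)^{\pm 1}]$ and then specialise $\det X = \det Y = 1$ — a finite, mechanical $2\times 2$ computation. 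Either way the proof is elementary; I would present the algebraic reduction via Lemma \ref{ele1}, flagging the $XYX$ reduction as the only point requiring care, and conclude by substituting $Y^{-1} = x_2 I - Y$ back to recover the stated form \eqref{Ap1.1}.
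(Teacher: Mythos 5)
Your approach is correct in outline but takes a genuinely different and more computational route than the paper. The paper's proof is a three-line factoring trick: write $X^{-1}=x_1 I-X$ to get $W(X,Y)=-(XY)^2Y^{-2}+x_1X$, then apply the quadratic relation (your Lemma~\ref{ele1}(iii)) separately to $XY$ and to $Y^{-1}$, giving $W(X,Y)=-(x_3\,XY-I)(x_2\,Y^{-1}-I)+x_1X$, and expand. This never produces a mixed monomial like $XYX$, so there is no bookkeeping obstacle at all. Your route — clearing inverses by multiplying \eqref{planeq} on the right by $YX$, then reducing everything modulo $X^2=x_1X-I$, $Y^2=x_2Y-I$ — works, but it forces you into the $XYX$ problem, and the explicit reduction you sketch for it is not quite right: $(XYX)^{-1}=X^{-1}Y^{-1}X^{-1}$, not $YXY^{-1}X^{-1}$, and Cayley--Hamilton for $XYX$ only trades $XYX$ for $(XYX)^{-1}$, which reintroduces inverses. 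The correct closed form, obtainable from the $2\times2$ product identity $AB+BA=(\Tr A)B+(\Tr B)A+(\Tr AB-\Tr A\Tr B)I$ applied with $A=X$, $B=YX$, is $XYX=Y+x_3X-x_2 I$; substituting this, your expansion reduces precisely to the Fricke relation $XY+YX=x_1Y+x_2X+(x_3-x_1x_2)I$, which closes the argument. Your fallback — since both sides of \eqref{planeq} are polynomial in the matrix entries, verify the identity for generic $2\times2$ matrices and specialise $\det=1$ — is also sound and sidesteps all of this, but it is heavier machinery than the paper needs. In short: correct strategy, one garbled step (flagged by you), and the paper's factoring of $W(X,Y)$ through $-(XY)^2Y^{-2}+x_1X$ is the observation that makes the whole thing frictionless.
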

\begin{proof} We have
\begin{align*}
W(X,Y)=XYX^{-1}Y^{-1}&=XY(-X+ x_1I)Y^{-1},\\&=-(XY)^{2}Y^{-2} +x_1X,\\&=-(x_3XY -I)(x_2Y^{-1} -I)+x_1X,
\end{align*}
from which the result follows on expanding.
\end{proof}

\begin{remark} Taking trace of \eqref{Ap1.1} gives the Fricke identity, and so the Markoff equation $M(x_1,x_2,x_3)=x_1^2+x_2^2+x_3^2-x_1x_2x_3=k$. We say ${\bf x}=(x_1,x_2,x_3)\in \mathcal{M}_k(D)$ with $k=\Tr W(X,Y)+2$.
\end{remark}

\begin{remark}
If $W(X,Y)\neq I$, then the coefficients in \eqref{Ap1.1} are uniquely determined in the following sense: if  $W(X,Y)=\alpha XY+\beta X+\delta Y^{-1}-I$, then $\alpha =\Tr XY$, $\beta=\Tr X -\Tr Y\Tr XY$ and $\gamma=\Tr Y$. This is because if we suppose not,  subtracting from \eqref{Ap1.1} gives $(x_3-\alpha)XY= s_1X + t_1Y^{-1}$, for some $s_1,\ t_1\in D$ not both zero. Since $W(X,Y)\neq I$, we have $\alpha \neq x_3$, so that $XY= s_2X + t_2Y^{-1}$ for some $s_2$ and $t_2$, so that $X(Y- s_2I)=t_2Y^{-1}$. If $|Y-s_2I|=0$, we have $t_2=0$, so that $Y=s_2I$, giving $W(X,Y)=I$. Hence, $X=t_2Y^{-1}(Y-s_2I)^{-1}$. But $(Y-s_2I)(Y^{-1}-s_2I)=(s_2^2-x_2s_2+1)I\neq 0$. Hence $X$ is a linear combination of $Y^{-1}$ and $Y^{-2}$, and so commutes with $Y$, giving $W(X,Y)=I$. This provides the contradiction.
\end{remark}

\begin{definition}\label{traceset}
For a matrix $A$ defined over $D$, we let $\mathfrak{S}(A)$ denote the set over $D$ given by
\[
\mathfrak{S}(A)=\{X : \Tr AX=\Tr X \ , |X|=1\}.
\]
\end{definition}

\begin{lemma}\label{traceq}
Let $X,\ Y,\ Z\in \Gamma_D$. Suppose $D$ is an integral domain and $\Tr Z\neq 2$ or $D$ is a ring with $\Tr Z -2\in D^{\times}$. Then
\[
Z=W(X,Y)\quad \text{if and only if}\quad Y\in \mathfrak{S}(Z),\ \text{and}\  \ X,\  XY\in \mathfrak{S}(Z^{-1}).
\]
\end{lemma}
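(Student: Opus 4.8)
\textbf{Proof plan for Lemma \ref{traceq}.}

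The plan is to establish the two directions separately, relying on Proposition \ref{identity} (the Fricke-type identity $W(X,Y)=x_3XY+(x_1-x_2x_3)X+x_2Y^{-1}-I$) and the elementary identities in Lemma \ref{ele1}. The forward direction is essentially a trace computation. Assume $Z=W(X,Y)=XYX^{-1}Y^{-1}$. First I would show $Y\in\mathfrak S(Z)$: using $\Tr(ZY)=\Tr(XYX^{-1}Y^{-1}Y)=\Tr(XYX^{-1})=\Tr Y$, since trace is conjugation-invariant. For $X$ and $XY$, the cleanest route is to exploit the permutation table in Lemma \ref{perms}: the pair $(X,Y)$ with commutator $Z$ is taken by the permutation $(3,1,2)$ to the pair $(XY,X^{-1})$ whose commutator is again $Z$, while $(2,1,3)$ sends it to $(Y,X)$ with commutator $Z^{-1}$. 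Applying the observation just proved ($Y\in\mathfrak S(\text{commutator of }(X,Y))$) to these transformed pairs yields $X^{-1}\in\mathfrak S(Z)$ and $X\in\mathfrak S(Z^{-1})$, and similarly $XY\in\mathfrak S(Z^{-1})$. Alternatively, and perhaps more transparently, I would just directly compute: $\Tr(Z^{-1}X)=\Tr(YXY^{-1}X^{-1}X)=\Tr(YXY^{-1})=\Tr X$, and $\Tr(Z^{-1}XY)=\Tr(YXY^{-1}X^{-1}XY)=\Tr(YXY^{-1}\cdot Y)\cdot$ — wait, that needs care — $\Tr(Z^{-1}XY)=\Tr(YXY^{-1}X^{-1}\cdot XY)=\Tr(YXY^{-1}\cdot Y)=\Tr(YX)=\Tr(XY)$. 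So all three membership conditions follow from invariance of trace under conjugation, with no hypothesis on $\Tr Z$ needed for this direction.

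The converse is where the hypothesis $\Tr Z-2\in D^\times$ (or $\Tr Z\neq 2$ over a domain) enters. Suppose $Y\in\mathfrak S(Z)$ and $X,XY\in\mathfrak S(Z^{-1})$. Set $x_1=\Tr X$, $x_2=\Tr Y$, $x_3=\Tr XY$, $t=\Tr Z$. The strategy is to show $W(X,Y)=Z$ by comparing both sides as explicit matrix expressions. By Proposition \ref{identity}, $W(X,Y)=x_3XY+(x_1-x_2x_3)X+x_2Y^{-1}-I$. On the other hand I would like to express $Z$ itself in the same $\{XY,X,Y^{-1},I\}$-frame; the membership hypotheses give linear trace conditions $\Tr(ZY)=x_2$, $\Tr(Z^{-1}X)=x_1$, $\Tr(Z^{-1}XY)=x_3$, and these together with $\Tr Z = t$ should pin down $Z$ uniquely provided the relevant $4\times 4$ "trace-pairing" matrix (pairing $Z$ against $I,Y,$ and the conjugates hidden in $Z^{-1}X$, $Z^{-1}XY$) is invertible — and its determinant will turn out to be a unit times a power of $(t-2)$ or of $\Delta=t+2-x_2^2$-type quantities. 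Concretely, I expect the argument to mirror the uniqueness half of Lemma \ref{lift2}: if $Z_1$ and $Z_2$ both satisfy all the trace constraints, then $Z_1-Z_2$ is killed by pairing against a spanning set, forcing $Z_1=Z_2$, and since $W(X,Y)$ is checked (via the forward direction) to satisfy all the constraints, $Z=W(X,Y)$.

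The main obstacle will be handling the degenerate configurations and confirming that the hypothesis "$\Tr Z-2$ a unit" is exactly what rescues us. Specifically: the trace constraints determine $Z$ uniquely only when $X,Y$ generate a "large enough" subalgebra of $\mathbb M_2(D)$ — i.e. when $I,Y,XY,X$ (or an equivalent four-element set) span $\mathbb M_2(D)$ over the fraction field, which fails precisely when $X$ and $Y$ commute, and in the commuting case $W(X,Y)=I$ while $\Tr Z=2$; the hypothesis $\Tr Z\neq 2$ rules this out. I would need to check that outside this degenerate case the spanning/invertibility genuinely holds — I anticipate computing a determinant of the form $\pm(x_1^2+x_2^2+x_3^2-x_1x_2x_3-4)=\pm(t-2)$ (the discriminant of the character variety embedding), so that over a domain with $t\neq 2$ it is nonzero, and over a general ring the unit hypothesis makes it invertible, closing the argument. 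The bookkeeping of which conjugate of $X$ or $Y$ appears, and making sure one never divides by a non-unit, is the delicate part; everything else is routine manipulation with Lemma \ref{ele1} and Proposition \ref{identity}.
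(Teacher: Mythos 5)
Your forward direction is fine and matches the paper's (which simply calls it ``immediate''): the three membership conditions follow from conjugation-invariance of the trace, and you do not need any hypothesis on $\Tr Z$ there. It is the converse where your plan has a genuine gap.

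Your strategy for the converse is a trace-pairing/uniqueness argument: pin $Z$ down uniquely by the four linear constraints $\Tr Z = t$, $\Tr(ZY)=x_2$, $\Tr(Z^{-1}X)=x_1$, $\Tr(Z^{-1}XY)=x_3$, and then conclude $Z=W(X,Y)$ because ``$W(X,Y)$ is checked (via the forward direction) to satisfy all the constraints.'' But the forward direction only gives you that $W:=W(X,Y)$ satisfies $Y\in\mathfrak S(W)$ and $X,XY\in\mathfrak S(W^{-1})$ — it gives you three of the four constraints, with $\Tr W$ playing the role of $t$. It does \emph{not} tell you that $\Tr W = t$. That is precisely the crux of the converse, and it is not automatic. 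Worse, the Gram determinant of $\{I,X,Y,XY\}$ under the trace pairing is $-(x_1^2+x_2^2+x_3^2-x_1x_2x_3-4)^2=-(\Tr W-2)^2$, \emph{not} $-(t-2)^2$ — the Markoff expression $x_1^2+x_2^2+x_3^2-x_1x_2x_3$ equals $\Tr W(X,Y)+2$ by Fricke, and this is intrinsic to $X,Y$ independently of $Z$. So the hypothesis ``$\Tr Z-2$ is a unit'' does not directly make your pairing matrix invertible; you would have to already know $\Tr W = t$ for that, which is circular.

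The paper's proof sidesteps all of this by never invoking an abstract uniqueness principle. It converts each of the three trace hypotheses into a matrix identity via Lemma \ref{ele1}(i), substitutes to get $(Y^{-1}X^{-1}-X^{-1}Y^{-1})Z=Y^{-1}X^{-1}+(1-t)X^{-1}Y^{-1}$, and multiplies by $XY$ to obtain the compact relation $(I-W)Z=I+(1-t)W$. Taking determinants of both sides (using $\det Z=1$ and $\det W=1$) produces the scalar equation $2-\Tr W = 1+(1-t)^2+(1-t)\Tr W$, i.e.\ $(2-t)\Tr W=t(2-t)$; this is where the unit hypothesis first enters, yielding $\Tr W=t$. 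Only then is $2-t=2-\Tr W$, and multiplying the matrix relation by $I-W'$ collapses it to $(2-t)Z=(2-t)W$, where the unit hypothesis is used a second time to cancel. If you want to repair your plan you would essentially have to reproduce this step: from the hypotheses derive a closed equation relating $Z$, $W$, $t$, and $\Tr W$, and then use $\det Z=\det W=1$ to force $\Tr W=t$ before any invertibility or uniqueness claim becomes available.
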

\begin{proof} That the left implies the right is immediate. 

We now assume the right and put $\Tr Z=t$. We convert the trace equations on the right to matrix form using Lemma \ref{ele1}, giving

\begin{enumerate}[label=(\roman*).]
\item $ZY= Y+Y^{-1}-Y^{-1}Z^{-1}$\ ,
\item $Z^{-1}X=X+X^{-1}-X^{-1}Z$\ ,\  and
\item $XY + Y^{-1}X^{-1}= Z^{-1}XY + Y^{-1}X^{-1}Z$\ .
\end{enumerate}
Substituting the first two into the third, and using Lemma \ref{ele1}(i) to replace $Z^{-1}$ with $Z$, and simplifying, gives 
\[
(Y^{-1}X^{-1}-X^{-1}Y^{-1})Z = Y^{-1}X^{-1} +(1-t)X^{-1}Y^{-1}.
\]
Multiplying with $XY$ and putting $W(X,Y)=W$ gives
\[
(I-W)Z=I+(1-t)W\ .
\]
All of the above is true for any $t$. In what follows, we assume $t\neq 2$. 

We take determinants of both sides: $|I-W|=2-\Tr W$, and $|I+(1-t)W|= 1+(1-t)^2 +(1-t)\Tr W$. Equating the two gives $\Tr W=t$ . Then multiplying both sides of the equation with $(I-W')$ and simplifying gives $(2-t)Z=(2-t)W$, from which the result follows.
\end{proof}

\begin{remark}\label{tracezero} Combining Lemmas \ref{identity} and \ref{traceq} shows that $-I$ is a commutator in $D$ if and only if there exist matrices $X$ and $Y\in \Gamma_D$ such that $\Tr X=\Tr Y= \Tr XY=0$, in which case $-I = W(X,Y)$. Finding such $X$ and $Y$ satisfying the trace equations is problematic in general as it requires solving simultaneously three quadratic equations over $D$, coming from $\Tr XY=0$ and $|X|=|Y|=1$. It is straightforward to see that $-I$ is not a commutator in $\SLR$, and hence not in any subring.

For a PID, one can say more. Suppose $r_1^2+r_2^2+r_3^2=0$ with $r_j\in D$ not all zero. Choose $r_1$ and $r_2$ so that $\gcd(r_1,r_2)=1$. Pick $u$ and $v\in D$ such that $r_1u-r_2v=1$. Put $X= \left[\begin{smallmatrix} ur_3&u^2r_2+v(r_1u+1)\\r_2&-ur_3\end{smallmatrix}\right]$ and $Y= \left[\begin{smallmatrix} vr_3&v^2r_1+u(r_2v-1)\\r_1&-vr_3\end{smallmatrix}\right]$, both in $\Gamma_D$. Then $W(X,Y)=-I$. Conversely, given the latter, by equating the traces and determinants, we get a non-trivial solution to the sum of three squares being zero. 
\end{remark}

%%%%%%%%%%%%%%%%%%%%%%%%%%%%%%%%%%%%%%%%%%%%%%%%%%%%%%%%%%%%%%%%%%%%%%%%%%%%%%%%%%%%%%%%%%%%%%%%%%%%%%%%%%%%%%%%%%%%%%%%%%%%%%%%%%
\section{Local analysis}\label{App.B}\
%%%%%%%%%%%%%%%%%%%%%%%%%%%%%%%%%%%%%%%%%%%%%%%%%%%%%%%%%%%%%%%%%%%%%%%%%%%%%%%%%%%%%%%%%%%%%%%%%%%%%%%%%%%%%%%%%%%

\begin{lemma}[Elementary]\label{lemB1} Let $p$ be an odd prime and $\chi(.)$ the Legendre symbol mod $p$. 
\begin{enumerate}[label=\upshape(\roman*).]
\item The number  of solutions to $x^2-\Delta y^2\equiv n$ (mod $p$) is  
\[
= \left\{ \begin{array}{ll}
 [1+\chi(n)]p & \text{if}\ \ p\nmid n\ \text{and}\ p|\Delta \ , \\
 p-\chi(\Delta) & \text{if}\ \ p\nmid n\ \text{and}\ p\nmid\Delta \ , \\
 p[1+\chi(\Delta)] -\chi(\Delta) & \text{if}\ \ p|n\ . \\
\end{array} \right. 
\]
\item Let $\Delta=b^2-4ac$. Then
\[
\sum_{m,n\ ({\rm mod} p)}e_p(am^2+bmn+cn^2)= \left\{ \begin{array}{ll}
 \chi(\Delta)p & \text{if}\ \ p\nmid \Delta \ , \\
 \chi(a)pS_p & \text{if}\ \ p\nmid a\ \text{or}\ c, \ \text{and}\ p|\Delta \ , \\
 p^2 & \text{if}\ p|a\ ,b\ \text{and}\ c\ , \\
\end{array} \right. 
\]
\end{enumerate}
where $S_p = \sum_{x}e_p(x^2)$ is the Gauss sum.
\end{lemma}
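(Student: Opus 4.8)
\textbf{Proof plan for Lemma \ref{lemB1}.}

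The plan is to prove both parts by standard finite-field character sum manipulations. For part (i), I would count solutions of $x^2 - \Delta y^2 \equiv n$ using the Gauss/quadratic character. Write $N(n) = \#\{(x,y) : x^2 - \Delta y^2 \equiv n\}$. The number of $x$ with $x^2 \equiv c$ is $1 + \chi(c)$ for $c \not\equiv 0$ and $1$ for $c \equiv 0$, which can be written uniformly as $1 + \chi(c)$ with the convention $\chi(0) = 0$. So $N(n) = \sum_{c+d \equiv n}(1+\chi(c))(1+\chi(\Delta^{-1} d))$ where $c$ ranges over values of $x^2$ and $d$ over values of $\Delta y^2$; more cleanly, $N(n) = \sum_{u,v : u - \Delta v \equiv n}(1+\chi(u))(1+\chi(v))$. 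Expanding gives $p + \sum_u \chi(u) + \sum_v \chi(v) + \sum_{u-\Delta v \equiv n}\chi(u)\chi(v)$. The two linear sums $\sum_u \chi(u)$ over all residues vanish (when the variable genuinely ranges over all of $\mathbb{F}_p$). The cross term $\sum_v \chi(v)\chi(n+\Delta v) = \chi(\Delta)\sum_v \chi(v)\chi(\Delta^{-1}n + v)$ is the classical Jacobsthal-type sum, equal to $-\chi(\Delta)$ when $\Delta^{-1}n \not\equiv 0$ (i.e. $p \nmid n$, $p\nmid \Delta$) and equal to $\chi(\Delta)(p-1)$ when $p | n$. I would then split into the three cases $p \nmid n, p | \Delta$; $p\nmid n, p \nmid \Delta$; and $p | n$, tracking which of the linear sums degenerate (when $\Delta \equiv 0$ the variable $v$ no longer appears so $\sum_v \chi(v)$ must be re-examined), and collect terms to match the stated formulas.

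For part (ii), I would evaluate $T = \sum_{m,n} e_p(am^2 + bmn + cn^2)$ with $\Delta = b^2 - 4ac$. In the generic case $p \nmid a$, complete the square: $4a(am^2+bmn+cn^2) = (2am+bn)^2 - \Delta n^2$, so $T = \sum_n e_p(-\overline{4a}\,\Delta n^2)\sum_m e_p(\overline{4a}(2am+bn)^2)$; for each fixed $n$ the inner sum over $m$ is a complete quadratic Gauss sum $\chi(\overline{4a} \cdot a)\,S_p = \chi(a)S_p$ after the substitution $m' = 2am+bn$ (valid since $p\nmid 2a$). This leaves $\chi(a)S_p \sum_n e_p(-\overline{4a}\Delta n^2)$. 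If $p \nmid \Delta$ the remaining sum is $\chi(-\overline{4a}\Delta)S_p = \chi(-a\Delta)S_p$ and one uses $S_p^2 = \chi(-1)p$ to get $T = \chi(a)\chi(-a\Delta)\chi(-1)p = \chi(\Delta)p$. If $p | \Delta$ the $n$-sum is just $\sum_n 1 = p$, giving $T = \chi(a)pS_p$. The case $p | a$ but $p \nmid c$ follows by symmetry (swap roles of $m,n$, noting $\Delta$ is symmetric). Finally if $p | a, b, c$ every exponent is $0$ and $T = p^2$.

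I would also handle the residual degenerate configurations not explicitly separated in the statement: in part (ii) the case $p | a$ and $p | c$ but $p \nmid b$ forces $p \nmid \Delta$, so it falls under the first line and should be checked directly ($\sum_{m,n}e_p(bmn) = p\cdot[\text{$n$-sum}]$ type argument gives $0 = \chi(\Delta)p$? — no: here one gets $\sum_{m,n} e_p(bmn) = p \cdot \#\{n : bn \equiv 0\} = p$ when... actually $\sum_m e_p(bmn) = p$ if $n\equiv 0$ and $0$ otherwise, so the total is $p$, and indeed $\chi(\Delta) = \chi(b^2) = 1$, consistent). The main obstacle I anticipate is purely bookkeeping: making sure the convention $\chi(0)=0$ is applied consistently and that each degenerate sub-case (a variable dropping out when $\Delta \equiv 0$, or a leading coefficient vanishing mod $p$) is reconciled with the compact case division in the statement, since the lemma as written is slightly terse about exactly which hypotheses pin down which line. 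None of the individual computations is hard; the care is entirely in the case analysis.
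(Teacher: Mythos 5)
The paper states Lemma~\ref{lemB1} as ``[Elementary]'' and gives no proof, so there is no internal argument to compare against. Your plan is the standard one and is correct: for part (i), writing the solution count as $\sum_{u-\Delta v\equiv n}(1+\chi(u))(1+\chi(v))$, expanding, and evaluating the linear sums and the Jacobsthal-type cross term $\sum_v\chi(v)\chi(n+\Delta v)$ case by case (taking care when $\Delta\equiv 0$ collapses a sum); for part (ii), completing the square when a leading coefficient is invertible, reducing the inner sum to a quadratic Gauss sum, and using $S_p^2=\chi(-1)p$. All the case checks you list come out consistent.

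One small point worth flagging, which is really an imprecision in the lemma's statement rather than a gap in your proof: in the second branch of part (ii), if $p\mid a$ but $p\nmid c$, then $p\mid\Delta$ forces $p\mid b$, the form degenerates to $cn^2$, and the correct value is $\chi(c)\,p\,S_p$, not $\chi(a)\,p\,S_p=0$. When $p\nmid a$ and $p\nmid c$ one has $b^2\equiv 4ac\pmod p$ with $p\nmid b$, hence $\chi(a)=\chi(c)$, so the printed formula is unambiguous in that sub-case. You noticed this implicitly (``by symmetry''), and in the paper the lemma is only applied with the leading coefficient a unit (see Lemma~\ref{corB1}), so the misprint is harmless downstream; still, it would be cleaner to state the second branch as $\chi(a)pS_p$ when $p\nmid a$ and $\chi(c)pS_p$ when $p\nmid c$.
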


\begin{lemma}\label{corB1} For $p\geq 3$ a prime, let $\Gamma=\SL_2(\mathbb{Z}/p\mathbb{Z})$.  For any $\zeta$ with $p\nmid \zeta$, let $A= \left[\begin{smallmatrix} a&b\\ c&d \end{smallmatrix}\right]$ and $B= \left[\begin{smallmatrix} a&b\zeta\\ c\zeta^{-1}&d \end{smallmatrix}\right]$ be in $\Gamma$. Then, if $A\neq \pm I$, $\Tr A\equiv \pm 2$ and $\zeta$ is a QNR (mod $p$), $A$ and $B$ are not conjugates in $\Gamma$, and otherwise  $A$ and $B$ are $\Gamma$-conjugates.
\end{lemma}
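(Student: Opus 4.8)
\textbf{Proof plan for Lemma \ref{corB1}.}

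The plan is to work with the rational canonical form of $A$ over $\FF_p$ and track how the conjugation by the diagonal matrix $\delta_\zeta := \left[\begin{smallmatrix} 1 & 0 \\ 0 & \zeta^{-1}\end{smallmatrix}\right]$ (which is not in $\Gamma$ unless $\zeta$ is a square, but conjugation by it sends $A$ to something $\Gamma$-conjugate to $B$ after rescaling) interacts with the conjugacy classes of $\SL_2(\FF_p)$. The cleaner route is simply to classify the conjugacy classes in $\Gamma = \SL_2(\FF_p)$ of a given trace and compare the classes of $A$ and $B$ directly. First I would dispose of the generic case $\Tr A \not\equiv \pm 2$: then $A$ is regular semisimple and its $\GL_2(\FF_p)$-conjugacy class is determined by its characteristic polynomial; since $B$ has the same trace and determinant as $A$, $A$ and $B$ are $\GL_2(\FF_p)$-conjugate, and because the centralizer of a regular semisimple element of $\SL_2$ is connected (a torus), the $\GL_2$-class intersected with $\SL_2$ is a single $\SL_2$-class, so $A \sim_\Gamma B$. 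This already shows the ``otherwise'' conclusion holds whenever $\Tr A \not\equiv \pm 2$, regardless of $\zeta$.

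Next I would treat $\Tr A \equiv \pm 2$ with $A \ne \pm I$. Replacing $A$ by $-A$ if necessary (which replaces $B$ by $-B$ and does not affect the conjugacy comparison), assume $\Tr A = 2$, so $A$ is unipotent and $\ne I$. The unipotent elements of $\SL_2(\FF_p)$ of trace $2$ fall into exactly two $\Gamma$-conjugacy classes, represented by $\left[\begin{smallmatrix} 1 & 1 \\ 0 & 1\end{smallmatrix}\right]$ and $\left[\begin{smallmatrix} 1 & \varepsilon \\ 0 & 1\end{smallmatrix}\right]$ where $\varepsilon$ is a fixed non-residue; these are distinguished by the class of the off-diagonal-type invariant, concretely by $\chi$ of the unique (up to squares) nonzero entry realizing the Jordan form. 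The key computation is then: conjugating $A$ by $\diag(t, t^{-1})$ scales the strictly-upper entry by $t^2$, so the class-invariant of $A$ is unchanged by such conjugations; passing from $A$ to $B$ multiplies the $(1,2)$ entry by $\zeta$ and the $(2,1)$ entry by $\zeta^{-1}$, and one checks (using that $A$ is conjugate to an upper triangular unipotent, and following the entry that becomes the Jordan entry) that the class-invariant of $B$ equals $\chi(\zeta)$ times that of $A$. Hence $A \sim_\Gamma B$ iff $\chi(\zeta) = 1$, which is exactly the stated dichotomy.

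The main obstacle I anticipate is the bookkeeping in the unipotent case: one must argue that the ``Jordan entry'' class-invariant transforms by the factor $\chi(\zeta)$ under $A \mapsto B$, uniformly over all $A \ne \pm I$ of trace $\pm 2$, including the cases where $b = 0$ or $c = 0$ in $A$. I would handle $b=0,\,c\ne 0$ (and symmetrically $c = 0,\, b \ne 0$) by first conjugating within $\Gamma$ to bring $A$ to upper-triangular form and checking that the conjugating element can be chosen to interact correctly with the $\zeta$-scaling — or, more slickly, by invoking the first part of Lemma \ref{lemB1}: the number of solutions to the relevant quadratic $x^2 - \Delta y^2 \equiv n$ governs the sizes/splitting of these classes, and counting fixed points of conjugation shows $A$ and $B$ lie in the same class precisely when a certain quadratic character value is trivial. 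Finally I would note the degenerate case $A = \pm I$ is excluded by hypothesis (there $B = A$ trivially), completing the proof.
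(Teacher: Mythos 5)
Your plan is correct, but it is a genuinely different route from the paper's. The paper's proof is a single uniform computation: it writes out the conjugacy requirement $\gamma A = B\gamma$, eliminates $\gamma_2,\gamma_4$, and lands on a single binary quadratic equation $c\gamma_1^2 - (d-a)\zeta\gamma_1\gamma_3 - b\zeta^2\gamma_3^2 = c\zeta$ with discriminant $(\Tr A)^2 - 4$; Lemma~\ref{lemB1} then reads off exactly when this has solutions, and the $\chi(\zeta)$ dichotomy falls out of the solution count when the discriminant vanishes. There is no case split: unipotent vs.\ regular semisimple, and $b=0$ vs.\ $b\ne 0$, are all handled by the same formula. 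Your approach instead partitions by type of the element. For $\Tr A \not\equiv \pm 2$ you argue via the characteristic polynomial that $A$ and $B$ are $\GL_2(\FF_p)$-conjugate and then use that a maximal torus in $\GL_2$ surjects onto $\FF_p^\times$ under $\det$ (equivalently, Lang's theorem for the connected centralizer) to descend to $\SL_2$-conjugacy. For $\Tr A \equiv \pm 2$, $A \ne \pm I$, you identify the two unipotent $\SL_2$-classes by a square-class invariant (the entry in the upper-triangular normal form read modulo squares) and track that passing $A \mapsto B$ multiplies it by $\chi(\zeta)$.

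What each approach buys: the paper's version is shorter and self-contained given the solution-counting lemma it has already isolated, and it avoids discussing centralizers or canonical forms at all; it also yields, for free, a quantitative statement (how many conjugating $\gamma$ exist). Your version is more structural and arguably explains \emph{why} only the unipotent classes can split, but it leans on facts about conjugacy in $\SL_2(\FF_p)$ that must themselves be justified, and the unipotent bookkeeping is the part you leave as ``one checks.'' That step is real work: to see that the Jordan-entry invariant of $B$ is $\chi(\zeta)$ times that of $A$ uniformly, you must verify it in all three configurations $b\ne 0$, $b=0\ne c$, and both nonzero, and in particular you cannot simply transpose to reduce $b=0$ to $b\ne 0$, because conjugation by the Weyl element sends $\left[\begin{smallmatrix}1&u\\0&1\end{smallmatrix}\right]$ to $\left[\begin{smallmatrix}1&-u\\0&1\end{smallmatrix}\right]$, flipping the invariant by $\chi(-1)$. (A clean fix: when $b\ne 0$, use the eigenvector $(b,1-a)^{\top}$ to compute that the invariant is $\chi(b)$, and hence $\chi(b\zeta)=\chi(b)\chi(\zeta)$ for $B$; when $b=0$, conjugate by $S=\left[\begin{smallmatrix}0&-1\\1&0\end{smallmatrix}\right]$ to get invariant $\chi(-c)$, and $\chi(-c\zeta^{-1})=\chi(-c)\chi(\zeta)$ for $B$.) With that filled in, your argument is complete and correct; your fallback of invoking Lemma~\ref{lemB1} via solution counts would simply collapse back into the paper's own proof.
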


\begin{proof} We seek $\gamma=\left[\begin{smallmatrix} \gamma_1&\gamma_2\\ \gamma_3&\gamma_4 \end{smallmatrix}\right]$ in $\Gamma$ satisfying $\gamma A=B\gamma$. If $p|b$ and $p|c$ , take $\gamma=I$. So, we assume $p\nmid c$; otherwise we transpose everything. This leads to the equation 
\[c\gamma_1^2 -(d-a)\zeta\gamma_1\gamma_3 -b\zeta^2\gamma_3^2=c\zeta,
\]
with discriminant $\Delta=(\Tr A)^2-4$, with $\gamma_2$ and $\gamma_4$ determined. We apply Lemma \ref{lemB1} with $n=4c^2\zeta$.

 If $\Tr A \equiv \pm2$ (mod $p$), and if $\zeta$ is a quadratic nonresidue, there are no solutions so that $A$ and $B$ are not conjugates. Otherwise there are solutions and $A$ and $B$ are $\Gamma$-conjugates.
\end{proof}

\begin{lemma}\label{corB2} For $p\geq 3$ a prime and $\Gamma=\SL_2(\mathbb{Z}/p\mathbb{Z})$, if $t \not\equiv \pm 2$ (mod $p$), there is exactly one conjugacy class with trace $t$ in $\Gamma$, with representative $\left[\begin{smallmatrix} 0&1\\ -1&t \end{smallmatrix}\right]$.
\end{lemma}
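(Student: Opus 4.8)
\textbf{Proof plan for Lemma \ref{corB2}.}
The statement to prove is that for an odd prime $p$ and $t\not\equiv\pm2\pmod p$, the group $\Gamma=\SL_2(\Z/p\Z)$ has exactly one conjugacy class of matrices of trace $t$, represented by $C_t:=\left[\begin{smallmatrix}0&1\\-1&t\end{smallmatrix}\right]$. The plan is to fix any $A\in\Gamma$ with $\Tr A=t$ and exhibit an explicit $\gamma\in\Gamma$ with $\gamma A\gamma^{-1}=C_t$, by a rational-canonical-form argument adapted to $\SL_2$.

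First I would note that $C_t$ is the companion matrix of the characteristic polynomial $X^2-tX+1$, which (since $t\not\equiv\pm2$) has nonzero discriminant $\Delta=t^2-4$, so $A$ has two distinct eigenvalues over $\overline{\F_p}$ and in particular $A\neq\pm I$ and $A$ is not a scalar; hence $A-sI\neq0$ for every scalar $s$. The key step is to produce a vector $v\in(\Z/p\Z)^2$ that is \emph{cyclic} for $A$, i.e.\ such that $\{v,Av\}$ is a basis: since $A$ is non-scalar, picking $v$ outside the (at most one-dimensional, since $A\ne sI$) kernel of $A-sI$ for the finitely many relevant scalars — concretely, simply choosing $v$ so that $v$ and $Av$ are linearly independent, which fails only on a proper subset — gives such a $v$. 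With respect to the ordered basis $(v,Av)$, the matrix of $A$ is exactly $C_t$, because $A\cdot v = Av$ and $A\cdot(Av)=A^2v=tAv-v$ (using the Cayley--Hamilton relation $A^2=tA-I$ from Lemma \ref{ele1}(iii) together with $|A|=1$). Let $g$ be the change-of-basis matrix with columns $v$ and $Av$; then $g^{-1}Ag=C_t$. This $g$ lies in $\GL_2(\Z/p\Z)$ but perhaps not in $\SL_2$; to fix the determinant, replace $v$ by $\det(g)^{-1}v$ (legitimate since $\det g\neq0$ and $\Z/p\Z$ is a field), which scales the first column and leaves the relation intact — or, more cleanly, observe that the scalar matrix $\mathrm{diag}(\det g,1)$ conjugates $C_t$ to itself in $\GL_2$ would not quite work, so the cleanest route is: rescale $v\mapsto \lambda v$ with $\lambda^{-2}=\det g$ when $\det g$ is a square, and otherwise note that $g':=g\cdot\mathrm{diag}(\det(g)^{-1},1)$ has determinant $1$ and still satisfies $g'^{-1}Ag'=\mathrm{diag}(\det g,1)^{-1}C_t\,\mathrm{diag}(\det g,1)=C_t$ since $C_t$ has the form $\left[\begin{smallmatrix}0&1\\-1&t\end{smallmatrix}\right]$ and conjugation by $\mathrm{diag}(u,1)$ sends it to $\left[\begin{smallmatrix}0&u^{-1}\\-u&t\end{smallmatrix}\right]$ — so in fact I should instead absorb the determinant correction into a rescaling of the cyclic vector, which is the genuinely clean argument: replacing $v$ by $uv$ multiplies both columns' ``$v$-part'' consistently and multiplies $\det g$ by $u^2$, so this only handles square determinants. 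The honest fix is therefore: conjugate $C_t$ by $\mathrm{diag}(u,1)$ to $\left[\begin{smallmatrix}0&u^{-1}\\-u&t\end{smallmatrix}\right]$ and check directly that this latter matrix is $\SL_2$-conjugate to $C_t$ for every $u$, e.g.\ via $\mathrm{diag}(u,1)=\mathrm{diag}(u,u^{-1})\cdot\mathrm{diag}(1,u)$ and handling the $\SL_2$-scalar part separately — so the cleanest packaging is to invoke Lemma \ref{corB1} with $A=C_t$, $B=\left[\begin{smallmatrix}0&\zeta\\-\zeta^{-1}&t\end{smallmatrix}\right]$ and $\zeta=u$: since $\Tr C_t=t\not\equiv\pm2$, that lemma gives $\Gamma$-conjugacy of $C_t$ and $B$ for \emph{every} $\zeta$, which absorbs exactly the determinant ambiguity.

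So the proof assembles as: (1) Cayley--Hamilton gives $g^{-1}Ag=C_t$ for $g\in\GL_2(\Z/p\Z)$ built from a cyclic vector, which exists since $A$ is non-scalar; (2) writing $g=\mathrm{diag}(\det g,1)\cdot g_1$ with $g_1\in\SL_2$ shows $A$ is $\SL_2$-conjugate to $\mathrm{diag}(\det g,1)^{-1}C_t\,\mathrm{diag}(\det g,1)$, which by Lemma \ref{corB1} (applicable precisely because $t\not\equiv\pm2$) is $\SL_2$-conjugate to $C_t$; (3) hence every trace-$t$ matrix is $\Gamma$-conjugate to $C_t$, giving at most one class, and there is at least one since $C_t$ itself has trace $t$. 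I expect the only real subtlety — and the step I would be most careful about — is exactly this determinant bookkeeping in passing from $\GL_2$- to $\SL_2$-conjugacy, which is why routing it through Lemma \ref{corB1} rather than an ad hoc rescaling is the safest choice; everything else (existence of a cyclic vector for a non-scalar $2\times2$ matrix, the companion-matrix computation) is routine linear algebra over the field $\Z/p\Z$.
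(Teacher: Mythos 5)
Your proof is correct, and it takes a genuinely different route from the paper's. The paper argues by explicit conjugation and case analysis: it first shows $A$, $A^{-1}$, $A^{\intercal}$ are all $\Gamma$-conjugate, then distinguishes cases according to whether $p$ divides the off-diagonal entries, conjugating by concretely chosen upper-triangular or unipotent matrices to land on a matrix of the form $\left[\begin{smallmatrix}0&b\\-b^{-1}&t\end{smallmatrix}\right]$, and finally invokes Lemma~\ref{corB1} to normalize $b$ to $1$. You instead appeal to the rational canonical form: $t\not\equiv\pm 2$ forces $A$ to be non-scalar, so a cyclic vector exists, the companion-matrix computation (via Cayley--Hamilton, $A^2=tA-I$) puts $A$ into a fixed shape after a $\GL_2$ change of basis, and you then absorb the determinant of that change of basis through Lemma~\ref{corB1}. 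The two proofs converge at the same final step --- both use Lemma~\ref{corB1} and both use $t\not\equiv\pm 2$ at exactly that step --- but your intermediate route is conceptual where the paper's is computational. What the paper's route buys is that the auxiliary facts it proves en route ($A\sim A^{-1}\sim A^{\intercal}$ for such $A$) are independently useful and completely explicit; what yours buys is economy and a version that transparently generalizes to any field over which the determinant correction can be absorbed.

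One small slip worth flagging: with respect to the ordered basis $(v,Av)$ the matrix of $A$ is the companion form $\left[\begin{smallmatrix}0&-1\\1&t\end{smallmatrix}\right]$, not $\left[\begin{smallmatrix}0&1\\-1&t\end{smallmatrix}\right]$ as you wrote (the columns record $Av=0\cdot v+1\cdot Av$ and $A^2v=-v+t\cdot Av$). You can fix this by taking the basis $(-v,Av)$ instead, or simply note that it does not matter, since the sign is one more unit $\zeta=-1$ that Lemma~\ref{corB1} absorbs in the same breath as $\det g$. Likewise the decomposition should be arranged so that the $\SL_2$ factor of $g$ is the one you conjugate by; writing $g=g_1\,\mathrm{diag}(1,\det g)$ with $g_1\in\SL_2$ gives $g_1^{-1}Ag_1=\left[\begin{smallmatrix}0&-u^{-1}\\u&t\end{smallmatrix}\right]$ with $u=\det g$, which is exactly the form Lemma~\ref{corB1} handles. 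These are presentational rather than substantive; the argument is sound.
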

\begin{proof} Let $A= \left[\begin{smallmatrix} a&b\\ c&d \end{smallmatrix}\right] \in \Gamma$. First, we verify that $A$, $A^{-1}$ and $A^\intercal$ are all $\Gamma$-conjugates. Conjugating with $S= \left[\begin{smallmatrix} 0&1\\ -1&0 \end{smallmatrix}\right]$ shows that $A^{-1}$ and $A^\intercal$ are conjugates. Then, if $p\nmid bc$, taking $\zeta=\frac{c}{b}$ in Lemma \ref{corB1} shows that $A$ is conjugate to them too. If $p\nmid b$ and $p|c$, we have $p\nmid(a-a^{-1})$ and conjugating $A$ with $ \left[\begin{smallmatrix} b&(1+b^2)/(a-a^{-1})\\ -(a-a^{-1}) &-b \end{smallmatrix}\right]$ gives $A^{-1}$.

Now if $p\nmid b$ or $p\nmid c$, conjugating $A$ (or its transpose) with an upper triangular matrix gives rise to a matrix of the form $\left[\begin{smallmatrix} 0&b\\ -b^{-1}&t \end{smallmatrix}\right] \in \Gamma$, which by Lemma \ref{corB1} is equivalent to $\left[\begin{smallmatrix} 0&1\\ -1&t \end{smallmatrix}\right]$. If $p|b$ and $p|c$, conjugating with $\left[\begin{smallmatrix} 1&1\\ 0&1 \end{smallmatrix}\right] \in \Gamma$ gives $\left[\begin{smallmatrix} a&a-d\\ 0&d \end{smallmatrix}\right]$. Since $p\nmid a-d$, as otherwise $\Tr A = \pm 2$, the argument above again leads to the matrix $\left[\begin{smallmatrix} 0&1\\ -1&t \end{smallmatrix}\right]$.
\end{proof}

\begin{remark}This lemma is related to the following. 

Let $K$ be a field of characteristic not 2, $Z\in \Gamma_F$, $\Tr Z=t$ with $t\neq \pm 2$. Then, $Z \ \text{or}\ Z^{-1} \sim \left[\begin{smallmatrix} 0&\alpha\\ -\alpha^{-1}&t \end{smallmatrix}\right]$ for some $\alpha \neq 0$. 

If $F$ is a field with the following property: there exists an $\omega\in F^{\times}/\square_F$ such that $F=\square_F\cup w\square_F$, where $\square_F$ denotes the squares in $F$. Then $Z \ \text{or}\ Z^{-1} \sim \left[\begin{smallmatrix} 0&1\\ -1&t \end{smallmatrix}\right]$ if $x^2-(t^2-4)y^2=\omega$ is solvable with $(x,y)\in F^2$. Otherwise $Z \ \text{or}\ Z^{-1} \sim \left[\begin{smallmatrix} 0&1\\ -1&t \end{smallmatrix}\right] \ \text{or}\  \left[\begin{smallmatrix} 0&\omega\\ -\omega^{-1}&t \end{smallmatrix}\right]$, the latter two being inequivalent.

\end{remark}

\end{appendices}
%%%%%%%%%%%%%%%%%%%%%%%%%%%%%%%%%%%%%%%%%%%%%%%%%%%%%%%%%%%%%%%%%%%%%%%%%%%%%%%%%%%%%%%%%%%%%%%%%%%%%%%%%%%%%%%%%%%

\footnotesize
\setlength{\parskip=0.0pt}
\setlength{\lineskip=0.0pt} 
\bibliographystyle{plain}

\bibliography{references}

\begin{thebibliography}{10}

\bibitem{AH}
M.~Abért and P.~Hegedűs.
\newblock On the number of generators needed for free profinite products of
  finite groups.
\newblock {\em Israel Jour. Math.}, 162(1):81--92, 2007.

\bibitem{agks}
N.~Avni, T.~Gerlander, M.~Kassabov, and A.~Shalev.
\newblock Word values in p-adic and adelic groups.
\newblock {\em Bull. Lond. Math. Soc}, 45(6):1323--1330, 2013.

\bibitem{Ba05}
A.~Baragar.
\newblock The {M}arkoff-{H}urwitz equations over number fields.
\newblock {\em Rocky Mountain J. Math}, 35(3):695--712, 2005.

\bibitem{Borel}
A.~Borel.
\newblock Some finiteness properties of adele groups over number fields.
\newblock {\em Publications Math\'ematiques de l'IH\'ES}, 16:5--30, 1963.

\bibitem{chowchow}
S.~Chowla, J.~Cowles, and M.~Cowles.
\newblock On the number of conjugacy classes in {SL}(2,$\mathbb{Z}$).
\newblock {\em Jour. Number Theory}, 12(3):372--377, 1980.

\bibitem{Cohen}
P.~J. Cohen.
\newblock Decision procedures for real and $p$-adic fields.
\newblock {\em Comm. on Pure and Applied Math.}, 22:131--151, 1969.

\bibitem{CT19}
J.~L. Colliot-Th\'el\`ene, D.~Wei, and F.~Xu.
\newblock {B}rauer-{M}anin obstruction for {M}arkoff surfaces.
\newblock {\em Annali della Scuola Normale Superiore di Pisa}, vol.
  XXI:1257--1313, 2020.

\bibitem{FR}
R~Fricke.
\newblock \"{U}ber die {T}heorie der automorphen {M}odulgruppen.
\newblock {\em Nachrichten Ges. Wiss. G\"ottingen}, pages 91--101, 1896.

\bibitem{GhSa}
A.~Ghosh and P.~Sarnak.
\newblock Integral points on {M}arkoff type cubic surfaces.
\newblock {\em arXiv:1706.06712 [math.NT]}, 2017.

\bibitem{GuSe1}
F.~Grunewald and D.~Segal.
\newblock Some general algorithms. {I}: {A}rithmetic groups.
\newblock {\em Annals of Math.}, 112(3):531--583, 1980.

\bibitem{GuSe2}
F.~Grunewald and D.~Segal.
\newblock Decision problems concerning {$S$}-arithmetic groups.
\newblock {\em Jour. Symbolic Logic}, 50(3):743–772, 1985.

\bibitem{Khe}
A.~Khelif.
\newblock Finite approximation and commutators in free groups.
\newblock {\em J. Algebra}, 281(2):407--412, 2004.

\bibitem{LM}
D.~Loughran and V.~Mitankin.
\newblock Integral {H}asse principle and strong approximation for {M}arkoff
  surfaces.
\newblock {\em IMRN /imrn/rnz114}, 10:1093, 2020.

\bibitem{LySh}
R.C. Lyndon and P.E. Schupp.
\newblock {\em Combinatorial Group Theory}.
\newblock v.89 in Classics in Mathematics. Springer-Verlag, 1977.

\bibitem{menn}
J.~Mennicke.
\newblock On {I}hara's modular group.
\newblock {\em Invent. Math}, 4:202--228, 1967/68.

\bibitem{OR}
M.~Orzech and L.~Ribes.
\newblock Residual finiteness and the {H}opf property in rings.
\newblock {\em J. Algebra}, 15:81--88, 1970.

\bibitem{Pall1945}
G.~Pall.
\newblock The arithmetical invariants of quadratic forms.
\newblock {\em Bull. AMS}, 51:185--197, 1945.

\bibitem{park2}
P.~S. Park.
\newblock Conjugacy growth of commutators, 2017.
\newblock Princeton University senior thesis.

\bibitem{park1}
P.~S. Park.
\newblock Conjugacy growth of commutators.
\newblock {\em Jour. of Algebra}, 526:423--458, 2019.

\bibitem{RZ}
L.~Ribes and P.~Zalesskii.
\newblock {\em Profinite groups}.
\newblock A Series of Modern Surveys in Mathematics, 40. Springer-Verlag,
  Berlin xiv+435 pp, 2000.

\bibitem{sarnak_thesis}
P.~Sarnak.
\newblock {\em Prime geodesic theorems}.
\newblock PhD thesis, Stanford, Proquest LLC, 1980.

\bibitem{Sela}
Z.~Sela.
\newblock {Diophantine geometry over groups VII: The elementary theory of a
  hyperbolic group}.
\newblock {\em Proc. Lond. Math. Soc.}, 99(1):217--273, 02 2009.

\bibitem{serre70}
J.~P. Serre.
\newblock Le probl\'eme des groupes de congruence pour {SL}2.
\newblock {\em Annals of Math.}, 92(3):489--527, 1970.

\bibitem{Ser}
J.~P. Serre.
\newblock {\em Trees}.
\newblock Translated from the French original by John Stillwell. Corrected 2nd
  printing of the 1980 English translation. Springer Monographs in Mathematics.
  Springer-Verlag, Berlin x+142 pp, 2003.

\bibitem{shalev2012}
A.~Shalev.
\newblock Applications of some zeta functions in group theory.
\newblock {\em Zeta functions in algebra and geometry, Contemp. Math},
  566:331--344, 2012.

\bibitem{Si90}
J.~H. Silverman.
\newblock The {M}arkoff equation $x^2+y^2+z^2=axyz$ over quadratic imaginary
  fields.
\newblock {\em J. Number Theory}, 35(1):72--104, 1990.

\bibitem{Thomp}
J.~G. Thompson.
\newblock Power maps and completions of free groups and of the modular group.
\newblock {\em Jour. of Algebra}, 191(1):252--264, 1997.

\bibitem{whang}
J.~Whang.
\newblock Nonlinear descent on moduli of local systems.
\newblock {\em Israel Jour. of Math.}, 240, 10 2017.

\bibitem{wicks}
M.~J. Wicks.
\newblock Commutators in free products.
\newblock {\em Jour. London Math. Soc.}, s1-37(1):433--444, 1962.

\bibitem{Wil}
J.~S. Wilson.
\newblock {\em Profinite groups}.
\newblock London Mathematical Society Monographs. New Series, 19. The Clarendon
  Press, Oxford University Press, New York xii+284 pp, 1998.

\end{thebibliography}

\end{document}